%%%%%%%%%%%%%%%%%%%%%%%%%%%%%%%%%%%%%%%%%%%%%%%%%%%%%%%%%%%%%%%%%
% This template lays out the Epiga style
%%%%%%%%%%%%%%%%%%%%%%%%%%%%%%%%%%%%%%%%%%%%%%%%%%%%%%%%%%%%%%%%%
\documentclass[11pt]{amsart}

\usepackage{enumitem}
\usepackage{epigamath}

%%%%%%%%%%%%%%%%%%%%%%%%%%%%%%%%%%%%%%%%%%%%%%
% Comment/uncomment/modify for other languages
%%%%%%%%%%%%%%%%%%%%%%%%%%%%%%%%%%%%%%%%%%%%%%

%\usepackage[french]{babel}
\usepackage[english]{babel}

%%%%%%%%%%%%%%%%%%%%%%%%%%%%%%%%%%%%%
% Specify the numbering of equations
%%%%%%%%%%%%%%%%%%%%%%%%%%%%%%%%%%%%%

\numberwithin{equation}{section}

%%%%%%%%%%%%%%%%%%%%%%%%%%%%%%%%%%%%%
% Specify any other package you need
%%%%%%%%%%%%%%%%%%%%%%%%%%%%%%%%%%%%%

\usepackage{subfigure}
\usepackage{tikz}
\usepackage{tikz-cd}
\usepackage{stmaryrd}
\usepackage{mathdots}

%%%%%%%%%%%%%%%%%%%%%%%%%%%%%%%%%%%%%
% Define any new environment you need
%%%%%%%%%%%%%%%%%%%%%%%%%%%%%%%%%%%%%

\newtheorem{theorem}{Theorem}[section]
\newtheorem{lemma}[theorem]{Lemma}
\newtheorem{proposition}[theorem]{Proposition}
\newtheorem{corollary}[theorem]{Corollary}
\newtheorem{conjecture}[theorem]{Conjecture}

\newtheorem{recipe}[theorem]{Recipe}

\theoremstyle{definition}
\newtheorem{definition}[theorem]{Definition}
\newtheorem{example}[theorem]{Example}

\theoremstyle{remark}
\newtheorem{remark}[theorem]{Remark}
\newtheorem{remarks}[theorem]{Remarks}

%%%%%%%%%%%%%%%%%%%%%%%%%%%%%%%%%%%%%
% Define any new command you need
%%%%%%%%%%%%%%%%%%%%%%%%%%%%%%%%%%%%%

% Tikz
\usepackage{cancel}
\usepackage{color}
\usetikzlibrary{calc,3d,arrows}
\colorlet{ggrey}{gray!80}
\colorlet{bblue}{red!60}
\colorlet{dbblue}{red!80}
\colorlet{bluee}{blue}
\colorlet{ggreen}{green!90!black!90}

%=========
% This is not an official TikZ library. Use at your own risk!

\makeatletter
% alternative latex arrow
\pgfarrowsdeclare{latexnew}{latexnew}
{
  \ifdim\pgfgetarrowoptions{latexnew}=-1pt%
    \pgfutil@tempdima=0.28pt%
    \pgfutil@tempdimb=\pgflinewidth%
    \ifdim\pgfinnerlinewidth>0pt%
      \pgfmathsetlength\pgfutil@tempdimb{.6\pgflinewidth-.4*\pgfinnerlinewidth}%
    \fi%
    \advance\pgfutil@tempdima by.3\pgfutil@tempdimb%
  \else%
    \pgfutil@tempdima=\pgfgetarrowoptions{latexnew}%
    \divide\pgfutil@tempdima by 10%
  \fi%
  \pgfarrowsleftextend{+-1\pgfutil@tempdima}%
  \pgfarrowsrightextend{+9\pgfutil@tempdima}%
}
{
  \ifdim\pgfgetarrowoptions{latexnew}=-1pt%
    \pgfutil@tempdima=0.28pt%
    \pgfutil@tempdimb=\pgflinewidth%
    \ifdim\pgfinnerlinewidth>0pt%
      \pgfmathsetlength\pgfutil@tempdimb{.6\pgflinewidth-.4*\pgfinnerlinewidth}%
    \fi%
    \advance\pgfutil@tempdima by.3\pgfutil@tempdimb%
  \else%
    \pgfutil@tempdima=\pgfgetarrowoptions{latexnew}%
    \divide\pgfutil@tempdima by 10%
    \pgfsetlinewidth{0bp}%
  \fi%
  \pgfpathmoveto{\pgfqpoint{9\pgfutil@tempdima}{0pt}}
  \pgfpathcurveto
  {\pgfqpoint{6.3333\pgfutil@tempdima}{.5\pgfutil@tempdima}}
  {\pgfqpoint{2\pgfutil@tempdima}{2\pgfutil@tempdima}}
  {\pgfqpoint{-1\pgfutil@tempdima}{3.75\pgfutil@tempdima}}
  \pgfpathlineto{\pgfqpoint{-1\pgfutil@tempdima}{-3.75\pgfutil@tempdima}}
  \pgfpathcurveto
  {\pgfqpoint{2\pgfutil@tempdima}{-2\pgfutil@tempdima}}
  {\pgfqpoint{6.3333\pgfutil@tempdima}{-.5\pgfutil@tempdima}}
  {\pgfqpoint{9\pgfutil@tempdima}{0pt}}
  \pgfusepathqfill
}

% alternative latex reversed arrow
\pgfarrowsdeclarereversed{latexnew reversed}{latexnew reversed}{latexnew}{latexnew}

% alternative latex' arrow
\pgfarrowsdeclare{latex'new}{latex'new}
{
  \ifdim\pgfgetarrowoptions{latex'new}=-1pt%
    \pgfutil@tempdima=0.28pt%
    \advance\pgfutil@tempdima by.3\pgflinewidth%
  \else%
    \pgfutil@tempdima=\pgfgetarrowoptions{latex'new}%
    \divide\pgfutil@tempdima by 10%
  \fi%
  \pgfarrowsleftextend{+-4\pgfutil@tempdima}
  \pgfarrowsrightextend{+6\pgfutil@tempdima}
}
{
  \ifdim\pgfgetarrowoptions{latex'new}=-1pt%
    \pgfutil@tempdima=0.28pt%
    \advance\pgfutil@tempdima by.3\pgflinewidth%
  \else%
    \pgfutil@tempdima=\pgfgetarrowoptions{latex'new}%
    \divide\pgfutil@tempdima by 10%
    \pgfsetlinewidth{0bp}%
  \fi%
  \pgfpathmoveto{\pgfqpoint{6\pgfutil@tempdima}{0\pgfutil@tempdima}}
  \pgfpathcurveto
  {\pgfqpoint{3.5\pgfutil@tempdima}{.5\pgfutil@tempdima}}
  {\pgfqpoint{-1\pgfutil@tempdima}{1.5\pgfutil@tempdima}}
  {\pgfqpoint{-4\pgfutil@tempdima}{3.75\pgfutil@tempdima}}
  \pgfpathcurveto
  {\pgfqpoint{-1.5\pgfutil@tempdima}{1\pgfutil@tempdima}}
  {\pgfqpoint{-1.5\pgfutil@tempdima}{-1\pgfutil@tempdima}}
  {\pgfqpoint{-4\pgfutil@tempdima}{-3.75\pgfutil@tempdima}}
  \pgfpathcurveto
  {\pgfqpoint{-1\pgfutil@tempdima}{-1.5\pgfutil@tempdima}}
  {\pgfqpoint{3.5\pgfutil@tempdima}{-.5\pgfutil@tempdima}}
  {\pgfqpoint{6\pgfutil@tempdima}{0\pgfutil@tempdima}}
  \pgfusepathqfill
}

% alternative latex' reversed arrow
\pgfarrowsdeclarereversed{latex'new reversed}{latex'new reversed}{latex'new}{latex'new}

% alternative o arrow
\pgfarrowsdeclare{onew}{onew}
{
  \pgfarrowsleftextend{+-.5\pgflinewidth}
  \ifdim\pgfgetarrowoptions{onew}=-1pt%
    \pgfutil@tempdima=0.4pt%
    \advance\pgfutil@tempdima by.2\pgflinewidth%
    \pgfutil@tempdimb=9\pgfutil@tempdima\advance\pgfutil@tempdimb by.5\pgflinewidth%
    \pgfarrowsrightextend{+\pgfutil@tempdimb}%
  \else%
    \pgfutil@tempdima=\pgfgetarrowoptions{onew}%
    \advance\pgfutil@tempdima by -0.5\pgflinewidth%
    \pgfarrowsrightextend{+\pgfutil@tempdima}%
  \fi%
}
{ 
  \ifdim\pgfgetarrowoptions{onew}=-1pt%
    \pgfutil@tempdima=0.4pt%
    \advance\pgfutil@tempdima by.2\pgflinewidth%
    \pgfutil@tempdimb=0pt%
  \else%
    \pgfutil@tempdima=\pgfgetarrowoptions{onew}%
    \divide\pgfutil@tempdima by 9%
    \pgfutil@tempdimb=0.5\pgflinewidth%
  \fi%
  \pgfsetdash{}{+0pt}
  \pgfpathcircle{\pgfpointadd{\pgfqpoint{4.5\pgfutil@tempdima}{0bp}}%
                             {\pgfqpoint{-\pgfutil@tempdimb}{0bp}}}%
                {4.5\pgfutil@tempdima-\pgfutil@tempdimb}%
  \pgfusepathqstroke
}

% alternative square arrow
\pgfarrowsdeclare{squarenew}{squarenew}
{
 \ifdim\pgfgetarrowoptions{squarenew}=-1pt%
   \pgfutil@tempdima=0.4pt
   \advance\pgfutil@tempdima by.275\pgflinewidth%
   \pgfarrowsleftextend{+-\pgfutil@tempdima}
   \advance\pgfutil@tempdima by.5\pgflinewidth
   \pgfarrowsrightextend{+\pgfutil@tempdima}
 \else%
   \pgfutil@tempdima=\pgfgetarrowoptions{squarenew}%
   \divide\pgfutil@tempdima by 8%
   \pgfarrowsleftextend{+-7\pgfutil@tempdima}%
   \pgfarrowsrightextend{+1\pgfutil@tempdima}%
 \fi%
}
{
 \ifdim\pgfgetarrowoptions{squarenew}=-1pt%
   \pgfutil@tempdima=0.4pt%
   \advance\pgfutil@tempdima by.275\pgflinewidth%
   \pgfutil@tempdimb=0pt%
 \else%
   \pgfutil@tempdima=\pgfgetarrowoptions{squarenew}%   
   \divide\pgfutil@tempdima by 8%
   \pgfutil@tempdimb=0.5\pgflinewidth%
 \fi%
 \pgfsetdash{}{+0pt}
 \pgfsetroundjoin
 \pgfpathmoveto{\pgfpointadd{\pgfqpoint{1\pgfutil@tempdima}{4\pgfutil@tempdima}}
                            {\pgfqpoint{-\pgfutil@tempdimb}{-\pgfutil@tempdimb}}}
 \pgfpathlineto{\pgfpointadd{\pgfqpoint{-7\pgfutil@tempdima}{4\pgfutil@tempdima}}
                            {\pgfqpoint{\pgfutil@tempdimb}{-\pgfutil@tempdimb}}}
 \pgfpathlineto{\pgfpointadd{\pgfqpoint{-7\pgfutil@tempdima}{-4\pgfutil@tempdima}}
                            {\pgfqpoint{\pgfutil@tempdimb}{\pgfutil@tempdimb}}}
 \pgfpathlineto{\pgfpointadd{\pgfqpoint{1\pgfutil@tempdima}{-4\pgfutil@tempdima}}
                            {\pgfqpoint{-\pgfutil@tempdimb}{\pgfutil@tempdimb}}}
 \pgfpathclose
 \pgfusepathqfillstroke
}

% alternative stealth arrow
\pgfarrowsdeclare{stealthnew}{stealthnew}
{
  \ifdim\pgfgetarrowoptions{stealthnew}=-1pt%
    \pgfutil@tempdima=0.28pt%
    \pgfutil@tempdimb=\pgflinewidth%
    \ifdim\pgfinnerlinewidth>0pt%
      \pgfmathsetlength\pgfutil@tempdimb{.6\pgflinewidth-.4*\pgfinnerlinewidth}%
    \fi%
    \advance\pgfutil@tempdima by.3\pgfutil@tempdimb%
  \else%
    \pgfutil@tempdima=\pgfgetarrowoptions{stealthnew}%
    \divide\pgfutil@tempdima by 8%
  \fi%
  \pgfarrowsleftextend{+-3\pgfutil@tempdima}
  \pgfarrowsrightextend{+5\pgfutil@tempdima}
}
{
  \ifdim\pgfgetarrowoptions{stealthnew}=-1pt%
    \pgfutil@tempdima=0.28pt%
    \pgfutil@tempdimb=\pgflinewidth%
    \ifdim\pgfinnerlinewidth>0pt%
      \pgfmathsetlength\pgfutil@tempdimb{.6\pgflinewidth-.4*\pgfinnerlinewidth}%
    \fi%
    \advance\pgfutil@tempdima by.3\pgfutil@tempdimb%
  \else%
    \pgfutil@tempdima=\pgfgetarrowoptions{stealthnew}%
    \divide\pgfutil@tempdima by 8%
    \pgfsetlinewidth{0bp}%
  \fi%
  \pgfpathmoveto{\pgfqpoint{5\pgfutil@tempdima}{0pt}}
  \pgfpathlineto{\pgfqpoint{-3\pgfutil@tempdima}{4\pgfutil@tempdima}}
  \pgfpathlineto{\pgfpointorigin}
  \pgfpathlineto{\pgfqpoint{-3\pgfutil@tempdima}{-4\pgfutil@tempdima}}
  \pgfusepathqfill
}

% alternative stealth reversed arrow
\pgfarrowsdeclarereversed{stealthnew reversed}{stealthnew reversed}{stealthnew}{stealthnew}

% alternative to arrow
\pgfarrowsdeclare{tonew}{tonew}
{
  \ifdim\pgfgetarrowoptions{tonew}=-1pt%
    \pgfutil@tempdima=0.84pt%
    \advance\pgfutil@tempdima by1.3\pgflinewidth%
    \pgfutil@tempdimb=0.21pt%
    \advance\pgfutil@tempdimb by.625\pgflinewidth%
  \else%
    \pgfutil@tempdima=\pgfgetarrowoptions{tonew}%
    \pgfarrowsleftextend{+-0.8\pgfutil@tempdima}%
    \pgfarrowsrightextend{+0.2\pgfutil@tempdima}%
  \fi%
}
{
  \ifdim\pgfgetarrowoptions{tonew}=-1pt%
    \pgfutil@tempdima=0.28pt%
    \advance\pgfutil@tempdima by.3\pgflinewidth%
    \pgfutil@tempdimb=0pt,%
  \else%
    \pgfutil@tempdima=\pgfgetarrowoptions{tonew}%
    \multiply\pgfutil@tempdima by 100%
    \divide\pgfutil@tempdima by 375%
    \pgfutil@tempdimb=0.4\pgflinewidth%
  \fi%
  \pgfsetdash{}{+0pt}
  \pgfsetroundcap
  \pgfsetroundjoin
  \pgfpathmoveto{\pgfpointorigin}
  \pgflineto{\pgfpointadd{\pgfpoint{0.75\pgfutil@tempdima}{0bp}}
                         {\pgfqpoint{-2\pgfutil@tempdimb}{0bp}}}
  \pgfusepathqstroke
  \pgfsetlinewidth{0.8\pgflinewidth}
  \pgfpathmoveto{\pgfpointadd{\pgfqpoint{-3\pgfutil@tempdima}{4\pgfutil@tempdima}}
                             {\pgfqpoint{\pgfutil@tempdimb}{0bp}}}
  \pgfpathcurveto
  {\pgfpointadd{\pgfqpoint{-2.75\pgfutil@tempdima}{2.5\pgfutil@tempdima}}
               {\pgfqpoint{0.5\pgfutil@tempdimb}{0bp}}}
  {\pgfpointadd{\pgfqpoint{0pt}{0.25\pgfutil@tempdima}}
               {\pgfqpoint{-0.5\pgfutil@tempdimb}{0bp}}}
  {\pgfpointadd{\pgfqpoint{0.75\pgfutil@tempdima}{0pt}}
               {\pgfqpoint{-\pgfutil@tempdimb}{0bp}}}
  \pgfpathcurveto
  {\pgfpointadd{\pgfqpoint{0pt}{-0.25\pgfutil@tempdima}}
               {\pgfqpoint{-0.5\pgfutil@tempdimb}{0bp}}}
  {\pgfpointadd{\pgfqpoint{-2.75\pgfutil@tempdima}{-2.5\pgfutil@tempdima}}
               {\pgfqpoint{0.5\pgfutil@tempdimb}{0bp}}}
  {\pgfpointadd{\pgfqpoint{-3\pgfutil@tempdima}{-4\pgfutil@tempdima}}
               {\pgfqpoint{\pgfutil@tempdimb}{0bp}}}
  \pgfusepathqstroke
}

% alias alternative to arrow
\pgfarrowsdeclarealias{<new}{>new}{tonew}{tonew}

\makeatother

% tip length code
\pgfsetarrowoptions{latexnew}{-1pt}
\pgfsetarrowoptions{latex'new}{-1pt}
\pgfsetarrowoptions{onew}{-1pt}
\pgfsetarrowoptions{squarenew}{-1pt}
\pgfsetarrowoptions{stealthnew}{-1pt}
\pgfsetarrowoptions{tonew}{-1pt}
\pgfkeys{/tikz/.cd, arrowhead/.default=-1pt, arrowhead/.code={
  \pgfsetarrowoptions{latexnew}{#1},
  \pgfsetarrowoptions{latex'new}{#1},
  \pgfsetarrowoptions{onew}{#1},
  \pgfsetarrowoptions{squarenew}{#1},
  \pgfsetarrowoptions{stealthnew}{#1},
  \pgfsetarrowoptions{tonew}{#1},
}}

\newcommand{\one}{\ensuremath{(\mathrm{i})}}
\newcommand{\two}{\ensuremath{(\mathrm{ii})}}
\newcommand{\three}{\ensuremath{(\mathrm{iii})}}
\newcommand{\four}{\ensuremath{(\mathrm{iv})}}

\newcommand{\CC}{\ensuremath{\mathbb{C}}} 
\newcommand{\kk}{\ensuremath{\mathbb{C}}} 
\newcommand{\NN}{\ensuremath{\mathbb{N}}} 
\newcommand{\RR}{\ensuremath{\mathbb{R}}}
\newcommand{\TT}{\ensuremath{\mathbb{T}}} 
\newcommand{\vv}{\ensuremath{\mathbf{v}}} 
\newcommand{\ZZ}{\ensuremath{\mathbb{Z}}} 

\newcommand{\e}{\ensuremath{\mathfrak{e}}}

\newcommand{\m}{\ensuremath{\mathfrak{m}}}
\newcommand{\n}{\ensuremath{\mathfrak{n}}}

\newcommand{\BKR}{\operatorname{BKR}}

\newcommand{\coh}{\operatorname{coh}}
\newcommand{\conv}{\operatorname{conv}}

\newcommand{\Dimer}{\ensuremath{\Gamma}}
\renewcommand{\div}{\operatorname{div}} 
\newcommand{\ghilb}{\ensuremath{G}\operatorname{-Hilb}}
\newcommand{\End}{\operatorname{End}} 
\newcommand{\git}{\operatorname{\!/\!\!/\!}}
\newcommand{\head}{\operatorname{\mathsf{h}}}
\newcommand{\hex}{\operatorname{Hex}}
\newcommand{\Hom}{\operatorname{Hom}} 
\newcommand{\jig}{\operatorname{J}}
\newcommand{\Image}{\operatorname{Image}}
\newcommand{\Irr}{\operatorname{Irr}}

\newcommand{\KV}{\operatorname{KV}}
\newcommand{\ltensor}{\overset{\mathbf{L}}{\otimes}}

 \newcommand{\modA}{\operatorname{mod-}\ensuremath{\!A}}
  
\newcommand{\Pic}{\operatorname{Pic}}
\newcommand{\Proj}{\operatorname{Proj}}
\newcommand{\soc}{\operatorname{soc}}
\newcommand{\SL}{\operatorname{SL}}
\newcommand{\Spec}{\operatorname{Spec}}
\newcommand{\supp}{\operatorname{supp}}
\newcommand{\tail}{\operatorname{\mathsf{t}}}
\newcommand{\tiling}{\operatorname{Graph}} 
\newcommand{\wt}{\operatorname{wt}}

%%%%%%%%%%%%%%%%%%%%%%%%%%%%%%%%%%%%%%%%%%%%%%%%%%%%%%%%%%%%%%%%%
% Volume, dates, title, author(s), abstract, keywords, MSC class
%%%%%%%%%%%%%%%%%%%%%%%%%%%%%%%%%%%%%%%%%%%%%%%%%%%%%%%%%%%%%%%%%

\EpigaVolumeYear{5}{2021} \EpigaArticleNr{4} \ReceivedOn{February 11,
2020}
%\InFinalFormOn{}
\InFinalFormOn{December 10, 2020}
\AcceptedOn{January 14, 2021}

\title{Combinatorial Reid's recipe for consistent dimer models}
\titlemark{Combinatorial Reid's recipe}

\author{Alastair Craw} 
\address{Department of Mathematical Sciences, University of Bath, Claverton Down, Bath BA2 7AY, UK.}
\email{a.craw@bath.ac.uk}
 
\author{Liana Heuberger} 
\address{Laboratoire angevin de recherche en math\'{e}matiques, Facult\'{e} des Sciences, 
2 Boulevard Lavoisier, 49045 Angers cedex 01, France.}
\email{liana.heuberger@univ-angers.fr}

\author{Jesus Tapia Amador} 
\address{Downside School, Stratton-on-the-Fosse, Radstock, Somerset, BA3 4RJ, UK.}
\email{j.ta87@yahoo.co.uk}

\authormark{A. Craw, L. Heuberger, and J. Tapia Amador}

\AbstractInEnglish{\footnotesize{Reid's recipe~\cite{Reid97, Craw05} for a finite abelian subgroup $G\subset \SL(3,\CC)$ is a combinatorial procedure that marks the toric fan of the $G$-Hilbert scheme with irreducible representations of $G$. The geometric McKay correspondence conjecture of Cautis--Logvinenko~\cite{CautisLogvinenko09} that describes certain objects in the derived category of $\ghilb$ in terms of Reid's recipe was later proved by Logvinenko et.\ al.~\cite{Logvinenko10, CCL17}. We generalise Reid's recipe to any consistent dimer model by marking the toric fan of a crepant resolution of the vaccuum moduli space in a manner that is compatible with the geometric correspondence of Bocklandt--Craw--Quintero--V\'{e}lez~\cite{BCQ15}. Our main tool generalises the jigsaw transformations of Nakamura~\cite{Nakamura01} to consistent dimer models.}}

\MSCclass{14E16; 14M25; 16E35; 16G20}

\KeyWords{Reid's recipe; dimer model; quiver moduli space; jigsaw transformations; tilting bundle}

\TitleInFrench{Proc\'d\'e de Reid combinatoire pour les mod\`eles de dim\`eres coh\'erents}

\AbstractInFrench{\footnotesize{Le proc\'ed\'e de Reid~\cite{Reid97, Craw05} pour un sous-groupe ab\'elien fini $G\subset \SL(3,\CC)$ est une proc\'edure combinatoire qui fournit un marquage de l'\'eventail torique  du $G$-sch\'ema de Hilbert par les composantes irr\'eductibles de $G$. La version g\'eom\'etrique de la correspondance de McKay, conjecture formul\'ee par Cautis--Logvinenko~\cite{CautisLogvinenko09} qui d\'ecrit cerains objets de la cat\'egorie d\'eriv\'ee de $\ghilb$ en termes de proc\'ed\'e de Reid, a \'et\'e d\'emontr\'ee ensuite par  Logvinenko et.\ al.~\cite{Logvinenko10, CCL17}. Nous g\'en\'eralisons \`a tout mod\`ele de dim\`ere le proc\'ed\'e de Reid qui consiste \`a marquer l'\'eventail torique d'une r\'esolution cr\'epante de l'espace des modules d'une mani\`ere compatible avec la correspondance g\'eom\'etrique de Bocklandt--Craw--Quintero--V\'{e}lez~\cite{BCQ15}. Notre outil principal g\'en\'eralise les transformations de puzzles de Nakamura~\cite{Nakamura01} aux mod\`eles de dim\`eres coh\'erents.}}

%%%%%%%%%%%%%%%%%
% Thanks (if any)
%%%%%%%%%%%%%%%%%

\acknowledgement{The second author was funded by grant EP/S004130/1 (PI Elisa Postinghel) and the third author was funded by CONACYT.}

%%%%%%%%%%%%%%%%%
% Dedication, contribution
%%%%%%%%%%%%%%%%%

%\contribution{With an appendix by...}

%\dedication{Dedicated to...}

\begin{document}

%%%%%%%%%%%%%%%%%%%%%%%%%%%%%%%
% Title page
%%%%%%%%%%%%%%%%%%%%%%%%%%%%%%%

\removeabove{0,8cm}
\removebetween{0,8cm}
\removebelow{0,8cm}

\maketitle

\begin{prelims}

\DisplayAbstractInEnglish

\bigskip

\DisplayKeyWords

\medskip

\DisplayMSCclass

\bigskip

\languagesection{Fran\c{c}ais}

\bigskip

\DisplayTitleInFrench

\medskip

\DisplayAbstractInFrench

\end{prelims}

%%%%%%%%%%%%%%%%%%%%%
% Table of Contents
%%%%%%%%%%%%%%%%%%%%%

\newpage

\setcounter{tocdepth}{1}

\tableofcontents

%%%%%%%%%%%%%%%%%%%%%
% Content begins here
%%%%%%%%%%%%%%%%%%%%%

\section{Introduction}
For a finite subgroup $G\subset \SL(2,\CC)$, the construction by McKay~\cite{McKay80} of an affine Dynkin diagram of type ADE directly from the representation theory of $G$  established a bijection between the nontrivial irreducible representations of $G$ and the irreducible components of the exceptional divisor on the minimal resolution $S$ of $\CC^2/G$. For a geometric interpretation, identify  
$S$ with the $G$-Hilbert scheme following Ito--Nakamura~\cite{ItoNakamura99} and use the induced universal family on $S$ to obtain an equivalence of derived categories of coherent sheaves  \[
 \Phi_{\KV}\colon D^b\big(G\text{-}\coh(\mathbb{C}^2)\big)\longrightarrow D^b\big(\!\coh(S)\big)
 \]
 as in Kapranov--Vasserot~\cite{KapranovVasserot00}. For a nontrivial irreducible representation $\rho$ of $G$, \emph{ibid}.\ shows that the functor $\Phi_{\KV}$ sends the simple $G$-sheaf $\mathcal{O}_0\otimes \rho$ on $\CC^2$ to the pure sheaf $\mathcal{O}_{C_\rho}(-1)[1]$ on $S$ whose support is the irreducible component $C_\rho$ of the exceptional locus associated to $\rho$ by McKay's bijection. Alternatively, the first Chern classes of the vector bundles $\Phi_{\KV}(\mathcal{O}_{\CC^2}\otimes \rho)$ introduced by Gonzalez-Sprinberg--Verdier~\cite{GSV83} provide the basis of $H^2(S,\ZZ)$ dual to the basis of $H_2(S,\ZZ)$ given by the curves classes $[C_\rho]$ for nontrivial $\rho$.
 
A similar story emerges for any finite subgroup $G\subset \SL(3,\CC)$, though the final step is well-understood only when $G$ is abelian. Indeed, building on work of Nakamura~\cite{Nakamura01} in the abelian case, Bridgeland--King--Reid~\cite{BKR01} 
 used the universal family on $\ghilb$ to obtain an equivalence of derived categories of coherent sheaves  
 \[
 \Phi_{\BKR}\colon D^b\big(\!\coh(\ghilb)\big)\longrightarrow  D^b\big(G\text{-}\coh(\mathbb{C}^3)\big)
 \]
 from which they deduced that $\ghilb$ is a crepant resolution of $\CC^3/G$. The conventions from \cite{KapranovVasserot00} and \cite{BKR01} are such that the functors $\Phi_{\KV}$ and $\Phi_{\BKR}$ determined by the universal family on $\ghilb$ point in opposite directions, so to implement the final step in the programme extending the McKay correspondence to dimension three, Cautis--Logvinenko~\cite{CautisLogvinenko09} considered the quasi-inverse functor
 \[
 \Psi:=\Phi_{\BKR}^{-1}\colon D^b\big(G\text{-}\coh(\mathbb{C}^3)\big)\longrightarrow  D^b\big(\!\coh(\ghilb)\big)
 \]
 and studied the images under $\Psi$ of the simple $G$-sheaves $\mathcal{O}_0\otimes \rho$ on $\CC^3$ when $G$ is abelian. Their conjecture, proved by Logvinenko~\cite{Logvinenko10} when $\CC^3/G$ has a single isolated singularity and by Cautis--Craw--Logvinenko~\cite{CCL17} for any finite abelian subgroup $G\subset \SL(3,\CC)$, asserted that the objects $\Psi(\mathcal{O}_0\otimes \rho)$ can be computed explicitly from a recipe of Reid~\cite{Reid97} for marking torus-invariant subvarieties of $\ghilb$ with irreducible representations of $G$. \emph{Reid's recipe},  shown to hold for any finite abelian subgroup of $\SL(3,\CC)$ by Craw~\cite{Craw05}, therefore provides the combinatorial input allowing for a complete understanding of the geometric McKay correspondence that sends each nontrivial irreducible representation $\rho$ of $G$ to the pure sheaf $\Psi(\mathcal{O}_0\otimes \rho)$ on $\ghilb$. 
 
  We study a more general situation in which the singularity $\CC^3/G$ is replaced by an arbitrary Gorenstein affine toric threefold $\Spec R$, and where $\ghilb$ is replaced by a moduli space of cyclic modules over a noncommutative crepant resolution (NCCR) $A$ of $R$. Bocklandt~\cite{Bocklandt12, Bocklandt13} shows that every such algebra $A$ arises as the path algebra of a quiver $Q$ modulo the ideal of relations encoded by a consistent dimer model $\Gamma$ in a real 2-torus. To study cyclic $A$-modules we specify a vertex of $Q$, denoted $0\in Q_0$, that determines a \emph{0-generated stability condition} $\theta$ (see \eqref{eqn:0generated} below). The fine moduli space $\mathcal{M}_\theta$ of $\theta$-stable $A$-modules of dimension vector $(1,..,1)$ 
  %is a crepant resolution of $\Spec R$ that 
  %.  Our main result generalises Reid's original combinatorial recipe to certain crepant resolutions $\mathcal{M}_\theta$ of an arbitrary Gorenstein affine toric threefold - the `vacuum moduli space' \cite{FHHZ08}. The initial data is a consistent dimer model $\Gamma$ in a real two-torus and the choice of a vertex in the dual quiver $Q$ that we denote $0\in Q_0$. Let $A$ denote the Jacobian algebra of the dimer model and write $Z(A)$ for the centre of $A$. Our chosen vertex $0$ determines a \emph{0-generated stability condition} $\theta$ for $A$-modules and the associated fine moduli space $\mathcal{M}_\theta$ of $\theta$-stable $A$-modules
  comes armed with a tautological vector bundle $T=\bigoplus_{i\in Q_0} L_i$, where each $L_i$ has rank one. Results of Ishii--Ueda~\cite{IshiiUeda08, IshiiUeda15} and Broomhead~\cite{Broomhead12} imply that $\mathcal{M}_\theta$ is a toric, crepant resolution of $\Spec R$ and, moreover,  the universal family over $\mathcal{M}_\theta$ induces a derived equivalence $\Phi_\theta$ with quasi-inverse 
  \[
 \Psi_\theta:=\Phi_\theta^{-1}\colon D^b(\modA) \longrightarrow D^b\big(\coh(\mathcal{M}_\theta)\big).
\]
 Each $i\in Q_0$ defines a vertex simple $A$-module $S_i$ and the object $\Psi_\theta(S_i)$ is known to be a pure sheaf for every $i\neq 0$ by work of Bocklandt--Craw--Quintero-V\'{e}lez~\cite{BCQ15}. In light of the works of Logvinenko et.~al.~\cite{CautisLogvinenko09, Logvinenko10, CCL17}, it is natural to seek a recipe that marks cones in the fan $\Sigma_\theta$ of the toric variety $\mathcal{M}_\theta$ with nonzero vertices of the quiver in such a way that the objects $\Psi_\theta(S_i)$ for $i\neq 0$ can be computed from this marking. 
 
 To state our first main result, each cone $\sigma\in \Sigma_\theta$ determines a $\theta$-stable $A$-module $M_\sigma$ that is isomorphic to the fibre of the tautological bundle $T$ over a distinguished point $y\in \mathcal{M}_\theta$ in the torus-orbit determined by $\sigma$ (see Lemma~\ref{lem:conepoint}). In what follows, it is convenient to identify $\Sigma_\theta$ with the induced triangulation of the polygon obtained as the height one slice of $\Sigma_\theta$, so we refer to cones in $\Sigma_\theta$ of dimension one, two and three as \emph{lattice points, line segments} and \emph{lattice  triangles} respectively. The following statement combines results from Definitions~\ref{def:RRpoints} and \ref{def:RRlines}, as well as Corollary~\ref{cor:RRcompatible}:
 
 \begin{theorem}[Combinatorial Reid's recipe]
 \label{thm:main}
 Let $Q$ be the quiver dual to a consistent dimer model and choose a vertex $0\in Q_0$. There is a combinatorial recipe that marks every internal lattice point and line segment of the fan $\Sigma_\theta$ with one or more nonzero vertices of $Q$; specifically, vertex $i\in Q_0$ marks:
\begin{enumerate}
 \item[\one] a lattice point $\rho$ iff $S_i$ lies in the socle of each torus-invariant $\theta$-stable $A$-module $M_\sigma$ defined by a lattice triangle $\sigma\in \Sigma_\theta$ satisfying $\rho\subset \sigma$; and
 \item[\two] a line segment $\tau$ iff $i$ is a source vertex of a pair of quivers $Q(\tau)^+$ and $Q(\tau)^-$ that we obtain from the $\theta$-stable $A$-modules $M_{\sigma_+}$ and $M_{\sigma_-}$ defined by the pair of lattice triangles $\sigma_\pm\in \Sigma_\theta$ satisfying $\tau\subset \sigma_\pm$.% from Corollary~\ref{cor:commonsource}.
 \end{enumerate}
  Moreover, this recipe agrees with Reid's original recipe for marking cones in the toric fan of $\ghilb$ in the special case when $Q$ is the  McKay quiver of a finite abelian subgroup $G\subset \SL(3,\CC)$.
  \end{theorem}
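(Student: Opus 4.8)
To prove the final assertion of Theorem~\ref{thm:main}, the plan is to compare the two recipes cone by cone in the special case, using the explicit description of the modules $M_\sigma$ for a McKay quiver together with the proof of Reid's recipe for abelian $G$ given in \cite{Craw05} and the jigsaw transformations of \cite{Nakamura01}. First I would record that, taking $0\in Q_0$ to be the vertex of the trivial character as in Reid's setting, when $Q$ is the McKay quiver of a finite abelian $G\subset\SL(3,\CC)$ the associated dimer model is consistent, $\mathcal{M}_\theta$ is canonically $\ghilb$ with $T$ the tautological family of Ito--Nakamura, and by Lemma~\ref{lem:conepoint} the $\theta$-stable module $M_\sigma$ attached to a lattice triangle $\sigma$ is Nakamura's $G$-cluster: $M_\sigma\cong\CC[x,y,z]/I_\sigma$ with its $G$-action, where $I_\sigma$ is the monomial ideal cut out by $\sigma$. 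All of this is standard (Nakamura; Ishii--Ueda; Broomhead), so it needs no new input; under this identification a vertex $i\in Q_0$ is a character $\chi_i$ of $G$ and $S_i$ is the skyscraper $\CC_{\chi_i}$ at the origin.

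For the lattice points I would invoke the module-theoretic description of Reid's recipe extracted from \cite{Craw05, Nakamura01}: a nontrivial character $\chi$ is attached to an interior lattice point $\rho$ precisely when $\chi$ occurs in the socle of the $G$-cluster $M_\sigma$ for \emph{every} lattice triangle $\sigma$ with $\rho\subset\sigma$. This is the shadow of Nakamura's $G$-graph analysis: the socle characters of $\CC[x,y,z]/I_\sigma$ are read off from the outer corners of the staircase $I_\sigma$, and these are precisely the characters carried, via Reid's recipe, by the exceptional divisors passing through the torus-fixed point of $\sigma$. Matching this statement against Definition~\ref{def:RRpoints} yields part \one; the cases in which Reid's recipe marks $\rho$ with two characters are accommodated automatically, since $\soc(M_\sigma)$ may contain several characters, which is exactly why the statement of Theorem~\ref{thm:main} allows one or more vertices.

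For the line segments I would show that the pair $(Q(\tau)^+,Q(\tau)^-)$ of Definition~\ref{def:RRlines} specialises to Nakamura's jigsaw-puzzle quivers: crossing the wall $\tau$ from $\sigma_+$ to $\sigma_-$, the $G$-clusters $M_{\sigma_\pm}$ differ by Nakamura's jigsaw move, and the source vertices of $Q(\tau)^\pm$ are precisely the characters interchanged by that move, which we show are the characters Reid's recipe attaches to the curve associated to $\tau$. Here one must also check that our notion of a source vertex of the pair reproduces the precise combinatorics of Reid's marking of interior curves, including any curve that carries more than one character, matching the ``one or more'' in the statement.

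The hard part is not any single one of these checks but the translation itself: Reid's recipe is classically phrased as a recursive carving of the junior simplex rather than in module-theoretic terms, so the real work is unwinding Nakamura's presentation of the fibres of $T$ and their socles in terms of $G$-graphs, and verifying that the generalised jigsaw transformation introduced here restricts on the nose to Nakamura's in the toric/McKay case with the source vertices as claimed. Once this dictionary is established the comparison reduces to reading Definitions~\ref{def:RRpoints}--\ref{def:RRlines} against the proof of Reid's recipe in \cite{Craw05}, with Corollary~\ref{cor:RRcompatible} available to confirm that there is no discrepancy with the geometric correspondence.
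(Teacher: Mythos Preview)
Your overall strategy matches the paper's: identify $M_\sigma$ with Nakamura's $G$-clusters $\kk[x,y,z]/I_\sigma$, then check the two parts of the recipe separately. But there are two genuine problems.

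First, your final sentence is circular. Corollary~\ref{cor:RRcompatible} \emph{is} the ``Moreover'' clause of Theorem~\ref{thm:main}; you cannot invoke it to confirm the comparison you are trying to prove. What you need to do is \emph{supply} the content of that corollary.

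Second, and more substantively, your treatment of line segments rests on a misconception. In the classical McKay case, Reid's recipe marks each interior line segment with \emph{exactly one} character, never more; the ``one or more'' in Theorem~\ref{thm:main} is a feature of the general dimer setting only (see Remark~\ref{rem:features}). So there is no check to perform for ``any curve that carries more than one character''. More importantly, your phrase ``the source vertices of $Q(\tau)^\pm$ are precisely the characters interchanged by that move'' is too vague to be an argument: \emph{every} monomial that moves under Nakamura's $G$-igsaw transformation is ``interchanged'' in some sense, but only one of them is the source. The paper's argument (Corollary~\ref{cor:RRcompatible}) is specific: by \cite[Lemma~1.8]{Nakamura01} the denominator of the $G$-invariant Laurent monomial $x^{m_1}y^{m_2}z^{m_3}$ normal to $\tau$ is itself a monomial $x^ay^bz^c\in\mathcal{S}_+$, and this is the character that Recipe~\ref{rec:RR}(1) attaches to $\tau$; then Proposition~\ref{prop:Nakamura} shows that a monomial in $\mathcal{S}_+$ moves under the $G$-igsaw transformation across $\tau$ if and only if it is divisible by $x^ay^bz^c$, so the subgraph of monomials that move has a \emph{unique} source, namely $x^ay^bz^c$. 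Via Corollary~\ref{cor:Gigsaw} this subgraph is $Q(\tau)^+$, and the recipes agree. You have not supplied this step.

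For lattice points your plan is essentially correct, but note that the equivalence between Recipe~\ref{rec:RR}(2) and the socle condition is not something one extracts from \cite{Craw05} or \cite{Nakamura01} directly; the paper takes it from \cite[Proposition~9.1]{CrawIshii04}, and once that is in hand the comparison with Definition~\ref{def:RRpoints} is immediate via the identification $M_\sigma\cong\kk[x,y,z]/I_\sigma$.
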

 
  The torus-invariant $\theta$-stable $A$-modules $M_\sigma$ that provide the input for Theorem~\ref{thm:main} are each encoded combinatorially by a fundamental domain in $\RR^2$ for the action of the group $\ZZ^2$ of deck transformations of the real 2-torus containing the dimer model $\Gamma$. These \emph{fundamental hexagons}, introduced originally by Ishii--Ueda~\cite[Section~4]{IshiiUeda08}, are easy to calculate by hand in small examples, while a computer program written by Raf Bocklandt can perform the calculation for larger examples. As a result, our recipe is easy to implement in practice, and we illustrate this by presenting two examples in Section~\ref{sec:examples}. These examples exhibit many new features that were not present in Reid's original recipe for the toric fan of $\ghilb$ (see Remark~\ref{rem:features}).
  
  %that exhibit many new features that are not present in Reid's original recipe for $\ghilb$.
 
  The statement in Theorem~\ref{thm:main} that is by no means obvious, however, is that the quivers $Q(\tau)^{\pm}$ associated to the interior line segment $\tau$ share the same source vertices. To define these quivers we first reconstruct the $A$-module $M_{\sigma_-}$ directly from $M_{\sigma_+}$, and vice versa, using a procedure that we call a \emph{jigsaw transformation}. This procedure (see Theorem~\ref{thm:genjigsaw}) establishes that the choice of the facet $\tau\subset \sigma_+$ determines a way to cut up the fundamental hexagon $\hex(\sigma_+)$ into `jigsaw pieces', each of which is a union of (lifts to $\RR^2$ of) tiles in $\Gamma$, and moreover, each piece can be translated by a uniquely determined element in the group $\ZZ^2$ of deck transformations such that the newly arranged jigsaw pieces reconstruct $\hex(\sigma_-)$; see Figure~\ref{fig:cuttinghexagon} for an illustration. Now, a unique jigsaw piece contains the tile dual to the vertex $0\in Q_0$, and once we remove this jigsaw piece from each of $\hex(\sigma_\pm)$, we are left with the combinatorial data encoding the quivers $Q(\tau)^{\pm}$. These two quivers do not coincide in general (see Remark~\ref{rem:charjig-(1)}), but the striking fact is that the source vertices of these two quivers are the same (see Corollary~\ref{cor:commonsource}). This key ingredient allows us to mark vertices on line segments in $\Sigma_\theta$ as in Theorem~\ref{thm:main}\two. That our recipe generalises that of Reid~\cite{Reid97} when $Q$ is the McKay quiver of a finite abelian subgroup $G\subset \SL(3,\CC)$ follows from the fact that our jigsaw transformations recover the so-called `$G$-igsaw transformations' of Nakamura~\cite{Nakamura01} in that case (see Corollary~\ref{cor:Gigsaw}).
  
  By combining Theorem~\ref{thm:main} with results of \cite{BCQ15}, we provide the following simple combinatorial description of the objects $\Psi_\theta(S_i)$ on $\mathcal{M}_\theta$ associated to some of the nonzero vertices $i\in Q_0$ (see Proposition~\ref{prop:GRR}). 
  
   \begin{corollary}[Compatibility with Geometric Reid's recipe]
   \label{cor:compatibility}
  %Let $Q$ be the quiver dual to a consistent dimer model and choose a vertex $0\in Q_0$.
  The recipe from Theorem~\ref{thm:main} is such that:
  \begin{enumerate}
\item[\one]  if a vertex $i$ marks at least one interior lattice point in $\Sigma_\theta$, then $\Psi_\theta(S_i)\cong L_i^{-1}\otimes \mathcal{O}_{Z_i}$ where $Z_i$ is the (connected) union of all torus-invariant divisors $D_\rho$ indexed by lattice points $\rho$ marked with $i$;
 \item[\two] if a vertex $i$ marks a unique interior line segment $\tau$ in $\Sigma_\theta$, then $\Psi_\theta(S_i)\cong L_i^{-1}\otimes \mathcal{O}_{C_\tau}$ where $C_\tau$ is the torus-invariant curve defined by $\tau$; 
 \end{enumerate}
  \end{corollary}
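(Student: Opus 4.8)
The plan is to combine the structural facts about the pure sheaves $\Psi_\theta(S_i)$ established in \cite{BCQ15} with the fact that the combinatorial markings of Theorem~\ref{thm:main} are ultimately governed by socles of the modules $M_\sigma$, using the derived equivalence $\Psi_\theta$ to pass between the two. The geometric input I would quote from \cite{BCQ15} is that for $i\neq 0$ the object $\Psi_\theta(S_i)$ is a pure sheaf on $\M$ whose support $Z_i$ is connected and torus-invariant, and that $\Psi_\theta(S_i)\cong L_i^{-1}\otimes\mathcal{O}_{Z_i}$ once $Z_i$ is reduced, so that the statement reduces to identifying $Z_i$. The bridge between geometry and combinatorics is an elementary remark: for any closed point $y\in\M$, since $\Phi_\theta$ and $\Psi_\theta$ are mutually inverse equivalences and $\Phi_\theta(\mathcal{O}_y)$ is the fibre $T_y$ of the tautological bundle with its natural $A$-module structure --- which is $M_\xi$ when $y$ is the distinguished point of the orbit of a cone $\xi$, by Lemma~\ref{lem:conepoint} --- we have $\Hom_{\M}(\Psi_\theta(S_i),\mathcal{O}_y)\cong\Hom_A(S_i,T_y)$; as $\Psi_\theta(S_i)$ is a sheaf in cohomological degree $0$, Nakayama's lemma shows that $y$ lies in $Z_i$ if and only if this group is nonzero, and since $T_y$ has dimension vector $(1,\dots,1)$ this holds precisely when $S_i\hookrightarrow T_y$, i.e.\ when $S_i\in\soc(T_y)$. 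Using that $Z_i$ is torus-invariant, we obtain the dictionary
\[
 \overline{O_\xi}\subseteq Z_i \iff S_i\in\soc(M_\xi)\qquad\text{for every cone }\xi\in\Sigma_\theta .
\]

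For part~\one, the dictionary applied to the ray $\rho$ reads $D_\rho\subseteq Z_i\iff S_i\in\soc(M_\rho)$, and applied to the lattice triangles $\sigma\supset\rho$ it reads $y_\sigma\in Z_i\iff S_i\in\soc(M_\sigma)$. Since Theorem~\ref{thm:main}\one declares that $i$ marks $\rho$ exactly when $S_i\in\soc(M_\sigma)$ for every lattice triangle $\sigma\supset\rho$, the assertion ``$D_\rho\subseteq Z_i\iff i$ marks $\rho$'' is equivalent to the purely module-theoretic statement
\[
 S_i\in\soc(M_\rho)\iff S_i\in\soc(M_\sigma)\text{ for every lattice triangle }\sigma\supset\rho .
\]
One implication is semicontinuity of $\dim\Hom_A(S_i,-)$ along the degenerations of $M_\rho$ to the various $M_\sigma$; the reverse implication, that $S_i$ lying in the socle of every maximal degeneration around $\rho$ forces it to lie in the socle of $M_\rho$ itself, is the crucial point, and I would prove it using the chart-by-chart description of $\Psi_\theta(S_i)$ from \cite{BCQ15}, in which the support over the affine toric chart $U_\sigma$ is a monomial subscheme controlled by $\hex(\sigma)$, or else directly from the combinatorics of consistent dimer modules. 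Granting this, $D_\rho\subseteq Z_i$ holds exactly for the marked $\rho$; purity of $\Psi_\theta(S_i)$ then forbids lower-dimensional components and connectedness of $Z_i$ gives $Z_i=\bigcup_{\rho\text{ marked}}D_\rho$, which is part~\one. In particular $Z_i$ is two-dimensional precisely when $i$ marks at least one lattice point, so cases~\one and~\two are mutually exclusive for the vertices considered.

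For part~\two, suppose $i$ marks no internal lattice point but a unique internal line segment $\tau$, with adjoining lattice triangles $\sigma_+,\sigma_-$. By part~\one no $D_\rho$ lies in $Z_i$, so $\dim Z_i\le 1$; by purity and connectedness $Z_i$ is a connected union of torus-invariant curves $C_{\tau'}$, and the dictionary gives $C_{\tau'}\subseteq Z_i\iff S_i\in\soc(M_{\tau'})$ --- here, unlike the divisorial case, the generic point already determines the curve, so no further promotion is needed. It then remains to recognise $S_i\in\soc(M_{\tau'})$ as the marking condition of Theorem~\ref{thm:main}\two, namely that $i$ be a source vertex of the quivers $Q(\tau')^{\pm}$ obtained by cutting $\hex(\sigma_\pm)$ along $\tau'$ and deleting the jigsaw piece carrying the tile dual to $0$; this is supplied by the jigsaw transformation together with the common-source statement Corollary~\ref{cor:commonsource}, which in particular guarantees that being a source vertex of $Q(\tau')^{+}$ and being one of $Q(\tau')^{-}$ are the same condition. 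Since $i$ marks only $\tau$, we conclude $Z_i=C_\tau$ and hence $\Psi_\theta(S_i)\cong L_i^{-1}\otimes\mathcal{O}_{C_\tau}$.

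The step I expect to be the main obstacle is the reverse implication isolated in part~\one: the socle conditions $S_i\in\soc(M_\sigma)$ at the torus-fixed points of $D_\rho$ are, taken in isolation, also compatible with $Z_i$ meeting $D_\rho$ merely in a configuration of its boundary curves, and excluding this --- equivalently, promoting fixed-point data to a divisorial statement --- is exactly where one needs the explicit local computations of \cite{BCQ15} (or a hands-on analysis of the modules $M_\sigma$ along $D_\rho$) together with the purity and connectedness of $Z_i$. The remaining ingredients --- the derived-equivalence dictionary and the curve case --- are straightforward given the results of \cite{BCQ15}, while the translation between source vertices of $Q(\tau)^{\pm}$ and socles, though substantial, is already carried out en route to Theorem~\ref{thm:main}.
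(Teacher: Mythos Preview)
Your proposal contains a genuine error in part~\two. You claim that ``$S_i\in\soc(M_{\tau'})$'' is the same as the marking condition ``$i$ is a source of $Q(\tau')^{\pm}$'', and that Corollary~\ref{cor:commonsource} supplies this. It does not: Corollary~\ref{cor:commonsource} only asserts that the sources of $Q(\tau')^{+}$ and $Q(\tau')^{-}$ agree. Being a source of $Q(\tau')^{\pm}$ is, via Proposition~\ref{prop:onejigsaw}, a degree-one condition $\deg(L_i\vert_{C_{\tau'}})=1$ together with a minimality property inside the non-zero jigsaw piece; it is \emph{not} a socle condition on $M_{\tau'}$. In Example~\ref{exa:LongHexCRR}, vertex~$9$ marks the line segment $\tau$ between $\rho_8$ and $\rho_9$, yet the arrow labelled~$21$ from vertex~$6$ to vertex~$9$ has divisor $\div(a)$ supported only on $D_2\cup D_{10}$, so it is nonzero in $M_\tau$ and hence $S_9\notin\soc(M_\tau)$. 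Your dictionary therefore does not identify $Z_i$ with the set of marked line segments.

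A related problem is in your setup: \cite{BCQ15} shows that $\Psi_\theta(S_i)$ is a pure sheaf only up to shift --- either $H^0$ or $H^{-1}$ is nonzero, not both --- and the formula $\Psi_\theta(S_i)\cong L_i^{-1}\otimes\mathcal{O}_{Z_i}$ together with your Nakayama dictionary applies only in the $H^0$ case. For part~\one\ this is harmless, since ``$i$ marks $\rho$'' literally says $S_i\in\soc(M_\sigma)$ for some $\sigma$, and your $\Hom$ argument then forces $H^0\neq0$. But for part~\two, since marking a line segment gives no socle information, you have no mechanism to rule out the $H^{-1}$ case; that implication is precisely the content of Conjecture~\ref{conj:CHT}.

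The paper proceeds in the opposite direction. It takes $H^0(\Psi_\theta(S_i))\neq0$ as a hypothesis (Proposition~\ref{prop:GRR}), quotes the structural trichotomy of \cite{BCQ15} that the socle locus $Z_i$ is either a single curve $C_\tau$ or a union of divisors, and then reads off the marking: in the curve case, \cite[Lemma~4.10]{BCQ15} forces exactly two jigsaw pieces with the adjacent one a single tile dual to $i$, so $i$ is trivially the unique source of $Q(\tau)^{\pm}$ and hence marks $\tau$; in the divisor case, $D_\rho\subseteq Z_i$ immediately gives the socle condition of Definition~\ref{def:RRpoints}. So the marking is an \emph{output} of the structural classification, not an input equivalent to a socle condition, and Corollary~\ref{cor:compatibility} is extracted from Proposition~\ref{prop:GRR} rather than proved by the route you outline.
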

  
  Corollary~\ref{cor:compatibility} makes no mention of the vertices $i$ marking two or more line segments; instead, we conjecture in such cases that the support of $\Psi_\theta(S_i)$ is the union of all torus-invariant divisors $D_\rho$ defined by lattice points $\rho$ in $\Sigma_\theta$ attached to two or more line segments marked with vertex $i$ (see Conjecture~\ref{conj:CHT} for a stronger statement). In fact, we anticipate in such cases that the object $\Psi_\theta(S_i)$ can be described explicitly in terms of the lattice points at the endpoints of the line segments singled out by  Theorem~\ref{thm:main}\two\ in a manner similar to that given by Cautis--Craw--Logvinenko~\cite[Theorem~1.2]{CCL17} for the toric fan of $\ghilb$. The
  key technical tool in that work, namely the construction of CT-subdivisions in the fan, required as input the original combinatorial recipe from \cite{Reid97, Craw05}. Thus, Theorem~\ref{thm:main}\two\ paves the way for a dimer model generalisation.
  
  \smallskip
 
 We have focused on the geometric interpretation of Reid's recipe that describes the objects $\Psi_\theta(S_i)$, but the application that Reid originally had in mind was to encode a minimal set of relations in $\Pic(\ghilb)$ between the tautological line bundles $L_i$ for $i\neq 0$ (see \cite[Theorem~6.1]{Craw05}). Using Theorem~\ref{thm:main}, we formulate a conjecture presenting a similar set of minimal relations in $\Pic(\mathcal{M}_\theta)$ for any consistent dimer model with a chosen vertex $0\in Q_0$ (see Conjecture~\ref{conj:rels}). A proof of this conjecture should lead to the following:
 \begin{itemize}
     \item the construction of a $\ZZ$-basis of the cohomology $H^*(\mathcal{M}_\theta,\ZZ)$ following the approach from \cite[Section~7]{Craw05}. This would provide the appropriate analogue of the original geometric construction of the McKay correspondence by Gonzalez-Sprinberg--Verdier~\cite{GSV83}.
     \item a description of the GIT chamber containing the 0-generated stability parameter $\theta$ defining $\mathcal{M}_\theta$, in a manner similar to that given recently by Wormleighton~\cite{Wormleighton20} for the GIT chamber of $\ghilb$. 
 \end{itemize}

 As a word of warning on terminology, we use three different combinatorial objects determined by points and lines: the dimer model $\Gamma$; the quiver $Q$; and the triangulation of a polygon that determines the fan $\Sigma_\theta$. To avoid confusion, we use different terminology and notation in each case that we summarise here:
\begin{table}[!ht]
\centering
\begin{tabular}{c|c c c}
 & 0-dimensional & 1-dimensional & 2-dimensional\\ \hline
Dimer model $\Dimer$ & nodes ($\n$) & edges ($\e$) & tiles ($\mathfrak{t}$)\\
Quiver $Q$ & vertices ($i$) & arrows ($a$) & \\
Triangulation $\Sigma_\theta$ & lattice points ($\rho$) & line segments ($\tau$) & lattice triangles ($\sigma$)
\end{tabular}
\end{table}

%\noindent In particular, from now on we no longer use the notation $\rho$ for irreducible representations of $G$; rather, each is referred to simply as a vertex $i$ of the McKay quiver. 

\subsection*{Acknowledgements}
We thank Raf Bocklandt for sharing with us the computer-generated examples upon which the results of this paper are based. Thanks to Timothy Logvinenko and Alastair King for examining the PhD thesis of the third author that contains many of our results. Thanks also to the anonymous referee for several helpful comments.

\section{Background}

\subsection{Dimer models}
\label{sec:dimers}
 Let $\Dimer$ be a dimer model in the real two-torus $\mathbb{T}$, \emph{i.\,e.}\ $\Dimer$ is a polygonal cell decomposition of $\mathbb{T}$, where elements of the set of $i$-cells $\Dimer_i$ are called \emph{nodes}, \emph{edge} and \emph{tiles} when $i=0,1,2$ respectively, that satisfies the following properties: first, the set $\Dimer_0$ decomposes as the disjoint union of a set of \emph{black} nodes and a set of \emph{white} nodes, such that every edge $\mathfrak{e}\in \Dimer_1$ joins a black node to a white node; and secondly, every tile is a simply-connected convex polygon. We may assume that $\Gamma$ contains no bivalent nodes.

 Dually, we obtain a quiver $Q=(Q_0, Q_1)$ embedded in $\mathbb{T}$, where each vertex $i\in Q_0$ lies in the interior of a tile of $\Dimer$, and each arrow $a\in Q_1$ crosses a unique edge $\mathfrak{e}_a\in \Dimer_1$ in such a way that the white endnode of $\mathfrak{e}_a$ lies to the right of $a$. Let $\head(a), \tail(a)\in Q_0$ denote the head and tail of an arrow $a\in Q_1$. Let $Q_2$ denote the set of connected components of the complement of this quiver in $\mathbb{T}$, and we refer to elements of $Q_2$ as \emph{faces} of $Q$. Each face $f\in Q_2$ contains a unique node of $\Dimer$, and the arrows in the boundary of $f$ form a cycle in $Q$ that is clockwise (resp.\ anticlockwise) when the dual node is white (resp.\ black). Every arrow $a\in Q_1$ therefore appears in the cycle traversing the boundary of precisely one white face and one black face, and we may write these cycles in the form $ap_a^+$ and $ap_a^-$ respectively, where $p_a^{\pm}$ are paths in $Q$ with tail at $\head(a)$ and head at $\tail(a)$.  The \emph{Jacobian algebra} of the dimer model $\Dimer$ is defined to be the quotient
 \begin{equation}
     \label{eqn:Jacobian}
 A:= \kk Q / \langle p_a^+-p_a^- \mid a\in Q_1\rangle
 \end{equation}
 of the path algebra $\kk Q$ of the quiver $Q$ by the ideal of relations $\langle p_a^+-p_a^- \mid a\in Q_1\rangle$.
  
  A \emph{perfect matching} of $\Dimer$ is a set of edges $\Pi\subset \Dimer_1$ such that for each node $\mathfrak{n}\in \Dimer_0$, there is a unique edge $\mathfrak{e}\in \Pi$ for which $\mathfrak{n}$ is an endnode of $\mathfrak{e}$. Given a pair of perfect matchings $\Pi, \Pi'$, the locus $\RR^2\setminus (\Pi\cup \Pi')$ is a union of connected components. The \emph{height function} $h_{\Pi,\Pi'}\colon \RR^2\to \RR$, which is only well-defined up to the choice of an additive constant, is locally-constant on  $\RR^2\setminus (\Pi\cup \Pi')$ and is determined as follows: the value of $h_{\Pi,\Pi'}$ increases  (resp.\ decreases) by 1 when a path passing between connected components crosses either an edge $\mathfrak{e}\in \Pi$ with a black (resp.\ white) node to the right, or an edge $\mathfrak{e}\in \Pi'$ with the white (resp.\ black) node to the right. The ambiguity in the choice of additive constant is removed by defining functions 
\[
h_x(\Pi, \Pi'):= h_{\Pi,\Pi'}\big(p + (1, 0)\big) - h_{\Pi,\Pi'}(p)\quad\text{and}\quad h_y(\Pi, \Pi'):= h_{\Pi,\Pi'}\big(p + (0, 1)\big) - h_{\Pi,\Pi'}(p)
\]
 for any point $p\in \RR^2\setminus (\Pi\cup\Pi')$, and the \emph{height change} of $\Pi$ with respect to $\Pi'$ is defined to be
\begin{equation}
\label{eqn:heightchange}
h(\Pi, \Pi') = \big(h_x(\Pi, \Pi'), h_y(\Pi, \Pi')\big)\in H^1(\TT,\ZZ).
\end{equation}
 Fix a reference perfect matching $\Pi_0$, and define the \emph{characteristic polygon} of $\Gamma$ to be the convex polygon
 \[
 \Delta(\Gamma) := \conv\big\{h(\Pi,\Pi_0)\in H^1(\TT,\RR) \mid \Pi\text{ is a perfect matching of }\Gamma\big\}
 \]
 obtained as the convex hull of the lattice points in $H^1(\TT,\RR)\cong \RR^2$. Given perfect matchings $\Pi_i$ and $\Pi_j$, the line segment joining the lattice points is $h(\Pi_i,\Pi_j):=h(\Pi_i,\Pi_0)-h(\Pi_j,\Pi_0)$. If we choose a different reference perfect matching then $\Delta(\Gamma)$ undergoes a translation. For the rank three lattice $N:=H^1(\TT,\ZZ)\oplus \ZZ$, let $\sigma_0\subset N\otimes_\ZZ \mathbb{Q}$ denote the cone over the lattice polygon $\Delta(\Gamma)\times \{1\}$. Then for the dual lattice $M:=\Hom(N,\ZZ)$, the toric threefold $X:=\Spec \kk[\sigma_0^\vee\cap M]$ determined by the characteristic polygon $\Delta(\Gamma)$ is Gorenstein.
 
  %A perfect matching $\pi$ is said to be \emph{extremal} if for some (and hence any) reference matching $\Pi_0$, the height change $h(\Pi,\Pi_0)$ lies at a corner of the boundary of the polygon $\Delta(\Gamma)$. 

\begin{example}
\label{exa:LongHex}
Consider the dimer model shown in black in Figure~\ref{fig:LongHex}. The dual quiver $Q$ is drawn in grey in the same picture. This dimer model admits 60 perfect matchings, but for any reference matching $\Pi_0$ there are only 10 distinct height changes $h(\Pi,\Pi_0)$. The characteristic polygon $\Delta(\Gamma)$ is shown in Figure~\ref{fig:LongHexPolygon}.
%Only 6 of the 60 perfect matchings are extremal: these are listed as $\Pi_1, \dots, \Pi_6$ in \cite[Table~A.1]{TapiaAmador15}, and that same table shows that each of the 28 edges in $\Gamma$ lies in at least one of the extremal perfect matchings.

\begin{figure}[!ht]
   \centering
      \subfigure[]{
\begin{tikzpicture} [thick,scale=0.4, every node/.style={scale=1}] 
\begin{scope}\clip (0pt,0pt) rectangle (400pt,400pt);
\draw [ggrey,-stealthnew,arrowhead=6pt,shorten >=5pt] (26pt,26pt) to node [rectangle,draw,fill=white,sloped,inner sep=1pt] {{\tiny 1}} (108pt,-56pt); 
\draw [ggrey,-stealthnew,arrowhead=6pt,shorten >=5pt] (26pt,426pt) to node [rectangle,draw,fill=white,sloped,inner sep=1pt] {{\tiny 1}} (108pt,344pt);  
\draw [ggrey,-stealthnew,arrowhead=6pt, shorten >=5pt] (26pt,26pt) to node [rectangle,draw,fill=white,sloped,inner sep=1pt] {{\tiny 2}} (67pt,185pt); 
\draw [ggrey,-stealthnew,arrowhead=6pt,shorten >=5pt] (426pt,426pt) to node [rectangle,draw,fill=white,sloped,inner sep=1pt] {{\tiny 3}} (267pt,385pt); 
\draw [ggrey,-stealthnew,arrowhead=6pt,shorten >=5pt] (26pt,26pt) to node [rectangle,draw,fill=white,sloped,inner sep=1pt] {{\tiny 3}} (-133pt,-15pt); 
\draw [ggrey,-stealthnew,arrowhead=6pt,shorten >=5pt] (426pt,26pt) to node [rectangle,draw,fill=white,sloped,inner sep=1pt] {{\tiny 3}} (267pt,-15pt); 
\draw [ggrey,-stealthnew,arrowhead=6pt,shorten >=5pt] (-50pt,-98pt) to node [rectangle,draw,fill=white,sloped,inner sep=1pt] {{\tiny 4}} (26pt,26pt); 
\draw [ggrey,-stealthnew,arrowhead=6pt,shorten >=5pt] (350pt,302pt) to node [rectangle,draw,fill=white,sloped,inner sep=1pt] {{\tiny 4}} (426pt,426pt); 
\draw [ggrey,-stealthnew,arrowhead=6pt,shorten >=5pt] (-50pt,302pt) to node [rectangle,draw,fill=white,sloped,inner sep=1pt] {{\tiny 4}} (26pt,426pt);
\draw [ggrey,-stealthnew,arrowhead=6pt,shorten >=5pt] (350pt,302pt) to node [rectangle,draw,fill=white,sloped,inner sep=1pt] {{\tiny 5}} (273pt,179pt); 
\draw [ggrey,-stealthnew,arrowhead=6pt,shorten >=5pt] (273pt,579pt) to node [rectangle,draw,fill=white,sloped,inner sep=1pt] {{\tiny 6}} (108pt,344pt); 
\draw [ggrey,-stealthnew,arrowhead=6pt,shorten >=5pt] (273pt,179pt) to node [rectangle,draw,fill=white,sloped,inner sep=1pt] {{\tiny 6}} (108pt,-56pt); 
\draw [ggrey,-stealthnew,arrowhead=6pt,shorten >=5pt] (-127pt,179pt) to node [rectangle,draw,fill=white,sloped,inner sep=1pt] {{\tiny 7}} (108pt,344pt); 
\draw [ggrey,-stealthnew,arrowhead=6pt,shorten >=5pt] (273pt,179pt) to node [rectangle,draw,fill=white,sloped,inner sep=1pt] {{\tiny 7}} (508pt,344pt); 
\draw [ggrey,-stealthnew,arrowhead=6pt,shorten >=5pt] (273pt,179pt) to node [rectangle,draw,fill=white,sloped,inner sep=1pt] {{\tiny 8}} (191pt,261pt); 
\draw [ggrey,-stealthnew,arrowhead=6pt,shorten >=5pt] (308pt,144pt) to node [rectangle,draw,fill=white,sloped,inner sep=1pt,pos=0.4] {{\tiny 9}} (273pt,179pt); 
\draw [ggrey,-stealthnew,arrowhead=6pt,shorten >=5pt] (308pt,144pt) to node [rectangle,draw,fill=white,sloped,inner sep=1pt] {{\tiny 10}} (344pt,108pt); 
\draw [ggrey,-stealthnew,arrowhead=6pt,shorten >=5pt] (508pt,344pt) to node [rectangle,draw,fill=white,sloped,inner sep=1pt] {{\tiny 11}} (350pt,302pt); 
\draw [ggrey,-stealthnew,arrowhead=6pt,shorten >=5pt] (108pt,344pt) to node [rectangle,draw,fill=white,sloped,inner sep=1pt] {{\tiny 11}} (-50pt,302pt);  
\draw [ggrey,-stealthnew,arrowhead=6pt,shorten >=5pt] (108pt,344pt) to node [rectangle,draw,fill=white,sloped,inner sep=1pt] {{\tiny 12}} (-92pt,144pt); 
\draw [ggrey,-stealthnew,arrowhead=6pt,shorten >=5pt] (508pt,344pt) to node [rectangle,draw,fill=white,sloped,inner sep=1pt] {{\tiny 12}} (308pt,144pt); 
\draw [ggrey,-stealthnew,arrowhead=6pt,shorten >=5pt] (108pt,-56pt) to node [rectangle,draw,fill=white,sloped,inner sep=1pt] {{\tiny 13}} (308pt,144pt); 
\draw [ggrey,-stealthnew,arrowhead=6pt,shorten >=5pt] (108pt,344pt) to node [rectangle,draw,fill=white,sloped,inner sep=1pt] {{\tiny 13}} (308pt,544pt); 
\draw [ggrey,-stealthnew,arrowhead=6pt,shorten >=5pt] (108pt,344pt) to node [rectangle,draw,fill=white,sloped,inner sep=1pt] {{\tiny 14}} (67pt,185pt); 
\draw [ggrey,-stealthnew,arrowhead=6pt,shorten >=5pt] (108pt,-56pt) to node [rectangle,draw,fill=white,sloped,inner sep=1pt] {{\tiny 15}} (150pt,102pt); 
\draw [ggrey,-stealthnew,arrowhead=6pt,shorten >=5pt] (108pt,344pt) to node [rectangle,draw,fill=white,sloped,inner sep=1pt] {{\tiny 15}} (150pt,502pt); 
\draw [ggrey,-stealthnew,arrowhead=6pt,shorten >=5pt] (108pt,344pt) to node [rectangle,draw,fill=white,sloped,inner sep=1pt] {{\tiny 16}} (267pt,385pt); 
\draw [ggrey,-stealthnew,arrowhead=6pt,shorten >=5pt] (-56pt,108pt) to node [rectangle,draw,fill=white,sloped,inner sep=1pt] {{\tiny 17}} (26pt,26pt); 
\draw [ggrey,-stealthnew,arrowhead=6pt,shorten >=5pt] (344pt,108pt) to node [rectangle,draw,fill=white,sloped,inner sep=1pt] {{\tiny 17}} (426pt,26pt); 
\draw [ggrey,-stealthnew,arrowhead=6pt,shorten >=5pt] (344pt,508pt) to node [rectangle,draw,fill=white,sloped,inner sep=1pt] {{\tiny 18}} (108pt,344pt); 
\draw [ggrey,-stealthnew,arrowhead=6pt,shorten >=5pt] (344pt,108pt) to node [rectangle,draw,fill=white,sloped,inner sep=1pt] {{\tiny 18}} (108pt,-56pt); 
\draw [ggrey,-stealthnew,arrowhead=6pt,shorten >=5pt] (-56pt,108pt) to node [rectangle,draw,fill=white,sloped,inner sep=1pt] {{\tiny 19}} (108pt,344pt); 
\draw [ggrey,-stealthnew,arrowhead=6pt,shorten >=5pt] (344pt,108pt) to node [rectangle,draw,fill=white,sloped,inner sep=1pt] {{\tiny 19}} (508pt,344pt); 
\draw [ggrey,-stealthnew,arrowhead=6pt,shorten >=5pt] (467pt,185pt) to node [rectangle,draw,fill=white,sloped,inner sep=1pt] {{\tiny 20}} (344pt,108pt); 
\draw [ggrey,-stealthnew,arrowhead=6pt,shorten >=5pt] (67pt,185pt) to node [rectangle,draw,fill=white,sloped,inner sep=1pt] {{\tiny 20}} (-56pt,108pt); 
\draw [ggrey,-stealthnew,arrowhead=6pt,shorten >=5pt] (67pt,185pt) to node [rectangle,draw,fill=white,sloped,inner sep=1pt] {{\tiny 21}} (191pt,261pt); 
\draw [ggrey,-stealthnew,arrowhead=6pt,shorten >=5pt] (150pt,102pt) to node [rectangle,draw,fill=white,sloped,inner sep=1pt] {{\tiny 22}} (26pt,26pt); 
\draw [ggrey,-stealthnew,arrowhead=6pt,shorten >=5pt] (150pt,102pt) to node [rectangle,draw,fill=white,sloped,inner sep=1pt] {{\tiny 23}} (273pt,179pt); 
\draw [ggrey,-stealthnew,arrowhead=6pt,shorten >=5pt] (267pt,-15pt) to node [rectangle,draw,fill=white,sloped,inner sep=1pt] {{\tiny 24}} (344pt,108pt); 
\draw [ggrey,-stealthnew,arrowhead=6pt,shorten >=5pt] (267pt,385pt) to node [rectangle,draw,fill=white,sloped,inner sep=1pt] {{\tiny 24}} (344pt,508pt); 
\draw [ggrey,-stealthnew,arrowhead=6pt,shorten >=5pt] (267pt,385pt) to node [rectangle,draw,fill=white,sloped,inner sep=1pt] {{\tiny 25}} (191pt,261pt); 
\draw [ggrey,-stealthnew,arrowhead=6pt,shorten >=5pt] (191pt,261pt) to node [rectangle,draw,fill=white,sloped,inner sep=1pt] {{\tiny 26}} (350pt,302pt); 
\draw [ggrey,-stealthnew,arrowhead=6pt,shorten >=5pt] (191pt,261pt) to node [rectangle,draw,fill=white,sloped,inner sep=1pt] {{\tiny 27}} (108pt,344pt); 
\draw [ggrey,-stealthnew,arrowhead=6pt,shorten >=5pt] (191pt,261pt) to node [rectangle,draw,fill=white,sloped,inner sep=1pt] {{\tiny 28}} (150pt,102pt); 
\node at (26pt,26pt) [circle,draw=ggrey,draw,fill=white,minimum size=10pt,inner sep=1pt,text=ggrey] {\mbox{\tiny $0$}};  
\node at (350pt,302pt) [circle,draw=ggrey,draw,fill=white,minimum size=10pt,inner sep=1pt,text=ggrey] {\mbox{\tiny $1$}}; 
\node at (273pt,179pt) [circle,draw=ggrey,draw,fill=white,minimum size=10pt,inner sep=1pt,text=ggrey] {\mbox{\tiny $2$}}; 
\node at (308pt,144pt) [circle,draw=ggrey,draw,fill=white,minimum size=10pt,inner sep=1pt,text=ggrey] {\mbox{\tiny $3$}};  
\node at (108pt,344pt) [circle,draw=ggrey,draw,fill=white,minimum size=10pt,inner sep=1pt,text=ggrey] {\mbox{\tiny $4$}}; 
\node at (344pt,108pt) [circle,draw=ggrey,draw,fill=white,minimum size=10pt,inner sep=1pt,text=ggrey] {\mbox{\tiny $5$}}; 
\node at (67pt,185pt) [circle,draw=ggrey,draw,fill=white,minimum size=10pt,inner sep=1pt,text=ggrey] {\mbox{\tiny $6$}}; 
\node at (150pt,102pt) [circle,draw=ggrey,draw,fill=white,minimum size=10pt,inner sep=1pt,text=ggrey] {\mbox{\tiny $7$}}; 
\node at (267pt,385pt) [circle,draw=ggrey,draw,fill=white,minimum size=10pt,inner sep=1pt,text=ggrey] {\mbox{\tiny $8$}}; 
\node at (191pt,261pt) [circle,draw=ggrey,draw,fill=white,minimum size=10pt,inner sep=1pt,text=ggrey] {\mbox{\tiny $9$}}; 
\draw [very thick] (95pt,24pt) -- (28pt,-43pt); % 1
\draw [very thick] (95pt,424pt) -- (28pt,357pt); % 1
\draw [very thick] (12pt,106pt) -- (109pt,143pt); % 2
\draw [very thick] (309pt,343pt) -- (348pt,440pt); % 3
\draw [very thick] (309pt,-57pt) -- (348pt,40pt); % 3
\draw [very thick] (309pt,343pt) -- (428pt,357pt); % 4
\draw [very thick] (-91pt,343pt) -- (28pt,357pt); % 4
\draw [very thick] (377pt,275pt) -- (271pt,247pt); % 5
\draw [very thick] (177pt,75pt) -- (230pt,89pt); % 6
\draw [very thick] (377pt,275pt) -- (363pt,222pt); % 7
\draw [very thick] (205pt,181pt) -- (271pt,247pt); % 8
\draw [very thick] (230pt,89pt) -- (363pt,222pt); % 9
\draw [very thick] (253pt,65pt) -- (387pt,199pt); % 10
\draw [very thick] (377pt,275pt) -- (428pt,357pt); % 11
\draw [very thick] (-23pt,275pt) -- (28pt,357pt); % 11
\draw [very thick] (387pt,199pt) -- (363pt,222pt); % 12
\draw [very thick] (253pt,65pt) -- (230pt,89pt); % 13
\draw [very thick] (40pt,212pt) -- (122pt,263pt); % 14
\draw [very thick] (95pt,24pt) -- (177pt,75pt); % 15
\draw [very thick] (189pt,330pt) -- (240pt,412pt); % 16
\draw [very thick] (189pt,-70pt) -- (240pt,12pt); % 16
\draw [very thick] (348pt,40pt) -- (412pt,106pt); % 17
\draw [very thick] (-52pt,40pt) -- (12pt,106pt); % 17
\draw [very thick] (253pt,65pt) -- (240pt,12pt); % 18
\draw [very thick] (387pt,199pt) -- (440pt,212pt); % 19
\draw [very thick] (-13pt,199pt) -- (40pt,212pt); % 19
\draw [very thick] (40pt,212pt) -- (12pt,106pt); % 20
\draw [very thick] (109pt,143pt) -- (122pt,263pt); % 21
\draw [very thick] (109pt,143pt) -- (95pt,24pt); % 22
\draw [very thick] (177pt,75pt) -- (205pt,181pt); % 23
\draw [very thick] (240pt,12pt) -- (348pt,40pt); % 24
\draw [very thick] (189pt,330pt) -- (309pt,343pt); % 25
\draw [very thick] (271pt,247pt) -- (309pt,343pt); % 26
\draw [very thick] (189pt,330pt) -- (122pt,263pt); % 27
\draw [very thick] (205pt,181pt) -- (109pt,143pt); % 28
\node at (309pt,343pt) [circle,draw,fill=black,minimum size=6pt,inner sep=1pt]{}; % 0,1,8,9
\node at (28pt,357pt) [circle,draw,fill=white,minimum size=6pt,inner sep=1pt]{}; % 0,1,4
\node at (95pt,24pt) [circle,draw,fill=black,minimum size=6pt,inner sep=1pt]{}; % 0,4,7
\node at (348pt,40pt) [circle,draw,fill=white,minimum size=6pt,inner sep=1pt]{}; % 0,5,8
\node at (12pt,106pt) [circle,draw,fill=black,minimum size=6pt,inner sep=1pt]{}; % 0,5,6
\node at (109pt,143pt) [circle,draw,fill=white,minimum size=6pt,inner sep=1pt]{}; % 0,6,7,9
\node at (377pt,275pt) [circle,draw,fill=black,minimum size=6pt,inner sep=1pt]{}; % 1,2,4
\node at (271pt,247pt) [circle,draw,fill=white,minimum size=6pt,inner sep=1pt]{}; % 1,2,9
\node at (205pt,181pt) [circle,draw,fill=black,minimum size=6pt,inner sep=1pt]{}; % 2,7,9
\node at (230pt,89pt) [circle,draw,fill=black,minimum size=6pt,inner sep=1pt]{}; % 2,3,4 left
\node at (363pt,222pt) [circle,draw,fill=white,minimum size=6pt,inner sep=1pt]{}; % 2,3,4 right
\node at (177pt,75pt) [circle,draw,fill=white,minimum size=6pt,inner sep=1pt]{}; % 2,4,7
\node at (253pt,65pt) [circle,draw,fill=white,minimum size=6pt,inner sep=1pt]{}; % 3,4,5 left
\node at (387pt,199pt) [circle,draw,fill=black,minimum size=6pt,inner sep=1pt]{}; % 3,4,5 right
\node at (40pt,212pt) [circle,draw,fill=white,minimum size=6pt,inner sep=1pt]{}; % 4,5,6
\node at (240pt,12pt) [circle,draw,fill=black,minimum size=6pt,inner sep=1pt]{}; % 4,5,8
\node at (122pt,263pt) [circle,draw,fill=black,minimum size=6pt,inner sep=1pt]{}; % 4,6,9
\node at (189pt,330pt) [circle,draw,fill=white,minimum size=6pt,inner sep=1pt]{}; % 4,8,9
\draw[very thick, dashed] (0pt,0pt) rectangle (400pt,400pt);
 \end{scope}
 \end{tikzpicture}
 \label{fig:LongHex}
 }
      \qquad 
      \qquad
      \subfigure[]{
\begin{tikzpicture}[baseline={(0,-1.5)}] % characteristic polygon for X (Long Hex)
\draw (0,0) -- (0,1) -- (-1,2) -- (-2,3) -- (-3,3) -- (-3,2) -- (-2,1) -- (-1,0) -- (0,0); 
%\draw (0,0) -- (-1,1) -- (-2,2) -- (-3,3); 
%\draw (0,1) -- (-1,1) -- (-2,1); 
%\draw (-1,2) -- (-2,2) -- (-3,2); 
%\draw (-2,3) -- (-2,2) -- (-2,1); 
%\draw (-1,2) -- (-1,1) -- (-1,0); 
\draw (0,0) node[circle,draw,fill=white,minimum size=10pt,inner sep=1pt] {{\tiny1}};
\draw (0,1) node[circle,draw,fill=white,minimum size=10pt,inner sep=1pt] {{\tiny2}};
\draw (-2,3) node[circle,draw,fill=white,minimum size=10pt,inner sep=1pt] {{\tiny3}};
\draw (-3,3) node[circle,draw,fill=white,minimum size=10pt,inner sep=1pt] {{\tiny4}};
\draw (-3,2) node[circle,draw,fill=white,minimum size=10pt,inner sep=1pt] {{\tiny5}};
\draw (-1,0) node[circle,draw,fill=white,minimum size=10pt,inner sep=1pt] {{\tiny6}};
\draw (-2,1) node[circle,draw,fill=white,minimum size=10pt,inner sep=1pt] {{\tiny7}};
\draw (-2,2) node[circle,draw,fill=white,minimum size=10pt,inner sep=1pt] {{\tiny8}};
\draw (-1,1) node[circle,draw,fill=white,minimum size=10pt,inner sep=1pt] {{\tiny9}};
\draw (-1,2) node[circle,draw,fill=white,minimum size=10pt,inner sep=1pt] {{\tiny10}};
\end{tikzpicture} 
\label{fig:LongHexPolygon}
}
          \caption{(a) A dimer model $\Gamma$ and its dual quiver $Q$; (b) The characteristic polygon $\Delta(\Gamma)$.}
          \label{fig:LHDimerandFan}
  \end{figure}
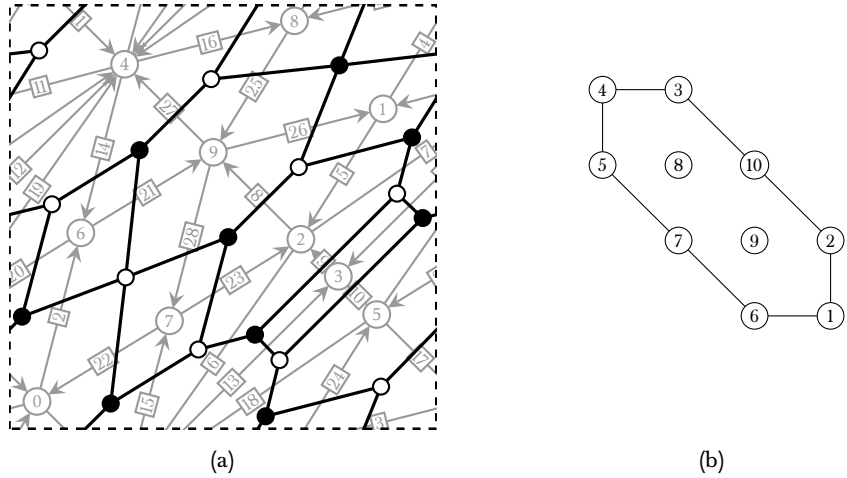
  
 \end{example}
 \subsection{Consistency}
 A dimer model $\Gamma$ determines a graph in $\mathbb{T}$ with nodes $\Gamma_0$ and edges $\Gamma_1$. A \emph{zig-zag path} on $\Gamma$ is a path in this graph that turns maximally right at every white node and maximally left at every black node. Each zig-zag path $\gamma$ is necessarily periodic, so each determines a class $[\gamma]\in H_1(\TT,\ZZ)$. Each edge $\mathfrak{e}\in \Gamma_1$ determines two zig-zag paths: one traversing the edge in each direction. Following Ishii--Ueda~\cite[Definition~3.5]{IshiiUeda11}, we say that $\Gamma$ is \emph{consistent} if: there is no homologically trivial zig-zag path; no zig-zag path intersects itself on the universal cover; and no pair of zig-zag paths intersect each other on the universal cover in the same direction more than once. For alternative definitions of consistency and for a comparison between them, see \emph{ibid.}\ or Bocklandt~\cite{Bocklandt12}; for a single paragraph that summarises the situation, see \cite[Remark~2.1]{BCQ15}. 
 
 Combining the work of Broomhead~\cite[Lemma~5.6]{Broomhead12} with results by Ishii--Ueda~\cite[Proposition~6.3]{IshiiUeda08} and Craw--Quintero-V\'elez~\cite[Theorem~3.15]{CQV11} leads to the following result:
 
 \begin{proposition}
 Let $\Gamma$ be a consistent dimer model. The centre $Z(A)$ of the Jacobian algebra of $\Gamma$ is isomorphic to the semigroup algebra $\kk[\sigma_0^\vee\cap M]$. In particular, the Gorenstein toric threefold determined by the characteristic polygon satisfies $X\cong \Spec Z(A)$.  
 \end{proposition}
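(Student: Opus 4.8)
The plan is to read off the statement by stringing the three cited results into a chain of isomorphisms that turns the algebraic object $Z(A)$ into the toric object $\kk[\sigma_0^\vee\cap M]$. First I would replace the centre by something more tractable: for any chosen vertex $i\in Q_0$ the canonical map $Z(A)\to e_iAe_i$ to the subalgebra of cycles at $i$ is an isomorphism, independently of the choice of $i$. The input I take here from Broomhead~\cite[Lemma~5.6]{Broomhead12} is that consistency of $\Gamma$ makes $A$ cancellative, so each arrow acts injectively by left and by right multiplication; this forces every $e_iAe_i$ to be commutative and, more importantly, pins down when two cycles at $i$ coincide in $A$ --- precisely when they cross each edge of $\Gamma$ the same number of times.

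Next I would translate cycles into lattice points. To a cycle $p$ at $i$ attach its vector of crossing numbers $\big(n_\Pi(p)\big)_\Pi\in\ZZ_{\ge 0}$ against the perfect matchings $\Pi$ of $\Gamma$; using that $n_\Pi(p)-n_{\Pi_0}(p)$ depends only on the homology class of $p$ and equals its pairing with $h(\Pi,\Pi_0)$, this packages into a single element $m_p$ of $M=\Hom(N,\ZZ)$ with $\langle m_p,(h(\Pi,\Pi_0),1)\rangle=n_\Pi(p)\ge 0$ for all $\Pi$, so that $m_p\in\sigma_0^\vee\cap M$. Because crossing numbers add under concatenation of cycles and, by the previous step, determine a cycle up to equality in $A$, the rule $p\mapsto m_p$ identifies $e_iAe_i$ with the semigroup algebra on the monoid $\{m_p : p \text{ a cycle at } i\}$. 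That monoid equals $\sigma_0^\vee\cap M$: every $m_p$ lies there by construction, and conversely each lattice point of $\sigma_0^\vee$ is some $m_p$ because the perfect matchings realise all of $\Delta(\Gamma)$ while the rays $(h(\Pi,\Pi_0),1)$ span $\sigma_0$ --- this is exactly the combinatorial input supplied by Ishii--Ueda~\cite[Proposition~6.3]{IshiiUeda08} and Craw--Quintero-V\'elez~\cite[Theorem~3.15]{CQV11}. Hence $Z(A)\cong e_iAe_i\cong\kk[\sigma_0^\vee\cap M]$, and since $X=\Spec\kk[\sigma_0^\vee\cap M]$ by definition and the Gorenstein property was already recorded above, both assertions follow.

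The step I expect to carry the real weight --- and for which I would defer to the cited works rather than attempt a direct proof --- is the surjectivity used just above, namely that the crossing-number vectors of cycles at $i$ fill out all of $\sigma_0^\vee\cap M$, together with the identification $Z(A)\cong e_iAe_i$. For an inconsistent dimer model the perfect matchings need not separate cycles and the monoid of realised crossing-number vectors need not be saturated, so $Z(A)$ could appear as a proper, non-normal subring of $\kk[\sigma_0^\vee\cap M]$, or the canonical map $Z(A)\to e_iAe_i$ could fail to be onto. Consistency is precisely what rules out these pathologies: Broomhead's cancellation property takes care of separation of cycles, and the zig-zag and perfect-matching analysis underlying the Ishii--Ueda and Craw--Quintero-V\'elez results takes care of saturatedness and surjectivity. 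The remaining bookkeeping --- normality of affine semigroup rings, duality of cones, and the Gorenstein criterion for the cone over a lattice polygon --- is purely formal.
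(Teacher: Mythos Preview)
The paper does not give its own proof of this proposition; it simply records that the statement follows by combining Broomhead~\cite[Lemma~5.6]{Broomhead12}, Ishii--Ueda~\cite[Proposition~6.3]{IshiiUeda08} and Craw--Quintero-V\'{e}lez~\cite[Theorem~3.15]{CQV11}. Your proposal is a reasonable and essentially correct expansion of how those three ingredients fit together: Broomhead supplies cancellation, which makes $Z(A)\to e_iAe_i$ an isomorphism and separates cycles by their perfect-matching degrees, and the Ishii--Ueda/Craw--Quintero-V\'{e}lez input shows that the resulting monoid of degrees is exactly $\sigma_0^\vee\cap M$.

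One phrasing issue worth correcting: in your first paragraph you write that cancellation ``pins down when two cycles at $i$ coincide in $A$ --- precisely when they cross each edge of $\Gamma$ the same number of times.'' That is too strong; crossing each edge the same number of times would make the cycles equal already in $\kk Q$. The correct criterion, which you in fact use in the second paragraph, is that two cycles at $i$ are equal in $A$ precisely when their crossing numbers against every \emph{perfect matching} agree (equivalently, when they have the same image under $\wt\colon\ZZ^{Q_1}\to\Lambda$). With that adjustment your sketch matches the intended argument.
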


  \subsection{Moduli of quiver representations}
 A \emph{representation of the quiver} $Q$ is a collection $\{V_i\mid i\in Q_0\}$ of finite dimensional $\kk$-vector spaces and a collection $\{v_a \colon V_{\tail(a)} \to V_{\head(a)}\mid a\in Q_1\}$ of $\kk$-linear maps. A representation $V$ of $Q$ is said to satisfy the relations $\{p_a^+-p_a^-\mid a\in Q_1\}$ from \eqref{eqn:Jacobian} if and only if the corresponding linear combination of $\kk$-linear maps from $V_{\head(a)}$ to $V_{\tail(a)}$ is zero. A morphism $f \colon V \to V^\prime$ of representations of $Q$ is a collection $\{f_i \colon V_i \to V^\prime_i \mid i \in Q_0 \}$ of $\kk$-linear maps such that $v^\prime_a f_{\tail(a)} = f_{\head(a)}v_a$ for all $a \in Q_1$. The category of representations of $Q$ satisfying the relations from \eqref{eqn:Jacobian} is equivalent to the abelian category $\modA$ of finite-dimensional left modules over the Jacobian algebra $A$. The dimension vector of a representation $V$ is the vector $\dim(V) \in \NN^{Q_0}$ whose $i$th component is $\dim (V_i)$ for $i\in Q_0$. Each vertex $i \in Q_0$ defines a simple object $S_i=\kk e_i$ in $\modA$ called the \emph{vertex simple} for $i\in Q_0$; as a representation of $Q$, this has $V_i=\kk$ and $V_j=0$ for $j\neq i$, where the maps $v_a$ are zero for all $a\in Q_1$.
 
 We are interested primarily in representations of dimension vector $\vv:=(1,1,\dots, 1)\in \NN^{Q_0}$. Consider the rational vector space
\[
\Theta = \Big\{\theta \in \Hom\big(\ZZ^{Q_0},\mathbb{Q}\big) \mid \theta(\vv) = 0\Big\}.
\]
Given a representation $V$ of $Q$, define $\theta(V):=\theta(\dim V)$ for $\theta\in \Theta$. A representation $V$ is \emph{$\theta$-semistable} if $\theta(V)=0$ and every subrepresentation $V^\prime\subset V$ satisfies $\theta(V^\prime)\geq 0$, and it is \emph{$\theta$-stable} if these inequalities are strict for every nonzero, proper subrepresentation $V^\prime\subset V$. These notions apply  to representations satisfying the relations from \eqref{eqn:Jacobian}, so the equivalence of abelian categories mentioned above gives us the notion of $\theta$-(semi)stability for $A$-modules. For any $\theta\in \Theta$, King~\cite[Proposition~5.2]{King94} constructs the coarse moduli space $\overline{\mathcal{M}_\theta}$ of S-equivalence classes of $\theta$-semistable $A$-modules of dimension vector $\vv$ using GIT. 
We say that $\theta\in \Theta$ is \emph{generic} if every $\theta$-semistable $A$-module of dimension vector $v$ is $\theta$-stable, in which case $\overline{\mathcal{M}_\theta}$ coincides with the fine moduli space $\mathcal{M}_\theta$ of isomorphism classes of $\theta$-stable $A$-modules of dimension vector $\vv$. The universal family of $\theta$-stable $A$-modules on $\mathcal{M}_\theta$ is a tautological locally free sheaf 
\[
 T=\bigoplus_{i\in Q_0} L_i,
\] 
where $L_i$ has rank one for all $i\in Q_0$, together with a tautological $\kk$-algebra homomorphism 
\begin{equation}
\label{eqn:tauthomo}
\phi\colon A\rightarrow \End_{\mathscr{O}_{\mathcal{M}_\theta}}(T).    
\end{equation}
 We fix once and for all a vertex of the quiver that we denote $0 \in Q_0$, and we normalise the tautological bundle by fixing $L_0\cong \mathscr{O}_{\mathcal{M}_\theta}$. For each closed point $y\in \mathcal{M}_\theta$, the fibre $T_y=T\otimes \mathcal{O}_y$ of the tautological bundle $T$ over $y$ is a $\theta$-stable $A$-module of dimension vector $\vv$, where the $A$-module structure is obtained by restriction from the tautological maps $\phi(a)\colon L_{\tail(a)}\to L_{\head(a)}$ for $a\in Q_1$.
 
 \subsection{Toric VGIT for consistent dimer models}
  Assume from now on that $\Gamma$ is a consistent dimer model in $\mathbb{T}$. In this case, the moduli spaces $\mathcal{M}_\theta$ can be constructed using toric geometry. To describe the relevant toric GIT quotients, note that the homology $H_*(\mathbb{T},\ZZ)$ is computed by the complex 
 \[
 \ZZ^{Q_2}\stackrel{\partial_2}{\longrightarrow} \ZZ^{Q_1}\stackrel{\partial_1}{\longrightarrow} \ZZ^{Q_0}
 \]
 with maps given by $\partial_2(f) = \Sigma_{a\subseteq \partial f} a$ for $f\in Q_2$ and $\partial_1(a) = \head(a)-\tail(a)$ for $a\in Q_1$. Since $Q$ is connected, the sublattice $B:=\Image(\partial_1)\subset \ZZ^{Q_0}$ has corank one. Mozgovoy--Reineke~\cite[Lemma~3.3]{MozgovoyReineke10} show that 
 \[
 \Lambda:=\ZZ^{Q_1}/\langle \partial_2(f)-\partial_2(f^\prime) \mid f, f^\prime\in Q_2\rangle
 \]
 is a free abelian group (when $\Gamma$ admits a perfect matching which is the case since $\Gamma$ is consistent); an explicit list of generators of $\Lambda$ is given in \cite[Lemma~3.14]{CQV11}. The incidence map  $\partial_1\colon \ZZ^{Q_1}\to B$ of the quiver $Q$ factors through the quotient map  $\wt\colon \ZZ^{Q_1}\to \Lambda$, giving rise to a commutative diagram
  \begin{equation}
   \label{eqn:fgdiagram}
\begin{tikzcd}
  & & \ZZ^{Q_1} \ar[d,swap,"{\wt}"] \ar[dr,"{\partial_1}"]& \\
 0 \ar[r]& M\ar[r]&  \Lambda \ar[r,"d"] & B \ar[r]& 0
 \end{tikzcd}
\end{equation}
 of lattices. % \cite[Section~2.2]{Mozgovoy09}.
 It follows that the map of semigroup algebras $\kk[\ZZ^{Q_1}]\to \kk[\Lambda]$ induced by the quotient map $\wt$ is compatible with the $B$-gradings induced by $\partial_1$ and $d$. In geometric terms, this means the inclusion of algebraic tori $\Spec \kk[\Lambda]\rightarrow \Spec \kk[\ZZ^{Q_1}]$ induced by $\wt$ is equivariant with respect to the action of the algebraic torus $T_B:=\Hom(B,\kk^\times)$ whose weights are encoded by $\partial_1$ and $d$.  If we write $\Lambda^+$ for the image under $\wt$ of the subsemigroup $\NN^{Q_1}\subset \ZZ^{Q_1}$, then the induced closed immersion 
 \begin{equation}
     \label{eqn:V}
 V:= \Spec \kk[\Lambda^+]\hookrightarrow \mathbb{C}^{Q_1}:= \Spec \kk[\NN^{Q_1}]
 \end{equation}
 is equivariant with respect to the $T_B$-action. For any character $\theta\in B = \Hom(T_B,\kk^\times)$, write 
 \[
 V\git_\theta T_B:= \Proj \left(\bigoplus_{j\geq 0} \kk[\Lambda^+]_{j\theta}\right)
 \]
 for the GIT quotient, where $\kk[\Lambda^+]_{j\theta}$ denotes the vector space of $j\theta$-semi-invariant elements of $\kk[\Lambda^+]$. Since  $\Gamma$ is consistent,  the study of the moduli spaces $\mathcal{M}_\theta$ by Ishii--Ueda~\cite{IshiiUeda08} can be interpreted directly as variation of GIT quotient for the toric varieties $V\git_\theta T_B$ as in Mozgovoy~\cite[Section~4]{Mozgovoy09} or  Craw--Quintero-V\'{e}lez~\cite{CQV11}; for a one paragraph summary of the argument, see \cite[Remark~2.4]{BCQ15}.  
 
  \begin{proposition}
  Assume that $\Gamma$ is consistent. For any generic $\theta\in \Theta$, there is a commutative diagram
   \begin{equation}
   \label{eqn:crepantres}
\begin{tikzcd}
 \mathcal{M}_\theta  \ar[d] \ar[r,"{\sim}"] & V\git_\theta T_B \ar[d]\\
 X  \ar[r,"\sim"] & V\git_0 T_B
 \end{tikzcd}
\end{equation}
  of toric varieties in which the horizontal arrows are isomorphisms, the vertical arrows are projective, crepant resolutions, and where the right-hand arrow is obtained by variation of GIT quotient. 
  \end{proposition}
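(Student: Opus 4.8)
The plan is to assemble the diagram \eqref{eqn:crepantres} from three ingredients: King's GIT description of $\mathcal{M}_\theta$; the lattice-theoretic factorisation \eqref{eqn:fgdiagram}; and the consistency results of Ishii--Ueda~\cite{IshiiUeda08, IshiiUeda15} and Broomhead~\cite{Broomhead12}. For the horizontal isomorphisms I would first recall from King~\cite{King94} that $\mathcal{M}_\theta$ is the GIT quotient of the representation scheme $\mathrm{Rep}(A,\vv)$ by the gauge group $\prod_{i\in Q_0}\kk^\times$ linearised by $\theta$, with the diagonal $\kk^\times$ acting trivially so that the effective group is the torus $T_B$. Since each relation $p_a^+-p_a^-$ of \eqref{eqn:Jacobian} is a difference of monomials in the arrow coordinates, $\mathrm{Rep}(A,\vv)$ is the binomial subscheme of $\mathbb{C}^{Q_1}$ cut out by these relations; using that $\Gamma$ is consistent, hence admits a perfect matching, so that $\Lambda$ is free by Mozgovoy--Reineke~\cite{MozgovoyReineke10} with $\Lambda^+$ generated as in Craw--Quintero-V\'elez~\cite{CQV11}, one identifies $\mathrm{Rep}(A,\vv)$ $T_B$-equivariantly with $V=\Spec\kk[\Lambda^+]$ of \eqref{eqn:V}, following Mozgovoy~\cite{Mozgovoy09} and \cite{CQV11}. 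Matching the linearisations gives $\mathcal{M}_\theta\cong V\git_\theta T_B$ for all generic $\theta$; and at $\theta=0$ the affine quotient is $\Spec$ of the ring of $T_B$-invariants of $\kk[\Lambda^+]$, which by exactness of the bottom row of \eqref{eqn:fgdiagram} is $\kk[\Lambda^+\cap M]=\kk[\sigma_0^\vee\cap M]\cong Z(A)$, so the Proposition above yields $X\cong V\git_0 T_B$. The left square commutes because the left vertical arrow sends a $\theta$-stable $A$-module $M$ to the closed point of $X=\Spec Z(A)$ given by the support of $M$ as a $Z(A)$-module, and this coincides with the morphism classified by the inclusion of $T_B$-invariants into $j\theta$-semi-invariants.

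The next step is to check that both vertical arrows are projective, crepant resolutions. Projectivity is formal: $\kk[\Lambda^+]$ is finitely generated, so $\bigoplus_{j\ge 0}\kk[\Lambda^+]_{j\theta}$ is a finitely generated graded algebra over its degree-zero part $\kk[\sigma_0^\vee\cap M]$, whence the variation-of-GIT morphism $V\git_\theta T_B\to V\git_0 T_B$ is projective. Birationality holds because this morphism restricts to an isomorphism over the dense torus of $X$, where for every $\theta$ all representations of dimension vector $\vv$ are stable and the quotient map is a $T_B$-torsor. For smoothness and crepancy I would invoke the consistency results: by Ishii--Ueda~\cite{IshiiUeda08, IshiiUeda15} the toric variety $\mathcal{M}_\theta$ is smooth for $\Gamma$ consistent and $\theta$ generic, its fan $\Sigma_\theta$ in $N=H^1(\TT,\ZZ)\oplus\ZZ$ being a unimodular triangulation refining $\sigma_0$; and crepancy amounts to the statement that every ray of $\Sigma_\theta$ passes through a lattice point of $\Delta(\Gamma)\times\{1\}$, i.e.\ that every torus-invariant divisor of $\mathcal{M}_\theta$ has discrepancy zero over $X$, which follows from Broomhead's identification~\cite{Broomhead12} of the perfect matchings of $\Gamma$ with the lattice points of $\Delta(\Gamma)$ together with the description of $\Sigma_\theta$ as a GIT chamber fan refining $\sigma_0$. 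Assembling these facts produces \eqref{eqn:crepantres}.

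I expect the first step to be essentially bookkeeping once the freeness of $\Lambda$ and the explicit generators of $\Lambda^+$ are in hand, so the main obstacle is the last step: smoothness and crepancy of $\mathcal{M}_\theta$. These fail for a general, non-consistent dimer model, and establishing them here genuinely relies on the structural input of Ishii--Ueda and Broomhead governed by the absence of homologically trivial and self-intersecting zig-zag paths; I would treat this as a citation rather than reprove it. A minor additional point is to verify that a generic $\theta$ lies in some GIT chamber with $\mathcal{M}_\theta$ nonempty, which is automatic since $V\git_\theta T_B$ is projective over and surjects onto the three-fold $X$.
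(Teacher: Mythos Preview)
Your proposal is correct and follows essentially the same route as the paper: the paper does not give a self-contained proof of this proposition but instead states it as a consequence of the cited works of Ishii--Ueda~\cite{IshiiUeda08,IshiiUeda15}, Mozgovoy~\cite{Mozgovoy09}, Craw--Quintero-V\'{e}lez~\cite{CQV11} and the summary in \cite[Remark~2.4]{BCQ15}, and your sketch is precisely an unpacking of what those references provide. Your identification of the main content---that the horizontal isomorphisms are essentially bookkeeping once $\Lambda$ is free, while smoothness and crepancy genuinely require the consistency hypothesis via Ishii--Ueda and Broomhead---matches the paper's emphasis exactly.
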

  
  \subsection{The (dual) tautological bundle is a tilting bundle}
 To state an important result linking the geometry of the moduli spaces $\mathcal{M}_\theta$ with the Jacobian algebra $A$ of a consistent dimer model, let $D^b(\coh(\mathcal{M}_{\theta}))$ denote the bounded derived category of coherent sheaves on $\mathcal{M}_\theta$ and $D^b(\modA)$ the bounded derived category of finitely-generated left $A$-modules.
 
 \begin{theorem}[Ishii--Ueda~\cite{IshiiUeda15}]
 \label{thm:tilting}
 Let $\Gamma$ be a consistent dimer model. For generic $\theta\in \Theta$, let $T$ denote the tautological bundle on $\mathcal{M}_\theta$. The tautological $\kk$-algebra homomorphism $\phi\colon A\to \End(T)$ is an isomorphism, and the functor
 \begin{equation}
    \label{eqn:Phitheta}
\Phi_\theta(-) = \mathbf{R}\!\Hom_{\mathscr{O}_{\mathcal{M}_{\theta}}}(T^\vee,-)\colon D^b(\coh(\mathcal{M}_{\theta})) \longrightarrow D^b(\modA)
\end{equation}
induced by the dual bundle $T^\vee:=\mathcal{H}om(T,\mathcal{O}_{\mathcal{M}_\theta})$ is an exact equivalence of triangulated categories.
 \end{theorem}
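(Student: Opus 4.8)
The plan is to show that $T^\vee$ is a \emph{tilting bundle} on $\mathcal{M}_\theta$ with endomorphism algebra $A^{op}$, and then to apply the standard tilting formalism of Bondal--Van den Bergh and Hille--Van den Bergh. Since $\mathcal{M}_\theta$ is a smooth quasi-projective variety, projective over the affine base $X=\Spec R$ via the crepant resolution \eqref{eqn:crepantres}, it is enough to establish three statements: \textbf{(G)} $T^\vee$ (equivalently $T$) classically generates $D^b(\coh(\mathcal{M}_\theta))$; \textbf{(E)} $\operatorname{Ext}^q_{\mathcal{M}_\theta}(T^\vee,T^\vee)=0$ for all $q>0$; and \textbf{(I)} the tautological homomorphism $\phi\colon A\to\End(T)$ is an isomorphism, so that $\End(T^\vee)\cong A^{op}$. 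Granting these, smoothness of $\mathcal{M}_\theta$ ensures that the module-finite $R$-algebra $\End(T^\vee)$ has finite global dimension, and the tilting theorem yields an exact equivalence from $D^b(\coh(\mathcal{M}_\theta))$ onto the bounded derived category of finitely generated right $\End(T^\vee)$-modules, given by $\mathbf{R}\!\Hom_{\mathcal{O}_{\mathcal{M}_\theta}}(T^\vee,-)$. Since $\Hom(T^\vee,-)$ carries a natural left action of $\End(T^\vee)^{op}\cong A$, the target is $D^b(\modA)$, which is the assertion.

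For \textbf{(I)}, injectivity of $\phi$ is standard: $A$ is torsion-free over $R$ and $\phi$ becomes an isomorphism over the generic point of $X$, where both algebras coincide with the same central simple algebra over $\operatorname{Frac}(R)$. Surjectivity is the crux. Writing $\End(T)=\bigoplus_{i,j\in Q_0}\Hom_{\mathcal{O}_{\mathcal{M}_\theta}}(L_i,L_j)$ and using $R=H^0(\mathcal{O}_{\mathcal{M}_\theta})$, each summand is the reflexive $R$-module $H^0(\mathcal{M}_\theta,L_i^\vee\otimes L_j)$, which one computes combinatorially from the fan $\Sigma_\theta$. The content is that it coincides with $e_jAe_i$, the span of paths from $i$ to $j$ in $Q$ modulo the Jacobian relations. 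This is precisely the toric computation of Ishii--Ueda and of Craw--Quintero-V\'elez, and it is where consistency of $\Gamma$ is decisive: consistency guarantees that the perfect matchings of $\Gamma$ account for the lattice structure of $\Delta(\Gamma)$ and that the resulting monomial description of sections is in bijection with path classes.

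For \textbf{(G)} I would produce a resolution of the diagonal. The $2$-torus cell structure underlying $\Gamma$ furnishes a canonical complex of projective $A$-bimodules $0\to P_3\to P_2\to P_1\to P_0\to A\to 0$ --- the Calabi--Yau-$3$ bimodule resolution of a consistent dimer algebra (Broomhead, Bocklandt) --- with $P_k$ assembled from the $k$-cells of $\Gamma$. Applying $\phi$ and sheafifying, this globalises to an exact complex on $\mathcal{M}_\theta\times_X\mathcal{M}_\theta$ resolving $\delta_*\mathcal{O}_{\mathcal{M}_\theta}$ (with $\delta$ the diagonal) whose terms are direct sums of $\operatorname{pr}_1^*L_i\otimes\operatorname{pr}_2^*L_j^\vee$; convolving an arbitrary object against it exhibits that object as built from the summands $L_i$ of $T$, which is generation. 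For \textbf{(E)} it suffices to show $R^q\pi_*(L_i^\vee\otimes L_j)=0$ for all $q>0$ and all $i,j$, where $\pi\colon\mathcal{M}_\theta\to X$ is the crepant resolution; as $X$ is affine this gives $\operatorname{Ext}^q_{\mathcal{M}_\theta}(T^\vee,T^\vee)=H^q(\mathcal{M}_\theta,T\otimes T^\vee)=0$ for $q>0$. This relative vanishing is once more a toric statement about $\Sigma_\theta$ --- roughly, that the piecewise-linear functions recording the perfect matchings of $\Gamma$ are $\pi$-relatively nef --- and again it relies on consistency.

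The hard part will be \textbf{(I)} (surjectivity of $\phi$) together with the relative vanishing in \textbf{(E)}: both reduce to a delicate combinatorial analysis of the fan $\Sigma_\theta$ and the perfect matchings of $\Gamma$, translating ``paths in $Q$ modulo relations'' into ``sections of line bundles on a toric crepant resolution''. Everything else --- the abstract tilting formalism, and the convolution argument for generation once the bimodule resolution is in hand --- is essentially formal.
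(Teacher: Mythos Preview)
The paper does not prove this theorem. It is stated as a result of Ishii--Ueda~\cite{IshiiUeda15}, with the only commentary being the sentence immediately following the statement: Ishii--Ueda show that $T$ itself is tilting, and the fact that the dual $T^\vee$ is also tilting follows from \cite[Lemma~3.2]{BCQ15}. There is therefore no proof in the paper to compare your proposal against.

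That said, your outline is a reasonable high-level reconstruction of the strategy used in the cited literature. The decomposition into generation, Ext-vanishing, and the endomorphism computation is the standard tilting template, and you correctly identify that consistency of $\Gamma$ is the essential hypothesis driving both the surjectivity of $\phi$ and the vanishing in \textbf{(E)}. One point worth flagging: your argument for \textbf{(G)} via a resolution of the diagonal on $\mathcal{M}_\theta\times_X\mathcal{M}_\theta$ is plausible in spirit but is not quite how Ishii--Ueda proceed; they argue more directly via spanning classes (skyscraper sheaves) and the moduli description, and the passage from the bimodule resolution of $A$ to an honest resolution of the diagonal sheaf requires care that your sketch elides. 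Your claim that ``everything else is essentially formal'' is optimistic: the relative vanishing \textbf{(E)} and the surjectivity in \textbf{(I)} are genuinely the hard content, as you acknowledge, and in the literature these are handled by non-trivial combinatorial arguments specific to dimer models rather than by general toric nefness statements.
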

 
 In fact, \cite{IshiiUeda15} show that $\mathbf{R}\!\Hom(T,-)$ is an equivalence, \emph{i.\,e.}\ that $T$ itself is a tilting bundle. However, the fact that $T^\vee$ is tilting follows as in \cite[Lemma~3.2]{BCQ15}, and the functor $\Phi_\theta$ from \eqref{eqn:Phitheta} has some especially nice properties. For example, for each point $y\in \mathcal{M}_\theta$, the skyscraper sheaf $\mathcal{O}_y$ is sent by $\Phi_\theta$ to the $A$-module
 \begin{equation}
     \label{eqn:derivedOy}
  \Phi_\theta(\mathcal{O}_y) = \mathbf{R}\Gamma\circ  \mathbf{R}\mathcal{H}om_{\mathscr{O}_{\mathcal{M}_{\theta}}}(\mathcal{O}_Y,T\otimes \mathcal{O}_y) = \mathbf{R}\Gamma(T\otimes \mathcal{O}_y)= \Gamma(T\otimes \mathcal{O}_y) \cong T_y
 \end{equation}
 obtained as the fibre of the tautological bundle $T$ over $y$ (here and here alone, $\Gamma$ denotes global sections). 
 
   Theorem~\ref{thm:tilting} also facilitates our understanding of the tautological line bundles $\{L_i \mid i\in Q_0\}$ on $\mathcal{M}_\theta$. Indeed, for each $i, j\in Q_0$, the tautological isomorphism $\phi\colon A\to \End(T)$ induces an isomorphism between the space $e_jAe_i$ spanned by classes of paths in $Q$ from $i$ to $j$ and the $\kk$-vector space $\Hom(L_i,L_j)$. Under this isomorphism, each $a\in Q_1$ is assigned to an effective torus-invariant divisor $\div(a)$ obtained as the divisor of zeroes of a section of  $L_{\head(a)}\otimes L_{\tail(a)}^{-1}$.  Explicitly, for each ray $\rho\in \Sigma_\theta(1)$ in the toric fan of $\mathcal{M}_\theta$, if we write $\Pi_\rho$ and $D_\rho$ for the corresponding $\theta$-stable perfect matching in $\Gamma$ and torus-invariant divisor in $\mathcal{M}_\theta$ respectively, then \cite[Theorem~4.2]{BenderMozgovoy09} and \cite[Lemma~4.1]{IshiiUeda15} give that $\div(a)$
  is the divisor of zeroes of the section 
   \begin{equation}
       \label{eqn:xa}
   t^{\div(a)}:= \prod_{\{\rho\in \Sigma_\theta(1) \mid \mathfrak{e}_a\in \Pi_\rho\}} t_\rho\in H^0(L_{\head(a)}\otimes L_{\tail(a)}^{-1}),
   \end{equation}
where $\mathfrak{e}_a$ is the edge in the dimer model dual to the arrow $a$; here, we write the section in the variables of the Cox ring $\kk[t_\rho\mid \rho\in \Sigma_\theta(1)]$ of $\mathcal{M}_\theta$. For $i=0$, if we choose any path $p$ in the quiver $Q$ from $0$ to $j\in Q_0$ and write $\div(p)$ for the sum of the divisors $\div(a)$ indexed by arrows $a$ in the support of $p$, then $\div(p)$ is the divisor of zeroes of the section $t^{\div(p)}:=\prod_{a\in \supp(p)} t^{\div(a)}\in H^0(L_j)$ and hence $   L_j\cong \mathcal{O}_{\mathcal{M}_\theta}\big(\div(p)\big)$.  
 
 \begin{example}
 Consider again the consistent dimer model $\Gamma$ from Example~\ref{exa:LongHex}. The stability parameter $\theta=(\theta_i)\in \Theta$ satisfying $\theta_i>0$ for all $i>0$ is generic. After listing all 10 of the $\theta$-stable perfect matchings (see \cite[Figure~1.5]{TapiaAmador15}), it is straightforward to label each arrow in $Q$ with the section from Equation \eqref{eqn:xa} as shown in Figure~\ref{fig:LHDivisorsTriangulation}(a). The triangulation of the characteristic polygon $\Delta(\Gamma)$ that determines the toric fan $\Sigma_\theta$ of $\mathcal{M}_\theta$ is shown in Figure~\ref{fig:LHDivisorsTriangulation}(b). 
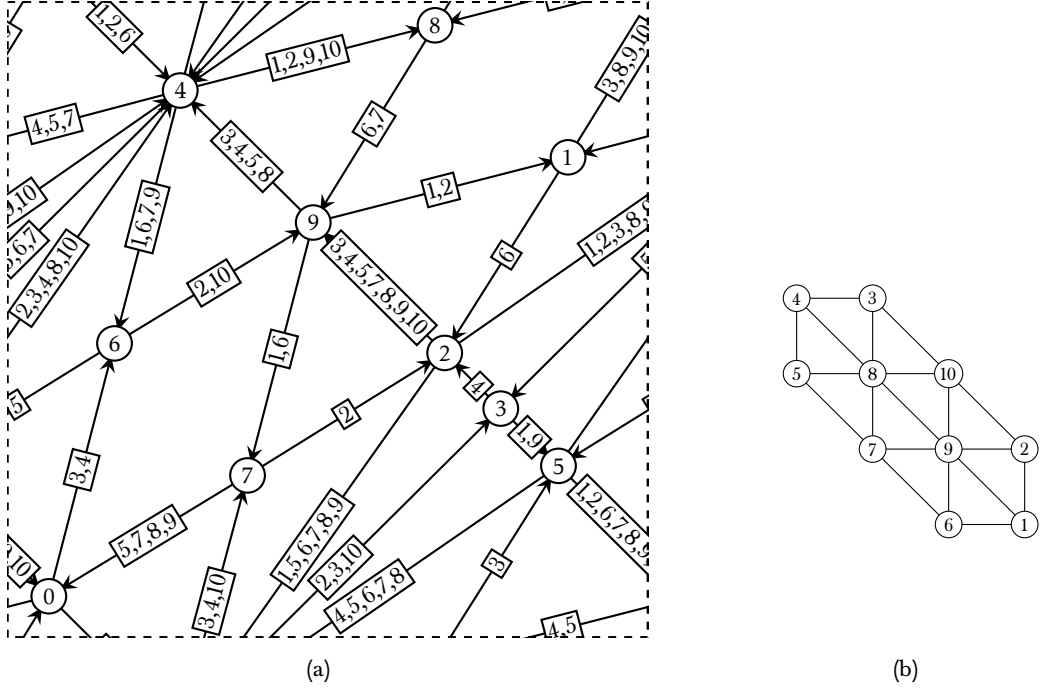
\begin{figure}[!ht]
   \centering
      \subfigure[]{
 \begin{tikzpicture} [thick,scale=0.6, every node/.style={scale=1.3}] 
 
\begin{scope}\clip (0pt,0pt) rectangle (400pt,400pt);
\draw [-stealthnew,arrowhead=6pt,shorten >=5pt] (26pt,26pt) to node [rectangle,draw,fill=white,sloped,inner sep=1pt] {{\tiny 1,2,6}} (108pt,-56pt); 
\draw [-stealthnew,arrowhead=6pt,shorten >=5pt] (26pt,426pt) to node [rectangle,draw,fill=white,sloped,inner sep=1pt] {{\tiny 1,2,6}} (108pt,344pt);  
\draw [-stealthnew,arrowhead=6pt, shorten >=5pt] (26pt,26pt) to node [rectangle,draw,fill=white,sloped,inner sep=1pt] {{\tiny 3,4}} (67pt,185pt); 
\draw [-stealthnew,arrowhead=6pt,shorten >=5pt] (426pt,426pt) to node [rectangle,draw,fill=white,sloped,inner sep=1pt] {{\tiny 4,5}} (267pt,385pt); 
\draw [-stealthnew,arrowhead=6pt,shorten >=5pt] (26pt,26pt) to node [rectangle,draw,fill=white,sloped,inner sep=1pt] {{\tiny 4,5}} (-133pt,-15pt); 
\draw [-stealthnew,arrowhead=6pt,shorten >=5pt] (426pt,26pt) to node [rectangle,draw,fill=white,sloped,inner sep=1pt] {{\tiny 4,5}} (267pt,-15pt); 
\draw [-stealthnew,arrowhead=6pt,shorten >=5pt] (-50pt,-98pt) to node [rectangle,draw,fill=white,sloped,inner sep=1pt] {{\tiny 3,8,9,10}} (26pt,26pt); 
\draw [-stealthnew,arrowhead=6pt,shorten >=5pt] (350pt,302pt) to node [rectangle,draw,fill=white,sloped,inner sep=1pt] {{\tiny 3,8,9,10}} (426pt,426pt); 
\draw [-stealthnew,arrowhead=6pt,shorten >=5pt] (-50pt,302pt) to node [rectangle,draw,fill=white,sloped,inner sep=1pt] {{\tiny 3,8,9,10}} (26pt,426pt);
\draw [-stealthnew,arrowhead=6pt,shorten >=5pt] (350pt,302pt) to node [rectangle,draw,fill=white,sloped,inner sep=1pt] {{\tiny 6}} (273pt,179pt); 
\draw [-stealthnew,arrowhead=6pt,shorten >=5pt] (273pt,579pt) to node [rectangle,draw,fill=white,sloped,inner sep=1pt] {{\tiny 1,5,6,7,8,9}} (108pt,344pt); 
\draw [-stealthnew,arrowhead=6pt,shorten >=5pt] (273pt,179pt) to node [rectangle,draw,fill=white,sloped,inner sep=1pt] {{\tiny 1,5,6,7,8,9}} (108pt,-56pt); 
\draw [-stealthnew,arrowhead=6pt,shorten >=5pt] (-127pt,179pt) to node [rectangle,draw,fill=white,sloped,inner sep=1pt] {{\tiny 1,2,3,8,9,10}} (108pt,344pt); 
\draw [-stealthnew,arrowhead=6pt,shorten >=5pt] (273pt,179pt) to node [rectangle,draw,fill=white,sloped,inner sep=1pt] {{\tiny 1,2,3,8,9,10}} (508pt,344pt); 
\draw [-stealthnew,arrowhead=6pt,shorten >=5pt] (273pt,179pt) to node [rectangle,draw,fill=white,sloped,inner sep=1pt] {{\tiny 3,4,5,7,8,9,10}} (191pt,261pt); 
\draw [-stealthnew,arrowhead=6pt,shorten >=5pt] (308pt,144pt) to node [rectangle,draw,fill=white,sloped,inner sep=1pt,pos=0.4] {{\tiny 4}} (273pt,179pt); 
\draw [-stealthnew,arrowhead=6pt,shorten >=5pt] (308pt,144pt) to node [rectangle,draw,fill=white,sloped,inner sep=1pt] {{\tiny 1,9}} (344pt,108pt); 
\draw [-stealthnew,arrowhead=6pt,shorten >=5pt] (508pt,344pt) to node [rectangle,draw,fill=white,sloped,inner sep=1pt] {{\tiny 4,5,7}} (350pt,302pt); 
\draw [-stealthnew,arrowhead=6pt,shorten >=5pt] (108pt,344pt) to node [rectangle,draw,fill=white,sloped,inner sep=1pt] {{\tiny 4,5,7}} (-50pt,302pt);  
\draw [-stealthnew,arrowhead=6pt,shorten >=5pt] (108pt,344pt) to node [rectangle,draw,fill=white,sloped,inner sep=1pt] {{\tiny 5,6,7}} (-92pt,144pt); 
\draw [-stealthnew,arrowhead=6pt,shorten >=5pt] (508pt,344pt) to node [rectangle,draw,fill=white,sloped,inner sep=1pt] {{\tiny 5,6,7}} (308pt,144pt); 
\draw [-stealthnew,arrowhead=6pt,shorten >=5pt] (108pt,-56pt) to node [rectangle,draw,fill=white,sloped,inner sep=1pt] {{\tiny 2,3,10}} (308pt,144pt); 
\draw [-stealthnew,arrowhead=6pt,shorten >=5pt] (108pt,344pt) to node [rectangle,draw,fill=white,sloped,inner sep=1pt] {{\tiny 2,3,10}} (308pt,544pt); 
\draw [-stealthnew,arrowhead=6pt,shorten >=5pt] (108pt,344pt) to node [rectangle,draw,fill=white,sloped,inner sep=1pt] {{\tiny 1,6,7,9}} (67pt,185pt); 
\draw [-stealthnew,arrowhead=6pt,shorten >=5pt] (108pt,-56pt) to node [rectangle,draw,fill=white,sloped,inner sep=1pt] {{\tiny 3,4,10}} (150pt,102pt); 
\draw [-stealthnew,arrowhead=6pt,shorten >=5pt] (108pt,344pt) to node [rectangle,draw,fill=white,sloped,inner sep=1pt] {{\tiny 3,4,10}} (150pt,502pt); 
\draw [-stealthnew,arrowhead=6pt,shorten >=5pt] (108pt,344pt) to node [rectangle,draw,fill=white,sloped,inner sep=1pt] {{\tiny 1,2,9,10}} (267pt,385pt); 
\draw [-stealthnew,arrowhead=6pt,shorten >=5pt] (-56pt,108pt) to node [rectangle,draw,fill=white,sloped,inner sep=1pt] {{\tiny 1,2,6,7,8,9,10}} (26pt,26pt); 
\draw [-stealthnew,arrowhead=6pt,shorten >=5pt] (344pt,108pt) to node [rectangle,draw,fill=white,sloped,inner sep=1pt] {{\tiny 1,2,6,7,8,9,10}} (426pt,26pt); 
\draw [-stealthnew,arrowhead=6pt,shorten >=5pt] (344pt,508pt) to node [rectangle,draw,fill=white,sloped,inner sep=1pt] {{\tiny 4,5,6,7,8}} (108pt,344pt); 
\draw [-stealthnew,arrowhead=6pt,shorten >=5pt] (344pt,108pt) to node [rectangle,draw,fill=white,sloped,inner sep=1pt] {{\tiny 4,5,6,7,8}} (108pt,-56pt); 
\draw [-stealthnew,arrowhead=6pt,shorten >=5pt] (-56pt,108pt) to node [rectangle,draw,fill=white,sloped,inner sep=1pt] {{\tiny 2,3,4,8,10}} (108pt,344pt); 
\draw [-stealthnew,arrowhead=6pt,shorten >=5pt] (344pt,108pt) to node [rectangle,draw,fill=white,sloped,inner sep=1pt] {{\tiny 2,3,4,8,10}} (508pt,344pt); 
\draw [-stealthnew,arrowhead=6pt,shorten >=5pt] (467pt,185pt) to node [rectangle,draw,fill=white,sloped,inner sep=1pt] {{\tiny 5}} (344pt,108pt); 
\draw [-stealthnew,arrowhead=6pt,shorten >=5pt] (67pt,185pt) to node [rectangle,draw,fill=white,sloped,inner sep=1pt] {{\tiny 5}} (-56pt,108pt); 
\draw [-stealthnew,arrowhead=6pt,shorten >=5pt] (67pt,185pt) to node [rectangle,draw,fill=white,sloped,inner sep=1pt] {{\tiny 2,10}} (191pt,261pt); 
\draw [-stealthnew,arrowhead=6pt,shorten >=5pt] (150pt,102pt) to node [rectangle,draw,fill=white,sloped,inner sep=1pt] {{\tiny 5,7,8,9}} (26pt,26pt); 
\draw [-stealthnew,arrowhead=6pt,shorten >=5pt] (150pt,102pt) to node [rectangle,draw,fill=white,sloped,inner sep=1pt] {{\tiny 2}} (273pt,179pt); 
\draw [-stealthnew,arrowhead=6pt,shorten >=5pt] (267pt,-15pt) to node [rectangle,draw,fill=white,sloped,inner sep=1pt] {{\tiny 3}} (344pt,108pt); 
\draw [-stealthnew,arrowhead=6pt,shorten >=5pt] (267pt,385pt) to node [rectangle,draw,fill=white,sloped,inner sep=1pt] {{\tiny 3}} (344pt,508pt); 
\draw [-stealthnew,arrowhead=6pt,shorten >=5pt] (267pt,385pt) to node [rectangle,draw,fill=white,sloped,inner sep=1pt] {{\tiny 6,7}} (191pt,261pt); 
\draw [-stealthnew,arrowhead=6pt,shorten >=5pt] (191pt,261pt) to node [rectangle,draw,fill=white,sloped,inner sep=1pt] {{\tiny 1,2}} (350pt,302pt); 
\draw [-stealthnew,arrowhead=6pt,shorten >=5pt] (191pt,261pt) to node [rectangle,draw,fill=white,sloped,inner sep=1pt] {{\tiny 3,4,5,8}} (108pt,344pt); 
\draw [-stealthnew,arrowhead=6pt,shorten >=5pt] (191pt,261pt) to node [rectangle,draw,fill=white,sloped,inner sep=1pt] {{\tiny 1,6}} (150pt,102pt); 
\node at (26pt,26pt) [circle,draw,fill=white,minimum size=10pt,inner sep=1pt] {\mbox{\tiny $0$}};  
\node at (350pt,302pt) [circle,draw,fill=white,minimum size=10pt,inner sep=1pt] {\mbox{\tiny $1$}}; 
\node at (273pt,179pt) [circle,draw,fill=white,minimum size=10pt,inner sep=1pt] {\mbox{\tiny $2$}}; 
\node at (308pt,144pt) [circle,draw,fill=white,minimum size=10pt,inner sep=1pt] {\mbox{\tiny $3$}};  
\node at (108pt,344pt) [circle,draw,fill=white,minimum size=10pt,inner sep=1pt] {\mbox{\tiny $4$}}; 
\node at (344pt,108pt) [circle,draw,fill=white,minimum size=10pt,inner sep=1pt] {\mbox{\tiny $5$}}; 
\node at (67pt,185pt) [circle,draw,fill=white,minimum size=10pt,inner sep=1pt] {\mbox{\tiny $6$}}; 
\node at (150pt,102pt) [circle,draw,fill=white,minimum size=10pt,inner sep=1pt] {\mbox{\tiny $7$}}; 
\node at (267pt,385pt) [circle,draw,fill=white,minimum size=10pt,inner sep=1pt] {\mbox{\tiny $8$}}; 
\node at (191pt,261pt) [circle,draw,fill=white,minimum size=10pt,inner sep=1pt] {\mbox{\tiny $9$}}; 
\draw[very thick, dashed] (0pt,0pt) rectangle (400pt,400pt);
\end{scope}
\end{tikzpicture}}
      \qquad 
      \qquad
      \subfigure[]{
\begin{tikzpicture}[baseline={(0,-1.5)}] % characteristic polygon for X (Long Hex)
\draw (0,0) -- (0,1) -- (-1,2) -- (-2,3) -- (-3,3) -- (-3,2) -- (-2,1) -- (-1,0) -- (0,0); 
\draw (0,0) -- (-1,1) -- (-2,2) -- (-3,3); 
\draw (0,1) -- (-1,1) -- (-2,1); 
\draw (-1,2) -- (-2,2) -- (-3,2); 
\draw (-2,3) -- (-2,2) -- (-2,1); 
\draw (-1,2) -- (-1,1) -- (-1,0); 
\draw (0,0) node[circle,draw,fill=white,minimum size=10pt,inner sep=1pt] {{\tiny1}};
\draw (0,1) node[circle,draw,fill=white,minimum size=10pt,inner sep=1pt] {{\tiny2}};
\draw (-2,3) node[circle,draw,fill=white,minimum size=10pt,inner sep=1pt] {{\tiny3}};
\draw (-3,3) node[circle,draw,fill=white,minimum size=10pt,inner sep=1pt] {{\tiny4}};
\draw (-3,2) node[circle,draw,fill=white,minimum size=10pt,inner sep=1pt] {{\tiny5}};
\draw (-1,0) node[circle,draw,fill=white,minimum size=10pt,inner sep=1pt] {{\tiny6}};
\draw (-2,1) node[circle,draw,fill=white,minimum size=10pt,inner sep=1pt] {{\tiny7}};
\draw (-2,2) node[circle,draw,fill=white,minimum size=10pt,inner sep=1pt] {{\tiny8}};
\draw (-1,1) node[circle,draw,fill=white,minimum size=10pt,inner sep=1pt] {{\tiny9}};
\draw (-1,2) node[circle,draw,fill=white,minimum size=10pt,inner sep=1pt] {{\tiny10}};
\end{tikzpicture} 
}
          \caption{(a) Divisors labelling arrows in $Q$; (b) Triangulation of $\Delta(\Gamma)$ defining a fan $\Sigma_\theta$.}
          \label{fig:LHDivisorsTriangulation}
  \end{figure}

To illustrate how to compute with the tautological line bundles, let $\sigma_+$ and $\sigma_-$ denote the 3-dimensional cones in $\Sigma_\theta$ generated by the rays $\{\rho_7,\rho_8,\rho_9\}$ and $\{\rho_8, \rho_9,\rho_{10}\}$ respectively, and consider $\tau:=\sigma_+\cap \sigma_-\in \Sigma_\theta(2)$. Let $p_+$ denote the path in $Q$ from vertex 0 to vertex 9 labelled with the section
\[
t^{\div(p_+)}=t_2t_3t_4t_{10}\in H^0(U_{\sigma_+},L_9).
\]
The primitive vector $m\in M$ orthogonal to $\tau$ that is positive on $\sigma_+$ determines the Laurent monomial  $t^m = t_5t_6t_7/t_2t_3t_{10}$ in the variables of the Cox ring, and 
   \[
t^m\cdot t^{\div(p_+)} = t_4t_5t_6t_7\in H^0(U_{\sigma_-},L_9).
   \]
  The path $p_-$ from vertex 0 to 9 labelled with $t^{\div(p_-)}= t_4t_5t_6t_7$ is the generating section of $H^0(U_{\sigma_-},L_9)$, and since $t^m$ appears with exponent 1 in the equation $t^{\div(p_-)} = t^m\cdot t^{\div(p_+)}$, it follows that $L_9$ has degree one on the torus-invariant rational curve $C_\tau$ in $\mathcal{M}_\theta$ defined by the cone $\tau$. 
  \end{example}
  
 \subsection{Classical Reid's recipe}
  \label{sec:RR}
 The first example in the literature of a consistent dimer model $\Gamma$ in a real 2-torus is the hexagonal tiling for a finite abelian subgroup $G\subset \SL(3,\kk)$ studied by Reid~\cite{Reid97}; a more formal construction in the context of dimer models was given by Ueda--Yamazaki~\cite[Section~5]{UY11}. In this case:
 \begin{itemize}
     \item the quiver $Q$ is the McKay quiver of $G$ with vertex set $Q_0 = \Irr(G)$ given by the set of isomorphism classes of irreducible representations of $G$; 
      \item the Jacobian algebra $A=\kk[x,y,z]\rtimes G$ is the skew group algebra of $G$;
     \item the characteristic polygon $\Delta(\Gamma)$ is the junior simplex and the Gorenstein toric threefold is $X\cong \mathbb{C}^3/G$.
 \end{itemize}
 Let $0\in Q_0$ denote the vertex corresponding to the trivial representation of $G$. For the generic stability condition $\theta\in \Theta$ satisfying $\theta_i>0$ for $i\neq 0$, the fine moduli space $\mathcal{M}_\theta$ is isomorphic to the $G$-Hilbert scheme $\ghilb$ that parametrises $G$-invariant subschemes $Z\subset \mathbb{C}^3$ such that $H^0(\mathcal{O}_Z)$ is isomorphic as a $G$-module to $\kk[G]$. The fan $\Sigma$ of the toric variety $\ghilb$ can be constructed as in  Craw--Reid~\cite{CR02}.
 
  \emph{Reid's recipe} was introduced in several examples by Reid~\cite{Reid97} and proved in general by Craw~\cite{Craw05}. The recipe marks interior line segments and lattice points of the triangulation $\Sigma$ with certain nontrivial irreducible representations $i\in Q_0\setminus \{0\}$. To state the result, let  $S_i:=\kk e_i$ denote the vertex simple $A$-module where $e_i\in A$ is the idempotent corresponding to the trivial path at vertex $i$.
    
\begin{recipe}[Classical version of Reid's recipe]\label{rec:RR}
\leavevmode
\begin{enumerate}
\item Let $\tau\in \Sigma(2)$ be an interior line segment and let $m=(m_1, m_2, m_3)\in M$ be the primitive vector in either direction normal to the hyperplane spanned by $\tau$.  The line segment $\tau$ is marked with the vertex $i\in Q_0$ such that the numerator and denominator of the $G$-invariant Laurent monomial $x^{m_1}y^{m_2}z^{m_3}\in \kk[x^{\pm 1}, y^{\pm 1}, z^{\pm 1}]$ lie in the  $i$-character space.
\item Let $\rho\in \Sigma(1)$ be an interior lattice point. A vertex $i\in Q_0$ marks $\rho$ if and only if $S_i$ lies in the socle of every $G$-cluster defined by a torus-invariant point of the divisor $D_\rho$ in $\ghilb$.
\end{enumerate}
\end{recipe}
   
   Surprisingly, every nonzero vertex marks either a unique lattice point or a connected chain of edges that branches at most once \cite[Corollary~4.5]{Craw05}. As an application, a complete set of minimal relations in $\Pic(\ghilb)$ between the tautological lines bundles $\{L_i\mid i\in Q_0\}$ of $\ghilb$ can be given \cite[Theorem~6.1]{Craw05}. The description of the marking of lattice points presented in Recipe~\ref{rec:RR}(2) above is equivalent to the recipe from the original construction, see Craw--Ishii~\cite[Proposition~9.1]{CrawIshii04}.   
      
  Logvinenko~\cite{Logvinenko10} opened up the study of a geometric version of Reid's recipe when he proved a pair of conjectures from Cautis--Logvinenko~\cite{CautisLogvinenko09} in the special case when $\mathbb{C}^3/G$ has a single isolated singularity. To state these results, consider $\mathcal{M}_\theta=\ghilb$ and the functor $\Phi_\theta$ from \eqref{eqn:Phitheta}. Write the quasi-inverse to $\Phi_\theta$ as
\begin{equation}
\label{eqn:Psi}
\Psi_\theta(-):= T^\vee\ltensor_{A} (-) \;\;\colon D^b(\modA) \longrightarrow D^b\big(\coh(\mathcal{M}_\theta)\big)
\end{equation}
rather than $-\ltensor_{A^{\textrm{op}}} T^\vee$. It is immediate that $\Psi_\theta$ sends the indecomposable projective $A$-module $Ae_i$ to the line bundle $L_i^{-1}$ for each $i\in Q_0$. Cautis--Logvinenko~\cite{CautisLogvinenko09} studied the image under $\Psi_\theta$ of the vertex simple $A$-modules $S_i$, and showed that the object $\Psi_\theta(S_i)$, which a priori is a complex of coherent sheaves,  is in fact quasi-isomorphic to the shift (by 0 or 1) of a coherent sheaf. Logvinenko~\cite{Logvinenko10} went further by establishing the following striking relationship between Reid's recipe and the (support of the) object $\Psi_\theta(S_i)$:

\begin{theorem}[Logvinenko~\cite{Logvinenko10}]
\label{thm:Logvinenko}
Let $\mathbb{C}^3/G$ have a single isolated singularity and let $i\in Q_0$ be any non-zero vertex, \emph{i.\,e.}\ $i$ is not the trivial representation. According to Reid's recipe, vertex $i$ marks either:
\begin{enumerate}
 \item[\one] a unique interior lattice point $\rho\in\Sigma(1)$, and $\Psi_\theta(S_i)$ is quasi-isomorphic to $L_i^{-1}\otimes \mathcal{O}_{D_\rho}$;
 \item[\two] a unique interior line segment $\tau\in \Sigma(2)$, and $\Psi_\theta(S_i)$ is quasi-isomorphic to $L_i^{-1}\otimes \mathcal{O}_{C_\tau}$; 
  \item[\three] two or more interior line segments in $\Sigma(2)$, and $\Psi_\theta(S_i)$ is quasi-isomorphic to an object $\mathcal{F}[1]$, where $\mathcal{F}$ is a coherent sheaf whose support  is the union of all torus-invariant divisors $D_\rho$ such that two line segments $\tau$ in $\Sigma$ containing $\rho$ are marked by vertex $i$.  
\end{enumerate}
\end{theorem}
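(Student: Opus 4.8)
The plan is to compute $\Psi_\theta(S_i)$ directly from a projective resolution of the vertex simple $S_i$ over the Jacobian algebra $A=\kk[x,y,z]\rtimes G$, and then to extract the cohomology of the resulting complex of line bundles on $\ghilb$ using the combinatorics of Reid's recipe. Since $G\subset\SL(3,\CC)$, the skew group algebra $A$ has global dimension three, and the vertex simple $S_i=\kk e_i$ admits a minimal projective resolution of Koszul type
\begin{equation*}
0\longrightarrow Ae_i \stackrel{d_3}{\longrightarrow} \bigoplus_{j} Ae_j \stackrel{d_2}{\longrightarrow} \bigoplus_{k} Ae_k \stackrel{d_1}{\longrightarrow} Ae_i \longrightarrow S_i\longrightarrow 0,
\end{equation*}
obtained by tensoring the Koszul resolution of $\kk$ over $\kk[x,y,z]$ with $\rho_i$; here the middle terms are the summands of $A\otimes_{\kk[G]}(V\otimes\rho_i)$ and $A\otimes_{\kk[G]}(\wedge^2V\otimes\rho_i)$, the two outer copies of $Ae_i$ use that $\wedge^3V$ is the trivial $G$-representation, and $d_1,d_2,d_3$ are given by right multiplication by the arrows of the McKay quiver incident to $i$ and by the associated relations.

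\textbf{Applying $\Psi_\theta$.} Using that $\Psi_\theta(Ae_j)\cong L_j^{-1}$ (see the discussion following \eqref{eqn:Psi}) together with the identification in \eqref{eqn:xa} of an arrow $a$ with the monomial section $t^{\div(a)}=\prod_{\mathfrak{e}_a\in\Pi_\rho}t_\rho$ of $L_{\head(a)}\otimes L_{\tail(a)}^{-1}$, the exact functor $\Psi_\theta=T^\vee\ltensor_A(-)$ turns the resolution above into a complex of line bundles on $\ghilb$
\begin{equation*}
L_i^{-1}\stackrel{d_3}{\longrightarrow}\bigoplus_j L_j^{-1}\stackrel{d_2}{\longrightarrow}\bigoplus_k L_k^{-1}\stackrel{d_1}{\longrightarrow} L_i^{-1}
\end{equation*}
placed in cohomological degrees $-3,\dots,0$, whose total cohomology computes $\Psi_\theta(S_i)$; the entries of the $d_\ell$ are, up to signs, the monomials in the Cox variables $t_\rho$ of $\ghilb$ prescribed by \eqref{eqn:xa}.

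\textbf{Local computation via Reid's recipe.} One now works chart by chart over $U_\sigma$ for $\sigma\in\Sigma_\theta(3)$, where each $L_j^{-1}$ is trivialised and the $d_\ell$ become Laurent monomials in the local coordinates. Recipe~\ref{rec:RR}(1) records which rays $\rho$ occur in $t^{\div(p)}$ for a path $p$ from $0$ to $i$ — equivalently, which divisors $D_\rho$ appear in $\div(p)$ — while Recipe~\ref{rec:RR}(2) controls the socles of the $G$-clusters at torus-fixed points; combined with the structural fact \cite[Corollary~4.5]{Craw05} that a nonzero vertex $i$ marks either a single interior lattice point or a chain of interior line segments that branches at most once, this pins down the vanishing loci of the $d_\ell$ on every chart. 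In case \one, $t^{\div(p)}$ equals $t_\rho$ up to units on the relevant charts, $d_1$ is multiplication by $t_\rho$, the higher differentials are fibrewise injective, and the complex is a locally free resolution of $L_i^{-1}\otimes\mathcal{O}_{D_\rho}$ in degree $0$. In case \two\ the same analysis applied to the two monomials attached to the endpoints of $\tau$ yields $\Psi_\theta(S_i)\cong L_i^{-1}\otimes\mathcal{O}_{C_\tau}$. In case \three\ the branching of the chain forces a common factor among the entries of $d_1$, so that $\operatorname{coker}d_1=0$ but $\Ker d_1/\Image d_2$ is a nonzero sheaf $\mathcal{F}$ sitting in cohomological degree $-1$; hence $\Psi_\theta(S_i)\cong\mathcal{F}[1]$, and the local supports glue to the union of the divisors $D_\rho$ at the branch point and endpoints of the chain, matching the claimed description.

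\textbf{Main obstacle.} The heart of the argument is the local cohomology computation in case \three: proving that the complex of line bundles fails to resolve a sheaf in degree $0$ \emph{precisely} at the branching of the chain, and that the shifted sheaf $\mathcal{F}$ has exactly the stated support. This rests on (a) a complete dictionary between the marking rules of Reid's recipe and the monomial entries of $d_1,d_2,d_3$ on each chart, which in turn uses the explicit list of $\theta$-stable perfect matchings and the curves $C_\tau$ on $\ghilb$, and (b) matching the local sheaf computations across overlapping charts to obtain the global statement about $\supp(\mathcal{F})$. The hypothesis that $\CC^3/G$ has a single isolated singularity is what keeps this bookkeeping tractable, since it rigidly constrains the triangulation near the marked cones; removing it requires the CT-subdivision machinery developed later in \cite{CCL17}.
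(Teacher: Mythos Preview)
The paper does not contain a proof of this theorem: it is stated as a result of Logvinenko~\cite{Logvinenko10} (building on Cautis--Logvinenko~\cite{CautisLogvinenko09}) and is quoted purely as background to motivate the dimer-model generalisation. There is therefore no ``paper's own proof'' to compare against.

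That said, your outline is broadly faithful to the strategy actually used in \cite{CautisLogvinenko09,Logvinenko10}: one takes the length-three Koszul-type projective resolution of $S_i$ over $A=\kk[x,y,z]\rtimes G$, applies $\Psi_\theta$ to obtain a four-term complex of line bundles on $\ghilb$ with monomial differentials, and then computes cohomology chart by chart using the combinatorics of Reid's recipe. Two points deserve sharpening. First, in case~\three\ you describe the support of $\mathcal{F}$ as ``the divisors $D_\rho$ at the branch point and endpoints of the chain''; this does not match the statement. The lattice points $\rho$ contributing to $\supp(\mathcal{F})$ are those lying on \emph{two} line segments marked by $i$, i.e.\ the interior nodes of the chain (including any trivalent branch point), not the endpoints of the chain, where only a single marked segment meets $\rho$. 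Second, the passage ``$d_1$ is multiplication by $t_\rho$, the higher differentials are fibrewise injective'' in case~\one\ is too quick: what one actually shows is that after trivialising on $U_\sigma$ the complex becomes a Koszul complex on a regular sequence cutting out the relevant subvariety, and identifying that sequence correctly is exactly where the Reid's recipe input enters. Your sketch names the right ingredients but elides the substantial chart-by-chart verification that constitutes the bulk of \cite{Logvinenko10}; the isolated-singularity hypothesis is indeed what makes that verification finite, as you note.
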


Cautis--Craw--Logvinenko~\cite{CCL17} later described the mysterious sheaves $\mathcal{F}$ that appear in Theorem~\ref{thm:Logvinenko}\three\ in terms of Reid's recipe and they generalised Theorem~\ref{thm:Logvinenko} to every finite abelian subgroup $G\subset\SL(3,\kk)$, \emph{i.\,e.}\ $\mathbb{C}^3/G$ need not have a single isolated singularity. This complete description of the objects $\Psi_\theta(S_i)$ is called \emph{Derived Reid's recipe}.  Even for this derived category statement, the classical Recipe~\ref{rec:RR} forms an important part of the result and it is essential in defining the notion of a CT-subdivision (see \cite[Definition~4.2]{CCL17}) that provides the key technical tool for the proofs.

\section{Nakamura's jigsaw transformations}
This section studies fundamental hexagons and provides a dimer model analogue of the `$G$-igsaw transformations' introduced originally by Nakamura~\cite{Nakamura01} for a finite abelian subgroup $G\subset \SL(3,\kk)$.

\subsection{Distinguished $\theta$-stable $A$-modules}
 For any generic stability parameter $\theta\in\Theta$, we now describe how to associate a $\theta$-stable $A$-module $M_\sigma$ to each cone in the fan $\Sigma_\theta$ of the toric variety $\mathcal{M}_\theta$.
 
 First, for any ray $\rho\in\Sigma_\theta(1)$, let $O_\rho\subset \mathcal{M}_\theta$ denote the corresponding two-dimensional torus-orbit. For any point $y\in O_\rho$, let $(v_a)\in \mathbb{C}^{Q_1}$ be any quiver representation that determines the $\theta$-stable $A$-module $T_y$ obtained as the fibre of $T$ over the point $y$. Ishii--Ueda~\cite[Lemma~6.1]{IshiiUeda08} prove that the set of edges 
\begin{equation}
\label{eqn:pm}
\Pi_\rho:=\{\e_a\in \Gamma_1 \mid v_a=0\}
\end{equation}
 in $\Gamma$ is a perfect matching that is independent of the choice of point $y\in O_\rho$ and the choice of representative $(v_a)$ of $T_y$, and moreover, every $\theta$-stable perfect matching of $\Dimer$ arises in this way. 
 
  Using these perfect matchings, we associate a $\theta$-stable $A$-module to each cone of the fan $\Sigma_\theta$ as follows. For $\sigma\in\Sigma_\theta$, write $\sigma(1):=\{\rho\in\Sigma_\theta(1)\mid\rho\subseteq\sigma\}$, and for each $a\in Q_1$, define scalars
\begin{equation}
     \label{eqn:AmoduleOfCone}
v_a:=\left\{
\begin{array}{cl}
0 & \text{if } \mathfrak{e}_a\in\bigcup_{\rho\in\sigma(1)} \Pi_\rho\\
1 & \text{otherwise}
\end{array} \right..
\end{equation}
  For example, the zero cone $\sigma$ contains no rays, so $v_a=1$ for all $a\in Q_1$ in that case. For each cone $\sigma\in \Sigma_\theta$, the point $(v_a)\in \mathbb{C}^{Q_1}$ is a representation of the quiver $Q$ of dimension vector $\vv$ that depends only on $\sigma$. Let $M_\sigma$ denote the corresponding $\kk Q$-module of dimension vector $v$. In fact we can say more about $M_\sigma$:

\begin{lemma} 
\label{lem:conepoint}
For $\sigma\in \Sigma_\theta$, there exists a unique point $y\in O_\sigma\subseteq \mathcal{M}_\theta$ such that $M_\sigma\cong T_y$. In particular, $M_\sigma$ is a $\theta$-stable $A$-module of dimension vector $v$.
\end{lemma}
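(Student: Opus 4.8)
The plan is to identify the cone $\sigma$ with a torus-orbit closure via standard toric geometry and then show that the combinatorially defined representation $(v_a)$ from \eqref{eqn:AmoduleOfCone} coincides with the fibre of the tautological bundle at a carefully chosen point of that orbit. First I would recall that under the isomorphism $\mathcal{M}_\theta\cong V\git_\theta T_B$ of \eqref{eqn:crepantres}, the open affine toric chart $U_\sigma$ is the image of a $T_B$-invariant affine chart of $V\subset \mathbb{C}^{Q_1}$, and the torus-orbit $O_\sigma$ corresponds to the locus where precisely the coordinates indexed by rays $\rho\in\sigma(1)$ degenerate. Concretely, for each ray $\rho\in\Sigma_\theta(1)$ there is a Cox variable $t_\rho$, and by \eqref{eqn:xa} the tautological map $\phi(a)$ vanishes on exactly those $t_\rho$ with $\mathfrak{e}_a\in\Pi_\rho$. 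Thus if $y\in O_\sigma$ is the distinguished point obtained by setting $t_\rho=0$ for $\rho\in\sigma(1)$ and $t_\rho=1$ otherwise (i.e.\ the identity of the torus-orbit $O_\sigma$ under a fixed trivialisation), then the fibre map $\phi(a)\otimes\mathcal{O}_y$ is zero precisely when $\mathfrak{e}_a\in\bigcup_{\rho\in\sigma(1)}\Pi_\rho$ and is a nonzero scalar — which after rescaling the trivialisations of the $L_i$ we may take to be $1$ — otherwise. This is exactly the prescription \eqref{eqn:AmoduleOfCone}, so $M_\sigma\cong T_y$ for this $y$.

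The second point — that $y$ is the \emph{unique} point of $O_\sigma$ with $M_\sigma\cong T_y$ — I would deduce from the fact that $\mathcal{M}_\theta$ is a \emph{fine} moduli space for generic $\theta$ (as recalled in the paragraph preceding \eqref{eqn:tauthomo}): two closed points of $\mathcal{M}_\theta$ with isomorphic fibres $T_y\cong T_{y'}$ must be equal. Alternatively, and perhaps more cleanly, one observes that the $(|\sigma(1)|)$-dimensional torus acting on $U_\sigma$ acts freely and transitively on $O_\sigma$, rescaling the $t_\rho$ with $\rho\notin\sigma(1)$, and hence acts on the scalars $v_a$ by characters; the only point fixed by the stabiliser subgroup relevant to $O_\sigma$ — equivalently the only point whose representation is, up to the $(\mathbb{C}^\times)^{Q_0}/\mathbb{C}^\times$ change-of-basis action, equal to the $0/1$ representation — is $y$. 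Either route gives uniqueness.

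Finally, $\theta$-stability and the dimension vector: since $y\in O_\sigma\subseteq\mathcal{M}_\theta$ is an honest closed point of the moduli space, $T_y$ is by construction a $\theta$-stable $A$-module of dimension vector $\vv$; in particular it satisfies the relations \eqref{eqn:Jacobian}, so the a priori only-a-$\kk Q$-module $M_\sigma$ is in fact an $A$-module, and it is $\theta$-stable. The dimension vector is $\vv=(1,\dots,1)$ because the fibre of each rank-one bundle $L_i$ over $y$ is one-dimensional.

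\textbf{Main obstacle.} The genuinely delicate step is the first one: matching the combinatorial vanishing rule \eqref{eqn:AmoduleOfCone} with the geometric vanishing of the tautological maps at the distinguished point of $O_\sigma$. This requires care about which $\theta$-stable perfect matchings $\Pi_\rho$ correspond to rays in $\sigma$ versus rays elsewhere in $\Sigma_\theta$, about the precise dictionary between Cox coordinates $t_\rho$ and the affine coordinates on $V$ (via the second row of \eqref{eqn:fgdiagram} and the sections \eqref{eqn:xa}), and about the normalisation $L_0\cong\mathcal{O}_{\mathcal{M}_\theta}$ which is what allows one to pin down the nonzero scalars to be exactly $1$ rather than merely nonzero. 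One must also verify that the point with $t_\rho=0$ exactly for $\rho\in\sigma(1)$ genuinely lies in the orbit $O_\sigma$ and not in a smaller stratum — i.e.\ that the cone $\sigma$ is actually present in the fan $\Sigma_\theta$, which is part of the hypothesis $\sigma\in\Sigma_\theta$, but one should check the coordinate description of $O_\sigma$ is the expected one. Everything else is either a direct appeal to the fineness of $\mathcal{M}_\theta$ or a routine consequence of the toric/quiver dictionary set up in Section~2.
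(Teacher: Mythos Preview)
Your approach is correct but proceeds in the opposite direction from the paper's. The paper starts on the representation side: it takes the point $(v_a)\in\mathbb{C}^{Q_1}$ from \eqref{eqn:AmoduleOfCone} and argues, case-by-case on $\dim\sigma$, that this point lies in $V$, is $\theta$-stable, and that its class $y:=[(v_a)]\in V\git_\theta T_B\cong\mathcal{M}_\theta$ lands in $O_\sigma$ --- handling $\dim\sigma=0$ directly (where all $v_a=1$ so $(v_a)\in V\cap(\kk^\times)^{Q_1}$) and outsourcing the positive-dimensional cases to Ishii--Ueda~\cite[Lemma~6.2, Section~4]{IshiiUeda08} and Mozgovoy~\cite[Corollaries~4.18--4.19]{Mozgovoy09}. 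You instead start on the geometric side, producing $y\in O_\sigma$ via Cox coordinates and then reading off $T_y$ from the section formula \eqref{eqn:xa}. Your route is uniform in $\dim\sigma$ and makes the link between the vanishing pattern of $(v_a)$ and the perfect matchings $\Pi_\rho$ transparent; the paper's is shorter to state but defers the actual computation to the cited references, where essentially the same Cox-ring calculation is carried out. Your uniqueness argument via fineness of $\mathcal{M}_\theta$ is exactly right and the paper leaves this implicit. Two small caveats: your alternative uniqueness argument about the torus action is muddled (the dense torus of $\mathcal{M}_\theta$ is $3$-dimensional and acts transitively but not freely on $O_\sigma$ when $\dim\sigma>0$), so drop it and rely on fineness; and since \eqref{eqn:xa} itself rests on \cite{BenderMozgovoy09,IshiiUeda15}, your argument is not genuinely more self-contained than the paper's, just differently packaged.
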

\begin{proof}
This is essentially a restatement of results from Ishii--Ueda~\cite{IshiiUeda08} and Mozgovoy~\cite{Mozgovoy09}. Indeed, for $\sigma\in \Sigma_\theta$, we claim that the point $(v_a)\in \mathbb{C}^{Q_1}$ from \eqref{eqn:AmoduleOfCone} is a $\theta$-stable point of the subscheme $V\subseteq \mathbb{C}^{Q_1}$ from \eqref{eqn:V} and, moreover, the point $y:=[(v_a)]\in \mathcal{M}_\theta$ lies in $O_\sigma$. The proof is a case-by-case analysis according to the dimension of $\sigma$ as follows. For $0\in \Sigma_\theta$, we have $v_a=1$ for all $a\in Q_1$, so $(v_a)\in V\cap (\kk^\times)^{Q_1}$. Every point in $(\kk^\times)^{Q_1}$ is $\theta$-stable for all $\theta\in \Theta$, so $y=[(v_a)]$ satisfies $y\in\bigl(V\cap(\kk^\times)^{Q_1}\bigl)/(\kk^\times)^{Q_0}=O_0\subset \mathcal{M}_\theta$ as required. For a cone $\sigma\in \Sigma_\theta(d)$ of dimension $d>0$, the result was established by \cite[Lemma~6.2]{IshiiUeda08} when $d=1$, by \cite[Corollary~4.19]{Mozgovoy09} when $d=2$ and by both \cite[Section~4]{IshiiUeda08} and \cite[Corollary~4.18]{Mozgovoy09} when $d=3$.
\end{proof}

 \subsection{Fundamental hexagons}
 \label{sec:fundahex}
We now recall the combinatorial description of the torus-invariant $\theta$-stable $A$-modules $M_\sigma$ for $\sigma\in \Sigma_\theta(3)$ by Ishii--Ueda ~\cite[Lemma~4.5]{IshiiUeda08}. Let $\pi\colon\RR^2\rightarrow \mathbb{T}$ denote the universal cover of the two-dimensional torus. The preimage of $\Dimer$ defines a $\ZZ^2$-periodic regular cell decomposition $\widetilde \Dimer$ of $\RR^2$. Similarly, the preimage of the quiver $Q$ dual to $\Dimer$ is a $\ZZ^2$-periodic quiver $\widetilde Q$ in $\RR^2$, such that each vertex $\widetilde{i}$ of $\widetilde Q$ lies inside a unique polygonal tile of $\widetilde \Dimer$.
 
 For a cone $\sigma\in\Sigma_\theta(3)$, let $\rho_0,\rho_1,\rho_2\in\Sigma_\theta(1)$ denote the rays in $\sigma$, and write $\Pi_0, \Pi_1, \Pi_2$ for the corresponding $\theta$-stable perfect matchings. Let $Q^\sigma$ denote the subquiver of $Q$ with vertex set $Q_0$ and with arrow set comprising those $a\in Q_1$ for which the corresponding scalar from \eqref{eqn:AmoduleOfCone} satisfies $v_a \neq 0$; in terms of perfect matchings, this is the set of arrows $a\in Q_1$ such that $\mathfrak{e}_a\not\in\bigcup_{0\leq i\leq 2} \Pi_i$. Fix a vertex $i\in Q_0$. Then for any choice of $\widetilde{i} \in \pi^{-1}(i)$, there is a uniquely defined subquiver $\widetilde{Q^\sigma}$ of $\widetilde Q$ that has $\widetilde{i}$ as a vertex such that $\pi$ identifies the quivers $\widetilde{Q^\sigma}$ and $Q^\sigma$ \cite[Lemma~4.1]{IshiiUeda08}.

\begin{definition} 
\label{def:hexHoneycomb}
For $\sigma\in\Sigma_\theta(3)$, the \emph{fundamental hexagon} $\hex(\sigma)$ is the subset of $\RR^2$ covered by the tiles of $\widetilde{\Dimer}$ dual to the vertices of $\widetilde{Q^{\sigma}}$. Let $\tiling(\sigma)$ denote the $\ZZ^2$-periodic graph obtained as the union of all $\ZZ^2$-translates of the boundary of $\hex(\sigma)$.
\end{definition}

 Note that $\hex(\sigma)$ encodes the same information as the quiver $Q^\sigma$, so it allows one to reconstruct the $A$-module $M_\sigma$ for $\sigma\in \Sigma_\theta(3)$. Also, since the boundary of $\hex(\sigma)$ is a subset of $\tiling(\sigma)$, it is sometimes convenient to regard an edge of the boundary of $\hex(\sigma)$ as an edge of the dimer model $\Dimer$ in the 2-torus $\mathbb{T}$. 
 
\begin{example}
\label{exa:lhfundhex}
Consider the dimer model from Example~\ref{exa:LongHex}, and let $\sigma\in\Sigma_\theta(3)$ be the cone generated by the rays $\rho_8, \rho_9, \rho_{10}$ in the fan $\Sigma_\theta$ shown in Figure~\ref{fig:LHDivisorsTriangulation}(b). The dashed arrows in green in Figure~\ref{fig:8910} illustrate one lift $\widetilde{Q^\sigma}$ to $\RR^2$ of the quiver $Q^\sigma$, while the dashed edges in red illustrate the edges in the boundary of $\hex(\sigma)$. The set $\bigcup_{0\leq \rho\leq 2} \Pi_\rho$ is the union of all edges shown in black and all edges shown in dashed red; note that $\tiling(\sigma)$ is obtained from this set by removing each connected component comprising only a single edge.
\begin{figure}[!ht]
\centering
\begin{tikzpicture} [thick,scale=0.31, every node/.style={scale=1}]
\begin{scope}\clip (0pt,0pt) rectangle (400pt,400pt);

\draw [ggrey,-stealthnew,arrowhead=6pt,shorten >=5pt] (26pt,26pt) to node [rectangle,draw,fill=white,sloped,inner sep=1pt] {{\tiny 1}} (108pt,-56pt); 
\draw [ggrey,-stealthnew,arrowhead=6pt,shorten >=5pt] (26pt,426pt) to node [rectangle,draw,fill=white,sloped,inner sep=1pt] {{\tiny 1}} (108pt,344pt);  
\draw [ggrey,-stealthnew,arrowhead=6pt, shorten >=5pt] (26pt,26pt) to node [rectangle,draw,fill=white,sloped,inner sep=1pt] {{\tiny 2}} (67pt,185pt); 
\draw [ggrey,-stealthnew,arrowhead=6pt,shorten >=5pt] (426pt,426pt) to node [rectangle,draw,fill=white,sloped,inner sep=1pt] {{\tiny 3}} (267pt,385pt); 
\draw [ggrey,-stealthnew,arrowhead=6pt,shorten >=5pt] (26pt,26pt) to node [rectangle,draw,fill=white,sloped,inner sep=1pt] {{\tiny 3}} (-133pt,-15pt); 
\draw [dashed, ggreen,-stealthnew,arrowhead=6pt,shorten >=5pt] (426pt,26pt) to node [rectangle,solid,draw,fill=white,sloped,inner sep=1pt] {{\tiny 3}} (267pt,-15pt); 
\draw [ggrey,-stealthnew,arrowhead=6pt,shorten >=5pt] (350pt,302pt) to node [rectangle,draw,fill=white,sloped,inner sep=1pt] {{\tiny 5}} (273pt,179pt); 
\draw [ggrey,-stealthnew,arrowhead=6pt,shorten >=5pt] (308pt,144pt) to node [rectangle,draw,fill=white,sloped,inner sep=1pt] {{\tiny 9}} (273pt,179pt);  
\draw [ggrey,-stealthnew,arrowhead=6pt,shorten >=5pt] (508pt,344pt) to node [rectangle,draw,fill=white,sloped,inner sep=1pt] {{\tiny 11}} (350pt,302pt); 
\draw [ggrey,-stealthnew,arrowhead=6pt,shorten >=5pt] (108pt,344pt) to node [rectangle,draw,fill=white,sloped,inner sep=1pt] {{\tiny 11}} (-50pt,302pt);  
\draw [ggrey,-stealthnew,arrowhead=6pt,shorten >=5pt] (108pt,344pt) to node [rectangle,draw,fill=white,sloped,inner sep=1pt] {{\tiny 12}} (-92pt,144pt); 
\draw [ggrey,-stealthnew,arrowhead=6pt,shorten >=5pt] (508pt,344pt) to node [rectangle,draw,fill=white,sloped,inner sep=1pt] {{\tiny 12}} (308pt,144pt);  
\draw [dashed, ggreen,-stealthnew,arrowhead=6pt,shorten >=5pt] (467pt,185pt) to node [rectangle,solid,draw,fill=white,sloped,inner sep=1pt] {{\tiny 20}} (344pt,108pt); 
\draw [ggrey,-stealthnew,arrowhead=6pt,shorten >=5pt] (67pt,185pt) to node [rectangle,draw,fill=white,sloped,inner sep=1pt] {{\tiny 20}} (-56pt,108pt); 
\draw [ggrey,-stealthnew,arrowhead=6pt,shorten >=5pt] (150pt,102pt) to node [rectangle,draw,fill=white,sloped,inner sep=1pt] {{\tiny 23}} (273pt,179pt); 
\draw [dashed, ggreen,-stealthnew,arrowhead=6pt,shorten >=5pt] (267pt,-15pt) to node [rectangle,solid,draw,fill=white,sloped,inner sep=1pt] {{\tiny 24}} (344pt,108pt); 
\draw [ggrey,-stealthnew,arrowhead=6pt,shorten >=5pt] (267pt,385pt) to node [rectangle,draw,fill=white,sloped,inner sep=1pt] {{\tiny 24}} (344pt,508pt); 
\draw [ggrey,-stealthnew,arrowhead=6pt,shorten >=5pt] (267pt,385pt) to node [rectangle,draw,fill=white,sloped,inner sep=1pt] {{\tiny 25}} (191pt,261pt); 
\draw [ggrey,-stealthnew,arrowhead=6pt,shorten >=5pt] (191pt,261pt) to node [rectangle,draw,fill=white,sloped,inner sep=1pt] {{\tiny 26}} (350pt,302pt); ; 
\draw [ggrey,-stealthnew,arrowhead=6pt,shorten >=5pt] (191pt,261pt) to node [rectangle,draw,fill=white,sloped,inner sep=1pt] {{\tiny 28}} (150pt,102pt); 
\node at (26pt,26pt) [circle,draw=ggrey,draw,fill=white,minimum size=10pt,inner sep=1pt,text=ggrey] {\mbox{\tiny $0$}};  
\node at (350pt,302pt) [circle,draw=ggrey,draw,fill=white,minimum size=10pt,inner sep=1pt,text=ggrey] {\mbox{\tiny $1$}}; 
\node at (273pt,179pt) [circle,draw=ggrey,draw,fill=white,minimum size=10pt,inner sep=1pt,text=ggrey] {\mbox{\tiny $2$}}; 
\node at (308pt,144pt) [circle,draw=ggrey,draw,fill=white,minimum size=10pt,inner sep=1pt,text=ggrey] {\mbox{\tiny $3$}};  
\node at (108pt,344pt) [circle,draw=ggrey,draw,fill=white,minimum size=10pt,inner sep=1pt,text=ggrey] {\mbox{\tiny $4$}}; 
\node at (344pt,108pt) [circle,draw=black,draw,fill=white,minimum size=10pt,inner sep=1pt,text=black] {\mbox{\tiny $5$}}; 
\node at (67pt,185pt) [circle,draw=ggrey,draw,fill=white,minimum size=10pt,inner sep=1pt,text=ggrey] {\mbox{\tiny $6$}}; 
\node at (150pt,102pt) [circle,draw=ggrey,draw,fill=white,minimum size=10pt,inner sep=1pt,text=ggrey] {\mbox{\tiny $7$}}; 
\node at (267pt,385pt) [circle,draw=ggrey,draw,fill=white,minimum size=10pt,inner sep=1pt,text=ggrey] {\mbox{\tiny $8$}}; 
\node at (191pt,261pt) [circle,draw=ggrey,draw,fill=white,minimum size=10pt,inner sep=1pt,text=ggrey] {\mbox{\tiny $9$}}; 
\draw [very thick] (309pt,343pt) -- (428pt,357pt); % 4
\draw [very thick] (-91pt,343pt) -- (28pt,357pt); % 4
\draw [very thick] (177pt,75pt) -- (230pt,89pt); % 6
\draw [very thick] (377pt,275pt) -- (363pt,222pt); % 7
\draw [very thick] (205pt,181pt) -- (271pt,247pt); % 8
\draw [dashed, dbblue,very thick] (253pt,65pt) -- (387pt,199pt); % 10
\draw [very thick] (253pt,65pt) -- (230pt,89pt); % 13
\draw [very thick] (40pt,212pt) -- (122pt,263pt); % 14
\draw [very thick] (95pt,24pt) -- (177pt,75pt); % 15
\draw [very thick] (189pt,330pt) -- (240pt,412pt); % 16
\draw [dashed, dbblue, very thick] (189pt,-70pt) -- (240pt,12pt); % 16
\draw [very thick] (348pt,40pt) -- (412pt,106pt); % 17
\draw [very thick] (-52pt,40pt) -- (12pt,106pt); % 17
\draw [dashed, dbblue, very thick] (253pt,65pt) -- (240pt,12pt); % 18
\draw [dashed, dbblue, very thick] (387pt,199pt) -- (440pt,212pt); % 19
\draw [very thick] (-13pt,199pt) -- (40pt,212pt); % 19
\draw [very thick] (109pt,143pt) -- (122pt,263pt); % 21
\draw [very thick] (109pt,143pt) -- (95pt,24pt); % 22
\draw [very thick] (189pt,330pt) -- (122pt,263pt); % 27

\node at (309pt,343pt) [circle,draw,fill=black,minimum size=6pt,inner sep=1pt]{}; % 0,1,8,9
\node at (28pt,357pt) [circle,draw,fill=white,minimum size=6pt,inner sep=1pt]{}; % 0,1,4
\node at (95pt,24pt) [circle,draw,fill=black,minimum size=6pt,inner sep=1pt]{}; % 0,4,7
\node at (348pt,40pt) [circle,draw,fill=white,minimum size=6pt,inner sep=1pt]{}; % 0,5,8
\node at (12pt,106pt) [circle,draw,fill=black,minimum size=6pt,inner sep=1pt]{}; % 0,5,6
\node at (109pt,143pt) [circle,draw,fill=white,minimum size=6pt,inner sep=1pt]{}; % 0,6,7,9
\node at (377pt,275pt) [circle,draw,fill=black,minimum size=6pt,inner sep=1pt]{}; % 1,2,4
\node at (271pt,247pt) [circle,draw,fill=white,minimum size=6pt,inner sep=1pt]{}; % 1,2,9
\node at (205pt,181pt) [circle,draw,fill=black,minimum size=6pt,inner sep=1pt]{}; % 2,7,9
\node at (230pt,89pt) [circle,draw,fill=black,minimum size=6pt,inner sep=1pt]{}; % 2,3,4 left
\node at (363pt,222pt) [circle,draw,fill=white,minimum size=6pt,inner sep=1pt]{}; % 2,3,4 right
\node at (177pt,75pt) [circle,draw,fill=white,minimum size=6pt,inner sep=1pt]{}; % 2,4,7
\node at (253pt,65pt) [circle,draw,fill=white,minimum size=6pt,inner sep=1pt]{}; % 3,4,5 left
\node at (387pt,199pt) [circle,draw,fill=black,minimum size=6pt,inner sep=1pt]{}; % 3,4,5 right
\node at (40pt,212pt) [circle,draw,fill=white,minimum size=6pt,inner sep=1pt]{}; % 4,5,6
\node at (240pt,12pt) [circle,draw,fill=black,minimum size=6pt,inner sep=1pt]{}; % 4,5,8
\node at (122pt,263pt) [circle,draw,fill=black,minimum size=6pt,inner sep=1pt]{}; % 4,6,9
\node at (189pt,330pt) [circle,draw,fill=white,minimum size=6pt,inner sep=1pt]{}; % 4,8,9
\draw[very thick, dashed] (0pt,0pt) rectangle (400pt,400pt);
\end{scope}\end{tikzpicture}
\hspace{-.225cm}
\begin{tikzpicture} [thick,scale=0.31, every node/.style={scale=1}]
\begin{scope}\clip (0pt,0pt) rectangle (400pt,400pt);
\draw [dashed, ggreen,-stealthnew,arrowhead=6pt,shorten >=5pt] (26pt,26pt) to node [rectangle,solid,draw,fill=white,sloped,inner sep=1pt] {{\tiny 1}} (108pt,-56pt); 
\draw [ggrey,-stealthnew,arrowhead=6pt,shorten >=5pt] (26pt,426pt) to node [rectangle,draw,fill=white,sloped,inner sep=1pt] {{\tiny 1}} (108pt,344pt);  
\draw [dashed, ggreen,-stealthnew,arrowhead=6pt, shorten >=5pt] (26pt,26pt) to node [rectangle,solid,draw,fill=white,sloped,inner sep=1pt] {{\tiny 2}} (67pt,185pt); 
\draw [ggrey,-stealthnew,arrowhead=6pt,shorten >=5pt] (426pt,426pt) to node [rectangle,draw,fill=white,sloped,inner sep=1pt] {{\tiny 3}} (267pt,385pt); 
\draw [dashed, ggreen,-stealthnew,arrowhead=6pt,shorten >=5pt] (26pt,26pt) to node [rectangle,solid,draw,fill=white,sloped,inner sep=1pt] {{\tiny 3}} (-133pt,-15pt); 
\draw [ggrey,-stealthnew,arrowhead=6pt,shorten >=5pt] (426pt,26pt) to node [rectangle,draw,fill=white,sloped,inner sep=1pt] {{\tiny 3}} (267pt,-15pt); 
\draw [ggrey,-stealthnew,arrowhead=6pt,shorten >=5pt] (350pt,302pt) to node [rectangle,draw,fill=white,sloped,inner sep=1pt] {{\tiny 5}} (273pt,179pt); 
\draw [ggrey,-stealthnew,arrowhead=6pt,shorten >=5pt] (308pt,144pt) to node [rectangle,draw,fill=white,sloped,inner sep=1pt] {{\tiny 9}} (273pt,179pt);  
\draw [ggrey,-stealthnew,arrowhead=6pt,shorten >=5pt] (508pt,344pt) to node [rectangle,draw,fill=white,sloped,inner sep=1pt] {{\tiny 11}} (350pt,302pt); 
\draw [ggrey,-stealthnew,arrowhead=6pt,shorten >=5pt] (108pt,344pt) to node [rectangle,draw,fill=white,sloped,inner sep=1pt] {{\tiny 11}} (-50pt,302pt);  
\draw [ggrey,-stealthnew,arrowhead=6pt,shorten >=5pt] (108pt,344pt) to node [rectangle,draw,fill=white,sloped,inner sep=1pt] {{\tiny 12}} (-92pt,144pt); 
\draw [ggrey,-stealthnew,arrowhead=6pt,shorten >=5pt] (508pt,344pt) to node [rectangle,draw,fill=white,sloped,inner sep=1pt] {{\tiny 12}} (308pt,144pt);  
\draw [ggrey,-stealthnew,arrowhead=6pt,shorten >=5pt] (467pt,185pt) to node [rectangle,draw,fill=white,sloped,inner sep=1pt] {{\tiny 20}} (344pt,108pt); 
\draw [dashed, ggreen,-stealthnew,arrowhead=6pt,shorten >=5pt] (67pt,185pt) to node [rectangle,solid,draw,fill=white,sloped,inner sep=1pt] {{\tiny 20}} (-56pt,108pt); 
\draw [ggrey,-stealthnew,arrowhead=6pt,shorten >=5pt] (150pt,102pt) to node [rectangle,draw,fill=white,sloped,inner sep=1pt] {{\tiny 23}} (273pt,179pt); 
\draw [ggrey,-stealthnew,arrowhead=6pt,shorten >=5pt] (267pt,-15pt) to node [rectangle,draw,fill=white,sloped,inner sep=1pt] {{\tiny 24}} (344pt,108pt); 
\draw [ggrey,-stealthnew,arrowhead=6pt,shorten >=5pt] (267pt,385pt) to node [rectangle,draw,fill=white,sloped,inner sep=1pt] {{\tiny 24}} (344pt,508pt); 
\draw [ggrey,-stealthnew,arrowhead=6pt,shorten >=5pt] (267pt,385pt) to node [rectangle,draw,fill=white,sloped,inner sep=1pt] {{\tiny 25}} (191pt,261pt); 
\draw [ggrey,-stealthnew,arrowhead=6pt,shorten >=5pt] (191pt,261pt) to node [rectangle,draw,fill=white,sloped,inner sep=1pt] {{\tiny 26}} (350pt,302pt); ; 
\draw [ggrey,-stealthnew,arrowhead=6pt,shorten >=5pt] (191pt,261pt) to node [rectangle,draw,fill=white,sloped,inner sep=1pt] {{\tiny 28}} (150pt,102pt); 
\node at (26pt,26pt) [circle,draw=black,draw,fill=white,minimum size=10pt,inner sep=1pt,text=black] {\mbox{\tiny $0$}};  
\node at (350pt,302pt) [circle,draw=ggrey,draw,fill=white,minimum size=10pt,inner sep=1pt,text=ggrey] {\mbox{\tiny $1$}}; 
\node at (273pt,179pt) [circle,draw=ggrey,draw,fill=white,minimum size=10pt,inner sep=1pt,text=ggrey] {\mbox{\tiny $2$}}; 
\node at (308pt,144pt) [circle,draw=ggrey,draw,fill=white,minimum size=10pt,inner sep=1pt,text=ggrey] {\mbox{\tiny $3$}};  
\node at (108pt,344pt) [circle,draw=ggrey,draw,fill=white,minimum size=10pt,inner sep=1pt,text=ggrey] {\mbox{\tiny $4$}}; 
\node at (344pt,108pt) [circle,draw=ggrey,draw,fill=white,minimum size=10pt,inner sep=1pt,text=ggrey] {\mbox{\tiny $5$}}; 
\node at (67pt,185pt) [circle,draw=black,draw,fill=white,minimum size=10pt,inner sep=1pt,text=black] {\mbox{\tiny $6$}}; 
\node at (150pt,102pt) [circle,draw=ggrey,draw,fill=white,minimum size=10pt,inner sep=1pt,text=ggrey] {\mbox{\tiny $7$}}; 
\node at (267pt,385pt) [circle,draw=ggrey,draw,fill=white,minimum size=10pt,inner sep=1pt,text=ggrey] {\mbox{\tiny $8$}}; 
\node at (191pt,261pt) [circle,draw=ggrey,draw,fill=white,minimum size=10pt,inner sep=1pt,text=ggrey] {\mbox{\tiny $9$}}; 
\draw [very thick] (309pt,343pt) -- (428pt,357pt); % 4
\draw [very thick] (-91pt,343pt) -- (28pt,357pt); % 4
\draw [dashed, dbblue, very thick] (177pt,75pt) -- (230pt,89pt); % 6
\draw [very thick] (377pt,275pt) -- (363pt,222pt); % 7
\draw [very thick] (205pt,181pt) -- (271pt,247pt); % 8
\draw [very thick] (253pt,65pt) -- (387pt,199pt); % 10
\draw [dashed, dbblue, very thick] (253pt,65pt) -- (230pt,89pt); % 13
\draw [dashed, dbblue, very thick] (40pt,212pt) -- (122pt,263pt); % 14
\draw [dashed, dbblue, very thick] (95pt,24pt) -- (177pt,75pt); % 15
\draw [very thick] (189pt,330pt) -- (240pt,412pt); % 16
\draw [dashed, dbblue, very thick] (189pt,-70pt) -- (240pt,12pt); % 16
\draw [very thick] (348pt,40pt) -- (412pt,106pt); % 17
\draw [very thick] (-52pt,40pt) -- (12pt,106pt); % 17
\draw [dashed, dbblue, very thick] (253pt,65pt) -- (240pt,12pt); % 18
\draw [very thick] (387pt,199pt) -- (440pt,212pt); % 19
\draw [dashed, dbblue, very thick] (-13pt,199pt) -- (40pt,212pt); % 19
\draw [dashed, dbblue, very thick] (109pt,143pt) -- (122pt,263pt); % 21
\draw [dashed, dbblue, very thick] (109pt,143pt) -- (95pt,24pt); % 22
\draw [very thick] (189pt,330pt) -- (122pt,263pt); % 27

\node at (309pt,343pt) [circle,draw,fill=black,minimum size=6pt,inner sep=1pt]{}; % 0,1,8,9
\node at (28pt,357pt) [circle,draw,fill=white,minimum size=6pt,inner sep=1pt]{}; % 0,1,4
\node at (95pt,24pt) [circle,draw,fill=black,minimum size=6pt,inner sep=1pt]{}; % 0,4,7
\node at (348pt,40pt) [circle,draw,fill=white,minimum size=6pt,inner sep=1pt]{}; % 0,5,8
\node at (12pt,106pt) [circle,draw,fill=black,minimum size=6pt,inner sep=1pt]{}; % 0,5,6
\node at (109pt,143pt) [circle,draw,fill=white,minimum size=6pt,inner sep=1pt]{}; % 0,6,7,9
\node at (377pt,275pt) [circle,draw,fill=black,minimum size=6pt,inner sep=1pt]{}; % 1,2,4
\node at (271pt,247pt) [circle,draw,fill=white,minimum size=6pt,inner sep=1pt]{}; % 1,2,9
\node at (205pt,181pt) [circle,draw,fill=black,minimum size=6pt,inner sep=1pt]{}; % 2,7,9
\node at (230pt,89pt) [circle,draw,fill=black,minimum size=6pt,inner sep=1pt]{}; % 2,3,4 left
\node at (363pt,222pt) [circle,draw,fill=white,minimum size=6pt,inner sep=1pt]{}; % 2,3,4 right
\node at (177pt,75pt) [circle,draw,fill=white,minimum size=6pt,inner sep=1pt]{}; % 2,4,7
\node at (253pt,65pt) [circle,draw,fill=white,minimum size=6pt,inner sep=1pt]{}; % 3,4,5 left
\node at (387pt,199pt) [circle,draw,fill=black,minimum size=6pt,inner sep=1pt]{}; % 3,4,5 right
\node at (40pt,212pt) [circle,draw,fill=white,minimum size=6pt,inner sep=1pt]{}; % 4,5,6
\node at (240pt,12pt) [circle,draw,fill=black,minimum size=6pt,inner sep=1pt]{}; % 4,5,8
\node at (122pt,263pt) [circle,draw,fill=black,minimum size=6pt,inner sep=1pt]{}; % 4,6,9
\node at (189pt,330pt) [circle,draw,fill=white,minimum size=6pt,inner sep=1pt]{}; % 4,8,9
\draw[very thick, dashed] (0pt,0pt) rectangle (400pt,400pt);
\end{scope}\end{tikzpicture}\\
\begin{tikzpicture} [thick,scale=0.31, every node/.style={scale=1}]
\begin{scope}\clip (0pt,0pt) rectangle (400pt,400pt);
\draw [ggrey,-stealthnew,arrowhead=6pt,shorten >=5pt] (26pt,26pt) to node [rectangle,draw,fill=white,sloped,inner sep=1pt] {{\tiny 1}} (108pt,-56pt); 
\draw [ggrey,-stealthnew,arrowhead=6pt,shorten >=5pt] (26pt,426pt) to node [rectangle,draw,fill=white,sloped,inner sep=1pt] {{\tiny 1}} (108pt,344pt);  
\draw [ggrey,-stealthnew,arrowhead=6pt, shorten >=5pt] (26pt,26pt) to node [rectangle,draw,fill=white,sloped,inner sep=1pt] {{\tiny 2}} (67pt,185pt); 
\draw [dashed, ggreen,-stealthnew,arrowhead=6pt,shorten >=5pt] (426pt,426pt) to node [rectangle,solid,draw,fill=white,sloped,inner sep=1pt] {{\tiny 3}} (267pt,385pt); 
\draw [ggrey,-stealthnew,arrowhead=6pt,shorten >=5pt] (26pt,26pt) to node [rectangle,draw,fill=white,sloped,inner sep=1pt] {{\tiny 3}} (-133pt,-15pt); 
\draw [ggrey,-stealthnew,arrowhead=6pt,shorten >=5pt] (426pt,26pt) to node [rectangle,draw,fill=white,sloped,inner sep=1pt] {{\tiny 3}} (267pt,-15pt); 
\draw [dashed, ggreen,-stealthnew,arrowhead=6pt,shorten >=5pt] (350pt,302pt) to node [rectangle,solid,draw,fill=white,sloped,inner sep=1pt] {{\tiny 5}} (273pt,179pt); 
\draw [dashed, ggreen,-stealthnew,arrowhead=6pt,shorten >=5pt] (308pt,144pt) to node [rectangle,solid,draw,fill=white,sloped,inner sep=1pt] {{\tiny 9}} (273pt,179pt);  
\draw [dashed, ggreen,-stealthnew,arrowhead=6pt,shorten >=5pt] (508pt,344pt) to node [rectangle,solid,draw,fill=white,sloped,inner sep=1pt] {{\tiny 11}} (350pt,302pt); 
\draw [ggrey,-stealthnew,arrowhead=6pt,shorten >=5pt] (108pt,344pt) to node [rectangle,draw,fill=white,sloped,inner sep=1pt] {{\tiny 11}} (-50pt,302pt);  
\draw [ggrey,-stealthnew,arrowhead=6pt,shorten >=5pt] (108pt,344pt) to node [rectangle,draw,fill=white,sloped,inner sep=1pt] {{\tiny 12}} (-92pt,144pt); 
\draw [dashed, ggreen,-stealthnew,arrowhead=6pt,shorten >=5pt] (508pt,344pt) to node [rectangle,solid,draw,fill=white,sloped,inner sep=1pt] {{\tiny 12}} (308pt,144pt);  
\draw [ggrey,-stealthnew,arrowhead=6pt,shorten >=5pt] (467pt,185pt) to node [rectangle,draw,fill=white,sloped,inner sep=1pt] {{\tiny 20}} (344pt,108pt); 
\draw [ggrey,-stealthnew,arrowhead=6pt,shorten >=5pt] (67pt,185pt) to node [rectangle,draw,fill=white,sloped,inner sep=1pt] {{\tiny 20}} (-56pt,108pt); 
\draw [dashed, ggreen,-stealthnew,arrowhead=6pt,shorten >=5pt] (150pt,102pt) to node [rectangle,solid,draw,fill=white,sloped,inner sep=1pt] {{\tiny 23}} (273pt,179pt); 
\draw [ggrey,-stealthnew,arrowhead=6pt,shorten >=5pt] (267pt,-15pt) to node [rectangle,draw,fill=white,sloped,inner sep=1pt] {{\tiny 24}} (344pt,108pt); 
\draw [dashed, ggreen,-stealthnew,arrowhead=6pt,shorten >=5pt] (267pt,385pt) to node [rectangle,solid,draw,fill=white,sloped,inner sep=1pt] {{\tiny 24}} (344pt,508pt); 
\draw [dashed, ggreen,-stealthnew,arrowhead=6pt,shorten >=5pt] (267pt,385pt) to node [rectangle,solid,draw,fill=white,sloped,inner sep=1pt] {{\tiny 25}} (191pt,261pt); 
\draw [dashed, ggreen,-stealthnew,arrowhead=6pt,shorten >=5pt] (191pt,261pt) to node [rectangle,solid,draw,fill=white,sloped,inner sep=1pt] {{\tiny 26}} (350pt,302pt); ; 
\draw [dashed, ggreen,-stealthnew,arrowhead=6pt,shorten >=5pt] (191pt,261pt) to node [rectangle,solid, draw,fill=white,sloped,inner sep=1pt] {{\tiny 28}} (150pt,102pt); 
\node at (26pt,26pt) [circle,draw=ggrey,draw,fill=white,minimum size=10pt,inner sep=1pt,text=ggrey] {\mbox{\tiny $0$}};  
\node at (350pt,302pt) [circle,draw=black,draw,fill=white,minimum size=10pt,inner sep=1pt,text=black] {\mbox{\tiny $1$}}; 
\node at (273pt,179pt) [circle,draw=black,draw,fill=white,minimum size=10pt,inner sep=1pt,text=black] {\mbox{\tiny $2$}}; 
\node at (308pt,144pt) [circle,draw=black,draw,fill=white,minimum size=10pt,inner sep=1pt,text=black] {\mbox{\tiny $3$}};  
\node at (108pt,344pt) [circle,draw=ggrey,draw,fill=white,minimum size=10pt,inner sep=1pt,text=ggrey] {\mbox{\tiny $4$}}; 
\node at (344pt,108pt) [circle,draw=ggrey,draw,fill=white,minimum size=10pt,inner sep=1pt,text=ggrey] {\mbox{\tiny $5$}}; 
\node at (67pt,185pt) [circle,draw=ggrey,draw,fill=white,minimum size=10pt,inner sep=1pt,text=ggrey] {\mbox{\tiny $6$}}; 
\node at (150pt,102pt) [circle,draw=black,draw,fill=white,minimum size=10pt,inner sep=1pt,text=black] {\mbox{\tiny $7$}}; 
\node at (267pt,385pt) [circle,draw=black,draw,fill=white,minimum size=10pt,inner sep=1pt,text=black] {\mbox{\tiny $8$}}; 
\node at (191pt,261pt) [circle,draw=black,draw,fill=white,minimum size=10pt,inner sep=1pt,text=black] {\mbox{\tiny $9$}}; 

\draw [very thick] (309pt,343pt) -- (428pt,357pt); % 4
\draw [very thick] (-91pt,343pt) -- (28pt,357pt); % 4
\draw [dashed, dbblue, very thick] (177pt,75pt) -- (230pt,89pt); % 6
\draw [very thick] (377pt,275pt) -- (363pt,222pt); % 7
\draw [very thick] (205pt,181pt) -- (271pt,247pt); % 8
\draw [dashed, dbblue, very thick] (253pt,65pt) -- (387pt,199pt); % 10
\draw [dashed, dbblue, very thick] (253pt,65pt) -- (230pt,89pt); % 13
\draw [very thick] (40pt,212pt) -- (122pt,263pt); % 14
\draw [dashed, dbblue, very thick] (95pt,24pt) -- (177pt,75pt); % 15
\draw [dashed, dbblue, very thick] (189pt,330pt) -- (240pt,412pt); % 16
\draw [very thick] (189pt,-70pt) -- (240pt,12pt); % 16
\draw [very thick] (348pt,40pt) -- (412pt,106pt); % 17
\draw [very thick] (-52pt,40pt) -- (12pt,106pt); % 17
\draw [very thick] (253pt,65pt) -- (240pt,12pt); % 18
\draw [dashed, dbblue, very thick] (387pt,199pt) -- (440pt,212pt); % 19
\draw [very thick] (-13pt,199pt) -- (40pt,212pt); % 19
\draw [dashed, dbblue, very thick] (109pt,143pt) -- (122pt,263pt); % 21
\draw [dashed, dbblue, very thick] (109pt,143pt) -- (95pt,24pt); % 22
\draw [dashed, dbblue, very thick] (189pt,330pt) -- (122pt,263pt); % 27

\node at (309pt,343pt) [circle,draw,fill=black,minimum size=6pt,inner sep=1pt]{}; % 0,1,8,9
\node at (28pt,357pt) [circle,draw,fill=white,minimum size=6pt,inner sep=1pt]{}; % 0,1,4
\node at (95pt,24pt) [circle,draw,fill=black,minimum size=6pt,inner sep=1pt]{}; % 0,4,7
\node at (348pt,40pt) [circle,draw,fill=white,minimum size=6pt,inner sep=1pt]{}; % 0,5,8
\node at (12pt,106pt) [circle,draw,fill=black,minimum size=6pt,inner sep=1pt]{}; % 0,5,6
\node at (109pt,143pt) [circle,draw,fill=white,minimum size=6pt,inner sep=1pt]{}; % 0,6,7,9
\node at (377pt,275pt) [circle,draw,fill=black,minimum size=6pt,inner sep=1pt]{}; % 1,2,4
\node at (271pt,247pt) [circle,draw,fill=white,minimum size=6pt,inner sep=1pt]{}; % 1,2,9
\node at (205pt,181pt) [circle,draw,fill=black,minimum size=6pt,inner sep=1pt]{}; % 2,7,9
\node at (230pt,89pt) [circle,draw,fill=black,minimum size=6pt,inner sep=1pt]{}; % 2,3,4 left
\node at (363pt,222pt) [circle,draw,fill=white,minimum size=6pt,inner sep=1pt]{}; % 2,3,4 right
\node at (177pt,75pt) [circle,draw,fill=white,minimum size=6pt,inner sep=1pt]{}; % 2,4,7
\node at (253pt,65pt) [circle,draw,fill=white,minimum size=6pt,inner sep=1pt]{}; % 3,4,5 left
\node at (387pt,199pt) [circle,draw,fill=black,minimum size=6pt,inner sep=1pt]{}; % 3,4,5 right
\node at (40pt,212pt) [circle,draw,fill=white,minimum size=6pt,inner sep=1pt]{}; % 4,5,6
\node at (240pt,12pt) [circle,draw,fill=black,minimum size=6pt,inner sep=1pt]{}; % 4,5,8
\node at (122pt,263pt) [circle,draw,fill=black,minimum size=6pt,inner sep=1pt]{}; % 4,6,9
\node at (189pt,330pt) [circle,draw,fill=white,minimum size=6pt,inner sep=1pt]{}; % 4,8,9
\draw[very thick, dashed] (0pt,0pt) rectangle (400pt,400pt);
\end{scope}\end{tikzpicture}
\hspace{-.225cm}
\begin{tikzpicture} [thick,scale=0.31, every node/.style={scale=1}]
\begin{scope}\clip (0pt,0pt) rectangle (400pt,400pt);
\draw [ggrey,-stealthnew,arrowhead=6pt,shorten >=5pt] (26pt,26pt) to node [rectangle,draw,fill=white,sloped,inner sep=1pt] {{\tiny 1}} (108pt,-56pt); 
\draw [dashed, ggreen,-stealthnew,arrowhead=6pt,shorten >=5pt] (26pt,426pt) to node [rectangle,solid, draw,fill=white,sloped,inner sep=1pt] {{\tiny 1}} (108pt,344pt);  
\draw [ggrey,-stealthnew,arrowhead=6pt, shorten >=5pt] (26pt,26pt) to node [rectangle,draw,fill=white,sloped,inner sep=1pt] {{\tiny 2}} (67pt,185pt); 
\draw [ggrey,-stealthnew,arrowhead=6pt,shorten >=5pt] (426pt,426pt) to node [rectangle,draw,fill=white,sloped,inner sep=1pt] {{\tiny 3}} (267pt,385pt); 
\draw [ggrey,-stealthnew,arrowhead=6pt,shorten >=5pt] (26pt,26pt) to node [rectangle,draw,fill=white,sloped,inner sep=1pt] {{\tiny 3}} (-133pt,-15pt); 
\draw [ggrey,-stealthnew,arrowhead=6pt,shorten >=5pt] (426pt,26pt) to node [rectangle,draw,fill=white,sloped,inner sep=1pt] {{\tiny 3}} (267pt,-15pt); 
\draw [ggrey,-stealthnew,arrowhead=6pt,shorten >=5pt] (350pt,302pt) to node [rectangle,draw,fill=white,sloped,inner sep=1pt] {{\tiny 5}} (273pt,179pt); 
\draw [ggrey,-stealthnew,arrowhead=6pt,shorten >=5pt] (308pt,144pt) to node [rectangle,draw,fill=white,sloped,inner sep=1pt] {{\tiny 9}} (273pt,179pt);  
\draw [ggrey,-stealthnew,arrowhead=6pt,shorten >=5pt] (508pt,344pt) to node [rectangle,draw,fill=white,sloped,inner sep=1pt] {{\tiny 11}} (350pt,302pt); 
\draw [dashed, ggreen,-stealthnew,arrowhead=6pt,shorten >=5pt] (108pt,344pt) to node [rectangle,solid,draw,fill=white,sloped,inner sep=1pt] {{\tiny 11}} (-50pt,302pt);  
\draw [dashed, ggreen,-stealthnew,arrowhead=6pt,shorten >=5pt] (108pt,344pt) to node [rectangle,solid,draw,fill=white,sloped,inner sep=1pt] {{\tiny 12}} (-92pt,144pt); 
\draw [ggrey,-stealthnew,arrowhead=6pt,shorten >=5pt] (508pt,344pt) to node [rectangle,draw,fill=white,sloped,inner sep=1pt] {{\tiny 12}} (308pt,144pt);  
\draw [ggrey,-stealthnew,arrowhead=6pt,shorten >=5pt] (467pt,185pt) to node [rectangle,draw,fill=white,sloped,inner sep=1pt] {{\tiny 20}} (344pt,108pt); 
\draw [ggrey,-stealthnew,arrowhead=6pt,shorten >=5pt] (67pt,185pt) to node [rectangle,draw,fill=white,sloped,inner sep=1pt] {{\tiny 20}} (-56pt,108pt); 
\draw [ggrey,-stealthnew,arrowhead=6pt,shorten >=5pt] (150pt,102pt) to node [rectangle,draw,fill=white,sloped,inner sep=1pt] {{\tiny 23}} (273pt,179pt); 
\draw [ggrey,-stealthnew,arrowhead=6pt,shorten >=5pt] (267pt,-15pt) to node [rectangle,draw,fill=white,sloped,inner sep=1pt] {{\tiny 24}} (344pt,108pt); 
\draw [ggrey,-stealthnew,arrowhead=6pt,shorten >=5pt] (267pt,385pt) to node [rectangle,draw,fill=white,sloped,inner sep=1pt] {{\tiny 24}} (344pt,508pt); 
\draw [ggrey,-stealthnew,arrowhead=6pt,shorten >=5pt] (267pt,385pt) to node [rectangle,draw,fill=white,sloped,inner sep=1pt] {{\tiny 25}} (191pt,261pt); 
\draw [ggrey,-stealthnew,arrowhead=6pt,shorten >=5pt] (191pt,261pt) to node [rectangle,draw,fill=white,sloped,inner sep=1pt] {{\tiny 26}} (350pt,302pt); ; 
\draw [ggrey,-stealthnew,arrowhead=6pt,shorten >=5pt] (191pt,261pt) to node [rectangle,draw,fill=white,sloped,inner sep=1pt] {{\tiny 28}} (150pt,102pt); 
\node at (26pt,26pt) [circle,draw=ggrey,draw,fill=white,minimum size=10pt,inner sep=1pt,text=ggrey] {\mbox{\tiny $0$}};  
\node at (350pt,302pt) [circle,draw=ggrey,draw,fill=white,minimum size=10pt,inner sep=1pt,text=ggrey] {\mbox{\tiny $1$}}; 
\node at (273pt,179pt) [circle,draw=ggrey,draw,fill=white,minimum size=10pt,inner sep=1pt,text=ggrey] {\mbox{\tiny $2$}}; 
\node at (308pt,144pt) [circle,draw=ggrey,draw,fill=white,minimum size=10pt,inner sep=1pt,text=ggrey] {\mbox{\tiny $3$}};  
\node at (108pt,344pt) [circle,draw,draw,fill=white,minimum size=10pt,inner sep=1pt,text=black] {\mbox{\tiny $4$}}; 
\node at (344pt,108pt) [circle,draw=ggrey,draw,fill=white,minimum size=10pt,inner sep=1pt,text=ggrey] {\mbox{\tiny $5$}}; 
\node at (67pt,185pt) [circle,draw=ggrey,draw,fill=white,minimum size=10pt,inner sep=1pt,text=ggrey] {\mbox{\tiny $6$}}; 
\node at (150pt,102pt) [circle,draw=ggrey,draw,fill=white,minimum size=10pt,inner sep=1pt,text=ggrey] {\mbox{\tiny $7$}}; 
\node at (267pt,385pt) [circle,draw=ggrey,draw,fill=white,minimum size=10pt,inner sep=1pt,text=ggrey] {\mbox{\tiny $8$}}; 
\node at (191pt,261pt) [circle,draw=ggrey,draw,fill=white,minimum size=10pt,inner sep=1pt,text=ggrey] {\mbox{\tiny $9$}}; 
\draw [very thick] (309pt,343pt) -- (428pt,357pt); % 4
\draw [very thick] (-91pt,343pt) -- (28pt,357pt); % 4
\draw [very thick] (177pt,75pt) -- (230pt,89pt); % 6
\draw [very thick] (377pt,275pt) -- (363pt,222pt); % 7
\draw [very thick] (205pt,181pt) -- (271pt,247pt); % 8
\draw [very thick] (253pt,65pt) -- (387pt,199pt); % 10
\draw [very thick] (253pt,65pt) -- (230pt,89pt); % 13
\draw [dashed, dbblue, very thick] (40pt,212pt) -- (122pt,263pt); % 14
\draw [very thick] (95pt,24pt) -- (177pt,75pt); % 15
\draw [dashed, dbblue, very thick] (189pt,330pt) -- (240pt,412pt); % 16
\draw [very thick] (189pt,-70pt) -- (240pt,12pt); % 16
\draw [very thick] (348pt,40pt) -- (412pt,106pt); % 17
\draw [very thick] (-52pt,40pt) -- (12pt,106pt); % 17
\draw [very thick] (253pt,65pt) -- (240pt,12pt); % 18
\draw [very thick] (387pt,199pt) -- (440pt,212pt); % 19
\draw [dashed, dbblue, very thick] (-13pt,199pt) -- (40pt,212pt); % 19
\draw [very thick] (109pt,143pt) -- (122pt,263pt); % 21
\draw [very thick] (109pt,143pt) -- (95pt,24pt); % 22
\draw [dashed, dbblue, very thick] (189pt,330pt) -- (122pt,263pt); % 27

\node at (309pt,343pt) [circle,draw,fill=black,minimum size=6pt,inner sep=1pt]{}; % 0,1,8,9
\node at (28pt,357pt) [circle,draw,fill=white,minimum size=6pt,inner sep=1pt]{}; % 0,1,4
\node at (95pt,24pt) [circle,draw,fill=black,minimum size=6pt,inner sep=1pt]{}; % 0,4,7
\node at (348pt,40pt) [circle,draw,fill=white,minimum size=6pt,inner sep=1pt]{}; % 0,5,8
\node at (12pt,106pt) [circle,draw,fill=black,minimum size=6pt,inner sep=1pt]{}; % 0,5,6
\node at (109pt,143pt) [circle,draw,fill=white,minimum size=6pt,inner sep=1pt]{}; % 0,6,7,9
\node at (377pt,275pt) [circle,draw,fill=black,minimum size=6pt,inner sep=1pt]{}; % 1,2,4
\node at (271pt,247pt) [circle,draw,fill=white,minimum size=6pt,inner sep=1pt]{}; % 1,2,9
\node at (205pt,181pt) [circle,draw,fill=black,minimum size=6pt,inner sep=1pt]{}; % 2,7,9
\node at (230pt,89pt) [circle,draw,fill=black,minimum size=6pt,inner sep=1pt]{}; % 2,3,4 left
\node at (363pt,222pt) [circle,draw,fill=white,minimum size=6pt,inner sep=1pt]{}; % 2,3,4 right
\node at (177pt,75pt) [circle,draw,fill=white,minimum size=6pt,inner sep=1pt]{}; % 2,4,7
\node at (253pt,65pt) [circle,draw,fill=white,minimum size=6pt,inner sep=1pt]{}; % 3,4,5 left
\node at (387pt,199pt) [circle,draw,fill=black,minimum size=6pt,inner sep=1pt]{}; % 3,4,5 right
\node at (40pt,212pt) [circle,draw,fill=white,minimum size=6pt,inner sep=1pt]{}; % 4,5,6
\node at (240pt,12pt) [circle,draw,fill=black,minimum size=6pt,inner sep=1pt]{}; % 4,5,8
\node at (122pt,263pt) [circle,draw,fill=black,minimum size=6pt,inner sep=1pt]{}; % 4,6,9
\node at (189pt,330pt) [circle,draw,fill=white,minimum size=6pt,inner sep=1pt]{}; % 4,8,9
\draw[very thick, dashed] (0pt,0pt) rectangle (400pt,400pt);
\end{scope}\end{tikzpicture}
\caption{The red dashed lines form the boundary of the fundamental hexagon.}
\label{fig:8910}
\end{figure}
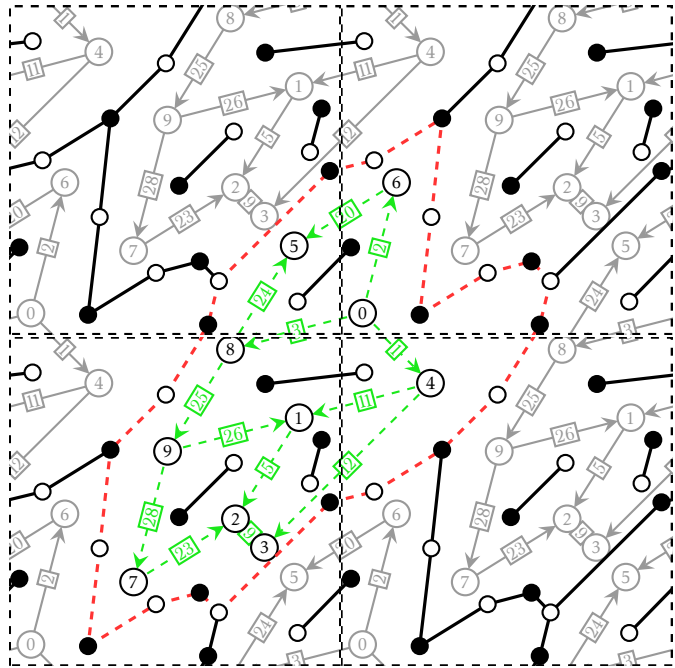
\end{example}

Since the tiles in $\hex(\sigma)$ are obtained by lifting each tile of $\Gamma$ in $\TT$, we see that $\hex(\sigma)$ forms a fundamental domain for the action of $\ZZ^2$ on $\RR^2$, and the boundary of $\hex(\sigma)$ coincides with $\tiling(\sigma)$ when both are viewed as subsets of $\mathbb{T}$. To fully justify the terminology from Definition~\ref{def:hexHoneycomb} and to show how the boundary of  $\hex(\sigma)$ can be calculated directly from perfect matchings, we bring together results of Ishii--Ueda and Mozgovoy in the following statement. 

\begin{proposition} 
\label{prop:IUvalency}
 For $\sigma\in\Sigma_\theta(3)$, the boundary of $\hex(\sigma)$ contains precisely six trivalent nodes of $\tiling(\sigma)$ that, when listed cyclically,  alternate between white nodes of one $\ZZ^2$-orbit and black nodes in a second $\ZZ^2$-orbit; all other nodes of $\tiling(\sigma)$ have valency two. Moreover, $\tiling(\sigma)$ is the unique connected component of the locus $\bigcup_{0\leq \rho\leq 2} \Pi_\rho$ in $\mathbb{T}$ comprising more than a single edge.
 \end{proposition}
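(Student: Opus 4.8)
The plan is to reduce the proposition to a finite combinatorial statement in the two-torus $\TT$ and then to quote the structural analysis of fundamental hexagons due to Ishii--Ueda in the toric reinterpretation of Mozgovoy. Since $\tiling(\sigma)$ is $\ZZ^2$-periodic and, by the discussion preceding the proposition, $\hex(\sigma)$ is a fundamental domain for the $\ZZ^2$-action on $\RR^2$ whose boundary descends to $\tiling(\sigma)\subseteq\TT$, it suffices to describe the image $\pi(\partial\hex(\sigma))\subseteq\TT$ and the valencies of its nodes. First I would identify this image: because $\hex(\sigma)$ is the union of the closed tiles of $\widetilde\Dimer$ dual to the vertices of $\widetilde{Q^\sigma}$ and $\pi$ carries this collection of tiles bijectively onto $\Dimer_2$, the complement of $\pi(\partial\hex(\sigma))$ in $\TT$ contains every open tile of $\Dimer$ and every open edge $\e_a$ dual to an arrow $a\in Q^\sigma$, so
\[
\pi\bigl(\partial\hex(\sigma)\bigr)\ \subseteq\ \bigcup_{0\le\rho\le 2}\Pi_\rho\ =\ \{\,\e_a \mid a\in Q_1,\ a\notin Q^\sigma\,\}.
\]
Moreover $\pi(\partial\hex(\sigma))$ omits every edge of $\bigcup_{\rho}\Pi_\rho$ both of whose endnodes are incident only to $Q^\sigma$-edges, since around such an endnode all incident tiles of $\widetilde\Dimer$ lie in one $\ZZ^2$-translate of $\hex(\sigma)$ and the edge is then interior. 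Thus $\pi(\partial\hex(\sigma))$ is contained in the union of the connected components of $\bigcup_{\rho}\Pi_\rho$ that contain more than one edge, and the second assertion of the proposition reduces to showing that this union is a single component equal to $\pi(\partial\hex(\sigma))$. Here I would invoke the hexagon lemma of Ishii--Ueda~\cite[Lemma~4.5 and its proof]{IshiiUeda08}, in the toric form of Mozgovoy~\cite[Section~4]{Mozgovoy09} (see also \cite{CQV11}); this also supplies the fact that $\hex(\sigma)$ is a topological disk, so that $\pi(\operatorname{int}\hex(\sigma))$ is an open disk and $\pi(\partial\hex(\sigma))$ is a connected graph in $\TT$ with that disk as complement.

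Granting this, the first assertion follows from an Euler-characteristic count in $\TT$. View $\pi(\partial\hex(\sigma))$ as a CW-decomposition of $\TT$ whose unique $2$-cell is $\pi(\operatorname{int}\hex(\sigma))$, whose $1$-cells are the edges of $\Dimer$ it contains and whose $0$-cells are the corresponding nodes. Every such node has valency at most $3$, since each of $\Pi_0,\Pi_1,\Pi_2$ is incident to exactly one edge there; and it has valency at least $2$, since $\pi$ is a local homeomorphism, so a neighbourhood of any node of $\pi(\partial\hex(\sigma))$ is a union of arcs of (lifts of) the boundary circle $\partial\hex(\sigma)$, and hence no node is a free end. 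Writing $V=V_2+V_3$ for the partition of the nodes into bivalent and trivalent ones, the identities $2E=2V_2+3V_3$ and $V-E+1=0$ on the torus force $V_3=2$: there are exactly two trivalent nodes in $\TT$. Cutting $\TT$ open along $\pi(\partial\hex(\sigma))$ to recover the closed disk $\hex(\sigma)$, the boundary circle $\partial\hex(\sigma)$ contains exactly two lifts of each edge of $\pi(\partial\hex(\sigma))$ and exactly $\mathrm{val}(n)$ lifts of each node $n$ of $\pi(\partial\hex(\sigma))$; applied to the two trivalent torus-nodes this shows that $\partial\hex(\sigma)$ contains precisely six trivalent nodes of $\tiling(\sigma)$, three in each of the two $\ZZ^2$-orbits, arranged cyclically as the six corners of the hexagon. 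Finally, the two trivalent torus-nodes are of opposite colours — so the six corners alternate white and black — because the three ``macro-edges'' of $\pi(\partial\hex(\sigma))$ joining them are each a segment of a zig-zag path of $\Dimer$ (as in \cite[Section~4]{IshiiUeda08}), hence a path of odd combinatorial length in the bipartite graph $\Dimer$.

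The principal difficulty is not any single step above but the dictionary between the two descriptions of $\hex(\sigma)$: Ishii--Ueda phrase their hexagon lemma through the lifted quiver $\widetilde{Q^\sigma}$ and its dual tiles, while here we must work with the three $\theta$-stable perfect matchings $\Pi_\rho$ attached to the rays of $\sigma$, and matching up the corners and sides of $\hex(\sigma)$ with the trivalent nodes and zig-zag segments of $\bigcup_{\rho}\Pi_\rho$ — which underlies both the identification in the first paragraph and the odd-length claim behind the colour alternation — requires carefully reconciling the two conventions. I would address this by combining \cite[Lemmas~4.1 and 4.5]{IshiiUeda08} with the toric translation in \cite[Section~4]{Mozgovoy09} and \cite{CQV11}; a self-contained alternative would be to re-derive the hexagon lemma directly from the consistency of $\Dimer$ via the combinatorics of zig-zag paths, which also yields the odd-length property.
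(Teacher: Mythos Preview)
Your Euler-characteristic argument for counting trivalent nodes is correct and is a genuinely different route from the paper, which simply cites \cite[Lemmas~4.4--4.5]{IshiiUeda08} for the entire first statement. Your computation that $V_3=2$ in $\TT$ and hence six corners on $\partial\hex(\sigma)$, together with the preimage-count $k=\mathrm{val}(n)$, is a pleasant and more self-contained alternative. For the second statement the two approaches are close in spirit: both defer to the Ishii--Ueda/Mozgovoy analysis, though the paper is more explicit, invoking \cite[Corollary~4.18]{Mozgovoy09} for the dichotomy that every edge of $\bigcup_\rho\Pi_\rho$ interior to $\hex(\sigma)$ lies in $\Pi_0\cap\Pi_1\cap\Pi_2$ (hence is isolated, since no two edges of a perfect matching share a node), while all other edges lie on $\partial\hex(\sigma)$.

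There is, however, a real gap in your colour-alternation step. The sides of $\hex(\sigma)$ are \emph{not} segments of zig-zag paths in general: as the paper explains (Section~3.3 and Remark~3.6), each side is a chain of edges in the intersection of two \emph{meandering walks}, which strictly generalise zig-zag paths; the zig-zag description is valid only when the relevant line segment lies on the boundary of $\Delta(\Gamma)$. Moreover, even granting the zig-zag claim, being a segment of a zig-zag path does not force odd length --- a segment may have any parity. The correct reason the sides have odd length is the one the paper extracts from Mozgovoy: along each side the edges alternate between a single matching $\Pi_\rho$ and the intersection $\Pi_{\rho+1}\cap\Pi_{\rho-1}$, and at each trivalent corner all three matchings are present, which forces both the first and last edge of the side to lie in $\Pi_\rho$; hence the side has odd length and its endpoints have opposite colours. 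Replacing your zig-zag sentence with this observation repairs the argument.
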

 \begin{proof}
 The first statement follows from \cite[Lemma~4.4]{IshiiUeda08} and the proof of \cite[Lemma~4.5]{IshiiUeda08}. For the second statement, Mozgovoy~\cite[Corollary~4.18]{Mozgovoy09} shows that each edge $\mathfrak{e}\in\bigcup_{0\leq \rho\leq 2} \Pi_\rho$ lies either:
\begin{enumerate}
\item[\one] strictly in the interior of $\hex(\sigma)$, in which case $\mathfrak{e}\in \Pi_0\cap \Pi_1\cap\Pi_2$. Such edges do not touch any other edge of $\bigcup_{0\leq \rho\leq 2} \Pi_\rho$, because no two edges of the same perfect matching touch; or
\item[\two] \label{lem:perfectmatchings2} on the boundary of $\hex(\sigma)$, where $\mathfrak{e}$ is an edge in a chain linking adjacent trivalent points of $\tiling(\sigma)$. These six chains are identified pairwise by the quotient map $\pi$. Each chain comprises an odd number of edges, and for $0\leq \rho\leq 2$, the edges in each chain belong alternately to either a single perfect matching $\Pi_\rho$ or to a pair of perfect matchings $\Pi_{\rho\pm 1}$, where addition of indices is taken modulo $3$ as shown in Figure~\ref{fig:fundhexpm}.
\end{enumerate}
 Therefore edges of $\tiling(\sigma)$ are precisely those of type \two, and these edges form a connected component of $\bigcup_{0\leq \rho\leq 2} \Pi_\rho$. Every other edge in $\bigcup_{0\leq \rho\leq 2} \Pi_\rho$ is of type \one, and every such edge lies in a connected component comprising only that single edge. This completes the proof.  \end{proof}

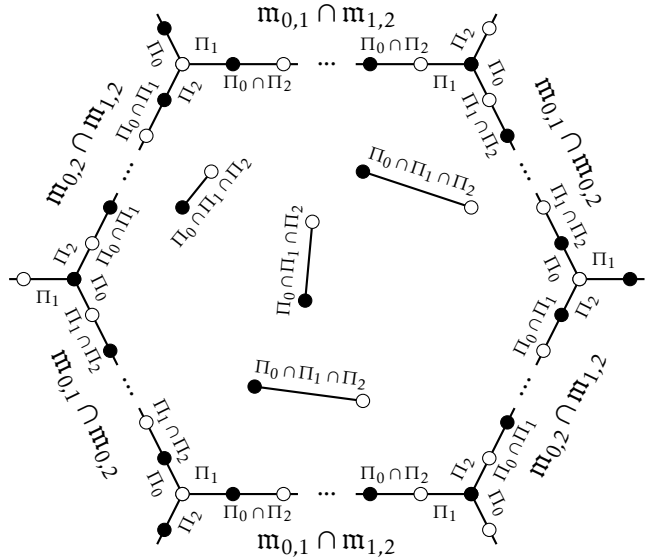
\begin{figure}[!ht]
\centering
\begin{tikzpicture} [scale=0.95, every node/.style={scale=1}]
% Sides tiling effect
\draw [thick] (-3.5,0) -- node[pos=0.5,sloped,below] {\tiny $\Pi_1$}(-4.2,0) -- (-4.4,0);
\draw [thick] (3.5,0) -- node[pos=0.5,sloped,above] {\tiny $\Pi_1$} (4.2,0) -- (4.4,0);
\draw [thick] (2,-3) -- node[pos=0.5,sloped,above] {\tiny $\Pi_0$}(2.25,-3.5) -- (2.35,-3.7);
\draw [thick] (-2,-3) -- node[pos=0.5,sloped,below] {\tiny $\Pi_2$}(-2.25,-3.5) --  (-2.35,-3.7);
\draw [thick] (2,3) -- node[pos=0.5,sloped,above] {\tiny $\Pi_2$}(2.25,3.5) -- (2.35,3.7);
\draw [thick] (-2,3) -- node[pos=0.5,sloped,below] {\tiny $\Pi_0$}(-2.25,3.5) --  (-2.35,3.7);

% Node tiling effect
\draw (-4.2,0) node[circle,draw,fill=white,minimum size=5pt,inner sep=1pt] {{}};
\draw (4.2,0) node[circle,draw,fill=black,minimum size=5pt,inner sep=1pt] {{}};
\draw (-2.25,-3.5) node[circle,draw,fill=black,minimum size=5pt,inner sep=1pt] {{}};
\draw (-2.25,3.5) node[circle,draw,fill=black,minimum size=5pt,inner sep=1pt] {{}};
\draw (2.25,-3.5) node[circle,draw,fill=white,minimum size=5pt,inner sep=1pt] {{}};
\draw (2.25,3.5) node[circle,draw,fill=white,minimum size=5pt,inner sep=1pt] {{}};

% Middle nodes and lines connecting them
\draw [thick] (-2,1) -- (-1.6,1.5) node[pos=0.5,sloped,below] {\tiny $\Pi_0\cap\Pi_1\cap\Pi_2$};
\draw [thick] (-1,-1.5) -- (.5,-1.7) node[pos=0.5,sloped,above] {\tiny $\Pi_0\cap\Pi_1\cap\Pi_2$};
\draw [thick] (.5,1.5) -- (2,1) node[pos=0.5,sloped,above] {\tiny $\Pi_0\cap\Pi_1\cap\Pi_2$};
\draw [thick] (-.3,-.3) -- (-.2,0.8)  node[pos=0.5,sloped,above] {\tiny $\Pi_0\cap\Pi_1\cap\Pi_2$};
\draw (-2,1) node[circle,draw,fill=black,minimum size=5pt,inner sep=1pt] {{}};
\draw (-1.6,1.5) node[circle,draw,fill=white,minimum size=5pt,inner sep=1pt] {{}};
\draw (-1,-1.5) node[circle,draw,fill=black,minimum size=5pt,inner sep=1pt] {{}};
\draw (0.5,-1.7) node[circle,draw,fill=white,minimum size=5pt,inner sep=1pt] {{}};
\draw (.5,1.5) node[circle,draw,fill=black,minimum size=5pt,inner sep=1pt] {{}};
\draw (2,1) node[circle,draw,fill=white,minimum size=5pt,inner sep=1pt] {{}};
\draw (-.3,-.3) node[circle,draw,fill=black,minimum size=5pt,inner sep=1pt] {{}};
\draw (-.2,0.8) node[circle,draw,fill=white,minimum size=5pt,inner sep=1pt] {{}};

% Top side N
\draw [thick] (-2,3) -- node[pos=0.5,sloped,above] {\tiny $\Pi_1$} (-1.3,3) -- node[pos=0.5,sloped,below] {\tiny $\Pi_0\cap\Pi_2$}(-0.6,3) -- (-0.4,3);
\draw [thick] (2,3) -- node[pos=0.5,sloped,below] {\tiny $\Pi_1$} (1.3,3) -- node[pos=0.5,sloped,above] {\tiny $\Pi_0\cap\Pi_2$} (0.6,3) -- (0.4,3); 
\node at (0,3) {...};
\node at (0,3.65) {$\m_{0,1}\cap\m_{1,2}$};
\draw (-2,3) node[circle,draw,fill=white,minimum size=5pt,inner sep=1pt] {{}};
\draw (-1.3,3) node[circle,draw,fill=black,minimum size=5pt,inner sep=1pt] {{}};
\draw (-0.6,3) node[circle,draw,fill=white,minimum size=5pt,inner sep=1pt] {{}};
\draw (2,3) node[circle,draw,fill=black,text=white,minimum size=5pt,inner sep=1pt] {{}};
\draw (1.3,3) node[circle,draw,fill=white,minimum size=5pt,inner sep=1pt] {{}};
\draw (0.6,3) node[circle,draw,fill=black,minimum size=5pt,inner sep=1pt] {{}};

% NW
\draw [thick] (-2,3) -- node[pos=0.5,sloped,below] {\tiny $\Pi_2$}(-2.25,2.5) -- node[pos=0.5,sloped,above] {\tiny $\Pi_0\cap\Pi_1$} (-2.5,2) --(-2.6,1.8);
\draw [thick] (-2.9,1.2) -- (-3,1) -- node[pos=0.5,sloped,below] {\tiny $\Pi_0\cap\Pi_1$} (-3.25,0.5) -- node[pos=0.5,sloped,above] {\tiny $\Pi_2$} (-3.5,0); 
\node at (-2.7,1.6) {.};
\node at (-2.75,1.5) {.};
\node at (-2.8,1.4) {.};
\node [rotate=64] at (-3.35,1.9) {$\m_{0,2}\cap\m_{1,2}$};
\draw (-2,3) node[circle,draw,fill=white,minimum size=5pt,inner sep=1pt] {{}};
\draw (-2.25,2.5) node[circle,draw,fill=black,minimum size=5pt,inner sep=1pt] {{}};
\draw (-2.5,2) node[circle,draw,fill=white,minimum size=5pt,inner sep=1pt] {{}};
%\draw (-2.75,1.5) node[circle,draw,fill=black,minimum size=5pt,inner sep=1pt] {{}};
\draw (-3,1) node[circle,draw,fill=black,minimum size=5pt,inner sep=1pt] {{}};
\draw (-3.25,0.5) node[circle,draw,fill=white,minimum size=5pt,inner sep=1pt] {{}};
\draw (-3.5,0) node[circle,draw,fill=black,text=white,minimum size=5pt,inner sep=1pt] {{}};

% SW
%\draw [thick] (-3.5,0) -- (-2, -3);
\draw [thick] (-2,-3) -- node[pos=0.5,sloped,below] {\tiny $\Pi_0$} (-2.25,-2.5) -- node[pos=0.5,sloped,above] {\tiny $\Pi_1\cap\Pi_2$} (-2.5,-2)-- (-2.6,-1.8);
\draw [thick] (-2.9,-1.2) -- (-3,-1) -- node[pos=0.5,sloped,below] {\tiny $\Pi_1\cap\Pi_2$}(-3.25, -0.5) -- node[pos=0.5,sloped,above] {\tiny $\Pi_0$}(-3.5,0); 
\node at (-2.7,-1.6) {.};
\node at (-2.75,-1.5) {.};
\node at (-2.8,-1.4) {.};
\node [rotate=-64] at (-3.35,-1.9) {$\m_{0,1}\cap\m_{0,2}$};
\draw (-2,-3) node[circle,draw,fill=white,minimum size=5pt,inner sep=1pt] {{}};
\draw (-2.25,-2.5) node[circle,draw,fill=black,minimum size=5pt,inner sep=1pt] {{}};
\draw (-2.5,-2) node[circle,draw,fill=white,minimum size=5pt,inner sep=1pt] {{}};
%\draw (-2.75,-1.5) node[circle,draw,fill=black,minimum size=5pt,inner sep=1pt] {{}};
\draw (-3,-1) node[circle,draw,fill=black,minimum size=5pt,inner sep=1pt] {{}};
\draw (-3.25,-0.5) node[circle,draw,fill=white,minimum size=5pt,inner sep=1pt] {{}};
\draw (-3.5,0) node[circle,draw,fill=black,text=white,minimum size=5pt,inner sep=1pt] {{}};

% Bottom side S
\draw [thick] (-2,-3) -- node[pos=0.5,sloped,above] {\tiny $\Pi_1$}(-1.3, -3) -- node[pos=0.5,sloped,below] {\tiny $\Pi_0\cap\Pi_2$}(-0.6,-3) -- (-0.4,-3);
\draw [thick] (2,-3) -- node[pos=0.5,sloped,below] {\tiny $\Pi_1$}(1.3,-3) -- node[pos=0.5,sloped,above] {\tiny $\Pi_0\cap\Pi_2$}(0.6,-3) -- (0.4,-3); 
\node at (0,-3) {...};
\node at (0,-3.7) {$\m_{0,1}\cap\m_{1,2}$};
\draw (-2,-3) node[circle,draw,fill=white,minimum size=5pt,inner sep=1pt] {{}};
\draw (-1.3,-3) node[circle,draw,fill=black,minimum size=5pt,inner sep=1pt] {{}};
\draw (-0.6,-3) node[circle,draw,fill=white,minimum size=5pt,inner sep=1pt] {{}};
\draw (2,-3) node[circle,draw,fill=black,text=white,minimum size=5pt,inner sep=1pt] {{}};
\draw (1.3,-3) node[circle,draw,fill=white,minimum size=5pt,inner sep=1pt] {{}};
\draw (0.6,-3) node[circle,draw,fill=black,minimum size=5pt,inner sep=1pt] {{}};

% SE
%\draw [thick] (3.5,0) -- (2, -3);
%\draw (3.5,0) node[circle,draw,fill=white,minimum size=5pt,inner sep=1pt] {{}};
\draw [thick] (2,-3) -- node[pos=0.5,sloped,above] {\tiny $\Pi_2$}(2.25, -2.5) -- node[pos=0.5,sloped,below] {\tiny $\Pi_0\cap\Pi_1$}(2.5, -2) -- (2.6,-1.8);
\draw [thick] (2.9,-1.2) -- (3,-1) -- node[pos=0.5,sloped,above] {\tiny $\Pi_0\cap\Pi_1$}(3.25, -0.5) -- node[pos=0.5,sloped,below] {\tiny $\Pi_2$}(3.5,0); 
\node at (2.7,-1.6) {.};
\node at (2.75,-1.5) {.};
\node at (2.8,-1.4) {.};
\node [rotate=64] at (3.35,-1.9) {$\m_{0,2}\cap\m_{1,2}$};
\draw (2,-3) node[circle,draw,fill=black,text=white,minimum size=5pt,inner sep=1pt] {{}};
\draw (2.25,-2.5) node[circle,draw,fill=white,minimum size=5pt,inner sep=1pt] {{}};
\draw (2.5,-2) node[circle,draw,fill=black,minimum size=5pt,inner sep=1pt] {{}};
%\draw (-2.75,-1.5) node[circle,draw,fill=black,minimum size=5pt,inner sep=1pt] {{}};
\draw (3,-1) node[circle,draw,fill=white,minimum size=5pt,inner sep=1pt] {{}};
\draw (3.25,-0.5) node[circle,draw,fill=black,minimum size=5pt,inner sep=1pt] {{}};
\draw (3.5,0) node[circle,draw,fill=white,minimum size=5pt,inner sep=1pt] {{}};

%NE
%\draw [thick] (2,3) -- (3.5, 0);
\draw [thick] (2,3) -- node[pos=0.5,sloped,above] {\tiny $\Pi_0$}(2.25,2.5) -- node[pos=0.5,sloped,below] {\tiny $\Pi_1\cap\Pi_2$}(2.5, 2) -- (2.6,1.8);
\draw [thick] (2.9,1.2) -- (3,1) -- node[pos=0.5,sloped,above] {\tiny $\Pi_1\cap\Pi_2$}(3.25, 0.5) -- node[pos=0.5,sloped,below] {\tiny $\Pi_0$}(3.5,0); 
\node at (2.7,1.6) {.};
\node at (2.75,1.5) {.};
\node at (2.8,1.4) {.};
\node [rotate=-64] at (3.35,1.9) {$\m_{0,1}\cap\m_{0,2}$};
\draw (2,3) node[circle,draw,fill=black,text=white,minimum size=5pt,inner sep=1pt] {{}};
\draw (2.25,2.5) node[circle,draw,fill=white,minimum size=5pt,inner sep=1pt] {{}};
\draw (2.5,2) node[circle,draw,fill=black,minimum size=5pt,inner sep=1pt] {{}};
%\draw (2.75,1.5) node[circle,draw,fill=black,minimum size=5pt,inner sep=1pt] {{}};
\draw (3,1) node[circle,draw,fill=white,minimum size=5pt,inner sep=1pt] {{}};
\draw (3.25,0.5) node[circle,draw,fill=black,minimum size=5pt,inner sep=1pt] {{}};
\draw (3.5,0) node[circle,draw,fill=white,minimum size=5pt,inner sep=1pt] {{}};

\end{tikzpicture}
\caption{The perfect matchings and meandering walks (see Section~\ref{sec:meandering}) for $\hex(\sigma)$.}
\label{fig:fundhexpm}
\end{figure}

\subsection{Meandering walks}
\label{sec:meandering}
 We now generalise the notion of a zig-zag path in $\Gamma$ by associating a walk in a consistent dimer model $\Dimer$ to any line segment $\tau\in \Sigma_\theta(2)$, where $\Sigma_\theta$ is the toric fan of $\mathcal{M}_\theta$ for any generic stability parameter $\theta\in \Theta$. In fact, the content of this section requires only that $\Gamma$ is non-degenerate \cite{IshiiUeda08}.

\begin{definition}
\label{def:meanderingwalk}
Let $\Pi_i$, $\Pi_j$ be $\theta$-stable perfect matchings of $\Dimer$, and let $\rho_i, \rho_j\in \Sigma_\theta(1)$ be the corresponding rays. The \emph{symmetric difference} of $\Pi_i$ and $\Pi_j$ is the set $\Pi_i\ominus\Pi_j:=(\Pi_i\cup\Pi_j)\setminus (\Pi_i\cap\Pi_j)$. For $\tau\in\Sigma_\theta(2)$, the \emph{meandering walk} of $\tau$ is the set $\mathfrak{m}_\tau:=\mathfrak{m}_{i,j} = \Pi_i\ominus\Pi_{j}$, where $\rho_i, \rho_j$ are the ray generators of $\tau$.  
\end{definition}

 To explain the terminology, we'll see that the edges in a meandering walk form a cycle that does not always turn maximally right at white nodes and maximally left at black nodes, but rather, it meanders. Meandering walks provide a common generalisation of the notions of a zig-zag path as in Broomhead~\cite{Broomhead12} and a $\sigma$-Strand as in Logvinenko~\cite[Definition~6.51]{Logvinenko04}.

 The proof of Proposition~\ref{prop:IUvalency}, and specifically statement \two\ of that proof, implies that if $\rho_i$ and $\rho_j$ are the endpoints of a side $\tau$ in a triangle $\sigma\in\Sigma_\theta(3)$, then the edges of the chains that form two adjacent sides of the boundary of $\hex(\sigma)$ belong alternately to the corresponding perfect matchings $\Pi_i$ and $\Pi_j$. Therefore, the edges in a meandering walk $\m_\tau$ form a cycle in $\Dimer$, and conversely, the edges of the chains that form any two adjacent sides of $\hex(\sigma)$ form the edges in a meandering walk $\mathfrak{m}_\tau$ for some two-dimensional cone $\tau\subset \sigma$. We illustrate this in Figure~\ref{fig:fundhexpm} where, 
for example, the meandering walk $\mathfrak{m}_{1,2}$ traverses edges that lie alternately in perfect matchings $\Pi_1$ and $\Pi_2$. Note that the chain of edges along any single side of the boundary of $\hex(\sigma)$ form the intersection of a pair of meandering walks.

 Traversing the edges in a meandering walk $\mathfrak{m}_\tau$ around the torus defines a homology class $[\m_\tau]\in H_1(\TT,\ZZ)$. Changing the direction in which we traverse the edges would change the sign of this class, but this sign is irrelevant for the next result. 
 
\begin{lemma}
\label{lem:mwnormal}
Let $\tau\in\Sigma_\theta(2)$. For either choice of direction on the meandering walk $\m_\tau$, the resulting homology class $[\m_\tau]\in H_1(\TT,\ZZ)$ is orthogonal to the line segment $\tau$ in the characteristic polygon $\Delta(\Dimer)\subset H^1(\mathbb{T},\mathbb{R})$ with respect to (the extension over $\RR$ of) the perfect pairing $H_1(\TT,\ZZ)\otimes H^1(\TT,\ZZ)\to \ZZ$. 
\end{lemma}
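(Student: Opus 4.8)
The plan is to connect the homology class $[\m_\tau]$ directly to height changes of the two $\theta$-stable perfect matchings $\Pi_i, \Pi_j$ whose symmetric difference defines $\m_\tau$. Recall from Section~\ref{sec:dimers} that a pair of perfect matchings $\Pi, \Pi'$ determines a height function $h_{\Pi,\Pi'}$ on $\RR^2\setminus(\Pi\cup\Pi')$ that jumps by $\pm 1$ exactly when a path crosses an edge of $\Pi$ or $\Pi'$, and whose total change around the two generating loops of $\TT$ gives the height change $h(\Pi,\Pi')=h(\Pi,\Pi_0)-h(\Pi',\Pi_0)\in H^1(\TT,\ZZ)$. The key observation is that the locus $\Pi_i\cup\Pi_j$ where $h_{\Pi_i,\Pi_j}$ can jump is, up to the edges of $\Pi_i\cap\Pi_j$ (across which the net jump is zero since crossing a $\Pi_i$-edge and the coincident $\Pi_j$-edge contributes $+1$ then $-1$, or vice versa), precisely the meandering walk $\m_\tau=\Pi_i\ominus\Pi_j$. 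Thus the closed $1$-cochain on $\TT$ recording the jumps of $h_{\Pi_i,\Pi_j}$ is Poincaré-dual to the $1$-cycle $[\m_\tau]$, and evaluating it on the two generators of $H_1(\TT,\ZZ)$ computes $h(\Pi_i,\Pi_j)$. Hence, up to sign, $[\m_\tau]$ is the Poincaré dual in $H_1(\TT,\ZZ)$ of the height-change class $h(\Pi_i,\Pi_j)\in H^1(\TT,\ZZ)$, i.e. $\langle [\m_\tau], \cdot\rangle = h(\Pi_i,\Pi_j)(\cdot)$ under the perfect pairing, where the ambiguity in sign comes from the choice of traversal direction.

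Granting this, the lemma follows from the definition of the characteristic polygon. The ray generators $\rho_i,\rho_j$ of $\tau$ correspond to $\theta$-stable perfect matchings $\Pi_i,\Pi_j$, and by construction of $\Delta(\Dimer)$ the lattice points of $\tau$ in $H^1(\TT,\RR)$ are exactly $h(\Pi_i,\Pi_0)$ and $h(\Pi_j,\Pi_0)$ (and any other matchings whose height change lies on that segment). The direction vector of $\tau$ is therefore $h(\Pi_i,\Pi_0)-h(\Pi_j,\Pi_0)=h(\Pi_i,\Pi_j)$. So the statement ``$[\m_\tau]$ is orthogonal to $\tau$ under the pairing $H_1(\TT,\ZZ)\otimes H^1(\TT,\ZZ)\to\ZZ$'' becomes the statement that $[\m_\tau]$ pairs to zero with $h(\Pi_i,\Pi_j)$; but we have just identified, up to sign, $\langle[\m_\tau],\cdot\rangle$ with the linear functional $h(\Pi_i,\Pi_j)$ itself, so the required vanishing is exactly the skew-symmetry $\langle[\m_\tau],h(\Pi_i,\Pi_j)\rangle = \pm\, \big(\text{Poincaré pairing of a class with itself}\big)=0$, using that the cup/intersection pairing on $H^1(\TT)$ (equivalently, the intersection form on $H_1(\TT)$) is alternating.

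I expect the main obstacle to be the first step: making rigorous the identification of the jump-cochain of $h_{\Pi_i,\Pi_j}$ with the Poincaré dual of $[\m_\tau]$, being careful about orientations (the black/white node conventions in the definition of $h_{\Pi,\Pi'}$ determine on which side the $\pm1$ occurs) and about the fact that edges in $\Pi_i\cap\Pi_j$ genuinely contribute nothing — this is where one must check the walk is a well-defined cycle, which is guaranteed by the analysis in the proof of Proposition~\ref{prop:IUvalency}\two\ showing that along each side of $\hex(\sigma)$ the edges alternate between $\Pi_i$ and $\Pi_j$, so that $\m_\tau$ genuinely closes up into a $1$-cycle. Once the dual-class identification is in place, the orthogonality is a one-line consequence of the alternating property of the intersection pairing on the torus, and the sign ambiguity noted in the statement is precisely the freedom in orienting $\m_\tau$. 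An alternative, more hands-on route that avoids Poincaré duality language is to traverse $\m_\tau$ explicitly and track the net height change of $h_{\Pi_i,\Pi_j}$ across it using the alternation of edges between $\Pi_i$ and $\Pi_j$; this yields the same conclusion but requires the bookkeeping of the meandering pattern, so I would present the cohomological argument as the main line and relegate the combinatorial check to a remark.
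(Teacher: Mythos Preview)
Your argument is correct and takes a genuinely different route from the paper's. The paper proceeds by an explicit coordinate computation: it chooses coordinates on $\RR^2$ so that the lift of $\m_\tau$ along two adjacent sides of $\hex(\sigma)$ runs from $(0,0)$ to $(0,1)$, giving $[\m_\tau]=(0,1)$; then it computes $h(\Pi_i,\Pi_j)$ directly by travelling from a basepoint $p$ to $p+(0,1)$ (no crossings of $\Pi_i\cup\Pi_j$, so the $y$-component is $0$) and from $p$ to $p+(1,0)$ (exactly one crossing of $\m_\tau$, so the $x$-component is $1$), yielding $h(\Pi_i,\Pi_j)=(1,0)$ and hence pairing $0$. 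In other words, the paper verifies in one set of coordinates precisely the Poincar\'e-duality identification $h(\Pi_i,\Pi_j)=\pm PD([\m_\tau])$ that you invoke abstractly, and then reads off orthogonality numerically rather than from skew-symmetry of the intersection form.

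Your approach is cleaner and coordinate-free, and it makes transparent \emph{why} the orthogonality holds: the height-change class is the Poincar\'e dual of the walk, so the pairing computes a self-intersection on the torus, which vanishes. The cost is that you must justify the duality carefully---specifically, that the jump of $h_{\Pi_i,\Pi_j}$ across $\m_\tau$ has a consistent sign all the way around, which in turn requires the alternation of edges between $\Pi_i$ and $\Pi_j$ and of node colours along $\m_\tau$. You flag this correctly as the main obstacle; it is exactly what the paper's explicit hexagon picture (and the proof of Proposition~\ref{prop:IUvalency}) provides. The paper's concrete computation sidesteps this verification by working in a single fundamental domain where the crossings can be counted by inspection, which is less elegant but requires no appeal to Poincar\'e duality. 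Your ``alternative hands-on route'' is essentially the paper's actual proof.
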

\begin{proof}
 Let $\rho_i, \rho_j\in\Sigma_\theta(1)$ be the ray generators of $\tau$, and choose $\sigma\in\Sigma_\theta(3)$ satisfying $\tau\subset \sigma$. Traversing the edges of $\m_\tau$ in $\mathbb{T}$ defines a cycle obtained as the image under $\pi\colon \RR^2\to \mathbb{T}$ of a path that follows two adjacent sides of the fundamental hexagon $\hex(\sigma)$. If we choose coordinates on $\RR^2$ such that this path travels from $(0,0)$ to $(0,1)$, then tracing the cycle $\mathfrak{m}_\tau\subset \mathbb{T}$ in this direction defines the class $(0,1)\in H_1(\TT,\ZZ)$. Figure~\ref{fig:mtaunormal} depicts part of $\tiling(\sigma)$ in the universal cover: the thicker edges of $\tiling(\sigma)$ show lifts of $\m_\tau$ via the universal cover $\pi$; the dashed lines delineate a fundamental domain for the action of $\ZZ^2$ on $\RR^2$.  
\begin{figure}[!ht]
\centering
\begin{tikzpicture} [thick,scale=0.35, every node/.style={scale=1}] 
%labels
\node at (-100pt,-30pt) [rectangle,draw=white,fill=white,minimum size=6pt,inner sep=1pt]{\scriptsize $(0,0)$};
\node at (-100pt,205pt) [rectangle,draw=white,fill=white,minimum size=6pt,inner sep=1pt]{\scriptsize $(0,1)$};
\node at (50pt,-120pt) [rectangle,draw=white,fill=white,minimum size=6pt,inner sep=1pt]{\scriptsize $(1,0)$};
%lines
\draw [thick] (0pt,0pt) -- (50pt,90pt) -- (150pt,90pt) -- (200pt,0pt) -- (150pt,-90pt) -- (50pt,-90pt) -- (0pt, 0pt);
\draw [thick] (0pt,0pt) -- (-100pt,0pt) -- (-150pt,90pt) -- (-100pt,180pt) -- (0pt,180pt) -- (50pt,90pt);
\draw[very thick, dashed] (-100pt,180pt) -- (50pt,90pt) -- (50pt,-90pt) -- (-100pt,0pt) -- (-100pt,180pt);
\draw [-stealthnew, line width = 0.07cm,shorten >=3pt] (-100pt,0pt) -- (-150pt,90pt) -- (-100pt,180pt);
\draw [-stealthnew, line width = 0.07cm,shorten >=3pt] (50pt,-90pt) -- (0pt,0pt) -- (50pt,90pt);
 \draw [line width = 0.07cm] (50pt,90pt) -- (0pt,180pt);
\draw [-stealthnew, line width = 0.07cm,shorten >=3pt] (150pt,-90pt) -- (200pt,0pt);
\draw [line width = 0.07cm] (200pt,0pt) -- (150pt,90pt);
%black
\node at (0pt,0pt) [circle,draw,fill=black,minimum size=6pt,inner sep=1pt]{};;
\node at (150pt,90pt) [circle,draw,fill=black,minimum size=6pt,inner sep=1pt]{};
\node at (150pt,-90pt) [circle,draw,fill=black,minimum size=6pt,inner sep=1pt]{};
\node at (-150pt,90pt) [circle,draw,fill=black,minimum size=6pt,inner sep=1pt]{};;
\node at (0pt,180pt) [circle,draw,fill=black,minimum size=6pt,inner sep=1pt]{};

%white
\node at (50pt,90pt) [circle,draw,fill=white,minimum size=6pt,inner sep=1pt]{};
\node at (200pt,0pt) [circle,draw,fill=white,minimum size=6pt,inner sep=1pt]{};
\node at (50pt,-90pt) [circle,draw,fill=white,minimum size=6pt,inner sep=1pt]{};
\node at (-100pt,0pt) [circle,draw,fill=white,minimum size=6pt,inner sep=1pt]{};
\node at (-100pt,180pt) [circle,draw,fill=white,minimum size=6pt,inner sep=1pt]{};
\end{tikzpicture}
\caption{Lifts of the meandering walk $\m_\tau$ with homology class $[\m_\tau]=(0,1)$.}
\label{fig:mtaunormal}
\end{figure}
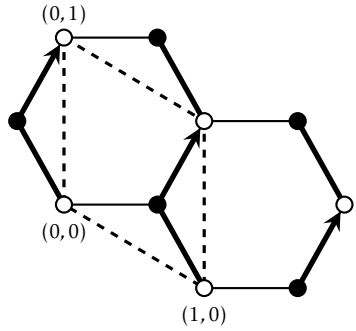

 For the $\theta$-stable perfect matchings $\Pi_i$ and $\Pi_j$ at the lattice points $\rho_i$ and $\rho_j$ respectively, we saw in Section~\ref{sec:dimers} that the height change $h(\Pi_i,\Pi_j)\in \ZZ^2$ is a vector parallel to the line segment $\tau$ in the characteristic polygon $\Delta(\Dimer)$. More invariantly, we regard the height change as a cohomology class $h(\Pi_i,\Pi_j)\in H^1(\TT,\ZZ)$ that we now compare to the homology class of the corresponding meandering walk $\mathfrak{m}_\tau$; here, the walk $\mathfrak{m}_\tau$ up two sides of the hexagon at the extreme left of Figure~\ref{fig:mtaunormal} corresponds to the walk $\mathfrak{m}_{0,2}$ in Figure~\ref{fig:fundhexpm}, so for the purposes of comparison we set $i=0$ and $j=2$.  Every edge in the perfect matchings $\Pi_0$ and $\Pi_2$ is shown in Figure~\ref{fig:fundhexpm}. Comparing this with the top-left hexagon in Figure~\ref{fig:mtaunormal}, we see that it is possible to travel from $p=(\varepsilon, \varepsilon)$ for small $\varepsilon>0$ to $p+(0,1)$ in Figure~\ref{fig:mtaunormal} without crossing a single edge in either $\Pi_0$ or $\Pi_2$, so according to \eqref{eqn:heightchange}, the $y$-coordinate of $h(\Pi_i,\Pi_j)$ equals 0. On the other hand, when travelling from $p$ to $p+(1,0)$, one is forced to cross once the walk $\mathfrak{m}_{0,2}$ whose edges lie alternately in $\Pi_0$ and $\Pi_2$ (as shown in Figure~\ref{fig:fundhexpm}), so the $x$-coordinate of $h(\Pi_i,\Pi_j)$ equals 1. Therefore $h(\Pi_i,\Pi_j) = (1,0)\in \ZZ^2=H^1(\mathbb{T},\ZZ)$, and the natural pairing gives $h(\Pi_i,\Pi_j) \cdot [\mathfrak{m}_{i,j}] = (1,0)\cdot (0,1) = 0$ as required. 
\end{proof}

% The cohomology class of the line segment $\tau$ in the characteristic polygon $\Delta(\Dimer)$ is equal to the height change $h(\Pi_i,\Pi_j)\in H^1(\TT,\ZZ)$ for the perfect matchings corresponding to the rays $\rho_i, \rho_j$.

\begin{remark} 
\label{rmk:zigzags}
 Ishii--Ueda~\cite[Theorem~11.1]{IshiiUeda15} implies that the homology class of each zig-zag in $\Dimer$ is orthogonal to the corresponding line segment in the boundary of $\Delta(\Dimer)$. Lemma~\ref{lem:mwnormal} provides a generalisation of this statement to any line segment in the triangulation of $\Delta(\Dimer)$ determined by the crepant resolution $\tau_\theta\colon \mathcal{M}_\theta\to X$ for any generic $\theta\in \Theta$. Lemma~\ref{lem:mwnormal} also extends the result of Logvinenko~\cite[Proposition~6.57]{Logvinenko04} for $\sigma$-Strands beyond the McKay quiver case.
\end{remark}

\subsection{Generalised Nakamura jigsaw transformations}
For any finite abelian subgroup $G\subset \SL(3,\kk)$, Nakamura~\cite{Nakamura01} introduced an algorithm to construct the $G$-Hilbert scheme where the key step in each iteration of the algorithm was a combinatorial procedure called a `$G$-igsaw transformation' from one torus-invariant $G$-cluster to another. In this section we generalise Nakamura's $G$-igsaw transformations to any generic stability condition $\theta$ and any consistent dimer model $\Gamma$ in $\TT$.

 First we recall Nakamura's $G$-igsaw transformation using the notation from Section~\ref{sec:RR}. Let $\Sigma$ be the fan of $\ghilb$. For each $\sigma_+\in \Sigma(3)$, there is a $G$-invariant monomial ideal $I_+\subset \kk[x,y,z]$ such that the torus-invariant $G$-cluster $\kk[x,y,z]/I_+$ is the fibre of the universal family on $\ghilb$ over the origin in the chart $U_{\sigma_+}$. The set $\mathcal{S}_+$ of monomials in $\kk[x,y,z]\setminus I_+$ provides an eigenbasis for $\kk[x,y,z]/I_+$, \emph{i.\,e.}\ each monomial in $\mathcal{S}_+$ lies in a different character space of the $G$-action. Following Nakamura~\cite{Nakamura01}, we obtain the \emph{$G$-graph} of $\kk[x,y,z]\setminus I_+$ by introducing directed edges between monomials in $\mathcal{S}_+$ to encode the $\kk[x,y,z]$-module structure on $\kk[x,y,z]/I_+$; in particular, each directed edge in the $G$-graph is naturally labelled by a variable $x$, $y$ or $z$. To describe the $G$-igsaw transformations of this monomial $G$-cluster, let $\tau\in \Sigma(2)$ be any interior line segment satisfying $\tau=\sigma_+\cap \sigma_-$ for some $\sigma_-\in \Sigma(3)$ and let $m=(m_1, m_2, m_3)\in M$ be the primitive vector in the normal direction to the hyperplane spanned by $\tau$ such that $\langle m,n\rangle\geq 0$ for all $n\in \sigma_+$.

 \begin{proposition}{\!\emph{(Nakamura}~\cite[Lemma~2.8]{Nakamura01}\emph{).}}
 \label{prop:Nakamura}
 Let $S_-$ be the $G$-graph of the torus-invariant $G$-cluster obtained as the fibre of the universal family on $\ghilb$ over the origin in $U_{\sigma_-}$. For each monomial $x^ay^bz^c\in\mathcal{S}_+$, the unique element of $\mathcal{S}_-$ in the same character space is  obtained by multiplying $x^ay^bz^c$ by the highest power of the $G$-invariant Laurent monomial $x^{m_1}y^{m_2}z^{m_3}$ such that the product lies in $\kk[x,y,z]$, \emph{i.\,e.}\
  \[
 \mathcal{S}_-=\Big\{ x^ay^bz^c(x^{m_1}y^{m_2}z^{m_3})^{d(a,b,c)}\in \kk[x,y,z] \mid x^ay^bz^c\in \mathcal{S}_+\Big\}
 \]
 for $d(a,b,c):=\max\{d\in \NN \mid a+dm_1\geq0, b+dm_2\geq 0, c+dm_3\geq 0\}$.
 \end{proposition}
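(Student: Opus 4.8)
\smallskip
\noindent\textbf{Proof strategy.}
The statement is Nakamura's, and the plan is to recover it from the geometry of $\ghilb$ near the wall $\tau$. Everything is local, and the only thing to prove is that, for a fixed $\rho\in Q_0=\Irr(G)$ with $\mathcal{S}_+$-representative $x^ay^bz^c$, the $\mathcal{S}_-$-representative of $\rho$ is the monomial $x^ay^bz^c(x^{m_1}y^{m_2}z^{m_3})^{d(a,b,c)}$. First I would set up the torus-invariant curve $C_\tau\subset\ghilb$ defined by $\tau$: since $\ghilb$ is smooth and $\sigma_\pm\in\Sigma(3)$ are the two maximal cones meeting along the facet $\tau$, the curve $C_\tau$ is a copy of $\mathbb{P}^1$ joining the torus-fixed points $y_{\sigma_+}\in U_{\sigma_+}$ and $y_{\sigma_-}\in U_{\sigma_-}$, and the $G$-invariant Laurent monomial $t:=x^{m_1}y^{m_2}z^{m_3}$ (regular on $U_{\sigma_+}$ because $m\in\sigma_+^\vee$ is primitive and generates $\sigma_+^\vee\cap\tau^\perp$) restricts to a coordinate on $C_\tau\cap U_{\sigma_+}$ with $\operatorname{ord}_{y_{\sigma_+}}(t)=1$ and $\operatorname{ord}_{y_{\sigma_-}}(t)=-1$.

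Next I would restrict the universal family $\mathcal{Z}\subset\ghilb\times\mathbb{C}^3$ to $C_\tau$ and push it forward, obtaining a locally free sheaf on $\mathbb{P}^1$ that splits $G$-equivariantly as $\bigoplus_{\rho\in\Irr(G)}\mathcal{L}_\rho$ with $\mathcal{L}_\rho\cong\mathcal{O}_{\mathbb{P}^1}(n_\rho)$. Over $C_\tau\cap U_{\sigma_+}=\Spec\kk[t]$ the summand $\mathcal{L}_\rho$ is trivialised by $x^ay^bz^c$, because over the chart $U_{\sigma_+}$ the universal family has the familiar staircase form in which its pushforward is free over $\kk[U_{\sigma_+}]$ on the basis $\mathcal{S}_+$; symmetrically $\mathcal{L}_\rho$ is trivialised over $C_\tau\cap U_{\sigma_-}$ by the $\mathcal{S}_-$-representative $x^{a'}y^{b'}z^{c'}$ of $\rho$. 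Since $x,y,z$ generate the pushforward as a sheaf of algebras, any monomial in the $\rho$-eigenspace is a global section of $\mathcal{L}_\rho$; in particular $x^ay^bz^c$ is a nonzero global section, so $n_\rho\ge 0$, and as it is nonvanishing on the torus of $C_\tau$ and generates $\mathcal{L}_\rho$ at $y_{\sigma_+}$, its zero divisor on $C_\tau$ is $n_\rho[y_{\sigma_-}]$; likewise the zero divisor of $x^{a'}y^{b'}z^{c'}$ is $n_\rho[y_{\sigma_+}]$.

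It then remains to prove $n_\rho=d(a,b,c)$. For ``$\le$'', comparing the two zero divisors shows that the $G$-invariant Laurent monomial $x^{a'-a}y^{b'-b}z^{c'-c}$, which is a priori a nonzero rational function on $C_\tau$, has divisor $n_\rho([y_{\sigma_+}]-[y_{\sigma_-}])$ there, hence equals $t^{n_\rho}$ on $C_\tau$; but a $G$-invariant Laurent monomial restricts to a nonzero function on $C_\tau$ only if its exponent is orthogonal to $\tau$, i.e.\ lies in $\tau^\perp\cap M=\ZZ\cdot m$, so $(a',b',c')=(a,b,c)+n_\rho m$, and as $x^{a'}y^{b'}z^{c'}\in\kk[x,y,z]$ this gives $a+n_\rho m_1\ge0$, $b+n_\rho m_2\ge0$, $c+n_\rho m_3\ge0$, whence $n_\rho\le d(a,b,c)$. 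For ``$\ge$'', fix $0\le k\le d(a,b,c)$; then $\mu_k:=x^{a+km_1}y^{b+km_2}z^{c+km_3}$ is a genuine monomial in $\kk[x,y,z]$, so a regular global section of $\mathcal{L}_\rho$, and over $C_\tau\cap U_{\sigma_+}$ one checks $\mu_k=t^k\cdot x^ay^bz^c$; hence $\mu_k$ vanishes to order $n_\rho-k$ at $y_{\sigma_-}$, and regularity (no pole) forces $k\le n_\rho$. Taking $k=d(a,b,c)$ gives $d(a,b,c)\le n_\rho$, so $n_\rho=d(a,b,c)$ and $x^{a'}y^{b'}z^{c'}=\mu_{d(a,b,c)}=x^ay^bz^c(x^{m_1}y^{m_2}z^{m_3})^{d(a,b,c)}$; letting $\rho$ range over $\Irr(G)$ yields the description of $\mathcal{S}_-$.

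The main obstacle is not the divisor bookkeeping on $\mathbb{P}^1$ but setting up its two structural inputs cleanly: the staircase description of the universal family over a torus chart of $\ghilb$ (from Ito--Nakamura and Nakamura), which is what makes each $\mathcal{L}_\rho$ monomially trivialisable, and the toric fact that a monomial restricts to a nonzero function on the orbit closure $C_\tau$ only when its exponent lies in $\tau^\perp\cap M$, which is what promotes ``$(a',b',c')-(a,b,c)\in M$'' to ``$(a',b',c')-(a,b,c)=n_\rho m$''. With these two facts in hand the remainder is elementary, and this is in essence Nakamura's proof of \cite[Lemma~2.8]{Nakamura01}.
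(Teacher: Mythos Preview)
The paper does not give its own proof of this proposition: it is stated as Nakamura's result and attributed to \cite[Lemma~2.8]{Nakamura01} without further argument, so there is no in-paper proof to compare against. Your proposal is a correct and self-contained reconstruction of the argument.

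It is worth observing that your strategy---trivialising the $\rho$-summand of the tautological bundle over each chart by the eigenbasis monomial, then reading off the degree $n_\rho$ on $C_\tau$ from the transition function $t^{n_\rho}$ with $t=x^{m_1}y^{m_2}z^{m_3}$---is precisely the mechanism the paper later deploys in the general dimer setting in Proposition~\ref{prop:onejigsaw} (see especially equations~\eqref{eqn:xmxl} and~\eqref{eqn:degreetransition}). In that sense your proof is not merely a reproduction of Nakamura's argument but also anticipates the paper's own method for the generalisation; the paper's Corollary~\ref{cor:Gigsaw} then closes the loop by showing that Theorem~\ref{thm:genjigsaw} specialises back to Proposition~\ref{prop:Nakamura}. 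One small point of presentation: your claim that ``any monomial in the $\rho$-eigenspace is a global section of $\mathcal{L}_\rho$'' is what makes the ``$\ge$'' direction work, and it rests on the fact that the universal family is a quotient of $\mathcal{O}_{\ghilb}\otimes\kk[x,y,z]$, so monomials map to regular sections---you use this implicitly but it deserves to be stated once.
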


\begin{remarks}
\leavevmode
\begin{enumerate}
    \item At least one of $m_1, m_2, m_3\in \ZZ$ is negative because $\tau$ is an interior line segment, so $d(a,b,c)$ is well-defined for each $x^ay^bz^c\in\mathcal{S}_+$.
    \item Since $G\subset \SL(3,\kk)$, the result of \cite[Lemma~2.8]{Nakamura01} is correct; compare \cite[Remark~4.13]{CMT07b}. 
    \item Worked examples of jigsaw transformations illustrating Proposition~\ref{prop:Nakamura} appear in \cite[Section~5]{Nakamura01}.
\end{enumerate}
\end{remarks}

 We now work towards the generalisation of this statement. Let $\theta\in \Theta$ be generic and write $\Sigma_\theta$ for the toric fan of $\mathcal{M}_\theta$. Let $\sigma_\pm\in\Sigma_\theta(3)$ be adjacent cones in $\Sigma_\theta$, with $\tau=\sigma_+\cap\sigma_-\in\Sigma_\theta(2)$. Let $\rho_0, \rho_1, \rho_2$ and $\rho_1, \rho_2, \rho_3$ denote the rays in $\sigma_+$ and $\sigma_-$ respectively as shown in Figure~\ref{fig:jigsawcones}, and  for $0\leq i\leq 3$ we write $\Pi_i$ for the perfect matching associated to the ray $\rho_i$ as in \eqref{eqn:pm}.
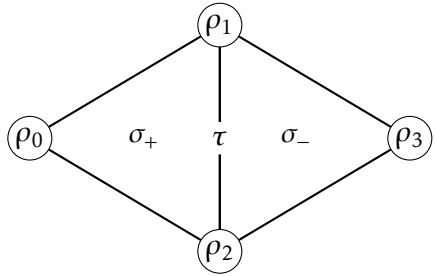
\begin{figure}[!ht]
\centering
\begin{tikzpicture} 
\draw [thick] (-2.5,0) -- (0,1.5) -- node [rectangle,draw=white,fill=white,sloped,inner sep=1pt] {{\tiny $\tau$}} (0,-1.5) -- (-2.5,0);
\draw [thick] (2.5,0) -- (0,1.5) -- node [rectangle,draw=white,fill=white,inner sep=3pt] {{$\tau$}}(0,-1.5) -- (2.5,0);

\draw (-1,0) node[circle,draw=white,fill=white,minimum size=5pt,inner sep=1pt] {{$\sigma_+$}};
\draw (1,0) node[circle,draw=white,fill=white,minimum size=5pt,inner sep=1pt] {{$\sigma_-$}};

\draw (-2.5,0) node[circle,draw,fill=white,minimum size=5pt,inner sep=1pt] {{$\rho_0$}};
\draw (2.5,0) node[circle,draw,fill=white,minimum size=5pt,inner sep=1pt] {{$\rho_3$}};
\draw (0,1.5) node[circle,draw,fill=white,minimum size=5pt,inner sep=1pt] {{$\rho_1$}};
\draw (0,-1.5) node[circle,draw,fill=white,minimum size=5pt,inner sep=1pt] {{$\rho_2$}};
\end{tikzpicture}
\caption{Cones $\sigma_+$ and $\sigma_-$ in the triangulation $\Sigma$.}
\label{fig:jigsawcones}
\end{figure}

\begin{definition} 
\label{def:jigsawpieces}
 Let $\tau\in \Sigma_\theta(2)$. A \emph{jigsaw piece} for $\tau$ is the closure of any connected component of the locus $\mathbb{T}\setminus\bigcup_{0\leq i \leq 3}\Pi_i$. 
\end{definition}

Each jigsaw piece for $\tau$ is the union of a collection of tiles of $\Dimer$. Tiles can be lifted from $\Gamma$ to $\widetilde{\Dimer}$, and we draw the jigsaw pieces in the universal cover. The terminology is chosen to suggest that one can move jigsaw pieces around in $\widetilde{\Dimer}$ by deliberately choosing different lifts.

\begin{lemma} 
\label{lem:cutHex}
Let $\sigma_\pm\in\Sigma_\theta(3)$ satisfy $\tau=\sigma_+\cap\sigma_-$ as in Figure~\ref{fig:jigsawcones} above, and regard
\begin{equation}
\label{eq:cuts-} 
\mathfrak{c}_-:=\mathfrak{m}_{1,3}\cap\mathfrak{m}_{2,3}
\end{equation}
as a subset of the edges in the boundary of $\hex(\sigma_-)$. The closure of any connected component of $\hex(\sigma_+)\setminus \mathfrak{c}_-$ is a jigsaw piece of $\tau$, and every jigsaw piece of $\tau$ arises in this way. In particular, if we cut $\hex(\sigma_+)$ along the edges of $\mathfrak{c}_-$ then we obtain precisely the jigsaw pieces of $\tau$.
\end{lemma}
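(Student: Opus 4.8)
The plan is to argue that cutting $\hex(\sigma_+)$ along the edges of $\mathfrak{c}_- = \mathfrak{m}_{1,3} \cap \mathfrak{m}_{2,3}$ is the same operation as decomposing $\mathbb{T}$ along $\bigcup_{0 \le i \le 3} \Pi_i$, restricted to the fundamental domain $\hex(\sigma_+)$. First I would recall from Proposition~\ref{prop:IUvalency} that for the cone $\sigma_+$ with rays $\rho_0,\rho_1,\rho_2$, the locus $\Pi_0 \cup \Pi_1 \cup \Pi_2$ consists of $\tiling(\sigma_+)$ (the boundary of $\hex(\sigma_+)$, viewed in $\mathbb{T}$) together with a collection of isolated single edges lying strictly in the interior of $\hex(\sigma_+)$; the interior edges are exactly the elements of $\Pi_0 \cap \Pi_1 \cap \Pi_2$, and they do not touch any other edge of the locus. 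Since a jigsaw piece for $\tau$ is the closure of a connected component of $\mathbb{T} \setminus \bigcup_{0 \le i \le 3}\Pi_i$, and each jigsaw piece is a union of tiles of $\Gamma$, the real work is to identify which edges of $\bigcup_{0 \le i \le 3}\Pi_i$ actually meet the interior of $\hex(\sigma_+)$ and to show these are precisely $\mathfrak{c}_-$.

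The key computation is this: inside $\hex(\sigma_+)$, an edge of $\Pi_3$ either already lies in $\Pi_0 \cup \Pi_1 \cup \Pi_2$ — in which case adding $\Pi_3$ contributes nothing new there — or it is a genuinely new edge. Because the boundary $\tiling(\sigma_+)$ is built (via Proposition~\ref{prop:IUvalency}\two\ and Figure~\ref{fig:fundhexpm}) from chains of edges lying in single matchings $\Pi_0, \Pi_1, \Pi_2$ or in pairwise intersections $\Pi_i \cap \Pi_j$, the edges of $\Pi_3$ that land on $\partial\hex(\sigma_+)$ create no new cutting inside the open hexagon; the only new cutting comes from edges of $\Pi_3$ that pass through the interior. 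I would show that such an interior edge $\mathfrak{e} \in \Pi_3$ lies in $\hex(\sigma_+)^\circ$ if and only if $\mathfrak{e} \in \Pi_1 \cap \Pi_3$ and $\mathfrak{e} \in \Pi_2 \cap \Pi_3$ simultaneously — i.e. $\mathfrak{e} \in \mathfrak{m}_{1,3} \cap \mathfrak{m}_{2,3}$ is false and instead $\mathfrak{e}$ must lie in the intersection pattern $\Pi_1 \cap \Pi_2 \cap \Pi_3$ or — more carefully — I would instead directly describe $\mathfrak{c}_-$ as the set of boundary edges of $\hex(\sigma_-)$ that are interior to $\hex(\sigma_+)$, using the combinatorial picture of Figure~\ref{fig:fundhexpm} applied to both $\sigma_+$ (rays $\rho_0,\rho_1,\rho_2$) and $\sigma_-$ (rays $\rho_1,\rho_2,\rho_3$): the shared side of the two hexagons is the chain of the meandering walk $\mathfrak{m}_{1,2}$, and the two sides of $\partial\hex(\sigma_-)$ meeting the $\rho_3$-vertex are the chains of $\mathfrak{m}_{1,3}$ and $\mathfrak{m}_{2,3}$, whose common edges are $\mathfrak{c}_-$. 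A symmetry/involution argument (swapping the roles of $\sigma_+$ and $\sigma_-$, equivalently $\rho_0 \leftrightarrow \rho_3$) then shows that cutting $\hex(\sigma_+)$ along $\mathfrak{c}_-$ and cutting $\hex(\sigma_-)$ along the analogous $\mathfrak{c}_+ := \mathfrak{m}_{0,1} \cap \mathfrak{m}_{0,2}$ produce the same collection of pieces, namely exactly the connected components of $\mathbb{T} \setminus \bigcup_{0 \le i \le 3}\Pi_i$.

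Once the edge bookkeeping is in place, the conclusion is essentially formal: the connected components of $\hex(\sigma_+) \setminus \mathfrak{c}_-$ are in bijection with the connected components of $\mathbb{T} \setminus \bigcup_{0 \le i \le 3}\Pi_i$, because $\hex(\sigma_+)$ is a fundamental domain for the $\ZZ^2$-action, the boundary $\partial\hex(\sigma_+)$ is identified in pairs by $\pi$, and — this is the point to check carefully — no jigsaw piece straddles the boundary $\partial\hex(\sigma_+)$ in a way that would be severed by that identification; equivalently, each jigsaw piece of $\tau$ has a representative lift contained in $\hex(\sigma_+)$. This last point follows because the boundary chains of $\hex(\sigma_+)$ all lie in $\Pi_0 \cup \Pi_1 \cup \Pi_2 \subseteq \bigcup_{0 \le i \le 3}\Pi_i$, so every jigsaw piece is, up to $\ZZ^2$-translation, confined to a single copy of $\hex(\sigma_+)$.

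The main obstacle I anticipate is the edge-level combinatorics of the previous paragraph: precisely matching the chain decomposition of $\partial\hex(\sigma_-)$ (the sides through $\rho_3$) against the interior of $\hex(\sigma_+)$, and verifying that $\Pi_3$ contributes no stray cutting edges on $\partial\hex(\sigma_+)$ beyond what $\Pi_0, \Pi_1, \Pi_2$ already provide. This requires a careful case analysis of how an edge of $\Pi_3$ can sit relative to $\hex(\sigma_+)$, using consistency of $\Gamma$ (no two edges of a single perfect matching touch, each node lies on exactly one edge of each matching) together with the classification of boundary edges from Proposition~\ref{prop:IUvalency} and the local picture in Figure~\ref{fig:fundhexpm}. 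I expect the rest — the bijection of connected components and the fundamental-domain argument — to be routine once that is settled.
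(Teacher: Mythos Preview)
Your overall strategy is right and matches the paper's: identify the set of edges that actually cut $\hex(\sigma_+)$, show this equals (up to isolated edges that don't disconnect tiles) the locus $\bigcup_{0\le i\le 3}\Pi_i$ inside the fundamental domain, and conclude via the definition of jigsaw pieces. But you are making the edge bookkeeping much harder than it needs to be, and in doing so you drift into a confused and partly incorrect analysis.

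The clean move you are missing is this. The boundary of $\hex(\sigma_+)$ is $\mathfrak{m}_{1,2}\cup(\mathfrak{m}_{0,1}\cap\mathfrak{m}_{0,2})$, and adding the cut $\mathfrak{c}_-=\mathfrak{m}_{1,3}\cap\mathfrak{m}_{2,3}$ gives a set $\mathfrak{C}$ whose image in $\mathbb{T}$ is exactly $\tiling(\sigma_+)\cup\tiling(\sigma_-)$, the union of the boundaries of \emph{both} hexagons. Now apply Proposition~\ref{prop:IUvalency} twice, once to $\sigma_+$ and once to $\sigma_-$: it tells you that $\tiling(\sigma_+)\cup\tiling(\sigma_-)$ is precisely the non-isolated part of $\bigcup_{0\le i\le 3}\Pi_i$. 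Since jigsaw pieces are closures of unions of tiles, the isolated edges are irrelevant and you are done. There is no need to track individual edges of $\Pi_3$ or to classify how they sit relative to $\partial\hex(\sigma_+)$; the ``main obstacle'' you anticipate dissolves once you recognise $\mathfrak{C}$ as the union of the two honeycomb graphs.

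Two specific slips to flag. First, your attempt to characterise interior cut edges via $\Pi_3$ alone is off: the chain $\mathfrak{c}_-$ alternates between edges in $\Pi_3$ and edges in $\Pi_1\cap\Pi_2$ (apply the Figure~\ref{fig:fundhexpm} picture to $\sigma_-$), so restricting attention to $\Pi_3$ misses half of it. Second, your fallback description of $\mathfrak{c}_-$ as ``the boundary edges of $\hex(\sigma_-)$ that are interior to $\hex(\sigma_+)$'' is not correct in general: edges of $\mathfrak{c}_-$ can lie on $\partial\hex(\sigma_+)$ (indeed the paper's Lemma~\ref{lem:cuts}\two\ treats exactly this case), so this would undercount. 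Neither error is fatal to the strategy, but both signal that the edge-by-edge approach is the wrong granularity here.
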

\begin{proof}
The rays $\rho_1, \rho_2$ lie in $\sigma_+\cap \sigma_-$, so the fundamental hexagons $\hex(\sigma_\pm)$ share four of their six boundary sides, namely those along the lifts of $\mathfrak{m}_{1,2}$ in Figure~\ref{fig:fundhexpm}. We also know $\mathfrak{m}_{0,1}\cap \mathfrak{m}_{0,2}$ traverses the other boundary sides of $\hex(\sigma_+)$, and cutting $\hex(\sigma_+)$ along the set $\mathfrak{c}_-$ requires that we cut along the edges of $\mathfrak{m}_{1,3}\cap\mathfrak{m}_{2,3}$. All together then, the edges that cut out the connected components of $\hex(\sigma_+)\setminus \mathfrak{c}_-$ are those in the set
\begin{equation}
\label{eqn:edgesinallcuts}
\mathfrak{C}:=\mathfrak{m}_{1,2}\cup (\mathfrak{m}_{0,1}\cap \mathfrak{m}_{0,2})\cup (\mathfrak{m}_{1,3}\cap\mathfrak{m}_{2,3})
\end{equation}
 that forms the union of all edges in the boundaries of $\hex(\sigma)$ and $\hex(\sigma_-)$. Regarding $\tiling(\sigma_\pm)$ as subsets of $\mathbb{T}$, we have that
\[
\pi(\mathfrak{C}) = \pi\big(\partial \hex(\sigma_+)\big)\cup \pi\big(\partial \hex(\sigma_-)\big) = \tiling(\sigma_+)\cup\tiling(\sigma_-).
\]
 Proposition~\ref{prop:IUvalency} implies that this subset of $\mathbb{T}$ is the unique connected component of the locus $\bigcup_{0\leq i\leq 3} \Pi_i$ comprising more than a single edge. Since each jigsaw piece for $\tau$ is the closure of a union of tiles in $\Dimer$, we may ignore isolated edges of $\bigcup_{0\leq i\leq 3} \Pi_i$ when computing the jigsaw pieces. Thus, jigsaw pieces for $\tau$ are precisely the (images under $\pi$ of the) closures of the connected components of $\hex(\sigma_+)\setminus \mathfrak{c}_-$.
\end{proof}

\begin{theorem}[Generalised jigsaw transformation]
\label{thm:genjigsaw}
 Let $M_{\sigma_+}$ be the torus-invariant $\theta$-stable $A$-module associated to $\sigma_+\in \Sigma_\theta(3)$. By cutting the fundamental hexagon $\hex(\sigma_+)$ in $\RR^2$ along the edges of  $\mathfrak{c}_-$ and rearranging the resulting jigsaw pieces ({i.\,e.}\ translating each piece by a carefully chosen element of $\ZZ^2$), one obtains $\hex(\sigma_-)$, thereby determining the $\theta$-stable $A$-module $M_{\sigma_-}$ for the adjacent cone $\sigma_-\in \Sigma_\theta(3)$.
\end{theorem}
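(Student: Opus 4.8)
\emph{Plan.} I would run a bookkeeping argument whose substance is supplied by Lemma~\ref{lem:cutHex} and Proposition~\ref{prop:IUvalency}. Recall that $\hex(\sigma_+)$ and $\hex(\sigma_-)$ are unions of tiles of $\widetilde{\Dimer}$ that are fundamental domains for the $\ZZ^2$-action on $\RR^2$, so each $\ZZ^2$-orbit of tiles of $\widetilde{\Dimer}$ has a unique representative inside $\hex(\sigma_+)$ and a unique one inside $\hex(\sigma_-)$; in particular no jigsaw piece of $\tau$ occurs twice in either hexagon, and since the interior of each tile lies in a single connected component of $\TT\setminus\bigcup_{0\le i\le 3}\Pi_i$, two distinct jigsaw pieces of $\tau$ share no tile and hence have disjoint interiors. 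The four steps are: (i) cut $\hex(\sigma_+)$ along $\mathfrak{c}_-$ to obtain exactly one lift of each jigsaw piece of $\tau$; (ii) show that each such lift lies in a single cell of the periodic hexagonal tiling determined by $\hex(\sigma_-)$, so there is a unique $\ZZ^2$-translate of it contained in $\hex(\sigma_-)$; (iii) check that the translated pieces tile $\hex(\sigma_-)$; (iv) recover $M_{\sigma_-}$ from $\hex(\sigma_-)$.

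\emph{Steps (i) and (ii).} Step (i) is exactly Lemma~\ref{lem:cutHex}: the closures $P_1,\dots,P_n$ of the connected components of $\hex(\sigma_+)\setminus\mathfrak{c}_-$ are one lift of each jigsaw piece of $\tau$. For step (ii), let $\mathcal{T}^-$ denote the $\ZZ^2$-periodic tiling of $\RR^2$ whose closed cells are the translates $\hex(\sigma_-)+w$ for $w\in\ZZ^2$; its walls form the graph $\tiling(\sigma_-)$. By Proposition~\ref{prop:IUvalency} and the proof of Lemma~\ref{lem:cutHex}, $\pi(\tiling(\sigma_+)\cup\tiling(\sigma_-))$ is exactly the connected component of $\bigcup_{0\le i\le 3}\Pi_i$ with more than one edge, which is the union of the boundaries of all jigsaw pieces of $\tau$; hence the interior of each $P_k$ is disjoint from $\tiling(\sigma_-)$. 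Being connected and missing all walls of $\mathcal{T}^-$, $\operatorname{int}(P_k)$ lies in a single open cell, and since $P_k$ is the closure of its interior (which is nonempty), there is a unique $w_k\in\ZZ^2$ with $P_k\subseteq\hex(\sigma_-)+w_k$, uniqueness following from $\hex(\sigma_-)$ being a fundamental domain.

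\emph{Steps (iii) and (iv).} The sets $P_k-w_k$ lie in $\hex(\sigma_-)$, are translates of jigsaw pieces of $\tau$ in pairwise distinct $\ZZ^2$-orbits, and therefore have pairwise disjoint interiors; moreover $\sum_k\operatorname{area}(P_k-w_k)=\sum_k\operatorname{area}(P_k)=\operatorname{area}(\hex(\sigma_+))=\operatorname{area}(\hex(\sigma_-))$, so, both $\bigcup_k(P_k-w_k)$ and $\hex(\sigma_-)$ being finite unions of tiles of $\widetilde{\Dimer}$, the inclusion $\bigcup_k(P_k-w_k)\subseteq\hex(\sigma_-)$ is an equality. (Equivalently, one may apply Lemma~\ref{lem:cutHex} with $\sigma_+$ and $\sigma_-$ interchanged — legitimate because the cut set $\mathfrak{C}$ of \eqref{eqn:edgesinallcuts} is symmetric under $\rho_0\leftrightarrow\rho_3$, with $\mathfrak{c}_-$ then replaced by $\mathfrak{c}_+:=\mathfrak{m}_{0,1}\cap\mathfrak{m}_{0,2}$ — to identify the $P_k-w_k$ with the lifts of jigsaw pieces that tile $\hex(\sigma_-)$.) Thus cutting $\hex(\sigma_+)$ along $\mathfrak{c}_-$ and translating each piece $P_k$ by the uniquely determined vector $-w_k$ reassembles $\hex(\sigma_-)$. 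Finally, the tiles of $\hex(\sigma_-)$ are dual to the vertices of the lifted subquiver $\widetilde{Q^{\sigma_-}}$ and its arrows are those crossing edges interior to $\hex(\sigma_-)$; via $\pi$ this recovers $Q^{\sigma_-}$, hence the scalars $v_a$ of \eqref{eqn:AmoduleOfCone} attached to $\sigma_-$, hence the module $M_{\sigma_-}$, as in the remark following Definition~\ref{def:hexHoneycomb}.

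\emph{Main obstacle.} The only genuinely delicate point is step (ii): that no jigsaw piece of $\tau$ can straddle a wall of the tiling $\mathcal{T}^-$. This rests on the facts that $\tiling(\sigma_-)\subseteq\bigcup_{0\le i\le 3}\Pi_i$ and that the boundaries of all jigsaw pieces lie in $\tiling(\sigma_+)\cup\tiling(\sigma_-)$ — equivalently, the edges of $\bigcup_{0\le i\le 3}\Pi_i$ not on these boundaries are the isolated edges, which are irrelevant to the jigsaw decomposition — and both are already contained in Proposition~\ref{prop:IUvalency} and the proof of Lemma~\ref{lem:cutHex}. Beyond this, the argument is a finite combinatorial count; the only care needed is to keep straight the distinction between a jigsaw piece in $\TT$ and the particular lift of it chosen in $\RR^2$.
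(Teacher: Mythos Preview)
Your proposal is correct and follows essentially the same approach as the paper: cut $\hex(\sigma_+)$ along $\mathfrak{c}_-$ via Lemma~\ref{lem:cutHex}, do the symmetric cut of $\hex(\sigma_-)$ along $\mathfrak{c}_+$, and observe that any two lifts of a jigsaw piece differ by a $\ZZ^2$-translation. The paper's proof is three sentences and leaves implicit exactly the points you spell out in steps~(ii) and~(iii); your area argument and the verification that no piece straddles a wall of $\tiling(\sigma_-)$ are welcome elaborations, and your parenthetical alternative in step~(iii) is precisely the paper's symmetry argument.
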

\begin{proof}
 By lifting the result of Lemma~\ref{lem:cutHex} to the universal cover, we may cut the interior of $\hex(\sigma_+)$ in $\RR^2$ along $\mathfrak{c}_-$ in order to decompose $\hex(\sigma_+)$ into the union of all jigsaw pieces for $\tau$, each a particular lift of the corresponding jigsaw piece in $\mathbb{T}^2$. By symmetry, a similar statement holds for the cut of $\hex(\sigma_-)$ along the chain of edges $\mathfrak{c}_+:=\mathfrak{m}_{0,1}\cap\mathfrak{m}_{0,2}$ in the boundary of $\hex(\sigma_+)$.  The result follows since any two lifts of a jigsaw piece differ only in translation by an element of $\ZZ^2$. 
\end{proof}

To see that Nakamura's $G$-igsaw transformations from Proposition~\ref{prop:Nakamura} can be recovered as a special case of Theorem~\ref{thm:genjigsaw}, we adopt the notation from Section~\ref{sec:RR}. In particular, for the stability parameter $\theta$ satisfying $\theta_i>0$ for each $i\neq 0$, the fine moduli space $\mathcal{M}_\theta$ is isomorphic to $\ghilb$.

\begin{corollary}
\label{cor:Gigsaw}
Let $G\subset \SL(3,\kk)$ be a finite abelian subgroup. Nakamura's $G$-igsaw transformations from Proposition~\ref{prop:Nakamura} are a special case of those from Theorem~\ref{thm:genjigsaw}.
\end{corollary}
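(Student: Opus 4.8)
The plan is to show that for the special stability parameter $\theta$ with $\theta_i>0$ for all $i\neq 0$, all of the combinatorial ingredients in Theorem~\ref{thm:genjigsaw} specialise exactly to Nakamura's setup in Proposition~\ref{prop:Nakamura}. First I would recall, following Section~\ref{sec:RR} and Ueda--Yamazaki~\cite{UY11}, the precise description of the hexagonal dimer model $\Gamma$ associated to a finite abelian $G\subset\SL(3,\kk)$: the quiver $Q$ is the McKay quiver, $A$ is the skew group algebra $\kk[x,y,z]\rtimes G$, and each arrow is labelled by one of the variables $x,y,z$ according to which of the three $G$-equivariant maps $\mathcal{O}_{\CC^3}\otimes\rho\to\mathcal{O}_{\CC^3}\otimes\rho'$ it encodes. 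The key translation is that a torus-invariant $\theta$-stable $A$-module $M_{\sigma_+}$ of dimension vector $\vv$ is, in this case, exactly a torus-invariant $G$-cluster $\kk[x,y,z]/I_+$: each vertex $\widetilde{i}$ of the lifted quiver $\widetilde{Q^{\sigma_+}}$ corresponds to the monomial $x^ay^bz^c\in\mathcal{S}_+$ reached by the unique path from $0$ to $i$ in $Q^{\sigma_+}$, and the tile of $\widetilde\Dimer$ dual to $\widetilde i$ records the exponent vector $(a,b,c)$. In other words, $\hex(\sigma_+)$ is a geometric realisation of the $G$-graph of $\kk[x,y,z]\setminus I_+$, with the hexagonal tiling placing the monomial $x^ay^bz^c$ at (an affine shift of) the point $a\mathbf{e}_1+b\mathbf{e}_2+c\mathbf{e}_3$ projected to $\RR^2$ along the junior simplex direction.

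Next I would identify the meandering walks with the coordinate directions. For the hexagonal dimer, there are exactly three zig-zag classes, corresponding to the variables $x,y,z$, and the characteristic polygon is the junior simplex; the $\theta$-stable perfect matchings $\Pi_\rho$ are in bijection with lattice points $\rho$ of the triangulated junior simplex. For an interior line segment $\tau=\sigma_+\cap\sigma_-$ with normal vector $m=(m_1,m_2,m_3)\in M$, the meandering walk $\m_\tau=\Pi_i\ominus\Pi_j$ has homology class in $H_1(\TT,\ZZ)$ orthogonal to $\tau$ by Lemma~\ref{lem:mwnormal}; since $m$ spans the line orthogonal to $\tau$ in $M$, the class $[\m_\tau]$ is a (primitive) multiple of the image of $m$. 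Crossing the walk $\m_\tau$ once therefore corresponds precisely to multiplication by the $G$-invariant Laurent monomial $x^{m_1}y^{m_2}z^{m_3}$: each edge of $\m_\tau$ is dual to an arrow labelled by whichever variable $x,y,z$ has the sign pattern dictated by $m$, so moving a tile (monomial) across the chain $\mathfrak{c}_-=\m_{1,3}\cap\m_{2,3}$ by an element of $\ZZ^2$ amounts to multiplying that monomial by $x^{m_1}y^{m_2}z^{m_3}$ raised to the power equal to the number of copies of the chain crossed. I would verify that the cut locus $\mathfrak{c}_-$ in Lemma~\ref{lem:cutHex}, when pushed through this dictionary, is exactly the "staircase" that Nakamura uses to subdivide the $G$-graph when passing from $\sigma_+$ to $\sigma_-$.

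With these identifications in place, the corollary is a matter of matching the algorithm. Each jigsaw piece of $\tau$, being a maximal union of tiles not separated by any $\Pi_i$ for $0\le i\le 3$, corresponds to a maximal block of monomials in $\mathcal{S}_+$ on which the power $d(a,b,c)=\max\{d\in\NN\mid a+dm_1\ge0,\ b+dm_2\ge0,\ c+dm_3\ge0\}$ is constant; Theorem~\ref{thm:genjigsaw} says this block is translated by the corresponding element of $\ZZ^2$, which under the dictionary is multiplication by $(x^{m_1}y^{m_2}z^{m_3})^{d(a,b,c)}$, so the rearranged hexagon $\hex(\sigma_-)$ encodes exactly the monomial set $\mathcal{S}_-$ of Proposition~\ref{prop:Nakamura}. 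I expect the main obstacle to be purely bookkeeping: making the combinatorial dictionary between tiles of $\widetilde\Dimer$, paths in $\widetilde{Q^{\sigma_+}}$, monomials in $\mathcal{S}_+$, and lattice points in $\NN^3$ completely precise and sign-consistent — in particular checking that the orientation conventions for zig-zag paths, the left/right rule for arrows crossing edges, and the sign of $m$ relative to $\sigma_+$ all line up so that "crossing $\m_\tau$ once" is genuinely "multiply by $x^{m_1}y^{m_2}z^{m_3}$" and not its inverse. Once that is nailed down, the rest is immediate. (It may also be cleanest to cite \cite[Section~5]{UY11} and \cite{Nakamura01, Craw05} for the standard facts about the hexagonal dimer so as to keep the verification short.)
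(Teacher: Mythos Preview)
Your proposal is correct in outline, but it takes a considerably more elaborate route than the paper. The paper's proof is a brief high-level identification: since $\mathcal{M}_\theta\cong\ghilb$ for the $0$-generated parameter, the torus-invariant $\theta$-stable modules $M_{\sigma_\pm}$ are identified with the torus-invariant $G$-clusters $\kk[x,y,z]/I_\pm$, and under this identification the quivers $Q^{\sigma_\pm}$ correspond to Nakamura's $G$-graphs $\mathcal{S}_\pm$. Since both Theorem~\ref{thm:genjigsaw} and Proposition~\ref{prop:Nakamura} are cut-and-rearrange procedures carrying the same input to the same output, they agree. That is the entire argument.

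You, by contrast, propose to unpack the combinatorial dictionary completely: matching tiles with monomials, meandering walks with the normal vector $m$, crossing $\mathfrak{c}_-$ with multiplication by $x^{m_1}y^{m_2}z^{m_3}$, and jigsaw pieces with level sets of $d(a,b,c)$. This is a genuinely finer statement than the paper proves --- it would show not only that the two transformations have the same input and output but that the pieces and translations themselves coincide. The paper does not verify this; it relies on the identification of modules and the fact that both procedures produce the correct $M_{\sigma_-}$. Your approach buys a more transparent picture (and would, for instance, make Remark~\ref{rem:Gigsaw} an immediate consequence rather than a separate observation), at the cost of the sign-and-orientation bookkeeping you flag. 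If you pursue it, the cleanest way to pin down ``crossing $\mathfrak{m}_\tau$ once equals multiplying by $x^{m_1}y^{m_2}z^{m_3}$'' is not via Lemma~\ref{lem:mwnormal} alone but via the later computation in the proof of Proposition~\ref{prop:onejigsaw}, which identifies the transition function across $\tau$ with $t^m$ and counts multiplicities of $t_{\rho_3}$; specialising that to the three corner variables gives exactly what you need.
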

\begin{proof}
 Let $\Sigma$ be the fan of $\ghilb$ and $\sigma_\pm\in \Sigma(3)$ adjacent cones satisfying $\tau=\sigma_+\cap \sigma_-$. The isomorphism $\mathcal{M}_\theta\cong \ghilb$ identifies the torus-invariant $\theta$-stable $A$-modules $M_{\sigma_\pm}$ with the torus-invariant $G$-clusters $\kk[x,y,z]/I_{\pm}$ obtained as the fibres of the universal family on $\ghilb$ over the origins in $U_{\sigma_\pm}$.  Under this identification, the quivers $Q^{\sigma_\pm}$ correspond to the $G$-graphs $\mathcal{S}_\pm$ of $\kk[x,y,z]/I_\pm$. Theorem~\ref{thm:genjigsaw} cuts the quiver $Q^{\sigma_+}$ (equivalently, the $A$-module $M_{\sigma_+}$) into pieces and rearranges them to produce $Q^{\sigma_-}$ (equivalently $M_{\sigma_-}$), just as Nakamura's $G$-igsaw transformation cuts $\mathcal{S}_+$  (equivalently $\kk[x,y,z]/I_+$) into pieces and rearranges them to produce $\mathcal{S}_-$ (equivalently $\kk[x,y,z]/I_-$) as in Proposition~\ref{prop:Nakamura}.
\end{proof}

\begin{remark}
\label{rem:Gigsaw}
 Each edge of the $G$-graphs $\mathcal{S}_\pm$ 
 is labelled with one of the variables $x, y, z$ according to the $\kk[x,y,z]$-module structure of $\kk[x,y,z]/I_\pm$. Contrast this with our labelling of arrows in $Q^{\sigma_\pm}$ with monomials $t^{\div(a)}$ in the Cox ring $\kk[t_\rho\mid \rho\in \Sigma(1)]$ following \eqref{eqn:xa}. To compare these labels, identify $x, y, z$ with the variables in the Cox ring indexed by the lattice points $\rho\in \Sigma(1)$ defined by corners of the junior simplex $\Delta(\Gamma)$. If we substitute $t_\rho=1$ in $t^{\div(a)}$ for each lattice point that is not a corner of the polygon $\Delta(\Gamma)$, then we obtain a single variable $x, y$ or $z$. Indeed, labelling each arrow by $t^{\div(a)}$ encodes the tautological isomorphism $\phi\colon A\to \End(T)$, whereas labelling each edge in a $G$-graph by a variable $x,y,z$ encodes the description of $A$ as the endomorphism algebra of the direct sum of the maximal Cohen--Macaulay modules on $\mathbb{C}^3/G$. The process of substituting $t_\rho=1$ for all non-corner lattice points corresponds to pushing forward each tautological line bundle on $\ghilb$ to the corresponding maximal Cohen--Macaulay module on $\mathbb{C}^3/G$.
\end{remark}

\begin{example}
\label{exa:lhjigsaw}
Continuing Examples~\ref{exa:LongHex} and ~\ref{exa:lhfundhex}, consider the cones $\sigma_+$ and $\sigma_-$ in Figure~\ref{fig:LHDivisorsTriangulation} generated by the rays $\{\rho_7,\rho_8,\rho_{9}\}$ and $\{\rho_8,\rho_9,\rho_{10}\}$ respectively.
Figure~\ref{fig:jigsaw} illustrates several copies of the fundamental hexagons $\hex(\sigma_\pm)$, where edges in $\bigcup_{7\leq i\leq 10}\Pi_i$ are coloured as follows: edges in $\mathfrak{m}_{1,2}$, $\mathfrak{c}_+$ and $\mathfrak{c}_-$ are blue, green and red respectively.
\begin{figure}[!ht]
\centering
\begin{tikzpicture} [thick,scale=0.3, every node/.style={scale=1}]
\begin{scope}\clip (0pt,0pt) rectangle (400pt,400pt);
\draw [ggrey,-stealthnew,arrowhead=6pt,shorten >=5pt] (26pt,26pt) to node [rectangle,draw,fill=white,sloped,inner sep=1pt] {{\tiny 1}} (108pt,-56pt); 
\draw [ggrey,-stealthnew,arrowhead=6pt,shorten >=5pt] (26pt,426pt) to node [rectangle,draw,fill=white,sloped,inner sep=1pt] {{\tiny 1}} (108pt,344pt);  
\draw [ggrey,-stealthnew,arrowhead=6pt, shorten >=5pt] (26pt,26pt) to node [rectangle,draw,fill=white,sloped,inner sep=1pt] {{\tiny 2}} (67pt,185pt); 
\draw [ggrey,-stealthnew,arrowhead=6pt,shorten >=5pt] (426pt,426pt) to node [rectangle,draw,fill=white,sloped,inner sep=1pt] {{\tiny 3}} (267pt,385pt); 
\draw [ggrey,-stealthnew,arrowhead=6pt,shorten >=5pt] (26pt,26pt) to node [rectangle,draw,fill=white,sloped,inner sep=1pt] {{\tiny 3}} (-133pt,-15pt); 
\draw [ggrey,-stealthnew,arrowhead=6pt,shorten >=5pt] (426pt,26pt) to node [rectangle,draw,fill=white,sloped,inner sep=1pt] {{\tiny 3}} (267pt,-15pt); 
\draw [ggrey,-stealthnew,arrowhead=6pt,shorten >=5pt] (350pt,302pt) to node [rectangle,draw,fill=white,sloped,inner sep=1pt] {{\tiny 5}} (273pt,179pt); 
\draw [ggrey,-stealthnew,arrowhead=6pt,shorten >=5pt] (308pt,144pt) to node [rectangle,draw,fill=white,sloped,inner sep=1pt] {{\tiny 9}} (273pt,179pt);  
\draw [ggrey,-stealthnew,arrowhead=6pt,shorten >=5pt] (467pt,185pt) to node [rectangle,draw,fill=white,sloped,inner sep=1pt] {{\tiny 20}} (344pt,108pt); 
\draw [ggrey,-stealthnew,arrowhead=6pt,shorten >=5pt] (67pt,185pt) to node [rectangle,draw,fill=white,sloped,inner sep=1pt] {{\tiny 20}} (-56pt,108pt);   
\draw [ggrey,-stealthnew,arrowhead=6pt,shorten >=5pt] (150pt,102pt) to node [rectangle,draw,fill=white,sloped,inner sep=1pt] {{\tiny 23}} (273pt,179pt); 
\draw [ggrey,-stealthnew,arrowhead=6pt,shorten >=5pt] (267pt,-15pt) to node [rectangle,draw,fill=white,sloped,inner sep=1pt] {{\tiny 24}} (344pt,108pt); 
\draw [ggrey,-stealthnew,arrowhead=6pt,shorten >=5pt] (267pt,385pt) to node [rectangle,draw,fill=white,sloped,inner sep=1pt] {{\tiny 24}} (344pt,508pt);  
\draw [ggrey,-stealthnew,arrowhead=6pt,shorten >=5pt] (191pt,261pt) to node [rectangle,draw,fill=white,sloped,inner sep=1pt] {{\tiny 26}} (350pt,302pt);  
\draw [ggrey,-stealthnew,arrowhead=6pt,shorten >=5pt] (191pt,261pt) to node [rectangle,draw,fill=white,sloped,inner sep=1pt] {{\tiny 28}} (150pt,102pt); 
\node at (26pt,26pt) [circle,draw=ggrey,draw,fill=white,minimum size=10pt,inner sep=1pt,text=ggrey] {\mbox{\tiny $0$}};  
\node at (350pt,302pt) [circle,draw=ggrey,draw,fill=white,minimum size=10pt,inner sep=1pt,text=ggrey] {\mbox{\tiny $1$}}; 
\node at (273pt,179pt) [circle,draw=ggrey,draw,fill=white,minimum size=10pt,inner sep=1pt,text=ggrey] {\mbox{\tiny $2$}}; 
\node at (308pt,144pt) [circle,draw=ggrey,draw,fill=white,minimum size=10pt,inner sep=1pt,text=ggrey] {\mbox{\tiny $3$}};  
\node at (108pt,344pt) [circle,draw=ggrey,draw,fill=white,minimum size=10pt,inner sep=1pt,text=ggrey] {\mbox{\tiny $4$}}; 
\node at (344pt,108pt) [circle,draw=ggrey,draw,fill=white,minimum size=10pt,inner sep=1pt,text=ggrey] {\mbox{\tiny $5$}}; 
\node at (67pt,185pt) [circle,draw=ggrey,draw,fill=white,minimum size=10pt,inner sep=1pt,text=ggrey] {\mbox{\tiny $6$}}; 
\node at (150pt,102pt) [circle,draw=ggrey,draw,fill=white,minimum size=10pt,inner sep=1pt,text=ggrey] {\mbox{\tiny $7$}}; 
\node at (267pt,385pt) [circle,draw=ggrey,draw,fill=white,minimum size=10pt,inner sep=1pt,text=ggrey] {\mbox{\tiny $8$}}; 
\node at (191pt,261pt) [circle,draw=ggrey,draw,fill=white,minimum size=10pt,inner sep=1pt,text=ggrey] {\mbox{\tiny $9$}}; 
\draw [ggreen, very thick] (309pt,343pt) -- (428pt,357pt); % 4
\draw [ggreen, very thick] (-91pt,343pt) -- (28pt,357pt); % 4
\draw [dbblue, very thick] (177pt,75pt) -- (230pt,89pt); % 6
\draw [ggreen, very thick] (377pt,275pt) -- (363pt,222pt); % 7
\draw [very thick] (205pt,181pt) -- (271pt,247pt); % 8
\draw [bluee, very thick] (253pt,65pt) -- (387pt,199pt); % 10
\draw [ggreen, very thick] (377pt,275pt) -- (428pt,357pt); % 11
\draw [ggreen, very thick] (-23pt,275pt) -- (28pt,357pt); % 11
\draw [ggreen, very thick] (387pt,199pt) -- (363pt,222pt); % 12
\draw [dbblue, very thick] (253pt,65pt) -- (230pt,89pt); % 13
\draw [bluee, very thick] (40pt,212pt) -- (122pt,263pt); % 14
\draw [dbblue, very thick] (95pt,24pt) -- (177pt,75pt); % 15
\draw [bluee, very thick] (189pt,330pt) -- (240pt,412pt); % 16
\draw [bluee, very thick] (189pt,-70pt) -- (240pt,12pt); % 16
\draw [very thick] (348pt,40pt) -- (412pt,106pt); % 17
\draw [very thick] (-52pt,40pt) -- (12pt,106pt); % 17
\draw [bluee, very thick] (253pt,65pt) -- (240pt,12pt); % 18
\draw [bluee, very thick] (387pt,199pt) -- (440pt,212pt); % 19
\draw [bluee, very thick] (-13pt,199pt) -- (40pt,212pt); % 19
\draw [dbblue, very thick] (109pt,143pt) -- (122pt,263pt); % 21
\draw [dbblue, very thick] (109pt,143pt) -- (95pt,24pt); % 22
\draw [ggreen, very thick] (189pt,330pt) -- (309pt,343pt); % 25
\draw [bluee, very thick] (189pt,330pt) -- (122pt,263pt); % 27
\node at (309pt,343pt) [circle,draw=ggreen,fill=ggreen,minimum size=6pt,inner sep=1pt]{}; % 0,1,8,9
\node at (28pt,357pt) [circle,draw=ggreen,fill=white,minimum size=6pt,inner sep=1pt]{}; % 0,1,4
\node at (95pt,24pt) [circle,draw=dbblue,fill=dbblue,minimum size=6pt,inner sep=1pt]{}; % 0,4,7
\node at (348pt,40pt) [circle,draw,fill=white,minimum size=6pt,inner sep=1pt]{}; % 0,5,8
\node at (12pt,106pt) [circle,draw,fill=black,minimum size=6pt,inner sep=1pt]{}; % 0,5,6
\node at (109pt,143pt) [circle,draw=dbblue,fill=white,minimum size=6pt,inner sep=1pt]{}; % 0,6,7,9
\node at (377pt,275pt) [circle,draw=ggreen,fill=ggreen,minimum size=6pt,inner sep=1pt]{}; % 1,2,4
\node at (271pt,247pt) [circle,draw,fill=white,minimum size=6pt,inner sep=1pt]{}; % 1,2,9
\node at (205pt,181pt) [circle,draw,fill=black,minimum size=6pt,inner sep=1pt]{}; % 2,7,9
\node at (230pt,89pt) [circle,draw=dbblue,fill=dbblue,minimum size=6pt,inner sep=1pt]{}; % 2,3,4 left
\node at (363pt,222pt) [circle,draw=ggreen,fill=white,minimum size=6pt,inner sep=1pt]{}; % 2,3,4 right
\node at (177pt,75pt) [circle,draw=dbblue,fill=white,minimum size=6pt,inner sep=1pt]{}; % 2,4,7
\node at (253pt,65pt) [circle,draw=bluee,fill=white,minimum size=6pt,inner sep=1pt]{}; % 3,4,5 left
\node at (387pt,199pt) [circle,draw=bluee,fill=bluee,minimum size=6pt,inner sep=1pt]{}; % 3,4,5 right
\node at (40pt,212pt) [circle,draw=bluee,fill=white,minimum size=6pt,inner sep=1pt]{}; % 4,5,6
\node at (240pt,12pt) [circle,draw=bluee,fill=bluee,minimum size=6pt,inner sep=1pt]{}; % 4,5,8
\node at (122pt,263pt) [circle,draw=bluee,fill=bluee,minimum size=6pt,inner sep=1pt]{}; % 4,6,9
\node at (189pt,330pt) [circle,draw=bluee,fill=white,minimum size=6pt,inner sep=1pt]{}; % 4,8,9
\draw[very thick, dashed] (0pt,0pt) rectangle (400pt,400pt);
\end{scope}\end{tikzpicture}
\hspace{-.225cm}
\begin{tikzpicture} [thick,scale=0.3, every node/.style={scale=1}]
\begin{scope}\clip (0pt,0pt) rectangle (400pt,400pt);
\draw [ggrey,-stealthnew,arrowhead=6pt,shorten >=5pt] (26pt,26pt) to node [rectangle,draw,fill=white,sloped,inner sep=1pt] {{\tiny 1}} (108pt,-56pt); 
\draw [ggrey,-stealthnew,arrowhead=6pt,shorten >=5pt] (26pt,426pt) to node [rectangle,draw,fill=white,sloped,inner sep=1pt] {{\tiny 1}} (108pt,344pt);  
\draw [ggrey,-stealthnew,arrowhead=6pt, shorten >=5pt] (26pt,26pt) to node [rectangle,draw,fill=white,sloped,inner sep=1pt] {{\tiny 2}} (67pt,185pt); 
\draw [ggrey,-stealthnew,arrowhead=6pt,shorten >=5pt] (426pt,426pt) to node [rectangle,draw,fill=white,sloped,inner sep=1pt] {{\tiny 3}} (267pt,385pt); 
\draw [ggrey,-stealthnew,arrowhead=6pt,shorten >=5pt] (26pt,26pt) to node [rectangle,draw,fill=white,sloped,inner sep=1pt] {{\tiny 3}} (-133pt,-15pt); 
\draw [ggrey,-stealthnew,arrowhead=6pt,shorten >=5pt] (426pt,26pt) to node [rectangle,draw,fill=white,sloped,inner sep=1pt] {{\tiny 3}} (267pt,-15pt); 
\draw [ggrey,-stealthnew,arrowhead=6pt,shorten >=5pt] (350pt,302pt) to node [rectangle,draw,fill=white,sloped,inner sep=1pt] {{\tiny 5}} (273pt,179pt); 
\draw [ggrey,-stealthnew,arrowhead=6pt,shorten >=5pt] (308pt,144pt) to node [rectangle,draw,fill=white,sloped,inner sep=1pt] {{\tiny 9}} (273pt,179pt);  
\draw [ggrey,-stealthnew,arrowhead=6pt,shorten >=5pt] (467pt,185pt) to node [rectangle,draw,fill=white,sloped,inner sep=1pt] {{\tiny 20}} (344pt,108pt); 
\draw [ggrey,-stealthnew,arrowhead=6pt,shorten >=5pt] (67pt,185pt) to node [rectangle,draw,fill=white,sloped,inner sep=1pt] {{\tiny 20}} (-56pt,108pt);   
\draw [ggrey,-stealthnew,arrowhead=6pt,shorten >=5pt] (150pt,102pt) to node [rectangle,draw,fill=white,sloped,inner sep=1pt] {{\tiny 23}} (273pt,179pt); 
\draw [ggrey,-stealthnew,arrowhead=6pt,shorten >=5pt] (267pt,-15pt) to node [rectangle,draw,fill=white,sloped,inner sep=1pt] {{\tiny 24}} (344pt,108pt); 
\draw [ggrey,-stealthnew,arrowhead=6pt,shorten >=5pt] (267pt,385pt) to node [rectangle,draw,fill=white,sloped,inner sep=1pt] {{\tiny 24}} (344pt,508pt);  
\draw [ggrey,-stealthnew,arrowhead=6pt,shorten >=5pt] (191pt,261pt) to node [rectangle,draw,fill=white,sloped,inner sep=1pt] {{\tiny 26}} (350pt,302pt);  
\draw [ggrey,-stealthnew,arrowhead=6pt,shorten >=5pt] (191pt,261pt) to node [rectangle,draw,fill=white,sloped,inner sep=1pt] {{\tiny 28}} (150pt,102pt); 
\node at (26pt,26pt) [circle,draw=ggrey,draw,fill=white,minimum size=10pt,inner sep=1pt,text=ggrey] {\mbox{\tiny $0$}};  
\node at (350pt,302pt) [circle,draw=ggrey,draw,fill=white,minimum size=10pt,inner sep=1pt,text=ggrey] {\mbox{\tiny $1$}}; 
\node at (273pt,179pt) [circle,draw=ggrey,draw,fill=white,minimum size=10pt,inner sep=1pt,text=ggrey] {\mbox{\tiny $2$}}; 
\node at (308pt,144pt) [circle,draw=ggrey,draw,fill=white,minimum size=10pt,inner sep=1pt,text=ggrey] {\mbox{\tiny $3$}};  
\node at (108pt,344pt) [circle,draw=ggrey,draw,fill=white,minimum size=10pt,inner sep=1pt,text=ggrey] {\mbox{\tiny $4$}}; 
\node at (344pt,108pt) [circle,draw=ggrey,draw,fill=white,minimum size=10pt,inner sep=1pt,text=ggrey] {\mbox{\tiny $5$}}; 
\node at (67pt,185pt) [circle,draw=ggrey,draw,fill=white,minimum size=10pt,inner sep=1pt,text=ggrey] {\mbox{\tiny $6$}}; 
\node at (150pt,102pt) [circle,draw=ggrey,draw,fill=white,minimum size=10pt,inner sep=1pt,text=ggrey] {\mbox{\tiny $7$}}; 
\node at (267pt,385pt) [circle,draw=ggrey,draw,fill=white,minimum size=10pt,inner sep=1pt,text=ggrey] {\mbox{\tiny $8$}}; 
\node at (191pt,261pt) [circle,draw=ggrey,draw,fill=white,minimum size=10pt,inner sep=1pt,text=ggrey] {\mbox{\tiny $9$}}; 
\draw [ggreen, very thick] (309pt,343pt) -- (428pt,357pt); % 4
\draw [ggreen, very thick] (-91pt,343pt) -- (28pt,357pt); % 4
\draw [dbblue, very thick] (177pt,75pt) -- (230pt,89pt); % 6
\draw [ggreen, very thick] (377pt,275pt) -- (363pt,222pt); % 7
\draw [very thick] (205pt,181pt) -- (271pt,247pt); % 8
\draw [bluee, very thick] (253pt,65pt) -- (387pt,199pt); % 10
\draw [ggreen, very thick] (377pt,275pt) -- (428pt,357pt); % 11
\draw [ggreen, very thick] (-23pt,275pt) -- (28pt,357pt); % 11
\draw [ggreen, very thick] (387pt,199pt) -- (363pt,222pt); % 12
\draw [dbblue, very thick] (253pt,65pt) -- (230pt,89pt); % 13
\draw [bluee, very thick] (40pt,212pt) -- (122pt,263pt); % 14
\draw [dbblue, very thick] (95pt,24pt) -- (177pt,75pt); % 15
\draw [bluee, very thick] (189pt,330pt) -- (240pt,412pt); % 16
\draw [bluee, very thick] (189pt,-70pt) -- (240pt,12pt); % 16
\draw [very thick] (348pt,40pt) -- (412pt,106pt); % 17
\draw [very thick] (-52pt,40pt) -- (12pt,106pt); % 17
\draw [bluee, very thick] (253pt,65pt) -- (240pt,12pt); % 18
\draw [bluee, very thick] (387pt,199pt) -- (440pt,212pt); % 19
\draw [bluee, very thick] (-13pt,199pt) -- (40pt,212pt); % 19
\draw [dbblue, very thick] (109pt,143pt) -- (122pt,263pt); % 21
\draw [dbblue, very thick] (109pt,143pt) -- (95pt,24pt); % 22
\draw [ggreen, very thick] (189pt,330pt) -- (309pt,343pt); % 25
\draw [bluee, very thick] (189pt,330pt) -- (122pt,263pt); % 27
\node at (309pt,343pt) [circle,draw=ggreen,fill=ggreen,minimum size=6pt,inner sep=1pt]{}; % 0,1,8,9
\node at (28pt,357pt) [circle,draw=ggreen,fill=white,minimum size=6pt,inner sep=1pt]{}; % 0,1,4
\node at (95pt,24pt) [circle,draw=dbblue,fill=dbblue,minimum size=6pt,inner sep=1pt]{}; % 0,4,7
\node at (348pt,40pt) [circle,draw,fill=white,minimum size=6pt,inner sep=1pt]{}; % 0,5,8
\node at (12pt,106pt) [circle,draw,fill=black,minimum size=6pt,inner sep=1pt]{}; % 0,5,6
\node at (109pt,143pt) [circle,draw=dbblue,fill=white,minimum size=6pt,inner sep=1pt]{}; % 0,6,7,9
\node at (377pt,275pt) [circle,draw=ggreen,fill=ggreen,minimum size=6pt,inner sep=1pt]{}; % 1,2,4
\node at (271pt,247pt) [circle,draw,fill=white,minimum size=6pt,inner sep=1pt]{}; % 1,2,9
\node at (205pt,181pt) [circle,draw,fill=black,minimum size=6pt,inner sep=1pt]{}; % 2,7,9
\node at (230pt,89pt) [circle,draw=dbblue,fill=dbblue,minimum size=6pt,inner sep=1pt]{}; % 2,3,4 left
\node at (363pt,222pt) [circle,draw=ggreen,fill=white,minimum size=6pt,inner sep=1pt]{}; % 2,3,4 right
\node at (177pt,75pt) [circle,draw=dbblue,fill=white,minimum size=6pt,inner sep=1pt]{}; % 2,4,7
\node at (253pt,65pt) [circle,draw=bluee,fill=white,minimum size=6pt,inner sep=1pt]{}; % 3,4,5 left
\node at (387pt,199pt) [circle,draw=bluee,fill=bluee,minimum size=6pt,inner sep=1pt]{}; % 3,4,5 right
\node at (40pt,212pt) [circle,draw=bluee,fill=white,minimum size=6pt,inner sep=1pt]{}; % 4,5,6
\node at (240pt,12pt) [circle,draw=bluee,fill=bluee,minimum size=6pt,inner sep=1pt]{}; % 4,5,8
\node at (122pt,263pt) [circle,draw=bluee,fill=bluee,minimum size=6pt,inner sep=1pt]{}; % 4,6,9
\node at (189pt,330pt) [circle,draw=bluee,fill=white,minimum size=6pt,inner sep=1pt]{}; % 4,8,9

\draw[very thick, dashed] (0pt,0pt) rectangle (400pt,400pt);
\end{scope}\end{tikzpicture}\\
\begin{tikzpicture} [thick,scale=0.3, every node/.style={scale=1}]
\begin{scope}\clip (0pt,0pt) rectangle (400pt,400pt);
\draw [ggrey,-stealthnew,arrowhead=6pt,shorten >=5pt] (26pt,26pt) to node [rectangle,draw,fill=white,sloped,inner sep=1pt] {{\tiny 1}} (108pt,-56pt); 
\draw [ggrey,-stealthnew,arrowhead=6pt,shorten >=5pt] (26pt,426pt) to node [rectangle,draw,fill=white,sloped,inner sep=1pt] {{\tiny 1}} (108pt,344pt);  
\draw [ggrey,-stealthnew,arrowhead=6pt, shorten >=5pt] (26pt,26pt) to node [rectangle,draw,fill=white,sloped,inner sep=1pt] {{\tiny 2}} (67pt,185pt); 
\draw [ggrey,-stealthnew,arrowhead=6pt,shorten >=5pt] (426pt,426pt) to node [rectangle,draw,fill=white,sloped,inner sep=1pt] {{\tiny 3}} (267pt,385pt); 
\draw [ggrey,-stealthnew,arrowhead=6pt,shorten >=5pt] (26pt,26pt) to node [rectangle,draw,fill=white,sloped,inner sep=1pt] {{\tiny 3}} (-133pt,-15pt); 
\draw [ggrey,-stealthnew,arrowhead=6pt,shorten >=5pt] (426pt,26pt) to node [rectangle,draw,fill=white,sloped,inner sep=1pt] {{\tiny 3}} (267pt,-15pt); 
\draw [ggrey,-stealthnew,arrowhead=6pt,shorten >=5pt] (350pt,302pt) to node [rectangle,draw,fill=white,sloped,inner sep=1pt] {{\tiny 5}} (273pt,179pt); 
\draw [ggrey,-stealthnew,arrowhead=6pt,shorten >=5pt] (308pt,144pt) to node [rectangle,draw,fill=white,sloped,inner sep=1pt] {{\tiny 9}} (273pt,179pt);  
\draw [ggrey,-stealthnew,arrowhead=6pt,shorten >=5pt] (467pt,185pt) to node [rectangle,draw,fill=white,sloped,inner sep=1pt] {{\tiny 20}} (344pt,108pt); 
\draw [ggrey,-stealthnew,arrowhead=6pt,shorten >=5pt] (67pt,185pt) to node [rectangle,draw,fill=white,sloped,inner sep=1pt] {{\tiny 20}} (-56pt,108pt);   
\draw [ggrey,-stealthnew,arrowhead=6pt,shorten >=5pt] (150pt,102pt) to node [rectangle,draw,fill=white,sloped,inner sep=1pt] {{\tiny 23}} (273pt,179pt); 
\draw [ggrey,-stealthnew,arrowhead=6pt,shorten >=5pt] (267pt,-15pt) to node [rectangle,draw,fill=white,sloped,inner sep=1pt] {{\tiny 24}} (344pt,108pt); 
\draw [ggrey,-stealthnew,arrowhead=6pt,shorten >=5pt] (267pt,385pt) to node [rectangle,draw,fill=white,sloped,inner sep=1pt] {{\tiny 24}} (344pt,508pt);  
\draw [ggrey,-stealthnew,arrowhead=6pt,shorten >=5pt] (191pt,261pt) to node [rectangle,draw,fill=white,sloped,inner sep=1pt] {{\tiny 26}} (350pt,302pt);  
\draw [ggrey,-stealthnew,arrowhead=6pt,shorten >=5pt] (191pt,261pt) to node [rectangle,draw,fill=white,sloped,inner sep=1pt] {{\tiny 28}} (150pt,102pt); 
\node at (26pt,26pt) [circle,draw=ggrey,draw,fill=white,minimum size=10pt,inner sep=1pt,text=ggrey] {\mbox{\tiny $0$}};  
\node at (350pt,302pt) [circle,draw=ggrey,draw,fill=white,minimum size=10pt,inner sep=1pt,text=ggrey] {\mbox{\tiny $1$}}; 
\node at (273pt,179pt) [circle,draw=ggrey,draw,fill=white,minimum size=10pt,inner sep=1pt,text=ggrey] {\mbox{\tiny $2$}}; 
\node at (308pt,144pt) [circle,draw=ggrey,draw,fill=white,minimum size=10pt,inner sep=1pt,text=ggrey] {\mbox{\tiny $3$}};  
\node at (108pt,344pt) [circle,draw=ggrey,draw,fill=white,minimum size=10pt,inner sep=1pt,text=ggrey] {\mbox{\tiny $4$}}; 
\node at (344pt,108pt) [circle,draw=ggrey,draw,fill=white,minimum size=10pt,inner sep=1pt,text=ggrey] {\mbox{\tiny $5$}}; 
\node at (67pt,185pt) [circle,draw=ggrey,draw,fill=white,minimum size=10pt,inner sep=1pt,text=ggrey] {\mbox{\tiny $6$}}; 
\node at (150pt,102pt) [circle,draw=ggrey,draw,fill=white,minimum size=10pt,inner sep=1pt,text=ggrey] {\mbox{\tiny $7$}}; 
\node at (267pt,385pt) [circle,draw=ggrey,draw,fill=white,minimum size=10pt,inner sep=1pt,text=ggrey] {\mbox{\tiny $8$}}; 
\node at (191pt,261pt) [circle,draw=ggrey,draw,fill=white,minimum size=10pt,inner sep=1pt,text=ggrey] {\mbox{\tiny $9$}}; 
\draw [ggreen, very thick] (309pt,343pt) -- (428pt,357pt); % 4
\draw [ggreen, very thick] (-91pt,343pt) -- (28pt,357pt); % 4
\draw [dbblue, very thick] (177pt,75pt) -- (230pt,89pt); % 6
\draw [ggreen, very thick] (377pt,275pt) -- (363pt,222pt); % 7
\draw [very thick] (205pt,181pt) -- (271pt,247pt); % 8
\draw [bluee, very thick] (253pt,65pt) -- (387pt,199pt); % 10
\draw [ggreen, very thick] (377pt,275pt) -- (428pt,357pt); % 11
\draw [ggreen, very thick] (-23pt,275pt) -- (28pt,357pt); % 11
\draw [ggreen, very thick] (387pt,199pt) -- (363pt,222pt); % 12
\draw [dbblue, very thick] (253pt,65pt) -- (230pt,89pt); % 13
\draw [bluee, very thick] (40pt,212pt) -- (122pt,263pt); % 14
\draw [dbblue, very thick] (95pt,24pt) -- (177pt,75pt); % 15
\draw [bluee, very thick] (189pt,330pt) -- (240pt,412pt); % 16
\draw [bluee, very thick] (189pt,-70pt) -- (240pt,12pt); % 16
\draw [very thick] (348pt,40pt) -- (412pt,106pt); % 17
\draw [very thick] (-52pt,40pt) -- (12pt,106pt); % 17
\draw [bluee, very thick] (253pt,65pt) -- (240pt,12pt); % 18
\draw [bluee, very thick] (387pt,199pt) -- (440pt,212pt); % 19
\draw [bluee, very thick] (-13pt,199pt) -- (40pt,212pt); % 19
\draw [dbblue, very thick] (109pt,143pt) -- (122pt,263pt); % 21
\draw [dbblue, very thick] (109pt,143pt) -- (95pt,24pt); % 22
\draw [ggreen, very thick] (189pt,330pt) -- (309pt,343pt); % 25
\draw [bluee, very thick] (189pt,330pt) -- (122pt,263pt); % 27
\node at (309pt,343pt) [circle,draw=ggreen,fill=ggreen,minimum size=6pt,inner sep=1pt]{}; % 0,1,8,9
\node at (28pt,357pt) [circle,draw=ggreen,fill=white,minimum size=6pt,inner sep=1pt]{}; % 0,1,4
\node at (95pt,24pt) [circle,draw=dbblue,fill=dbblue,minimum size=6pt,inner sep=1pt]{}; % 0,4,7
\node at (348pt,40pt) [circle,draw,fill=white,minimum size=6pt,inner sep=1pt]{}; % 0,5,8
\node at (12pt,106pt) [circle,draw,fill=black,minimum size=6pt,inner sep=1pt]{}; % 0,5,6
\node at (109pt,143pt) [circle,draw=dbblue,fill=white,minimum size=6pt,inner sep=1pt]{}; % 0,6,7,9
\node at (377pt,275pt) [circle,draw=ggreen,fill=ggreen,minimum size=6pt,inner sep=1pt]{}; % 1,2,4
\node at (271pt,247pt) [circle,draw,fill=white,minimum size=6pt,inner sep=1pt]{}; % 1,2,9
\node at (205pt,181pt) [circle,draw,fill=black,minimum size=6pt,inner sep=1pt]{}; % 2,7,9
\node at (230pt,89pt) [circle,draw=dbblue,fill=dbblue,minimum size=6pt,inner sep=1pt]{}; % 2,3,4 left
\node at (363pt,222pt) [circle,draw=ggreen,fill=white,minimum size=6pt,inner sep=1pt]{}; % 2,3,4 right
\node at (177pt,75pt) [circle,draw=dbblue,fill=white,minimum size=6pt,inner sep=1pt]{}; % 2,4,7
\node at (253pt,65pt) [circle,draw=bluee,fill=white,minimum size=6pt,inner sep=1pt]{}; % 3,4,5 left
\node at (387pt,199pt) [circle,draw=bluee,fill=bluee,minimum size=6pt,inner sep=1pt]{}; % 3,4,5 right
\node at (40pt,212pt) [circle,draw=bluee,fill=white,minimum size=6pt,inner sep=1pt]{}; % 4,5,6
\node at (240pt,12pt) [circle,draw=bluee,fill=bluee,minimum size=6pt,inner sep=1pt]{}; % 4,5,8
\node at (122pt,263pt) [circle,draw=bluee,fill=bluee,minimum size=6pt,inner sep=1pt]{}; % 4,6,9
\node at (189pt,330pt) [circle,draw=bluee,fill=white,minimum size=6pt,inner sep=1pt]{}; % 4,8,9
\draw[very thick, dashed] (0pt,0pt) rectangle (400pt,400pt);
\end{scope}\end{tikzpicture}
\hspace{-.225cm}
\begin{tikzpicture} [thick,scale=0.3, every node/.style={scale=1}]
\begin{scope}\clip (0pt,0pt) rectangle (400pt,400pt);
\draw [ggrey,-stealthnew,arrowhead=6pt,shorten >=5pt] (26pt,26pt) to node [rectangle,draw,fill=white,sloped,inner sep=1pt] {{\tiny 1}} (108pt,-56pt); 
\draw [ggrey,-stealthnew,arrowhead=6pt,shorten >=5pt] (26pt,426pt) to node [rectangle,draw,fill=white,sloped,inner sep=1pt] {{\tiny 1}} (108pt,344pt);  
\draw [ggrey,-stealthnew,arrowhead=6pt, shorten >=5pt] (26pt,26pt) to node [rectangle,draw,fill=white,sloped,inner sep=1pt] {{\tiny 2}} (67pt,185pt); 
\draw [ggrey,-stealthnew,arrowhead=6pt,shorten >=5pt] (426pt,426pt) to node [rectangle,draw,fill=white,sloped,inner sep=1pt] {{\tiny 3}} (267pt,385pt); 
\draw [ggrey,-stealthnew,arrowhead=6pt,shorten >=5pt] (26pt,26pt) to node [rectangle,draw,fill=white,sloped,inner sep=1pt] {{\tiny 3}} (-133pt,-15pt); 
\draw [ggrey,-stealthnew,arrowhead=6pt,shorten >=5pt] (426pt,26pt) to node [rectangle,draw,fill=white,sloped,inner sep=1pt] {{\tiny 3}} (267pt,-15pt); 
\draw [ggrey,-stealthnew,arrowhead=6pt,shorten >=5pt] (350pt,302pt) to node [rectangle,draw,fill=white,sloped,inner sep=1pt] {{\tiny 5}} (273pt,179pt); 
\draw [ggrey,-stealthnew,arrowhead=6pt,shorten >=5pt] (308pt,144pt) to node [rectangle,draw,fill=white,sloped,inner sep=1pt] {{\tiny 9}} (273pt,179pt);  
\draw [ggrey,-stealthnew,arrowhead=6pt,shorten >=5pt] (467pt,185pt) to node [rectangle,draw,fill=white,sloped,inner sep=1pt] {{\tiny 20}} (344pt,108pt); 
\draw [ggrey,-stealthnew,arrowhead=6pt,shorten >=5pt] (67pt,185pt) to node [rectangle,draw,fill=white,sloped,inner sep=1pt] {{\tiny 20}} (-56pt,108pt);   
\draw [ggrey,-stealthnew,arrowhead=6pt,shorten >=5pt] (150pt,102pt) to node [rectangle,draw,fill=white,sloped,inner sep=1pt] {{\tiny 23}} (273pt,179pt); 
\draw [ggrey,-stealthnew,arrowhead=6pt,shorten >=5pt] (267pt,-15pt) to node [rectangle,draw,fill=white,sloped,inner sep=1pt] {{\tiny 24}} (344pt,108pt); 
\draw [ggrey,-stealthnew,arrowhead=6pt,shorten >=5pt] (267pt,385pt) to node [rectangle,draw,fill=white,sloped,inner sep=1pt] {{\tiny 24}} (344pt,508pt);  
\draw [ggrey,-stealthnew,arrowhead=6pt,shorten >=5pt] (191pt,261pt) to node [rectangle,draw,fill=white,sloped,inner sep=1pt] {{\tiny 26}} (350pt,302pt);  
\draw [ggrey,-stealthnew,arrowhead=6pt,shorten >=5pt] (191pt,261pt) to node [rectangle,draw,fill=white,sloped,inner sep=1pt] {{\tiny 28}} (150pt,102pt); 
\node at (26pt,26pt) [circle,draw=ggrey,draw,fill=white,minimum size=10pt,inner sep=1pt,text=ggrey] {\mbox{\tiny $0$}};  
\node at (350pt,302pt) [circle,draw=ggrey,draw,fill=white,minimum size=10pt,inner sep=1pt,text=ggrey] {\mbox{\tiny $1$}}; 
\node at (273pt,179pt) [circle,draw=ggrey,draw,fill=white,minimum size=10pt,inner sep=1pt,text=ggrey] {\mbox{\tiny $2$}}; 
\node at (308pt,144pt) [circle,draw=ggrey,draw,fill=white,minimum size=10pt,inner sep=1pt,text=ggrey] {\mbox{\tiny $3$}};  
\node at (108pt,344pt) [circle,draw=ggrey,draw,fill=white,minimum size=10pt,inner sep=1pt,text=ggrey] {\mbox{\tiny $4$}}; 
\node at (344pt,108pt) [circle,draw=ggrey,draw,fill=white,minimum size=10pt,inner sep=1pt,text=ggrey] {\mbox{\tiny $5$}}; 
\node at (67pt,185pt) [circle,draw=ggrey,draw,fill=white,minimum size=10pt,inner sep=1pt,text=ggrey] {\mbox{\tiny $6$}}; 
\node at (150pt,102pt) [circle,draw=ggrey,draw,fill=white,minimum size=10pt,inner sep=1pt,text=ggrey] {\mbox{\tiny $7$}}; 
\node at (267pt,385pt) [circle,draw=ggrey,draw,fill=white,minimum size=10pt,inner sep=1pt,text=ggrey] {\mbox{\tiny $8$}}; 
\node at (191pt,261pt) [circle,draw=ggrey,draw,fill=white,minimum size=10pt,inner sep=1pt,text=ggrey] {\mbox{\tiny $9$}}; 
\draw [ggreen, very thick] (309pt,343pt) -- (428pt,357pt); % 4
\draw [ggreen, very thick] (-91pt,343pt) -- (28pt,357pt); % 4
\draw [dbblue, very thick] (177pt,75pt) -- (230pt,89pt); % 6
\draw [ggreen, very thick] (377pt,275pt) -- (363pt,222pt); % 7
\draw [very thick] (205pt,181pt) -- (271pt,247pt); % 8
\draw [bluee, very thick] (253pt,65pt) -- (387pt,199pt); % 10
\draw [ggreen, very thick] (377pt,275pt) -- (428pt,357pt); % 11
\draw [ggreen, very thick] (-23pt,275pt) -- (28pt,357pt); % 11
\draw [ggreen, very thick] (387pt,199pt) -- (363pt,222pt); % 12
\draw [dbblue, very thick] (253pt,65pt) -- (230pt,89pt); % 13
\draw [bluee, very thick] (40pt,212pt) -- (122pt,263pt); % 14
\draw [dbblue, very thick] (95pt,24pt) -- (177pt,75pt); % 15
\draw [bluee, very thick] (189pt,330pt) -- (240pt,412pt); % 16
\draw [bluee, very thick] (189pt,-70pt) -- (240pt,12pt); % 16
\draw [very thick] (348pt,40pt) -- (412pt,106pt); % 17
\draw [very thick] (-52pt,40pt) -- (12pt,106pt); % 17
\draw [bluee, very thick] (253pt,65pt) -- (240pt,12pt); % 18
\draw [bluee, very thick] (387pt,199pt) -- (440pt,212pt); % 19
\draw [bluee, very thick] (-13pt,199pt) -- (40pt,212pt); % 19
\draw [dbblue, very thick] (109pt,143pt) -- (122pt,263pt); % 21
\draw [dbblue, very thick] (109pt,143pt) -- (95pt,24pt); % 22
\draw [ggreen, very thick] (189pt,330pt) -- (309pt,343pt); % 25
\draw [bluee, very thick] (189pt,330pt) -- (122pt,263pt); % 27
\node at (309pt,343pt) [circle,draw=ggreen,fill=ggreen,minimum size=6pt,inner sep=1pt]{}; % 0,1,8,9
\node at (28pt,357pt) [circle,draw=ggreen,fill=white,minimum size=6pt,inner sep=1pt]{}; % 0,1,4
\node at (95pt,24pt) [circle,draw=dbblue,fill=dbblue,minimum size=6pt,inner sep=1pt]{}; % 0,4,7
\node at (348pt,40pt) [circle,draw,fill=white,minimum size=6pt,inner sep=1pt]{}; % 0,5,8
\node at (12pt,106pt) [circle,draw,fill=black,minimum size=6pt,inner sep=1pt]{}; % 0,5,6
\node at (109pt,143pt) [circle,draw=dbblue,fill=white,minimum size=6pt,inner sep=1pt]{}; % 0,6,7,9
\node at (377pt,275pt) [circle,draw=ggreen,fill=ggreen,minimum size=6pt,inner sep=1pt]{}; % 1,2,4
\node at (271pt,247pt) [circle,draw,fill=white,minimum size=6pt,inner sep=1pt]{}; % 1,2,9
\node at (205pt,181pt) [circle,draw,fill=black,minimum size=6pt,inner sep=1pt]{}; % 2,7,9
\node at (230pt,89pt) [circle,draw=dbblue,fill=dbblue,minimum size=6pt,inner sep=1pt]{}; % 2,3,4 left
\node at (363pt,222pt) [circle,draw=ggreen,fill=white,minimum size=6pt,inner sep=1pt]{}; % 2,3,4 right
\node at (177pt,75pt) [circle,draw=dbblue,fill=white,minimum size=6pt,inner sep=1pt]{}; % 2,4,7
\node at (253pt,65pt) [circle,draw=bluee,fill=white,minimum size=6pt,inner sep=1pt]{}; % 3,4,5 left
\node at (387pt,199pt) [circle,draw=bluee,fill=bluee,minimum size=6pt,inner sep=1pt]{}; % 3,4,5 right
\node at (40pt,212pt) [circle,draw=bluee,fill=white,minimum size=6pt,inner sep=1pt]{}; % 4,5,6
\node at (240pt,12pt) [circle,draw=bluee,fill=bluee,minimum size=6pt,inner sep=1pt]{}; % 4,5,8
\node at (122pt,263pt) [circle,draw=bluee,fill=bluee,minimum size=6pt,inner sep=1pt]{}; % 4,6,9
\node at (189pt,330pt) [circle,draw=bluee,fill=white,minimum size=6pt,inner sep=1pt]{}; % 4,8,9
\draw[very thick, dashed] (0pt,0pt) rectangle (400pt,400pt);
\end{scope}\end{tikzpicture}
\caption{The edges of $\bigcup_{7\leq i\leq 10}\Pi_i$ shown in the universal cover of $\mathbb{T}$.}
\label{fig:jigsaw}
\end{figure}
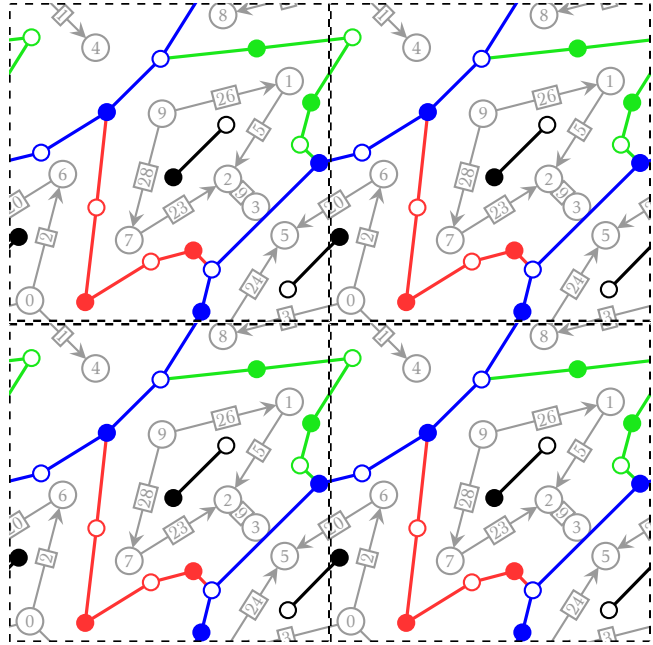  
 The boundary of $\hex(\sigma_-)$ comprises the edges in red and blue (compare Figure~\ref{fig:8910}); notice that the green edges of $\mathfrak{c}_+$ split this hexagon into two jigsaw pieces.  If we fix the position of the jigsaw piece containing the tile dual to the vertex of the quiver labelled $0$, and translate the second jigsaw piece along the direction of the blue edges until it sits on the opposite side of the first jigsaw piece, then we cover precisely $\hex(\sigma_+)$ whose boundary edges are blue and green (with the red edges of $\mathfrak{c}_-$ cutting it in two).
\end{example}

 For each cone $\sigma_\pm\in \Sigma(3)$ from Figure~\ref{fig:jigsawcones}, recall the subquiver $Q^{\sigma_\pm}$ of $Q$ that has vertex set $Q_0$ and arrow set $Q^{\sigma_\pm}_1$ comprising the arrows $a\in Q_1$ that support the corresponding $A$-module $M_{\sigma_{\pm}}$.

\begin{corollary}
\label{cor:edgeinPi3}
 Suppose that the head and tail of $a\in Q_1$ are dual to tiles in different jigsaw pieces
for $\tau$. If $a\in Q^{\sigma_+}_1$ then $\mathfrak{e}_a\in \Pi_3$, and similarly, if $a\in Q^{\sigma_-}_1$ then $\mathfrak{e}_a\in \Pi_0$
\end{corollary}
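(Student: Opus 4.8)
The plan is to reduce the statement to a single combinatorial observation about adjacency of tiles, and then to finish by unwinding the definition of $Q^{\sigma_\pm}$ via \eqref{eqn:AmoduleOfCone}. Throughout I regard the jigsaw pieces for $\tau$ as the closures of the connected components of $\mathbb{T}\setminus\bigcup_{0\le i\le 3}\Pi_i$ as in Definition~\ref{def:jigsawpieces}, using the fact recorded there that each such piece is a union of tiles of $\Gamma$.

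First I would recall from the dimer--quiver duality of Section~\ref{sec:dimers} that the arrow $a\in Q_1$ crosses the unique edge $\mathfrak{e}_a\in\Gamma_1$, so the tile of $\Gamma$ dual to $\head(a)$ and the tile dual to $\tail(a)$ are precisely the two tiles that meet along $\mathfrak{e}_a$. The middle step is the observation that these two tiles lie in the same jigsaw piece for $\tau$ if and only if $\mathfrak{e}_a\notin\bigcup_{0\le i\le 3}\Pi_i$: indeed, if $\mathfrak{e}_a$ avoids all four perfect matchings, then, since $\bigcup_{0\le i\le 3}\Pi_i$ is a finite union of closed edges, the open set $\mathbb{T}\setminus\bigcup_{0\le i\le 3}\Pi_i$ contains the relative interior of $\mathfrak{e}_a$ together with the interiors of the two adjacent tiles, and this set is connected, so both tiles sit in one component. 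Taking the contrapositive, the hypothesis that $\head(a)$ and $\tail(a)$ are dual to tiles in \emph{different} jigsaw pieces forces $\mathfrak{e}_a\in\Pi_i$ for some $i\in\{0,1,2,3\}$.

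Finally I would invoke the hypothesis $a\in Q^{\sigma_+}_1$. By the definition of the subquiver $Q^{\sigma_+}$ recalled before the corollary, this means the scalar $v_a$ attached to $\sigma_+$ via \eqref{eqn:AmoduleOfCone} is nonzero, i.e.\ $\mathfrak{e}_a\notin\bigcup_{\rho\in\sigma_+(1)}\Pi_\rho=\Pi_0\cup\Pi_1\cup\Pi_2$. Combining this with the previous step leaves only the possibility $\mathfrak{e}_a\in\Pi_3$. The argument for $a\in Q^{\sigma_-}_1$ is word-for-word the same, with $\sigma_+(1)=\{\rho_0,\rho_1,\rho_2\}$ replaced by $\sigma_-(1)=\{\rho_1,\rho_2,\rho_3\}$, yielding $\mathfrak{e}_a\in\Pi_0$.

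I do not expect a genuine obstacle here: the statement is a direct consequence of unwinding Definition~\ref{def:jigsawpieces} and \eqref{eqn:AmoduleOfCone}. The only point that deserves a sentence of care is the equivalence used in the middle step --- that two tiles adjacent across an edge \emph{not} in $\bigcup_{0\le i\le 3}\Pi_i$ really do lie in the same connected component of the complement --- and this is exactly where one uses that a perfect matching is a set of one-dimensional edges, so deleting finitely many of them cannot separate the interior of a tile from the interior of a tile adjacent to it across a surviving edge.
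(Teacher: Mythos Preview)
Your argument is correct and in fact more economical than the paper's. You work directly from Definition~\ref{def:jigsawpieces}: adjacent tiles lie in the same jigsaw piece unless the edge between them belongs to $\bigcup_{0\le i\le 3}\Pi_i$, and then the membership $a\in Q^{\sigma_+}_1$ rules out $\Pi_0,\Pi_1,\Pi_2$. The paper instead routes the argument through Lemma~\ref{lem:cutHex}, first deducing that $\mathfrak{e}_a$ lies in the cut $\mathfrak{c}_-=\mathfrak{m}_{1,3}\cap\mathfrak{m}_{2,3}$, then using that edges of $\mathfrak{c}_-$ alternate between $\Pi_3$ and $\Pi_1\cap\Pi_2$, and only at the end invoking $a\in Q^{\sigma_+}_1$ to eliminate $\Pi_1\cap\Pi_2$. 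Your route bypasses the meandering-walk structure entirely and needs nothing beyond the raw definition of jigsaw pieces and the subquiver $Q^{\sigma_+}$; the paper's route, while slightly longer, has the virtue of pinning down exactly \emph{where} the separating edge sits (namely on $\mathfrak{c}_-$), which is information reused later in Lemma~\ref{lem:charjig+(1)}.
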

\begin{proof}
For $a\in Q^{\sigma_+}_1$, Lemma~\ref{lem:cutHex} shows that
$\mathfrak{e}_a\in \mathfrak{c}_- = \mathfrak{m}_{1,3}\cap \mathfrak{m}_{2,3}$,
so either $\mathfrak{e}_a\in \Pi_3$ or $\mathfrak{e}_a\in \Pi_1\cap \Pi_2$.
However, $\mathfrak{e}_a\notin \bigcup_{i=0}^2\Pi_i$ because
$a\in Q^{\sigma_+}_1$, giving $\mathfrak{e}_a\in \Pi_3$ as required. The case $a\in Q^{\sigma_-}_1$ is similar.
\end{proof}

Let $\hex(\sigma_+)^\circ$ and $\partial \hex(\sigma_+)$ denote the interior and the boundary of $\hex(\sigma_+)$ respectively. 

\begin{lemma} 
\label{lem:cuts} 
The cuts $\mathfrak{c}_{-}$ of $\hex(\sigma_+)$ and $\mathfrak{c}_+$ of $\hex(\sigma_-)$ have the following properties.
\begin{enumerate}%[label={(\alph*)}]
\item[\one] \label{lem:cut1} Neither $\mathfrak{c}_-$ nor $\mathfrak{c}_+$ intersects itself;
\item[\two] \label{lem:cut2} If an edge lies in either $\mathfrak{c}_{-}\cap \partial\hex(\sigma_+)$ or $\mathfrak{c}_+\cap \partial\hex(\sigma_-)$, then this edge lies in $\mathfrak{c}_-\cap\mathfrak{c}_+$, and the connected component of $\mathfrak{c}_-\cap\mathfrak{c}_+$ containing this edge comprises an odd number of edges; 
\item[\three] \label{lem:cut3}
 If we follow $\mathfrak{c}_-$ $($resp.\ $\mathfrak{c}_+)$ in either direction around the torus, then the list of nodes where we depart from and arrive at $\partial\hex(\sigma_+)$ \emph{(}resp.\ $\partial\hex(\sigma_-))$ have colours that alternate between black and white.
 \item[\four] The set of nodes that are simultaneously an endnode of an edge in $\mathfrak{c}_-$ $($resp.\ $\mathfrak{c}_+)$ and an endnode of an edge in $\mathfrak{m}_{1,2}$ comprises one black node and one white node; these are the trivalent nodes of $\tiling(\sigma_-)$ $($resp.\ $\tiling(\sigma_+))$.
 \end{enumerate}
\end{lemma}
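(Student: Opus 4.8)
The plan is to reduce all four claims to a bookkeeping exercise at the nodes of $\Gamma$, using only that every edge of $\Gamma$ joins a black node to a white node, and that every node is the endnode of exactly one edge of each perfect matching. Together with Proposition~\ref{prop:IUvalency} and the matching labels recorded in Figure~\ref{fig:fundhexpm}, this gives a completely explicit description of $\mathfrak{c}_-$, $\mathfrak{c}_+$ and $\mathfrak{m}_{1,2}$. Indeed, as in the proof of Lemma~\ref{lem:cutHex}, $\tiling(\sigma_-)$ is a ``theta graph'' in $\mathbb{T}$: it has exactly one white and one black trivalent node, joined by the three chains $\mathfrak{m}_{1,2}\cap\mathfrak{m}_{1,3}$, $\mathfrak{m}_{1,2}\cap\mathfrak{m}_{2,3}$ and $\mathfrak{c}_-=\mathfrak{m}_{1,3}\cap\mathfrak{m}_{2,3}$, each a simple path whose edges alternate between a single matching $\Pi_k$ and an intersection $\Pi_i\cap\Pi_j$ of two matchings as in Figure~\ref{fig:fundhexpm}, with single edges at both endpoints and an odd number of edges in all. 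The symmetric statement (interchanging the roles of $\Pi_0$ and $\Pi_3$) holds for $\tiling(\sigma_+)$, with $\mathfrak{c}_+=\mathfrak{m}_{0,1}\cap\mathfrak{m}_{0,2}$ one of its three chains. Throughout, write $e_i$ for the $\Pi_i$-edge at a given node $\mathfrak{n}$.

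Parts \one\ and \four\ then come out directly. Since each edge of $\mathfrak{m}_{1,2}$ lies in exactly one of $\mathfrak{m}_{1,3},\mathfrak{m}_{2,3}$, one has $\mathfrak{m}_{1,2}=(\mathfrak{m}_{1,2}\cap\mathfrak{m}_{1,3})\sqcup(\mathfrak{m}_{1,2}\cap\mathfrak{m}_{2,3})$, so $\mathfrak{c}_-$ and $\mathfrak{m}_{1,2}$ are the complementary subgraphs of $\tiling(\sigma_-)$: they are edge-disjoint and share precisely its two trivalent nodes, one black and one white by Proposition~\ref{prop:IUvalency}. A direct count at $\mathfrak{n}$ shows $\mathfrak{n}$ meets an edge of $\mathfrak{c}_-$ and an edge of $\mathfrak{m}_{1,2}$ exactly when $e_1,e_2,e_3$ are pairwise distinct, i.e.\ exactly when $\mathfrak{n}$ is trivalent in $\tiling(\sigma_-)$; this is \four. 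The same count shows every node meets at most two edges of $\mathfrak{c}_-$, so $\mathfrak{c}_-$ is a simple path — and hence so is each of its lifts used to cut $\hex(\sigma_+)$ — which gives \one; the argument for $\mathfrak{c}_+$ is identical.

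For \two, the first assertion is that $\mathfrak{c}_-\cap\partial\hex(\sigma_+)=\mathfrak{c}_-\cap\mathfrak{c}_+$. An edge of $\mathfrak{c}_-$ lies in $\Pi_3\setminus(\Pi_1\cup\Pi_2)$ (a single edge) or in $(\Pi_1\cap\Pi_2)\setminus\Pi_3$ (a double edge); testing each case against Proposition~\ref{prop:IUvalency} applied to $\sigma_+$ shows it lies on $\tiling(\sigma_+)$ if and only if it lies in $\Pi_0$ (resp.\ does not lie in $\Pi_0$), and then it is a single (resp.\ double) edge of the chain $\mathfrak{c}_+$. Hence $\mathfrak{c}_-\cap\mathfrak{c}_+$ consists exactly of the edges in $(\Pi_0\cap\Pi_3)\setminus(\Pi_1\cup\Pi_2)$ and in $(\Pi_1\cap\Pi_2)\setminus(\Pi_0\cup\Pi_3)$, and a node count shows $\mathfrak{c}_-\cap\mathfrak{c}_+$ has no branch points. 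Let $R$ be a connected component. At an interior node of $R$ one has $e_1=e_2$ and $e_0=e_3$, so its two edges are of opposite types, and the two types alternate along $R$; thus $R$ has odd length as soon as its two end edges have the same type. If an end edge of $R$ is of the first type, then at that endnode $e_0=e_3$ equals that edge, so the node is trivalent in both $\tiling(\sigma_-)$ and $\tiling(\sigma_+)$, and that edge is terminal for the chains $\mathfrak{c}_-$ and $\mathfrak{c}_+$ at that node. The aim is then to show that $\mathfrak{c}_-$ and $\mathfrak{c}_+$ must agree along the whole of $R$ — propagating through the successive nodes, where the continuations of $\mathfrak{c}_-$ and of $\mathfrak{c}_+$ are both the unique edge in $\Pi_1\cap\Pi_2$, then the unique edge in $\Pi_0\cap\Pi_3$, and so on — forcing $R$ to be an entire chain of $\tiling(\sigma_\pm)$, which is odd by Proposition~\ref{prop:IUvalency}; otherwise both end edges of $R$ are of the second type and the alternation again gives odd length. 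Finally, $\mathfrak{c}_+\cap\partial\hex(\sigma_-)\subseteq\mathfrak{c}_-\cap\mathfrak{c}_+$ is \two\ with $\Pi_0$ and $\Pi_3$ interchanged.

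For \three, applying the same bookkeeping to the complementary edges of $\mathfrak{c}_-$ — those in $\Pi_3\setminus(\Pi_0\cup\Pi_1\cup\Pi_2)$ (single) or in $\Pi_0\cap\Pi_1\cap\Pi_2$ (double), which form the maximal sub-paths of $\mathfrak{c}_-$ lying in $\hex(\sigma_+)^\circ$ — shows these sub-paths also have odd length: a double such edge, lying in $\Pi_0$, cannot meet a single edge of $\mathfrak{c}_-\cap\mathfrak{c}_+$, which also lies in $\Pi_0$, at a common node, so these sub-paths begin and end with a single edge. Therefore, traversing $\mathfrak{c}_-$ we meet alternately a stretch on $\partial\hex(\sigma_+)$ and a stretch in $\hex(\sigma_+)^\circ$, each of odd length; since every edge of $\Gamma$ reverses the colours of its endnodes, each odd stretch reverses colour, and so the nodes where $\mathfrak{c}_-$ departs from and returns to $\partial\hex(\sigma_+)$ alternate between black and white (the degenerate case $\mathfrak{c}_-\cap\mathfrak{c}_+=\emptyset$, where \two\ and \three\ are vacuous, being trivial), and symmetrically for $\mathfrak{c}_+$. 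The genuinely delicate point is the step in \two\ where a component $R$ meets $\partial\hex(\sigma_+)$ at a node trivalent in both theta graphs: I expect the cleanest way to rule out a premature divergence of $\mathfrak{c}_-$ and $\mathfrak{c}_+$ there — and hence to conclude that $R$ is an entire chain — is to feed back the jigsaw structure of Theorem~\ref{thm:genjigsaw}, or else to track the parity of the edges of the meandering walk $\mathfrak{m}_{0,3}$ crossed along $R$; this is where I expect the main work to lie.
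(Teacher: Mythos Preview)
Your overall approach---reducing each claim to node-level bookkeeping using that every node meets exactly one edge of each perfect matching, together with the alternating structure of the chains recorded in Figure~\ref{fig:fundhexpm} and Proposition~\ref{prop:IUvalency}---is exactly the paper's approach. Parts \one, \three\ and \four\ match the paper's proof closely: the paper proves \one\ by the same three-edges-at-a-node contradiction you give; for \three\ it shows that both the boundary stretches and the interior stretches of $\mathfrak{c}_-$ have odd length, just as you do; and \four\ is deduced from \three\ together with an inspection of the endnodes of $\mathfrak{c}_-$.

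The difference is in \two, where you have the right structure but explicitly leave a gap, flagging the case where a component $R$ of $\mathfrak{c}_-\cap\mathfrak{c}_+$ meets a common trivalent node as ``where the main work lies''. The paper dispatches \two\ much more directly and does not enter your case analysis at all. Having shown that any edge in $\mathfrak{c}_-\cap\partial\hex(\sigma_+)$ is forced to lie in $\mathfrak{c}_+$ (using Figure~\ref{fig:fundhexpm}), the paper simply observes that since the edges of $\mathfrak{c}_+$ alternate between $\Pi_0$ and $\Pi_1\cap\Pi_2$, the connected component of $\mathfrak{c}_-\cap\mathfrak{c}_+$ containing the given edge must begin and end with an edge in $\Pi_1\cap\Pi_2$, and is therefore odd. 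The underlying reason is precisely the forward-propagation step you already wrote down: after an edge in $\Pi_0\cap\Pi_3$, the continuations of $\mathfrak{c}_-$ and of $\mathfrak{c}_+$ are both the unique $\Pi_1\cap\Pi_2$-edge at the next node, so the component does not terminate there. That is the whole argument; the paper does not separately analyse the trivalent-node case, does not invoke Theorem~\ref{thm:genjigsaw}, and makes no use of $\mathfrak{m}_{0,3}$. Your ``otherwise both end edges of $R$ are of the second type'' case is in fact the only case the paper considers, and your expectation that substantial further work is needed is unwarranted.
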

\begin{proof}
We prove each statement for $\mathfrak{c}_-$ and $\sigma_+$; swap signs throughout to obtain the proof for $\mathfrak{c}_+$ and $\sigma_-$. For \one, if $\mathfrak{c}_-$ intersects itself at a node $\mathfrak{n}\in \Gamma_0$, then at least three edges of $\mathfrak{c}_-$ touch $\mathfrak n$. Since edges in $\mathfrak{c}_-$ belong alternately to $\Pi_3$ and $\Pi_1\cap\Pi_2$, it follows that two of the three edges in $\mathfrak{c}_-$ touching $\mathfrak n$ belong to the same perfect matching, a contradiction. 
For \two, let $\mathfrak{e}$ be an edge of $\mathfrak{c}_{-}\cap \partial\hex(\sigma_+)$. Edges of $\mathfrak{c}_-$ belong alternately to $\Pi_3$ and $\Pi_1\cap\Pi_2$, and Figure~\ref{fig:fundhexpm} illustrates that $\mathfrak{e}$ is forced to lie in $\mathfrak{c}_+\subset \partial\hex(\sigma_+)$. Moreover, since edges of $\mathfrak{c}_+$ belong alternately to $\Pi_0$ and $\Pi_1\cap\Pi_2$, the connected component of $\mathfrak{c}_-\cap\mathfrak{c}_+$ containing $\mathfrak{e}$ must begin and end with an edge in $\Pi_1 \cap \Pi_2$, so this component comprises an odd number of edges. For \three, we deduce from \two\ that the endnodes of each connected component of $\mathfrak{c}_-\cap \mathfrak{c}_+$ must be of different colours. It suffices therefore to prove that each connected component of $\mathfrak{c}_-\cap\hex(\sigma_+)^\circ$ also contains an odd number of edges, as again the endnodes will be of different colours. Every node in $\partial\hex(\sigma_+)$ is an endnode of an edge in $\partial\hex(\sigma_+)$ that lies in either $\Pi_1$ or $\Pi_2$ (or both); see Figure~\ref{fig:fundhexpm}. Since the edges of $\mathfrak{c}_-$ alternate between edges of $\Pi_3$ and $\Pi_1\cap\Pi_2$, an edge in $\mathfrak{c}_-\cap \hex(\sigma_+)^\circ$ that has an endnode in $\partial \hex(\sigma_+)$ must therefore lie in $\Pi_3$. It follows that each connected component of $\mathfrak{c}_-\cap\hex(\sigma_+)^\circ$ contains an odd number of edges. Statement \four\ follows by combining statement \three\ with an examination of the endnodes of $\mathfrak{c}_-$.
\end{proof}

\begin{figure}[!ht]
\centering
\begin{tikzpicture} [thick,scale=0.35, every node/.style={scale=1}] 
%labels
%\node at (355pt,160pt) [rectangle,draw=white,fill=white,minimum size=6pt,inner sep=1pt]{\scriptsize $\iddots$};
%\node at (-154pt,-139pt) [rectangle,draw=white,fill=white,minimum size=6pt,inner sep=1pt]{\scriptsize $\iddots$};
\node at (-45pt,-210pt) [rectangle,draw=white,fill=white,minimum size=6pt,inner sep=1pt]{\scriptsize $\mathfrak{m}_{1,2}$};
\node at (105pt,-120pt) [rectangle,draw=white,fill=white,minimum size=6pt,inner sep=1pt]{\scriptsize $\mathfrak{m}_{1,2}$};
\node at (-45pt,25pt) [rectangle,draw=white,fill=white,minimum size=6pt,inner sep=1pt]{\scriptsize $\mathfrak{m}_{1,2}$};
\node at (105pt,115pt) [rectangle,draw=white,fill=white,minimum size=6pt,inner sep=1pt]{\scriptsize $\mathfrak{m}_{1,2}$};
%\node at (10pt,-120pt) {\scriptsize $\jig_0$};
%\node at (-55pt,-90pt) {\scriptsize $\jig_1$};
%\node at (-115pt,-60pt) {\scriptsize $\jig_2$};
%\node at (100pt,0pt) {\scriptsize $\jig_1^\prime$};
%\node at (190pt,125pt) {\scriptsize $\jig_2^\prime$};
%lines
\draw [thick] (0pt,0pt) -- (50pt,90pt) -- (150pt,90pt) -- (200pt,0pt) -- (150pt,-90pt) -- (50pt,-90pt) -- (0pt, 0pt);
\draw [red, very thick] (150pt,90pt) -- (200pt,0pt);
\draw [red, very thick] (50pt,-90pt) -- (0pt, 0pt);
\draw [red, very thick] (-150pt,-90pt) -- (-100pt, -180pt);
\draw [thick] (0pt,0pt) -- (-100pt,0pt) -- (-150pt,-90pt);
\draw [thick] (-100pt,-180pt) -- (0pt,-180pt) -- (50pt,-90pt);
%\draw [thick] (150pt,90pt) -- (200pt,180pt) -- (300pt,180pt) -- (350pt,90pt) -- (300pt,0pt) -- (200pt,0pt) -- (150pt, 90pt);
\draw [ggreen, dashed, very thick] (-30pt,-180pt) -- (-30pt,-150pt) -- (-10pt,-140pt) -- (-5pt,-80pt) -- (30pt,-54pt) -- (15pt,-27pt)-- (50pt,00pt) --  (60pt,50pt)--(75pt,60pt) -- (90pt,90pt);% --  (120pt,90pt);
\draw [ggreen, dashed, very thick] (120pt,-90pt) -- (120pt,-60pt) -- (140pt,-50pt) -- (145pt,10pt) -- (180pt,36pt) -- (165pt,63pt);-- (200pt,90pt) --  (210pt,140pt)--(225pt,150pt) -- (240pt,180pt);% -- (270pt,180pt);
%\draw [ggreen, very thick] (270pt,0pt) -- (270pt,30pt) -- (290pt,40pt) -- (295pt,100pt) -- (330pt,126pt)-- (315pt,153pt);
\draw [ggreen, dashed, very thick] (-135pt,-117pt)-- (-100pt,-90pt) --  (-90pt,-40pt)--(-75pt,-30pt) -- (-60pt,0pt);
\draw [ggreen, dashed, very thick] (-120pt,-144pt) -- (-135pt,-117pt);% -- (-30pt,0pt) ;
%\draw [ggreen, very thick] (180pt,36pt) -- (165pt,67pt)-- (200pt,90pt) --  (210pt,140pt) -- (225pt,150pt) -- (240pt,180pt);% -- (-30pt,0pt) ;
%white
\node at (0pt,0pt) [circle,draw,fill=black,minimum size=6pt,inner sep=1pt]{};;
\node at (150pt,90pt) [circle,draw,fill=black,minimum size=6pt,inner sep=1pt]{};
\node at (150pt,-90pt) [circle,draw,fill=black,minimum size=6pt,inner sep=1pt]{};
\node at (-150pt,-90pt) [circle,draw,fill=black,minimum size=6pt,inner sep=1pt]{};;
\node at (0pt,-180pt) [circle,draw,fill=black,minimum size=6pt,inner sep=1pt]{};
%\node at (300pt,180pt) [circle,draw,fill=black,minimum size=6pt,inner sep=1pt]{};
%\node at (300pt,0pt) [circle,draw,fill=black,minimum size=6pt,inner sep=1pt]{};
%small white
\node at (-120pt,-144pt) [circle,draw,fill=black,minimum size=3pt,inner sep=1pt]{};
\node at (30pt,-54pt) [circle,draw,fill=black,minimum size=3pt,inner sep=1pt]{};
\node at (180pt,36pt) [circle,draw,fill=black,minimum size=3pt,inner sep=1pt]{};
%\node at (330pt,126pt) [circle,draw,fill=black,minimum size=3pt,inner sep=1pt]{};
\node at (-60pt,0pt) [circle,draw,fill=black,minimum size=3pt,inner sep=1pt]{};
\node at (90pt,90pt) [circle,draw,fill=black,minimum size=3pt,inner sep=1pt]{};
%\node at (240pt,180pt) [circle,draw,fill=black,minimum size=3pt,inner sep=1pt]{};
%\node at (240pt,0pt) [circle,draw,fill=black,minimum size=3pt,inner sep=1pt]{};
\node at (90pt,-90pt) [circle,draw,fill=black,minimum size=3pt,inner sep=1pt]{};
\node at (-60pt,-180pt) [circle,draw,fill=black,minimum size=3pt,inner sep=1pt]{};
%black
\node at (50pt,90pt) [circle,draw,fill=white,minimum size=6pt,inner sep=1pt]{};
\node at (200pt,0pt) [circle,draw,fill=white,minimum size=6pt,inner sep=1pt]{};
\node at (50pt,-90pt) [circle,draw,fill=white,minimum size=6pt,inner sep=1pt]{};
\node at (-100pt,0pt) [circle,draw,fill=white,minimum size=6pt,inner sep=1pt]{};
\node at (-100pt,-180pt) [circle,draw,fill=white,minimum size=6pt,inner sep=1pt]{};
%\node at (200pt,180pt) [circle,draw,fill=white,minimum size=6pt,inner sep=1pt]{};
%\node at (350pt,90pt) [circle,draw,fill=white,minimum size=6pt,inner sep=1pt]{};
%small black
\node at (-30pt,-180pt) [circle,draw,fill=white,minimum size=3pt,inner sep=1pt]{};
\node at (120pt,-90pt) [circle,draw,fill=white,minimum size=3pt,inner sep=1pt]{};
%\node at (270pt,0pt) [circle,draw,fill=white,minimum size=3pt,inner sep=1pt]{};
\node at (-135pt,-117pt) [circle,draw,fill=white,minimum size=3pt,inner sep=1pt]{};
\node at (15pt,-27pt) [circle,draw,fill=white,minimum size=3pt,inner sep=1pt]{};
\node at (165pt,63pt) [circle,draw,fill=white,minimum size=3pt,inner sep=1pt]{};
%\node at (315pt,153pt) [circle,draw,fill=white,minimum size=3pt,inner sep=1pt]{};
\node at (-30pt,0pt) [circle,draw,fill=white,minimum size=3pt,inner sep=1pt]{};
\node at (120pt,90pt) [circle,draw,fill=white,minimum size=3pt,inner sep=1pt]{};
%\node at (270pt,180pt) [circle,draw,fill=white,minimum size=3pt,inner sep=1pt]{};
\end{tikzpicture}
\caption{$\hex(\sigma_+)$ and one of its translates are both pictured with the cut $\mathfrak{c}_-$ (in dashed green); we highlight the edges of $\mathfrak{c}_+$ (in red) to illustrate the statements of  Lemma~\ref{lem:cuts}.}
\label{fig:cutsthroughhexagons}
\end{figure}
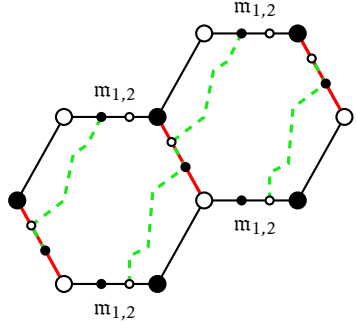

\section{Combinatorial Reid's recipe}
\subsection{The $0$-generated stability condition}
 Our normalisation of the tautological bundle $T$ is chosen so that $L_0\cong \mathcal{O}_{\mathcal{M}_\theta}$ for the vertex $0\in Q_0$;  we refer to the tile of $\Dimer$ dual to $0$ as the \emph{zero tile}. Choose a lift of $0\in Q_0$ to the universal cover that we also denote $0\in \widetilde{Q}$; again, we refer to the tile of $\widetilde{\Dimer}$ dual to $0$ as the \emph{zero tile}. For each cone $\sigma\in \Sigma_\theta(3)$, choose the lift $\widetilde{Q^\sigma}$ of the subquiver $Q^\sigma\subset Q$ so that $0\in \widetilde{Q^\sigma}$. In particular, the zero tile in $\widetilde{\Dimer}$ is contained in the fundamental hexagon $\hex(\sigma)$ for every $\sigma\in \Sigma_\theta(3)$. 

 Fix once and for all a \emph{0-generated} stability parameter, \emph{i.\,e.} a parameter of the form
 \begin{equation}
     \label{eqn:0generated}
 \theta = (\theta_i)\in \Theta\text{ such that }\theta_i>0\text{ for all }i\neq 0.
 \end{equation}
 It is well known that $0$-generated stability conditions are generic. Furthermore, we have the following:
 
 \begin{lemma}
 \label{lem:0generated}
 A quiver representation $(v_a)\in \mathbb{C}^{Q_1}$ is $\theta$-stable if and only if for every vertex $i\in Q_0\setminus \{0\}$, there is a path $p=a_\ell\cdots a_1$ in $Q$ from $0$ to $i$ such that $\prod_{1\leq j\leq \ell} v_{a_j}\neq 0$.
 \end{lemma}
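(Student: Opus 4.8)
The plan is to unwind the definition of $\theta$-stability for a $0$-generated parameter and show it is equivalent to the stated connectivity condition on the support of the representation. The representation $(v_a)$ has dimension vector $\vv = (1,\dots,1)$, so a subrepresentation $V'\subset V$ is determined by a subset $I\subseteq Q_0$ with the property that whenever $a\in Q_1$ satisfies $v_a\neq 0$ and $\tail(a)\in I$, we have $\head(a)\in I$; that is, $I$ is closed under following nonzero arrows. For such a subset, $\theta(V') = \sum_{i\in I}\theta_i$, and with the $0$-generated choice \eqref{eqn:0generated} this equals $\sum_{i\in I\setminus\{0\}}\theta_i + \theta_0\cdot[0\in I]$, where $\theta_0 = -\sum_{i\neq 0}\theta_i$ since $\theta(\vv)=0$. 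The key observation is that if $0\notin I$ then $\theta(V')=\sum_{i\in I}\theta_i \geq 0$ automatically, with equality only if $I=\emptyset$; whereas if $0\in I$, then $\theta(V') = -\sum_{i\notin I}\theta_i \leq 0$, with equality only if $I = Q_0$.

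First I would establish the "only if" direction. Suppose $(v_a)$ is $\theta$-stable, and fix $i\in Q_0\setminus\{0\}$. Let $I$ be the smallest subset of $Q_0$ containing $0$ that is closed under following nonzero arrows — equivalently, $I$ is the set of vertices reachable from $0$ along paths whose arrows all have nonzero $v$-value. This $I$ defines a nonzero subrepresentation $V'$ with $0\in I$, hence $\theta(V') = -\sum_{j\notin I}\theta_j$. If $i\notin I$ then $V'$ is a proper nonzero subrepresentation with $\theta(V')<0$, contradicting stability; therefore $i\in I$, which means precisely that there is a path $p = a_\ell\cdots a_1$ from $0$ to $i$ with $\prod_j v_{a_j}\neq 0$.

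Next, the "if" direction. Assume the path condition holds for every $i\neq 0$, and let $V'\subset V$ be any nonzero proper subrepresentation, corresponding to a nonempty proper subset $I\subsetneq Q_0$ closed under following nonzero arrows. If $0\notin I$: then $\theta(V') = \sum_{i\in I}\theta_i > 0$ since $I$ is nonempty and every $\theta_i>0$ for $i\neq 0$; so the stability inequality is strict. If $0\in I$: pick any $j\in Q_0\setminus I$ (possible since $I$ is proper); by hypothesis there is a path from $0$ to $j$ with all arrows nonzero, and since $I$ is closed under following nonzero arrows and $0\in I$, following this path forces $j\in I$, a contradiction. Hence the case $0\in I$ with $I$ proper cannot occur, and every nonzero proper subrepresentation has $\theta(V')>0$. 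Also $\theta(V) = \theta(\vv) = 0$. Therefore $(v_a)$ is $\theta$-stable.

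I do not anticipate a serious obstacle here — the argument is essentially bookkeeping with the sign constraint $\theta_0 = -\sum_{i\neq 0}\theta_i$ and the combinatorics of arrow-closed subsets. The one point requiring a little care is making sure the "smallest arrow-closed subset containing $0$" is literally the set of vertices joined to $0$ by a nonzero path, which is immediate by induction on path length; and noting that this subset, together with the restricted maps $v_a$ for arrows internal to it, genuinely forms a subrepresentation (it does, because any arrow leaving the subset either has $v_a=0$, contributing nothing, or would force its head into the subset). It is worth remarking that this lemma does not use consistency of $\Gamma$ or the Jacobian relations at all — it is a general fact about $\theta$-stability of thin representations for a $0$-generated weight — but we only need it for quiver representations that will later be shown to satisfy the relations.
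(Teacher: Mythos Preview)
Your proof is correct and complete. The paper itself does not provide a proof at all: it simply cites \cite[Lemma~3.1(ii)]{BCQ15} and moves on. What you have written is the standard unwinding of King stability for thin representations under a $0$-generated weight, and is essentially the argument one would find in the cited reference. Your closing remark that the lemma requires neither consistency of $\Gamma$ nor the Jacobian relations is also accurate and worth noting.
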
  
\begin{proof}
 See, for example, \cite[Lemma~3.1(ii)]{BCQ15}.
 \end{proof}

We now study jigsaw transformations of torus-invariant $\theta$-stable $A$-modules for a 0-generated stability parameter $\theta\in \Theta$. As in Figure~\ref{fig:jigsawcones}, choose adjacent three-dimensional cones $\sigma_\pm\in \Sigma_\theta(3)$ and let $\tau:=\sigma_+\cap \sigma_-$ be the common face of dimension two. As above, the subquivers $Q^{\sigma_\pm}$ of $Q$ from $\mathbb{T}$ lift to define subquivers $\widetilde{Q^{\sigma_\pm}}$ of $\widetilde{Q}$ in $\RR^2$ such that $\hex(\sigma_{\pm})$ both contain the zero tile.

\begin{definition}
 The \emph{zero jigsaw piece} for $\tau$, denoted $\jig_0$, is the jigsaw piece in $\widetilde{\Dimer}$ containing the zero tile. 
\end{definition}

 Both $\hex(\sigma_+)$ and $\hex(\sigma_-)$ can be obtained by gluing jigsaw pieces of $\tau$ along parts of their boundaries, so the zero jigsaw piece $\jig_0$ lies in $\hex(\sigma_+)\cap \hex(\sigma_-)$. The next result records an important consequence of
Lemma~\ref{lem:0generated} for certain edges in the boundary of $\jig_0$. Recall that we write $\hex(\sigma_+)^\circ$ and $\partial\hex(\sigma_+)$ for the interior and the boundary respectively of $\hex(\sigma_+)$.
 
 \begin{lemma}
 \label{lem:J1}
 Let $\mathfrak{e}_a\in \Dimer_1$ be an edge in the boundary of $\jig_0$ that lies in the interior of $\hex(\sigma_+)$. The dual arrow $a\in Q_1^{\sigma_+}$ is such that $\tail(a)$ is dual to a tile in $\jig_0$ and $\head(a)$ is dual to a tile that does not lie in $\jig_0$.
 \end{lemma}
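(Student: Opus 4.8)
The plan is to exploit the $0$-generated stability of $\theta$ via Lemma~\ref{lem:0generated}, applied to the quiver representation $M_{\sigma_+}$, together with the combinatorial description of jigsaw pieces in Lemma~\ref{lem:cutHex}. The key observation is that the zero jigsaw piece $\jig_0$ is precisely the set of tiles reachable from the zero tile without crossing an edge of $\mathfrak{c}_-=\mathfrak{m}_{1,3}\cap\mathfrak{m}_{2,3}$: indeed, by Lemma~\ref{lem:cutHex} the connected components of $\hex(\sigma_+)\setminus\mathfrak{c}_-$ are exactly the jigsaw pieces of $\tau$, and $\jig_0$ is the one containing the zero tile. So if $\mathfrak{e}_a$ lies in $\partial\jig_0\cap\hex(\sigma_+)^\circ$, then since $\mathfrak{e}_a$ separates two tiles inside $\hex(\sigma_+)$, exactly one of which lies in $\jig_0$, the arrow $a$ is dual to an edge of $\mathfrak{c}_-$ and hence, by Corollary~\ref{cor:edgeinPi3}, $\mathfrak{e}_a\in\Pi_3$ — wait, more carefully: $\mathfrak{e}_a$ lies in the interior of $\hex(\sigma_+)$, so it is not an edge of $\partial\hex(\sigma_+)$; since it lies on the boundary of a jigsaw piece it must be an edge along which we cut, i.e.\ $\mathfrak{e}_a\in\mathfrak{c}_-$. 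Because $a\in Q_1^{\sigma_+}$ means $\mathfrak{e}_a\notin\bigcup_{i=0}^2\Pi_i$, the alternation of $\mathfrak{c}_-$ between $\Pi_3$ and $\Pi_1\cap\Pi_2$ forces $\mathfrak{e}_a\in\Pi_3$. In particular $a$ is genuinely an arrow of $Q^{\sigma_+}$ crossing from one jigsaw piece to another.

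Next I would pin down the direction of $a$, i.e.\ show $\tail(a)$ is dual to a tile in $\jig_0$ and $\head(a)$ is not. Suppose for contradiction that $\tail(a)$ is dual to a tile $\mathfrak{t}\notin\jig_0$ (so then $\head(a)$ is dual to a tile in $\jig_0$, since the two tiles bounding $\mathfrak{e}_a$ lie in different jigsaw pieces). Write $i:=\tail(a)\in Q_0$. By Lemma~\ref{lem:0generated} applied to $M_{\sigma_+}$, there is a path $p=a_\ell\cdots a_1$ in $Q$ from $0$ to $i$ with $\prod_j v_{a_j}\neq 0$, i.e.\ every arrow $a_j$ of $p$ lies in $Q^{\sigma_+}_1$. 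Lift this path to $\widetilde{Q}$ starting at the copy of $0$ contained in $\hex(\sigma_+)$; by the local identification \cite[Lemma~4.1]{IshiiUeda08} of $\widetilde{Q^{\sigma_+}}$ with $Q^{\sigma_+}$, the whole lifted path stays inside $\hex(\sigma_+)$. Each arrow of $Q^{\sigma_+}$ in $\hex(\sigma_+)$ crosses exactly one edge of $\widetilde{\Dimer}$; since none of the arrows $a_j$ is dual to an edge of $\mathfrak{c}_-$ (such an edge lies in $\Pi_1\cap\Pi_2$ or $\Pi_3$; we need the $\Pi_3$ case — but an arrow dual to an edge of $\mathfrak{c}_-$ in $\Pi_3$ would a priori be allowed, so this is the subtle point, see below), the lifted path never leaves $\jig_0$, contradicting that it ends at the tile dual to $i\notin\jig_0$.

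The main obstacle — and the step requiring care — is exactly that last parenthetical: an arrow $a_j\in Q^{\sigma_+}_1$ \emph{can} be dual to an edge of $\mathfrak{c}_-$ (with $\mathfrak{e}_{a_j}\in\Pi_3$), which a priori would let the lifted path hop between jigsaw pieces. To close the argument I would instead argue via the \emph{head/tail geometry of such crossings}. By Corollary~\ref{cor:edgeinPi3}, every arrow of $Q^{\sigma_+}$ whose head and tail lie in different jigsaw pieces is dual to an edge in $\Pi_3$; and by Proposition~\ref{prop:IUvalency} together with Lemma~\ref{lem:cuts}\three, the edges of $\mathfrak{c}_-\cap\hex(\sigma_+)^\circ$ that touch $\partial\hex(\sigma_+)$ lie in $\Pi_3$, and the colour alternation of $\mathfrak{c}_-$ controls the cyclic orientation (clockwise/anticlockwise around white/black nodes, from Section~\ref{sec:dimers}) of the dual arrows. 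Concretely, because $a$ crosses $\mathfrak{e}_a\in\Pi_3\subset\mathfrak{c}_-$ and the white endnode of $\mathfrak{e}_a$ lies to the right of $a$, the orientation of $a$ relative to the chain $\mathfrak{c}_-$ is fixed; I would verify using Figure~\ref{fig:fundhexpm} (specifically the pattern of $\Pi_3$ versus $\Pi_1\cap\Pi_2$ edges along the boundary chains and the interior $\Pi_0\cap\Pi_1\cap\Pi_2$ edges) that every such crossing arrow of $Q^{\sigma_+}$ points \emph{out of} $\jig_0$ — i.e.\ $\tail(a)$ is on the $\jig_0$ side. This last claim is essentially local at a single edge of $\mathfrak{c}_-$: one checks the two faces of $Q$ adjacent to $\mathfrak{e}_a$, using that the white (black) face of $a$ carries a clockwise (anticlockwise) cycle and that the chain $\mathfrak{c}_-$ borders $\jig_0$ on a definite side. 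Once this local orientation lemma is in hand, the path-lifting argument of the previous paragraph goes through: a path from $0$ inside $Q^{\sigma_+}$ can only ever cross $\mathfrak{c}_-$ \emph{leaving} $\jig_0$, never returning, so any vertex reachable from $0$ whose tile lies in $\jig_0$ is reached by a path entirely within $\jig_0$ — hence $\tail(a)\in\jig_0$ and $\head(a)\notin\jig_0$, as claimed.
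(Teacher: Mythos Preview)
Your proposal has the right ingredients but assembles them in a circular way. The gap is in the final paragraph: you aim to verify via a ``local orientation lemma'' that every $Q^{\sigma_+}$-arrow crossing $\mathfrak{c}_-$ points \emph{out of} $\jig_0$, and then feed this into the path-lifting contradiction. But which side of a given component $\gamma\subset\mathfrak{c}_-\cap\hex(\sigma_+)^\circ$ contains $\jig_0$ is determined by the position of the zero tile --- global data --- so no purely local check at $\mathfrak{e}_a$ can decide it. And once you know every such arrow points out of $\jig_0$, you are already done; the path argument becomes redundant. So the logic is circular: the ``local lemma'' you posit is exactly the conclusion you want.

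The paper uses the same two ingredients but in the correct order. First comes the genuinely local fact (which you gesture at but do not isolate): along a fixed connected component $\gamma$ of $\mathfrak{c}_-\cap\hex(\sigma_+)^\circ$, all $Q^{\sigma_+}$-arrows crossing $\gamma$ point the \emph{same} way. Indeed the edges of $\gamma$ alternate between $\Pi_3$ and $\Pi_1\cap\Pi_2$, only the $\Pi_3$ edges give arrows of $Q^{\sigma_+}$, and since the dual arrow flips direction at each successive edge of a chain (white node to the right), taking every other edge yields coherent orientation across $\gamma$. Second, stability determines \emph{which} way: $\gamma$ separates $\hex(\sigma_+)$ into $C_0\supset\jig_0^\circ$ and $C_1$, and if the one arrow $a$ (hence all $Q^{\sigma_+}$-arrows across $\gamma$) pointed \emph{into} $\jig_0$, then no path in $Q^{\sigma_+}$ from $0$ could reach any vertex dual to a tile in $C_1$, contradicting Lemma~\ref{lem:0generated}. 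Your second paragraph was in fact almost there; the missing step was not a local orientation check but the coherence-across-$\gamma$ observation, which lets stability finish the job.
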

 \begin{proof}
Every edge in the boundary of $\jig_0$ is contained in $\partial\hex(\sigma_+)$ or $\mathfrak{c}_-\cap\hex(\sigma_+)^\circ$ by Lemma~\ref{lem:cutHex}. The boundary of $\jig_0$ is not contained in $\partial\hex(\sigma_+)$, otherwise there would only be one jigsaw piece; nor is it contained in $\mathfrak{c}_-\cap\hex(\sigma_+)^\circ$, because $\mathfrak{c}_-$ does not intersect itself by Lemma~\ref{lem:cuts}\one. Therefore, the boundary of $\jig_0$ comprises edges from both $\mathfrak{c}_-\cap \hex(\sigma_+)^\circ$ and $\partial \hex(\sigma_+)$. Let $\gamma$ be any connected component of $\mathfrak{c}_-\cap \hex(\sigma_+)^\circ$ that lies in the boundary of $\jig_0$.  Note that $\gamma$ cuts $\hex(\sigma_+)$ into two connected components, say $C_0$ and $C_1$; assume without loss of generality that the interior of $\jig_0$ is contained in $C_0$. Suppose for a contradiction that the arrow $a$ dual to an edge $\mathfrak{e}_a$ in the path $\gamma$ is such that $\tail(a)$ is dual to a tile in $C_1$ and $\head(a)$ is dual to a tile in $\jig_0$. Since edges of $\gamma$ belong alternately to $\Pi_3$ and $\Pi_1\cap \Pi_2$, every arrow of $Q^{\sigma_+}$ dual to an edge in $\gamma$ also has tail dual to a tile in $C_1$ and head dual to a tile in $\jig_0$. But then for any vertex $i$ dual to a tile in $C_1$, there does not exist a path $p$ in $Q^{\sigma_+}$ from $0\in \jig_0$ to $i\in C_1$; this contradicts Lemma~\ref{lem:0generated}. %This completes the proof.
 \end{proof}
 
 The statement of Lemma~\ref{lem:J1} also holds if we replace $+$ by $-$ throughout.
 We now construct an $A$-submodule $N(\tau)_+$ of the torus-invariant $A$-module $M_{\sigma_+}$. 
 
\begin{definition}
\label{def:Qpmtau}
Let $Q(\tau)^+$ denote the subquiver of $Q^{\sigma_+}$ with  vertex set and arrow set satisfying
\begin{eqnarray*}
Q(\tau)_0^+& := & \big\{i\in Q_0 \mid \text{the tile dual to }i\text{ does not lie in }\jig_0\big\}
\\
Q(\tau)_1^+& := & \big\{a\in Q_1^{\sigma_+} \mid \tail(a), \head(a)\text{ are dual to tiles that do not lie in }\jig_0\big\}.
\end{eqnarray*}
 Consider the dimension vector $\underline{d}:=(d_i)\in \NN^{Q_0}$ with $d_i=1$ for $i\in Q(\tau)_0^+$ and $d_i=0$ otherwise, and define a representation of $Q$ of dimension vector $\underline{d}$ by associating to each arrow $a\in Q_1$ the scalar 
 \[
 v_a=\left\{\begin{array}{cr} 1 & \text{if } a\in Q(\tau)_1^+; \\
0 & \text{otherwise.}
\end{array}\right.
\]
 Let $N(\tau)_+$ denote the corresponding $\CC Q$-module of dimension vector $\underline{d}$.
\end{definition}

\begin{proposition}
\label{prop:quotient} 
The $\CC Q$-module $N(\tau)_+$ is a proper, nonzero $A$-submodule of $M_{\sigma_+}$. 
\end{proposition}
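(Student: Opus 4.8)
The plan is to verify three things: that $N(\tau)_+$ is an $A$-module (not merely a $\CC Q$-module), that it is a submodule of $M_{\sigma_+}$, and that it is proper and nonzero.

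\textbf{Step 1: $N(\tau)_+$ is a submodule of $M_{\sigma_+}$.} By definition, $M_{\sigma_+}$ is the $\CC Q$-module whose underlying representation assigns $1$ to each $a\in Q_1^{\sigma_+}$ and $0$ otherwise, on the vector space $\bigoplus_{i\in Q_0}\CC e_i$. The representation $N(\tau)_+$ is supported on the coordinate subspace $\bigoplus_{i\in Q(\tau)_0^+}\CC e_i$, and assigns $1$ precisely to the arrows in $Q(\tau)_1^+\subseteq Q_1^{\sigma_+}$. To see that this coordinate subspace is a subrepresentation, I must check it is closed under the action of every arrow $a\in Q_1^{\sigma_+}$: if $\tail(a)$ is dual to a tile not in $\jig_0$ and $v_a\neq 0$ in $M_{\sigma_+}$, then $\head(a)$ is also dual to a tile not in $\jig_0$. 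Equivalently: no arrow of $Q^{\sigma_+}$ has its tail outside $\jig_0$ and its head inside $\jig_0$. This is exactly the content of Lemma~\ref{lem:J1} for the edges of $\partial\jig_0$ lying in $\hex(\sigma_+)^\circ$ (those edges' dual arrows point \emph{out} of $\jig_0$), together with the observation that an arrow $a\in Q_1^{\sigma_+}$ crossing an edge of $\partial\hex(\sigma_+)$ cannot connect tiles lying in different fundamental domains in a way that enters $\jig_0$ from outside, since $\jig_0$ is a single jigsaw piece sitting strictly inside $\hex(\sigma_+)\cap\hex(\sigma_-)$ (its boundary edges in $\partial\hex(\sigma_+)$ separate $\jig_0$ from the rest of the hexagon, and the arrows dual to those boundary edges have been analysed). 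Once the subspace is $Q$-stable and the arrow-scalars of $N(\tau)_+$ agree with the restriction of those of $M_{\sigma_+}$ on $Q(\tau)_1^+$ while being forced to $0$ on arrows leaving the subspace, $N(\tau)_+$ is literally the subrepresentation of $M_{\sigma_+}$ cut out by this coordinate subspace.

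\textbf{Step 2: $N(\tau)_+$ is an $A$-module.} Since $M_{\sigma_+}$ is an $A$-module (Lemma~\ref{lem:conepoint}) and the category of $A$-modules is closed under subobjects in $\modA$, any $\CC Q$-subrepresentation of an $A$-module automatically satisfies the Jacobian relations. So this is immediate from Step 1.

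\textbf{Step 3: properness and nonvanishing.} Nonzero: by Lemma~\ref{lem:cutHex} (or rather the discussion following it, and the fact that $\tau$ is an interior line segment with $\sigma_-$ on the other side), the cut $\mathfrak{c}_-$ genuinely divides $\hex(\sigma_+)$ into at least two jigsaw pieces, so $\jig_0$ is a proper subset of $\hex(\sigma_+)$; hence $Q(\tau)_0^+\neq\emptyset$ and $\underline{d}\neq 0$. Proper: the vertex $0$ is dual to the zero tile, which lies in $\jig_0$ by the definition of the zero jigsaw piece, so $0\notin Q(\tau)_0^+$, whence $d_0=0$ and $N(\tau)_+\neq M_{\sigma_+}$.

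\textbf{Main obstacle.} The delicate point is Step 1: one must be careful about edges of $\partial\jig_0$ that lie in $\partial\hex(\sigma_+)$ rather than in $\mathfrak{c}_-\cap\hex(\sigma_+)^\circ$, since Lemma~\ref{lem:J1} as stated only controls the latter. I expect the argument to be completed by invoking Lemma~\ref{lem:0generated}: if some arrow $a\in Q_1^{\sigma_+}$ had tail outside $\jig_0$ and head inside $\jig_0$, then — combining this with the orientation information from Lemma~\ref{lem:J1} along the interior portion of $\partial\jig_0$ — the component of $\hex(\sigma_+)$ outside $\jig_0$ on the relevant side would be unreachable by a path in $Q^{\sigma_+}$ starting from $0$, contradicting $\theta$-stability of $M_{\sigma_+}$. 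So the proof essentially re-runs the reachability argument of Lemma~\ref{lem:J1} globally around $\partial\jig_0$, using that $\mathfrak{c}_-$ does not self-intersect (Lemma~\ref{lem:cuts}\one) so that $\jig_0$ has a well-defined "inside/outside" dichotomy along each of its boundary arcs.
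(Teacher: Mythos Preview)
Your proof is essentially the same as the paper's, and Steps~2 and~3 match exactly. However, the ``main obstacle'' you flag in Step~1 is illusory, and the paper's argument is accordingly shorter. The point you are missing is that an arrow $a\in Q_1^{\sigma_+}$ can \emph{never} have its dual edge $\mathfrak{e}_a$ on $\partial\hex(\sigma_+)$: by definition of $Q^{\sigma_+}$ we have $\mathfrak{e}_a\notin\bigcup_{0\leq i\leq 2}\Pi_i$, whereas every edge of $\partial\hex(\sigma_+)$ lies in this union (Proposition~\ref{prop:IUvalency}). So if $a\in Q_1^{\sigma_+}$ has tail outside $\jig_0$ and head inside $\jig_0$, then $\mathfrak{e}_a\in\partial\jig_0$ is automatically in the interior of $\hex(\sigma_+)$, and Lemma~\ref{lem:J1} applies directly to give the contradiction. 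There is no need to run a separate reachability argument around $\partial\jig_0$ or to invoke Lemma~\ref{lem:cuts}\one\ here.
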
 
\begin{proof}
 The $\CC Q$-module $M_{\sigma_+}$ is an $A$-module because it satisfies the relations \eqref{eqn:Jacobian}. If we prove that $N(\tau)_+$ is a $\CC Q$-submodule of $M_{\sigma_+}$, then it will be an $A$-submodule because it will satisfy the same relations. For this, it suffices to show every arrow $a\in Q^{\sigma_+}_1$ with $\tail(a)\in Q(\tau)_0^+$ also has $\head(a)\in Q(\tau)_0^+$. If we suppose otherwise, then there exists $a\in Q_1^{\sigma_+}$ with $\tail(a)\in Q(\tau)_0^+$ and $\head(a)\not\in Q(\tau)_0^+$. The dual edge $\mathfrak{e}_a\in \Dimer_1$ lies in the boundary of $\jig_0$ and in the interior of $\hex(\sigma_+)$, so Lemma~\ref{lem:J1} gives a contradiction. It follows that $N(\tau)_+$ is an $A$-submodule of $M_{\sigma_+}$. Since $\hex(\sigma_+)$ and $\hex(\sigma_-)$ are distinct, there are at least two jigsaw pieces for $\tau$, so $N(\tau)_+$ is nonzero. It's a proper submodule of $M_{\sigma_+}$ because $\jig_0$ contains the zero tile. 
\end{proof}

\begin{remark}
\label{rem:Qtaum} 
If we replace + by $-$ throughout Definition~\ref{def:Qpmtau} and Proposition~\ref{prop:quotient}, then we obtain a proper nonzero submodule $N(\tau)_-$ of $M_{\sigma_-}$. The $A$-modules $N(\tau)_+$ and $N(\tau)_-$ are isomorphic if and only if there are precisely two jigsaw pieces.
\end{remark}

\subsection{Tautological bundles of degree one on the curve}
  Recall that for each vertex $i\in Q_0$, the tautological line bundle $L_i$ on $\mathcal{M}_\theta$ is of the form $L_i\cong \mathcal{O}_{\mathcal{M}_\theta}(\div(p))$ where $p$ is any path in the quiver $Q$ from vertex $0$ to vertex $i$ and where $\div(p)$ is the divisor of zeroes of the section $t^{\div(p)}=\prod_{a\in \supp(p)} t^{\div(a)}$ determined by the sections \eqref{eqn:xa} that label arrows $a$ in the path $p$.
 
\begin{lemma}
\label{lem:charjig+(1)} 
For $i\in Q_0$, let $p$ be a path in $Q^{\sigma_+}$ from vertex 0 to vertex $i$. Then $t_{\rho_3}$ appears with multiplicity one in $t^{\div(p)}$ if and only if vertex $i$ is dual to a tile in a jigsaw piece that lies adjacent to $\jig_0$ in $\hex(\sigma_+)$. 
\end{lemma}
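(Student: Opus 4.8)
The plan is to translate the statement about the multiplicity of $t_{\rho_3}$ in $t^{\div(p)}$ into a statement about how many edges in the support of the path $p$ lie in the perfect matching $\Pi_3$, using formula~\eqref{eqn:xa}. Indeed, $t^{\div(p)}=\prod_{a\in\supp(p)}\prod_{\{\rho\mid \e_a\in\Pi_\rho\}}t_\rho$, so the multiplicity of $t_{\rho_3}$ in $t^{\div(p)}$ equals the number of arrows $a$ in the support of $p$ whose dual edge $\e_a$ lies in $\Pi_3$. Since $p$ is a path in $Q^{\sigma_+}$, none of these edges lies in $\Pi_0\cup\Pi_1\cup\Pi_2$; by Corollary~\ref{cor:edgeinPi3}, an arrow $a\in Q^{\sigma_+}_1$ has $\e_a\in\Pi_3$ precisely when its head and tail are dual to tiles in different jigsaw pieces for $\tau$. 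So the multiplicity of $t_{\rho_3}$ in $t^{\div(p)}$ counts exactly the number of times the (lift to $\hex(\sigma_+)$ of the) path $p$ crosses from one jigsaw piece into another.

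The argument then has two directions. For the ``if'' direction, suppose vertex $i$ is dual to a tile in a jigsaw piece $\jig$ adjacent to $\jig_0$ in $\hex(\sigma_+)$; I would lift $p$ starting at the zero tile $0\in\jig_0$ and show it can (and must) be chosen to cross into $\jig$ exactly once. Existence of such a path with at least one crossing is immediate since $i\notin\jig_0$ (and by Lemma~\ref{lem:0generated} a path from $0$ to $i$ exists inside $Q^{\sigma_+}$); that it crosses exactly once uses that the crossings of $p$ occur along connected components of the cut $\mathfrak{c}_-\cap\hex(\sigma_+)^\circ$, together with Lemma~\ref{lem:cuts}\one\ (the cut $\mathfrak{c}_-$ does not self-intersect) and the key monotonicity supplied by Lemma~\ref{lem:J1}: every arrow of $Q^{\sigma_+}$ dual to an edge on a boundary component $\gamma$ of $\jig_0$ points \emph{out} of $\jig_0$. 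Hence once $p$ leaves $\jig_0$ it cannot return to $\jig_0$; and if $\jig$ is adjacent to $\jig_0$ then the first component of $\mathfrak{c}_-$ that $p$ meets separates $\jig_0$ from $\jig$, so a single crossing suffices and (up to replacing $p$ by a homotopic path within $Q^{\sigma_+}$, which does not change $t^{\div(p)}$ since it records a line bundle section) can be taken. The independence of $t^{\div(p)}$ on the choice of path from $0$ to $i$ in $Q$ — noted just before Lemma~\ref{lem:charjig+(1)} — is what makes the statement well-posed.

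For the ``only if'' direction, suppose $t_{\rho_3}$ appears with multiplicity exactly one; then $p$ crosses the cut $\mathfrak{c}_-$ exactly once, say along a component $\gamma$ of $\mathfrak{c}_-\cap\hex(\sigma_+)^\circ$. That single $\Pi_3$-edge witnesses a transition from $\jig_0$ (where $p$ starts) to a neighbouring jigsaw piece $\jig$, and since $p$ never crosses $\mathfrak{c}_-$ again, $i$ lies in $\jig$, which is by construction adjacent to $\jig_0$ along $\gamma$. The main technical point to get right — and the step I expect to be the principal obstacle — is the claim that the crossings of a path $p$ in $Q^{\sigma_+}$ are controlled by the components of $\mathfrak{c}_-$ and occur with the correct \emph{orientation}: one must rule out a path that wanders out of $\jig_0$, re-enters it, and leaves again (which would make the $\Pi_3$-multiplicity $\ge 2$ while $i$ still sits in an adjacent piece), and more generally control the interaction between the topology of the jigsaw pieces in $\hex(\sigma_+)$ and the arrow directions. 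Lemmas~\ref{lem:J1} and~\ref{lem:cuts} are precisely tailored for this, so the proof should be a careful but not lengthy combination of these facts with Corollary~\ref{cor:edgeinPi3}.
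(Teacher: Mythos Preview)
Your overall strategy matches the paper's: reduce the multiplicity of $t_{\rho_3}$ in $t^{\div(p)}$ to a count of crossings of the cut $\mathfrak{c}_-$, then relate that count to adjacency of jigsaw pieces. However, there is a gap in your reduction step. You assert, citing Corollary~\ref{cor:edgeinPi3}, that an arrow $a\in Q^{\sigma_+}_1$ has $\mathfrak{e}_a\in\Pi_3$ \emph{precisely when} its head and tail lie in different jigsaw pieces. That corollary only gives one implication (different pieces $\Rightarrow \mathfrak{e}_a\in\Pi_3$). You still need the converse: if $a\in Q^{\sigma_+}_1$ and $\mathfrak{e}_a\in\Pi_3$, then $\mathfrak{e}_a$ lies on $\mathfrak{c}_-\cap\hex(\sigma_+)^\circ$ and hence separates two pieces. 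This is easy once stated---since $a\in Q^{\sigma_+}_1$ forces $\mathfrak{e}_a\notin\Pi_1\cup\Pi_2$, one has $\mathfrak{e}_a\in\Pi_3\setminus(\Pi_1\cup\Pi_2)\subseteq\mathfrak{c}_-$, and $\mathfrak{e}_a$ is interior to $\hex(\sigma_+)$ because it avoids $\Pi_0\cup\Pi_1\cup\Pi_2$---but it is not supplied by the corollary you cite. The paper handles exactly this point as its case~\two, invoking Proposition~\ref{prop:IUvalency} applied to $\hex(\sigma_-)$ to conclude that an interior edge not on $\mathfrak{c}_-$ cannot lie in $\Pi_3$.

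On the other hand, your treatment of the final step (one crossing $\Leftrightarrow$ adjacent piece) is more explicit than the paper's. The paper simply asserts it in one sentence, relying implicitly on the fact that the $Q^{\sigma_+}$-arrows dual to edges of $\mathfrak{c}_-$ all point consistently across each component of the cut (because along $\mathfrak{c}_-$ the edges alternate between $\Pi_3$ and $\Pi_1\cap\Pi_2$, and only the $\Pi_3$-edges are dual to arrows in $Q^{\sigma_+}$). Your appeal to Lemma~\ref{lem:J1} captures this for the boundary of $\jig_0$; strictly speaking the same argument applies to every component of $\mathfrak{c}_-\cap\hex(\sigma_+)^\circ$, not just the one bounding $\jig_0$, and with that extension the monotonicity you want follows and the ``replacing $p$ by a homotopic path'' man\oe uvre becomes unnecessary: every path in $Q^{\sigma_+}$ from $0$ to $i$ crosses exactly as many components of $\mathfrak{c}_-$ as the distance from $\jig_0$ to the piece containing $i$.
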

\begin{proof}
 We claim that the multiplicity of $t_{\rho_3}$ in $t^{\div(p)}$ is equal to the number of arrows $a$ in the support of the path $p$ such that the dual edge satisfies $\mathfrak{e}_a\in \mathfrak{c}_-$. To prove the claim, let $a\in Q^{\sigma_+}_1$ and consider two cases: 
 \begin{enumerate}
     \item[\one] if $\mathfrak{e}_a\in \mathfrak{c}_-$, then Corollary~\ref{cor:edgeinPi3} gives $\mathfrak{e}_a\in \Pi_3$, in which case equation~\eqref{eqn:xa} shows that $t_{\rho_3}$ appears with multiplicity one in $t^{\div(a)}$;
     \item[\two] if $\mathfrak{e}_a\not\in \mathfrak{c}_-$, then we must show that $t_{\rho_3}$ does not divide $t^{\div(a)}$ or, equivalently, that $\mathfrak{e}_a\not\in \Pi_3$. Since $a\in Q^{\sigma_+}_1$, the edge $\mathfrak{e}_a$ lies in neither $\Pi_1$ nor $\Pi_2$ (nor $\mathfrak{c}_-$), so $\mathfrak{e}_a$ does not lie in the boundary of $\hex(\sigma_-)$. The proof of  Proposition~\ref{prop:IUvalency} applied to $\hex(\sigma_-)$ shows that $\mathfrak{e}_a\not\in \bigcup_{1\leq i\leq 3} \Pi_{\rho_i}$ as required.
 \end{enumerate}
 This proves the claim. Since the edges of $\mathfrak{c}_-$ provide the cuts that separate jigsaw pieces in $\hex(\sigma_+)$, and since $p$ has tail at vertex $0$, it follows that the number of arrows $a$ in the support of $p$ satisfying $\mathfrak{e}_a\in \mathfrak{c}_-$ is equal to one if and only if the head of $p$ lies in a jigsaw piece that lies adjacent to $\jig_0$ in $\hex(\sigma_+)$.
\end{proof}

\begin{remark}
\label{rem:charjig-(1)} 
 Similarly, if $p$ is a path in $Q^{\sigma_-}$ from vertex 0 to vertex $i$, then $t_{\rho_0}$ appears with multiplicity one in $t^{\div(p)}$ iff vertex $i$ is dual to a tile in a jigsaw piece that lies adjacent to $\jig_0$ in $\hex(\sigma_-)$.
\end{remark}

\begin{proposition}
\label{prop:onejigsaw}
 Let $C_\tau\subseteq \mathcal{M}_\theta$ be the rational curve determined by the cone $\tau=\sigma_+\cap\sigma_-\subseteq \Sigma_\theta$. A tile of $\Dimer$ lies in a jigsaw piece adjacent to $\jig_0$ in $\hex(\sigma_+)$ iff the vertex $i\in Q_0$ dual to the tile satisfies $\deg(L_i\vert_{C_\tau}) = 1$.
\end{proposition}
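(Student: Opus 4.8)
The plan is to compute the degree $\deg(L_i\vert_{C_\tau})$ directly from the combinatorics of the two hexagons using the recipe recalled just before Lemma~\ref{lem:charjig+(1)}, and to match it with the multiplicity of $t_{\rho_3}$ in a suitable section. Recall from the discussion following \eqref{eqn:xa} that $L_i\cong \mathcal{O}_{\mathcal{M}_\theta}(\div(p_+))$ for any path $p_+$ in $Q^{\sigma_+}$ from $0$ to $i$, and similarly $L_i\cong \mathcal{O}_{\mathcal{M}_\theta}(\div(p_-))$ for a path $p_-$ in $Q^{\sigma_-}$. The degree of $L_i$ on the torus-invariant curve $C_\tau$ defined by the wall $\tau=\sigma_+\cap\sigma_-$ is computed exactly as in the worked example in Section~\ref{sec:dimers}: if $m\in M$ is the primitive vector orthogonal to $\tau$ that is nonnegative on $\sigma_+$, then $t^{\div(p_-)}$ and $t^m\cdot t^{\div(p_+)}$ are both generating sections of $H^0(U_{\sigma_-},L_i)$, and $\deg(L_i\vert_{C_\tau})$ equals the exponent with which $t_{\rho_0}$ appears in the Laurent monomial $t^{\div(p_-)}/t^{\div(p_+)} = t^m$, equivalently the exponent of $t_{\rho_3}$ in $t^{\div(p_+)}/t^{\div(p_-)}$. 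Concretely, writing each divisor in the Cox variables, $\deg(L_i\vert_{C_\tau})$ is the multiplicity of $t_{\rho_3}$ in $t^{\div(p_+)}$ minus the multiplicity of $t_{\rho_3}$ in $t^{\div(p_-)}$ (the two being paths to the same vertex in the adjacent charts).

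The key observation I would exploit is that the variable $t_{\rho_3}$ does not appear at all in $t^{\div(p_-)}$: indeed $\rho_3\in\sigma_-$, so by \eqref{eqn:xa} a section expressed in the $U_{\sigma_-}$-chart involves $t_{\rho_3}$ only through... wait — more precisely, for a path $p_-$ entirely inside $Q^{\sigma_-}$, an arrow $a$ with $\mathfrak{e}_a\in\Pi_{\rho_3}$ would have $a\notin Q^{\sigma_-}_1$ by \eqref{eqn:AmoduleOfCone}, since $\rho_3\in\sigma_-(1)$; hence no arrow in the support of $p_-$ has $\mathfrak{e}_a\in\Pi_{\rho_3}$, and $t_{\rho_3}$ appears with multiplicity $0$ in $t^{\div(p_-)}$. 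Therefore $\deg(L_i\vert_{C_\tau})$ equals exactly the multiplicity of $t_{\rho_3}$ in $t^{\div(p_+)}$ for any path $p_+$ in $Q^{\sigma_+}$ from $0$ to $i$ (and this multiplicity is path-independent, being the degree of a line bundle). Now Lemma~\ref{lem:charjig+(1)} states precisely that this multiplicity is $1$ if and only if the tile dual to $i$ lies in a jigsaw piece adjacent to $\jig_0$ in $\hex(\sigma_+)$. Combining these two facts yields the proposition.

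The one point requiring care — and the main obstacle — is the bookkeeping ensuring that $\deg(L_i\vert_{C_\tau})$ is genuinely computed by the $t_{\rho_3}$-exponent and not by some other combination of Cox variables indexed by rays outside $\tau$. The clean way is to invoke the standard toric fact: for the wall $\tau=\sigma_+\cap\sigma_-$ with the extra rays $\rho_0$ (in $\sigma_+$, not in $\sigma_-$) and $\rho_3$ (in $\sigma_-$, not in $\sigma_+$), and a torus-invariant divisor $D=\sum_\rho a_\rho D_\rho$, one has $\deg(\mathcal{O}(D)\vert_{C_\tau}) = a_{\rho_0}\langle \cdot\rangle + a_{\rho_3}\langle\cdot\rangle + \dots$, but since $L_i$ is represented by an \emph{effective} divisor $\div(p_+)$ supported away from $\rho_0$ (as $p_+\subset Q^{\sigma_+}$ and $\rho_0\in\sigma_+(1)$ forces $a_{\rho_0}=0$ by the same argument as above applied to $\sigma_+$) and by $\div(p_-)$ supported away from $\rho_3$, the intersection pairing collapses: $\deg(L_i\vert_{C_\tau})$ equals the multiplicity of $t_{\rho_3}$ in $t^{\div(p_+)}$ (equivalently, minus the multiplicity of $t_{\rho_0}$ in $t^{\div(p_-)}$). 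This is exactly the computation performed in the worked example preceding this subsection, and I would present it in that style — stating that $t^m$ has $t_{\rho_3}$-exponent equal to $\deg(L_i\vert_{C_\tau})$ because $t^m = t^{\div(p_+)}/t^{\div(p_-)}$ with numerator free of $t_{\rho_0}$ and denominator free of $t_{\rho_3}$ — and then citing Lemma~\ref{lem:charjig+(1)} to finish.

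\begin{proof}
Fix $i\in Q_0$ and choose paths $p_+$ in $Q^{\sigma_+}$ and $p_-$ in $Q^{\sigma_-}$, each from vertex $0$ to vertex $i$; such paths exist by Lemma~\ref{lem:0generated}. By the discussion following \eqref{eqn:xa}, $t^{\div(p_\pm)}=\prod_{a\in\supp(p_\pm)}t^{\div(a)}$ is a generating section of $H^0(U_{\sigma_\pm},L_i)$. Let $m\in M$ be the primitive vector orthogonal to the hyperplane spanned by $\tau$ with $\langle m,n\rangle\geq 0$ for all $n\in\sigma_+$. As in the worked example in Section~\ref{sec:dimers}, the Laurent monomial $t^m$ in the Cox variables satisfies $t^{\div(p_-)}=t^m\cdot t^{\div(p_+)}$, so $\deg(L_i\vert_{C_\tau})$ equals the multiplicity with which $t_{\rho_3}$ appears in $t^m$, equivalently (since $L_i\vert_{C_\tau}$ has a well-defined degree independent of the chosen path) the multiplicity of $t_{\rho_3}$ in $t^{\div(p_+)}$ minus the multiplicity of $t_{\rho_3}$ in $t^{\div(p_-)}$.

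We claim the latter multiplicity is zero. Indeed, if $a\in\supp(p_-)\subseteq Q^{\sigma_-}_1$ then $\mathfrak{e}_a\notin\bigcup_{\rho\in\sigma_-(1)}\Pi_\rho$ by \eqref{eqn:AmoduleOfCone}; since $\rho_3\in\sigma_-(1)$, this gives $\mathfrak{e}_a\notin\Pi_{\rho_3}$, so by \eqref{eqn:xa} the variable $t_{\rho_3}$ does not divide $t^{\div(a)}$. Hence $t_{\rho_3}$ does not divide $t^{\div(p_-)}$, and therefore $\deg(L_i\vert_{C_\tau})$ equals the multiplicity of $t_{\rho_3}$ in $t^{\div(p_+)}$ for the path $p_+$ in $Q^{\sigma_+}$ from $0$ to $i$. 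By Lemma~\ref{lem:charjig+(1)}, this multiplicity is one if and only if vertex $i$ is dual to a tile in a jigsaw piece adjacent to $\jig_0$ in $\hex(\sigma_+)$. This proves the proposition.
\end{proof}
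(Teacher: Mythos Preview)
Your approach is essentially the same as the paper's: both compare generating sections of $L_i$ on the two charts $U_{\sigma_\pm}$, observe that $t_{\rho_3}$ is absent from $t^{\div(p_-)}$ because arrows of $Q^{\sigma_-}$ have dual edges outside $\Pi_{\rho_3}$, deduce that the degree equals the $t_{\rho_3}$-multiplicity in $t^{\div(p_+)}$, and then invoke Lemma~\ref{lem:charjig+(1)}.

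There is one gap worth flagging. Your equation $t^{\div(p_-)}=t^m\cdot t^{\div(p_+)}$ with $m$ \emph{primitive} is not correct in general: the ratio $t^{\div(p_-)}/t^{\div(p_+)}$ equals $(t^m)^d$ for some integer $d$, and it is precisely this exponent $d$ that equals $\deg(L_i\vert_{C_\tau})$ (the worked example you cite had $d=1$, which is why the exponent did not appear there). To then conclude that $d$ coincides with the $t_{\rho_3}$-multiplicity in $t^{\div(p_+)}$ rather than some multiple of it, you need $t^m$ itself to have $t_{\rho_3}$-exponent exactly $-1$, i.e.\ $\langle m,v_{\rho_3}\rangle=-1$. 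The paper supplies this step explicitly by noting that the cones $\sigma_\pm$ are basic, so the primitive $m$ pairs to $\pm 1$ with $v_{\rho_0}$ and $v_{\rho_3}$; this yields $t^m=(t_{\rho_0}/t_{\rho_3})\cdot t^\ell$ with $t^\ell$ independent of $t_{\rho_0},\dots,t_{\rho_3}$, from which the desired multiplicity count follows. Once you insert this observation, your argument is complete and matches the paper's.
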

\begin{proof}
We first compute the coordinate function on the toric chart in $\mathcal{M}_\theta$ corresponding to $\tau$. 
Let $m\in M$ be the primitive vector that is perpendicular to $\tau$ such that $\langle m,n\rangle\geq 0$ for all $n\in \sigma_+$. 
Since $\rho_1, \rho_2\subset \tau$, we have $\langle m,v_{\rho_1}\rangle=\langle m,v_{\rho_2}\rangle=0$, where $v_{\rho_i}\in N=\Hom(M,\ZZ)$ is the primitive lattice point on the ray $\rho_i\in \Sigma_\theta(1)$. 
%Also, since $\rho_0$ and $\rho_3$ lie on opposite sides of $\tau$, o
Our choice of $m$ gives $\langle m,v_{\rho_0}\rangle>0$ and $\langle m,v_{\rho_3}\rangle<0$, see Figure~\ref{fig:jigsawcones}. Since $m$ is a primitive generator and both cones $\sigma_\pm$ are basic, we have $\langle m,v_{\rho_0}\rangle=1$ and $\langle m,v_{\rho_3}\rangle=-1$. 
Thus, if we use the natural inclusion $M\hookrightarrow \ZZ^{\Sigma_\theta(1)}$ to identify $\kk[M]$ with a subring of the ring of Laurent monomials $\kk[t_\rho^{\pm 1} \mid \rho \in \Sigma_\theta(1)]$ in the variables of the Cox ring of $\mathcal{M}_\theta$, then 
\begin{equation}
    \label{eqn:xmxl}
t^m=\frac{t_{\rho_0}}{t_{\rho_3}}\cdot t^{\ell}
\end{equation}
where $\langle \ell,v_{\rho_i}\rangle = 0$ for $0\leq i\leq 3$, \emph{i.\,e.}\ $t^\ell$ is independent of $t_{\rho_0},t_{\rho_1},t_{\rho_2},t_{\rho_3}$. 

Let $i\in Q_0$ be any vertex, let $p_\pm$ be any path in $Q^{\sigma_\pm}$ from 0 to $i$ and write $U_{\sigma_\pm}:=\Spec \kk[\sigma_\pm^\vee\cap M]$ for the toric chart in $\mathcal{M}_\theta$. Given the section $t^{\div(p_+)}$ that generates $ H^0(U_{\sigma_+},L_i)$ as a $\kk[\sigma_+^\vee\cap M]$-module, we obtain the  generating section $t^{\div(p_-)}\in H^0(U_{\sigma_-},L_i)$ directly using the transition function from $U_{\sigma_+}$ to $U_{\sigma_-}$. Explicitly, $t^{\div(p_-)}$ is obtained by multiplying $t^{\div(p_+)}$ by the highest power of the above toric coordinate function $t^m$ such that the resulting Laurent monomial lies in $\kk[\sigma_-^\vee\cap M]$, \emph{i.\,e.}\
\begin{equation}
\label{eqn:degreetransition}
t^{\div(p_-)} = (t^m)^d\cdot t^{\div(p_+)};
\end{equation}
 here $d=\deg(L_i\vert_{C_\tau})$ is the greatest positive integer such that $(t^m)^d\cdot t^{\div(p_+)}\in \kk[\sigma_-^\vee\cap M]$. Since $p_-$ is a path in the quiver $Q^{\sigma_-}$ and since any arrow $a\in Q_1$ satisfying $\mathfrak{e}_a\in \Pi_3$ does not appear in $Q^{\sigma_-}$ by construction, it follows that the left hand side of \eqref{eqn:degreetransition} is independent of $t_{\rho_3}$. The Laurent monomial $t^{\ell}$ is also independent of $t_{\rho_3}$, so Equation \eqref{eqn:xmxl} implies that the multiplicity of the variable $t_{\rho_3}$ in $t^{\div(p_+)}$ must be $d=\deg(L_i\vert_{C_\tau})$. The result follows immediately from Lemma~\ref{lem:charjig+(1)}.
\end{proof}

\begin{remark}
\label{rem:adjacentjig0}
 The statement of Proposition~\ref{prop:onejigsaw} holds with $-$ replacing $+$ because the analogue of Lemma~\ref{lem:charjig+(1)} holds in this case, see Remark~\ref{rem:charjig-(1)}. In particular, the question of whether a tile lies in a jigsaw piece adjacent to $\jig_0$ is independent of whether one views the jigsaw piece in $\hex(\sigma_+)$ or in $\hex(\sigma_-)$. We may therefore refer to a jigsaw piece as being \emph{adjacent to $\jig_0$} without further qualification.
\end{remark}

\begin{corollary} 
\label{cor:commonsource}
The source vertices of the quivers $Q(\tau)^+$ and $Q(\tau)^-$ from Definition~\ref{def:Qpmtau} and Remark~\ref{rem:Qtaum} coincide. %In particular, the set of minimal $A$-module generators of $N(\tau)_+$ and $N(\tau)_-$ coincide. 
\end{corollary}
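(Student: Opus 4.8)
The plan is to characterise the source vertices of $Q(\tau)^+$ purely in terms of the curve $C_\tau$, so that the same description applies verbatim to $Q(\tau)^-$. First I would unpack the definition: a vertex $i\in Q(\tau)_0^+$ is a source of $Q(\tau)^+$ precisely when no arrow $a\in Q_1^{\sigma_+}$ with $\head(a)=i$ has its tail $\tail(a)$ also outside $\jig_0$; equivalently, every arrow of $Q^{\sigma_+}$ entering the tile dual to $i$ comes \emph{from} a tile in $\jig_0$. The key observation is that, for a vertex $i$ whose tile is not in $\jig_0$, there is always \emph{some} arrow of $Q^{\sigma_+}$ entering it (by Lemma~\ref{lem:0generated}, applied to a path from $0$ to $i$ in $Q^{\sigma_+}$, whose last arrow has head $i$ and lies in $Q^{\sigma_+}_1$). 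So $i$ is a source of $Q(\tau)^+$ iff the tile dual to $i$ lies in a jigsaw piece that is \emph{adjacent to} $\jig_0$ in $\hex(\sigma_+)$, \emph{and} moreover every incoming arrow originates in $\jig_0$ — but by Corollary~\ref{cor:edgeinPi3} and Lemma~\ref{lem:charjig+(1)}, the arrows of $Q^{\sigma_+}$ crossing between distinct jigsaw pieces are exactly those dual to edges of $\mathfrak{c}_-$, which all lie in $\Pi_3$; combined with Lemma~\ref{lem:J1} (and the structure of $\mathfrak{c}_-$ as a chain connecting $\partial\hex(\sigma_+)$ to itself) this forces any jigsaw piece adjacent to $\jig_0$ to have \emph{all} of its arrows from $\jig_0$ entering it, never leaving. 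Thus I expect to prove: $i$ is a source of $Q(\tau)^+$ iff the tile dual to $i$ lies in a jigsaw piece adjacent to $\jig_0$.

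Granting that claim, the conclusion is immediate. By Proposition~\ref{prop:onejigsaw}, the tile dual to $i$ lies in a jigsaw piece adjacent to $\jig_0$ in $\hex(\sigma_+)$ if and only if $\deg(L_i\vert_{C_\tau})=1$. By Remark~\ref{rem:adjacentjig0}, this same condition — adjacency to $\jig_0$ — is independent of whether one views the jigsaw piece in $\hex(\sigma_+)$ or in $\hex(\sigma_-)$, and likewise Remark~\ref{rem:charjig-(1)} gives the analogue of Lemma~\ref{lem:charjig+(1)} with $-$ in place of $+$, so $i$ is a source of $Q(\tau)^-$ under exactly the same condition $\deg(L_i\vert_{C_\tau})=1$. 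Since $\deg(L_i\vert_{C_\tau})$ is an intrinsic invariant of the curve $C_\tau$ and the line bundle $L_i$ — it does not reference $\sigma_+$ or $\sigma_-$ — the set of source vertices of $Q(\tau)^+$ equals the set of source vertices of $Q(\tau)^-$, as required.

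The main obstacle I anticipate is the verification that a vertex in a jigsaw piece adjacent to $\jig_0$ is genuinely a \emph{source} of $Q(\tau)^+$, not merely that all arrows between that piece and $\jig_0$ enter it. One must rule out arrows of $Q^{\sigma_+}$ that enter the tile dual to $i$ from \emph{another} jigsaw piece (not $\jig_0$ and not the piece containing $i$). Here I would argue using Lemma~\ref{lem:cutHex}: cutting $\hex(\sigma_+)$ along $\mathfrak{c}_-$ produces the jigsaw pieces, and Lemma~\ref{lem:cuts}\three\ and \four\ control how the connected components of $\mathfrak{c}_-\cap\hex(\sigma_+)^\circ$ partition the hexagon — each such component separates $\jig_0$ from a single adjacent piece. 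An arrow of $Q^{\sigma_+}_1$ crossing from a non-$\jig_0$ piece into the piece containing $i$ would be dual to an edge of $\mathfrak{c}_-$ lying between two pieces neither of which is $\jig_0$; Lemma~\ref{lem:J1} (with the roles of the components $C_0,C_1$ cut out by that chain) together with Lemma~\ref{lem:0generated} shows such a configuration would obstruct the existence of a path in $Q^{\sigma_+}$ from $0$ to vertices on the far side, a contradiction. So the adjacency characterisation of sources holds, and the corollary follows.
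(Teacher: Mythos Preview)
Your overall strategy --- locate the sources of $Q(\tau)^\pm$ among vertices dual to tiles in jigsaw pieces adjacent to $\jig_0$, then appeal to Remark~\ref{rem:adjacentjig0} --- is the same as the paper's. But the ``iff'' you aim for is false: it is \emph{not} true that every vertex in a jigsaw piece adjacent to $\jig_0$ is a source of $Q(\tau)^+$. In Example~\ref{exa:LongHexCRR} there are exactly two jigsaw pieces, so the single adjacent piece $\jig_1$ carries \emph{all} of $Q(\tau)^+_0$, yet only vertices $3$ and $9$ are sources. The obstruction is arrows $a\in Q^{\sigma_+}_1$ whose head and tail are \emph{both} dual to tiles in $\jig_1$; such arrows belong to $Q(\tau)^+_1$ and make their heads non-sources. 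Your obstacle paragraph contemplates only arrows entering $i$ from a \emph{different} non-$\jig_0$ piece and so misses this case entirely. Consequently your proposed equivalence ``$i$ is a source $\Leftrightarrow$ $\deg(L_i\vert_{C_\tau})=1$'' fails, and the deduction from Proposition~\ref{prop:onejigsaw} collapses.

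The paper does not attempt the ``iff''. It argues only the containment, via a path rather than an incoming-arrow analysis: for any $i\in Q(\tau)^+_0$ take a path $p_+$ in $Q^{\sigma_+}$ from $0$ to $i$ (Lemma~\ref{lem:0generated}); once $p_+$ exits $\jig_0$ it never re-enters (this is what Proposition~\ref{prop:quotient} encodes), so the suffix of $p_+$ after its unique crossing is a path in $Q(\tau)^+$ starting at a vertex $\head(a_+)$ in an adjacent piece and ending at $i$. Hence every vertex of $Q(\tau)^+_0$ is reachable in $Q(\tau)^+$ from an adjacent-piece vertex, and likewise for $Q(\tau)^-$, so sources on both sides lie in adjacent pieces; Remark~\ref{rem:adjacentjig0} then identifies the adjacent pieces. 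Your first paragraph already contains the ingredients for this path argument (Lemma~\ref{lem:0generated} plus the observation that the last arrow of such a path lands at $i$); the fix is to run that reasoning for a general $i$ rather than trying to rule out all incoming arrows at a would-be source.
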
 
\begin{proof}
It is enough to prove that the source vertices of the quivers $Q(\tau)^\pm$ are dual to tiles that lie in jigsaw pieces adjacent to $\jig_0$, because then the result follows from Remark~\ref{rem:adjacentjig0}. Let $i\in Q(\tau)_0^+=Q(\tau)_0^-$ be any vertex. Lemma~\ref{lem:0generated} shows that there exist paths $p_+$ and $p_-$ in the quivers $Q^{\sigma_+}$ and $Q^{\sigma_-}$ respectively, such that $\tail(p_+)=0=\tail(p_-)$ and $\head(p_+)=i=\head(p_-)$. The tile dual to the vertex $i$ does not lie in $\jig_0$ by Definition~\ref{def:Qpmtau}, so both $p_+$ and $p_-$ must each traverse precisely one arrow $a_+$ and $a_-$ respectively such that the vertices $\tail(a_+), \tail(a_-)$ are dual to tiles in $\jig_0$ and $\head(a_+), \head(a_-)$ are dual to tiles in jigsaw pieces that lie adjacent to $\jig_0$. Thus, after arriving at the vertices $\head(a_+)$ and $\head(a_-)$, the paths $p_+$ and $p_-$ go on to traverse paths in $Q(\tau)^+$ and $Q(\tau)^-$ respectively that end at vertex $i$. In particular, for every vertex $i\in Q(\tau)^+_0=Q(\tau)^+_0$, there exists a path in $Q(\tau)^+$ and a path in $Q(\tau)^-$ that start at a vertex dual to a tile in a jigsaw piece adjacent to $\jig_0$ and that end at vertex $i$. Thus, the source vertices of $Q(\tau)^+$ and $Q(\tau)^-$ are those dual to tiles in jigsaw pieces adjacent to $\jig_0$. %This proves the first statement. The second statement is immediate from the first.
\end{proof}

 \begin{example}
 Suppose that a given cone $\tau=\sigma_+\cap\sigma_-$ determined three jigsaw pieces. Figure~\ref{fig:cuttinghexagon} illustrates (lifts to the universal cover of) the jigsaw pieces in $\hex(\sigma_+)$, where they are denoted $\jig_0, \jig_1, \jig_2$, each separated from the others by lifts of the cut $\mathfrak{c}_-$ shown in green.  Figure~\ref{fig:cuttinghexagon} also illustrates how different lifts of the jigsaw pieces form $\hex(\sigma_-)$, where they are denoted $\jig_0, \jig_1^\prime, \jig_2^\prime$. Since $\jig_1$ and $\jig_1^\prime$ are different lifts of the same jigsaw piece, we see that only one jigsaw piece lies \emph{adjacent to $\jig_0$} in the sense of Remark~\ref{rem:adjacentjig0}.
 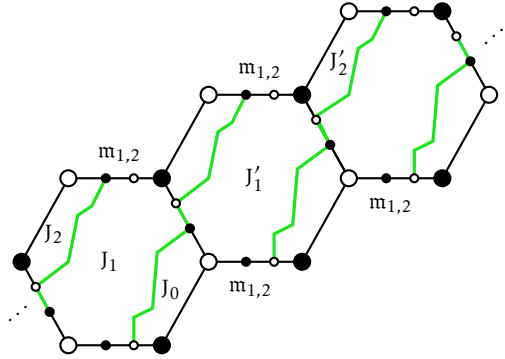
\begin{figure}[!ht]
\centering
\begin{tikzpicture} [thick,scale=0.35, every node/.style={scale=1}] 
%labels
\node at (355pt,160pt) [rectangle,draw=white,fill=white,minimum size=6pt,inner sep=1pt]{\scriptsize $\iddots$};
\node at (-154pt,-139pt) [rectangle,draw=white,fill=white,minimum size=6pt,inner sep=1pt]{\scriptsize $\iddots$};
\node at (245pt,-30pt) [rectangle,draw=white,fill=white,minimum size=6pt,inner sep=1pt]{\scriptsize $\mathfrak{m}_{1,2}$};
\node at (95pt,-120pt) [rectangle,draw=white,fill=white,minimum size=6pt,inner sep=1pt]{\scriptsize $\mathfrak{m}_{1,2}$};
\node at (-45pt,25pt) [rectangle,draw=white,fill=white,minimum size=6pt,inner sep=1pt]{\scriptsize $\mathfrak{m}_{1,2}$};
\node at (105pt,115pt) [rectangle,draw=white,fill=white,minimum size=6pt,inner sep=1pt]{\scriptsize $\mathfrak{m}_{1,2}$};
\node at (10pt,-120pt) {\scriptsize $\jig_0$};
\node at (-55pt,-90pt) {\scriptsize $\jig_1$};
\node at (-115pt,-60pt) {\scriptsize $\jig_2$};
\node at (100pt,0pt) {\scriptsize $\jig_1^\prime$};
\node at (190pt,125pt) {\scriptsize $\jig_2^\prime$};
%lines
\draw [thick] (0pt,0pt) -- (50pt,90pt) -- (150pt,90pt) -- (200pt,0pt) -- (150pt,-90pt) -- (50pt,-90pt) -- (0pt, 0pt);
\draw [thick] (0pt,0pt) -- (-100pt,0pt) -- (-150pt,-90pt) -- (-100pt,-180pt) -- (0pt,-180pt) -- (50pt,-90pt);
\draw [thick] (150pt,90pt) -- (200pt,180pt) -- (300pt,180pt) -- (350pt,90pt) -- (300pt,0pt) -- (200pt,0pt) -- (150pt, 90pt);
\draw [ggreen, very thick] (-30pt,-180pt) -- (-30pt,-150pt) -- (-10pt,-140pt) -- (-5pt,-80pt) -- (30pt,-54pt) -- (15pt,-27pt)-- (50pt,00pt) --  (60pt,50pt)--(75pt,60pt) -- (90pt,90pt);% --  (120pt,90pt);
\draw [ggreen, very thick] (120pt,-90pt) -- (120pt,-60pt) -- (140pt,-50pt) -- (145pt,10pt) -- (180pt,36pt) -- (165pt,63pt);-- (200pt,90pt) --  (210pt,140pt)--(225pt,150pt) -- (240pt,180pt);% -- (270pt,180pt);
\draw [ggreen, very thick] (270pt,0pt) -- (270pt,30pt) -- (290pt,40pt) -- (295pt,100pt) -- (330pt,126pt)-- (315pt,153pt);
\draw [ggreen, very thick] (-120pt,-144pt) -- (-135pt,-117pt)-- (-100pt,-90pt) --  (-90pt,-40pt)--(-75pt,-30pt) -- (-60pt,0pt);% -- (-30pt,0pt) ;
\draw [ggreen, very thick] (180pt,36pt) -- (165pt,67pt)-- (200pt,90pt) --  (210pt,140pt) -- (225pt,150pt) -- (240pt,180pt);% -- (-30pt,0pt) ;
%white
\node at (0pt,0pt) [circle,draw,fill=black,minimum size=6pt,inner sep=1pt]{};;
\node at (150pt,90pt) [circle,draw,fill=black,minimum size=6pt,inner sep=1pt]{};
\node at (150pt,-90pt) [circle,draw,fill=black,minimum size=6pt,inner sep=1pt]{};
\node at (-150pt,-90pt) [circle,draw,fill=black,minimum size=6pt,inner sep=1pt]{};;
\node at (0pt,-180pt) [circle,draw,fill=black,minimum size=6pt,inner sep=1pt]{};
\node at (300pt,180pt) [circle,draw,fill=black,minimum size=6pt,inner sep=1pt]{};
\node at (300pt,0pt) [circle,draw,fill=black,minimum size=6pt,inner sep=1pt]{};
%small white
\node at (-120pt,-144pt) [circle,draw,fill=black,minimum size=3pt,inner sep=1pt]{};
\node at (30pt,-54pt) [circle,draw,fill=black,minimum size=3pt,inner sep=1pt]{};
\node at (180pt,36pt) [circle,draw,fill=black,minimum size=3pt,inner sep=1pt]{};
\node at (330pt,126pt) [circle,draw,fill=black,minimum size=3pt,inner sep=1pt]{};
\node at (-60pt,0pt) [circle,draw,fill=black,minimum size=3pt,inner sep=1pt]{};
\node at (90pt,90pt) [circle,draw,fill=black,minimum size=3pt,inner sep=1pt]{};
\node at (240pt,180pt) [circle,draw,fill=black,minimum size=3pt,inner sep=1pt]{};
\node at (240pt,0pt) [circle,draw,fill=black,minimum size=3pt,inner sep=1pt]{};
\node at (90pt,-90pt) [circle,draw,fill=black,minimum size=3pt,inner sep=1pt]{};
\node at (-60pt,-180pt) [circle,draw,fill=black,minimum size=3pt,inner sep=1pt]{};
%black
\node at (50pt,90pt) [circle,draw,fill=white,minimum size=6pt,inner sep=1pt]{};
\node at (200pt,0pt) [circle,draw,fill=white,minimum size=6pt,inner sep=1pt]{};
\node at (50pt,-90pt) [circle,draw,fill=white,minimum size=6pt,inner sep=1pt]{};
\node at (-100pt,0pt) [circle,draw,fill=white,minimum size=6pt,inner sep=1pt]{};
\node at (-100pt,-180pt) [circle,draw,fill=white,minimum size=6pt,inner sep=1pt]{};
\node at (200pt,180pt) [circle,draw,fill=white,minimum size=6pt,inner sep=1pt]{};
\node at (350pt,90pt) [circle,draw,fill=white,minimum size=6pt,inner sep=1pt]{};
%small black
\node at (-30pt,-180pt) [circle,draw,fill=white,minimum size=3pt,inner sep=1pt]{};
\node at (120pt,-90pt) [circle,draw,fill=white,minimum size=3pt,inner sep=1pt]{};
\node at (270pt,0pt) [circle,draw,fill=white,minimum size=3pt,inner sep=1pt]{};
\node at (-135pt,-117pt) [circle,draw,fill=white,minimum size=3pt,inner sep=1pt]{};
\node at (15pt,-27pt) [circle,draw,fill=white,minimum size=3pt,inner sep=1pt]{};
\node at (165pt,63pt) [circle,draw,fill=white,minimum size=3pt,inner sep=1pt]{};
\node at (315pt,153pt) [circle,draw,fill=white,minimum size=3pt,inner sep=1pt]{};
\node at (-30pt,0pt) [circle,draw,fill=white,minimum size=3pt,inner sep=1pt]{};
\node at (120pt,90pt) [circle,draw,fill=white,minimum size=3pt,inner sep=1pt]{};
\node at (270pt,180pt) [circle,draw,fill=white,minimum size=3pt,inner sep=1pt]{};
\end{tikzpicture}
\caption{Translates of $\hex(\sigma_+)$ and several lifts of the cut $\mathfrak{c}_-$ illustrate how $\hex(\sigma_-)$ is formed from jigsaw pieces $\jig_0$ and the translates $\jig_1^\prime$ and $\jig_2^\prime$ of $\jig_1$ and $\jig_2$ respectively.}
\label{fig:cuttinghexagon}
\end{figure}
\end{example}

\subsection{Combinatorial Reid's recipe}
We now introduce Reid's recipe for a consistent dimer model $\Dimer$. We begin by introducing the recipe for marking interior lattice points of $\Sigma_\theta$ with vertices of $Q$. For $i\in Q_0$, write $S_i:=\kk e_i$ for the vertex simple $A$-module corresponding to the vertex $i$. For any $0$-generated $A$-module $M$,  the module $S_i$ is contained in the socle of $M$ if and only if $S_i$ is a submodule of $M$.

\begin{definition}[\textbf{Marking interior lattice points}]
\label{def:RRpoints}
Let $\rho\in \Sigma_\theta(1)$ be an interior lattice point. A vertex $i\in Q_0$ \emph{marks} $\rho$ if $S_i$ lies in the socle of the $A$-module $M_\sigma$ for every cone $\sigma\in \Sigma_\theta(3)$ satisfying $\rho\subset \sigma$.
\end{definition}

\begin{remark}
\label{rem:0doesn'tmark}
The vertex $0$ never marks a lattice point since $\theta$ is a $0$-generated stability parameter.
\end{remark}

\begin{lemma}
 Reid's recipe marks every interior lattice point of $\Sigma_\theta$ with at least one nonzero vertex of $Q$.
\end{lemma}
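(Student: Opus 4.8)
The plan is to show that for any interior lattice point $\rho\in\Sigma_\theta(1)$, the set of vertices marking $\rho$ is nonempty. The key device is the torus-invariant divisor $D_\rho\subset\mathcal{M}_\theta$: a lattice point is interior precisely when $D_\rho$ is a complete (compact) toric surface, so the line bundle $\mathcal{O}_{\mathcal{M}_\theta}(-D_\rho)$ restricted to $D_\rho$ is a nontrivial line bundle that has negative degree on at least one torus-invariant curve inside $D_\rho$. I would first translate the socle condition into a statement about the modules $M_\sigma$ for the lattice triangles $\sigma$ containing $\rho$: by Definition~\ref{def:RRpoints} and the remark that for a $0$-generated module $S_i$ lies in the socle iff it is a submodule, a vertex $i$ marks $\rho$ iff $S_i\hookrightarrow M_\sigma$ for every such $\sigma$. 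Concretely, using the combinatorial description via fundamental hexagons, $S_i$ is a submodule of $M_\sigma$ iff the tile dual to $i$ lies in $\hex(\sigma)$ but every arrow of $Q^\sigma$ out of $i$ is "cut off", i.e. its dual edge lies on $\partial\hex(\sigma)$ or in $\bigcup_{\rho'\in\sigma(1)}\Pi_{\rho'}$; equivalently the tile dual to $i$ is one of the six "corner-type" tiles sitting against the boundary of the hexagon on the side corresponding to the ray $\rho$.

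Next I would make precise the correspondence between the ray $\rho$ and a distinguished portion of the boundaries $\partial\hex(\sigma)$, ranging over the lattice triangles $\sigma\ni\rho$. By Proposition~\ref{prop:IUvalency} the boundary of each $\hex(\sigma)$ consists of six chains between six trivalent nodes, paired up in three $\ZZ^2$-orbits, and each of the three pairs of opposite sides of the hexagon is associated to one of the three rays of $\sigma$; for the ray $\rho$ common to all the triangles containing it, these are the sides along the meandering walk attached to $\rho$. The collection of tiles dual to vertices lying in the socle of all the $M_\sigma$ with $\rho\subset\sigma$ is then governed by the perfect matching $\Pi_\rho$ and the geometry near $D_\rho$: I expect that a vertex $i$ with $\mathfrak{e}_a\in\Pi_\rho$ for all arrows $a$ out of $i$ will do the job, provided $\Pi_\rho$ is genuinely "interior" (which is exactly the hypothesis that $\rho$ is an interior lattice point, forcing $\Pi_\rho$ not to be a corner matching and hence to contain edges in the interior of the relevant hexagons).

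The cleanest route, and the one I would actually write, uses the tautological line bundles instead of staring at hexagons. Since $\rho$ is interior, $D_\rho$ is complete, so $\mathcal{O}_{D_\rho}(-D_\rho)$ has negative degree on some torus-invariant curve $C_\tau\subset D_\rho$; equivalently, pick a lattice triangle $\sigma=\langle\rho,\rho',\rho''\rangle$ and the curve $C_\tau$ with $\tau=\langle\rho,\rho'\rangle$, so that $D_\rho\cdot C_\tau<0$. Because $\phi\colon A\to\End(T)$ is the tautological isomorphism (Theorem~\ref{thm:tilting}) and every $L_j$ is $\mathcal{O}_{\mathcal{M}_\theta}(\div(p))$ for a path $p$ from $0$ to $j$, the condition $\mathfrak{e}_a\in\Pi_\rho$ for an arrow $a$ says exactly, via \eqref{eqn:xa}, that $t_\rho$ divides $t^{\div(a)}$, i.e. that traversing $a$ increases the multiplicity of the Cox variable $t_\rho$. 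One then picks $i$ to be a vertex for which $\div(p)$ (for a path from $0$ to $i$ inside $Q^\sigma$) has $t_\rho$-multiplicity as large as possible among all vertices, which forces every further arrow out of $i$ in $Q^\sigma$ to have $t_\rho$-multiplicity zero — hence, by the case analysis in the proof of Lemma~\ref{lem:charjig+(1)}/Proposition~\ref{prop:IUvalency}, to be cut off — so that $S_i$ is a submodule of $M_\sigma$; running this simultaneously over all $\sigma\ni\rho$ (the maximizing vertex is the same, because it is characterised by the property of lying against the $\rho$-side of the hexagon, which is governed only by $\Pi_\rho$) shows $S_i$ lies in the socle of every $M_\sigma$ with $\rho\subset\sigma$. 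The existence of a vertex with strictly positive maximal $t_\rho$-multiplicity is guaranteed precisely because $\rho$ is an interior ray, so $t_\rho$ is not identically $1$ along the relevant transition functions; I expect \emph{this last nonvanishing/maximality argument} — identifying the maximizing vertex and showing it is independent of the chosen lattice triangle containing $\rho$ — to be the main obstacle, and the hexagon picture of Proposition~\ref{prop:IUvalency} together with the height-change orthogonality of Lemma~\ref{lem:mwnormal} is what I would use to pin it down.
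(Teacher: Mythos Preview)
Your maximisation argument has a fatal flaw: for any lattice triangle $\sigma$ with $\rho\subset\sigma$, every arrow $a\in Q^\sigma_1$ satisfies $\mathfrak{e}_a\notin\bigcup_{\rho'\in\sigma(1)}\Pi_{\rho'}\supseteq\Pi_\rho$ by the very definition of $Q^\sigma$, so $t_\rho$ never divides $t^{\div(a)}$. Consequently the $t_\rho$-multiplicity of \emph{every} path from $0$ to any vertex inside $Q^\sigma$ is zero, and your ``maximum'' is achieved at every vertex simultaneously. Nothing is singled out, and the conclusion that outgoing arrows are ``cut off'' simply does not follow. (Indeed, $t_\rho$-multiplicity zero is the \emph{opposite} of being cut off by $\Pi_\rho$.)

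The paper's argument bypasses this entirely by working with the $A$-module $M_\rho$ rather than with the $M_\sigma$. Since $\rho$ is interior, the point $y\in\mathcal{M}_\theta$ corresponding to $M_\rho$ (Lemma~\ref{lem:conepoint}) maps under $\tau_\theta$ to the unique torus-invariant point $x_0\in X$; hence every cycle in $Q$ evaluates to zero on the representation $(v_a)$ defining $M_\rho$ (using Le~Bruyn--Procesi to identify cycle-traces with functions on $X$). This means $M_\rho$ is nilpotent, so its socle contains some vertex simple $S_i$ with $i\neq 0$. Finally, for any $\sigma\supset\rho$ the module $M_\sigma$ is obtained from $M_\rho$ by setting further arrows to zero, so $S_i$ remains in $\soc(M_\sigma)$. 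The key step you are missing is precisely this nilpotence of $M_\rho$; your geometric observation that $D_\rho$ is compact is a shadow of the same fact ($y\in\tau_\theta^{-1}(x_0)$), but you never convert it into the statement that the subquiver of arrows with $v_a\neq 0$ in $M_\rho$ is acyclic, which is what is actually needed to produce a sink.
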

\begin{proof}
 Let $\rho\in \Sigma_\theta(1)$ be an interior lattice point. We claim that the $A$-module $M_\rho$ is nilpotent, \emph{i.\,e.}\ there exists $n\in\NN$ such that any element $a\in A$ represented by a path of length greater than $n$ in $Q$ satisfies $a\cdot m=0$ for all $m\in M_\rho$. Indeed, any path in $Q$ of length $n:=\vert Q_0\vert +1$ contains a cycle $\gamma$, and it suffices to show that $\gamma\cdot m=0$ for all $m\in M_\rho$. Le Bruyn--Procesi~\cite[Theorem~1]{LP90} associates to $\gamma$ a polynomial function $f_\gamma\in \CC[V]^{T_B}$ on the affine variety $X\cong V\git_0 T_B$. Since $\rho$ is interior, the point $y\in \mathcal{M}_\theta$ associated to the $A$-module $M_\rho$ by Lemma~\ref{lem:conepoint} satisfies $\tau_\theta(y)=x_0$, where $\tau_\theta\colon \mathcal{M}_\theta\to X$ is the crepant resolution from \eqref{eqn:crepantres} and $x_0\in X$ is the unique torus-invariant point.
 It follows that the value of $f_\gamma$ on the quiver representation $(v_a)$ associated to $M_\rho$ by \eqref{eqn:AmoduleOfCone} is zero. In particular, there exists a nonzero vertex $i\in Q_0$ such that $M_\rho$ contains the vertex simple $A$-module $S_i$ as a submodule. This means that $S_i$ lies in the socle of $M_\rho$.

% Lemma~\ref{lem:conepoint} gives a unique point $y\in D_\rho$ such that $M_\rho$ is the fibre $T_y$ of the tautological bundle over $\mathcal{M}_\theta$. Since $\rho$ is interior, we have $y\in D_\rho \subseteq \tau_\theta^{-1}(x_0)$, where $\tau_\theta\colon \mathcal{M}_\theta\to X$ is the crepant resolution from \eqref{eqn:crepantres} and $x_0\in X$ is the unique torus-invariant point. The derived equivalence $\Phi_\theta$ from \eqref{eqn:Phitheta} sends objects of $D^b(\coh(\mathcal{M}_{\theta}))$ whose support is contained in $\tau_\theta^{-1}(x_0)$ to the bounded derived category of nilpotent $A$-modules \cite[Section~7]{IshiiUeda16}. Thus, the $A$-module $M_\rho$ is nilpotent by \eqref{eqn:derivedOy} because the skyscraper sheaf $\mathcal{O}_y$ has support in $\tau_\theta^{-1}(x_0)$. In particular, there exists a nonzero vertex $i\in Q_0$ such that $M_\rho$ contains the vertex simple $A$-module $S_i$ as a submodule. This means that $S_i$ lies in the socle of $M_\rho$. 
 
 Let $\sigma\in \Sigma_\theta(3)$ satisfy $\rho\subset \sigma$, in which case the module structure on $M_\sigma$ is obtained from $M_\rho$ by setting some of the scalars $v_a$ from \eqref{eqn:AmoduleOfCone} to zero, \emph{i.\,e.}\ $M_\sigma$ is a quotient module of $M_\rho$. Since $M_\sigma$ is isomorphic to $M_\rho$ as a $\kk$-vector space, it follows that having $S_i$ in the socle of $M_\rho$ implies that $S_i$ is in the socle of $M_\sigma$. As a result, vertex $i$ marks the lattice point $\rho$ by Definition~\ref{def:RRpoints}.
\end{proof}

\begin{definition}[\textbf{Marking interior line segments}]
\label{def:RRlines}
Let $\tau\in\Sigma_\theta(2)$ be an interior line segment. A vertex $i\in Q_0$ \emph{marks} $\tau$ if $i$ is one of the common source vertices of the quivers $Q(\tau)^\pm$ from Corollary~\ref{cor:commonsource}.
\end{definition}

\begin{remarks}
\leavevmode
\begin{enumerate}
    \item Vertex $0$ never marks an interior line segment, because $0$ cannot be a vertex of $Q(\tau)^\pm$.
    \item The quivers $Q(\tau)^\pm$ are contained in the acyclic quivers $Q^{\sigma_\pm}$, so $Q(\tau)^\pm$ are themselves acyclic and hence have at least one source. It follows that each interior line segment $\tau\in\Sigma_\theta(2)$ is marked by at least one nonzero vertex of $Q$.
\end{enumerate}
\end{remarks}

\begin{corollary}
\label{cor:RRcompatible}
Let $G$ be a finite abelian subgroup of $\SL(3,\kk)$. Reid's recipe for the fan $\Sigma$ of $\ghilb$ as given by  Definitions~\ref{def:RRpoints} and ~\ref{def:RRlines} agrees with the classical Recipe~\ref{rec:RR}.
\end{corollary}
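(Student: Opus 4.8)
The plan is to verify the two halves of the recipe separately against Recipe~\ref{rec:RR}. For interior lattice points the argument is purely a change of vocabulary. Under the isomorphism $\mathcal{M}_\theta\cong\ghilb$ recalled in Section~\ref{sec:RR}, the universal family on $\ghilb$ is the tautological bundle $T$ equipped with its tautological $A$-module structure, the torus-fixed points of $\ghilb$ lying on a divisor $D_\rho$ are precisely those indexed by the cones $\sigma\in\Sigma(3)$ with $\rho\subset\sigma$, and the $G$-cluster carried by such a point is the fibre $T_y$, which by Lemma~\ref{lem:conepoint} is isomorphic to $M_\sigma$. Hence the condition in Recipe~\ref{rec:RR}(2) that $S_i$ lie in the socle of every $G$-cluster defined by a torus-invariant point of $D_\rho$ is word for word the condition of Definition~\ref{def:RRpoints}; I would simply record that the vertex simples and the operation of taking the socle are preserved by the Morita equivalence $\modA\simeq(\kk[x,y,z]\rtimes G)\text{-mod}$.

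For interior line segments the task is to show that the common source vertex of $Q(\tau)^{+}$ and $Q(\tau)^{-}$ from Definition~\ref{def:RRlines} is the $G$-character of the numerator, equivalently of the denominator, of the invariant Laurent monomial $x^{m_1}y^{m_2}z^{m_3}$. The plan is to translate everything into Nakamura's picture via Corollary~\ref{cor:Gigsaw}: under $\mathcal{M}_\theta\cong\ghilb$ the subquiver $Q^{\sigma_+}$ is the $G$-graph $\mathcal{S}_+$, and, using the substitution of Remark~\ref{rem:Gigsaw}, a path in $Q^{\sigma_+}$ from $0$ to $i$ corresponds to the monomial $x^ay^bz^c\in\mathcal{S}_+$ lying in the $i$-character space, with $\deg(L_i\vert_{C_\tau})$ equal to the integer $d(a,b,c)$ of Proposition~\ref{prop:Nakamura}. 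Fixing the orientation so that $m$ is nonnegative on $\sigma_+$ and writing $x^{m_1}y^{m_2}z^{m_3}=N/D$ in lowest terms (so $\gcd(N,D)=1$ and $D$ is a genuine monomial), one has the elementary equivalences $d(a,b,c)\ge 1\iff x^{m_1}y^{m_2}z^{m_3}\cdot x^ay^bz^c\in\kk[x,y,z]\iff D\mid x^ay^bz^c$. Combining this with the computations in the proofs of Lemma~\ref{lem:charjig+(1)} and Proposition~\ref{prop:onejigsaw}, which show that $\deg(L_i\vert_{C_\tau})$ counts the edges of the cut $\mathfrak{c}_-$ crossed by a path from $0$ to $i$ in $Q^{\sigma_+}$ (recall Corollary~\ref{cor:edgeinPi3}), I would deduce that $\deg(L_i\vert_{C_\tau})$ is constant along each jigsaw piece, that the zero jigsaw piece $\jig_0$ is exactly the set of monomials of $\mathcal{S}_+$ not divisible by $D$, and hence, via Definition~\ref{def:Qpmtau}, that $Q(\tau)^{+}$ is the full subquiver of the $G$-graph supported on the monomials divisible by $D$, with an arrow $m'\xrightarrow{v}m'v$ exactly when $m'$ and $m'v$ both lie in $\mathcal{S}_+$ (the latter being automatic once $D\mid m'$).

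It then remains to see that this subquiver has the unique source $D$. Since $\hex(\sigma_+)$ and $\hex(\sigma_-)$ are distinct there are at least two jigsaw pieces, so some monomial of $\mathcal{S}_+$ is divisible by $D$; as the $G$-graph is the complement of a monomial ideal and therefore closed under passing to divisors, $D\in\mathcal{S}_+$. It has no incoming arrow because $D/v$ is never divisible by $D$, and any monomial $m'\neq D$ divisible by $D$ satisfies $v\mid (m'/D)$ for some variable $v$, whence $m'/v\in\mathcal{S}_+$ is again divisible by $D$ and supplies an incoming arrow; so $D$ is the unique source of $Q(\tau)^{+}$. Its $G$-character equals that of $N$ because $N/D$ is $G$-invariant, and by Corollary~\ref{cor:commonsource} it is also the unique source of $Q(\tau)^{-}$. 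This character is precisely the representation attached to $\tau$ by Recipe~\ref{rec:RR}(1), which finishes the proof.

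I expect the main obstacle to be the bookkeeping in the second paragraph: keeping watertight the dictionary between the dimer-model and Cox-ring data ($\mathfrak{c}_-$, membership in $\Pi_3$, the section $t^{\div(p)}$, the degree $\deg(L_i\vert_{C_\tau})$) and the McKay data ($\mathcal{S}_\pm$, $G$-characters, the exponents $d(a,b,c)$, the monomials $N$ and $D$), and in particular establishing cleanly that the dimer-theoretic jigsaw pieces of Definition~\ref{def:jigsawpieces} coincide with Nakamura's combinatorial pieces, so that $\jig_0$ really is the set of monomials not divisible by $D$. Once that identification is secured, the source computation and the comparison with Recipe~\ref{rec:RR} are routine.
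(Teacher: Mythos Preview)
Your proposal is correct and follows essentially the same route as the paper. For lattice points both arguments are identical; for line segments the paper, like you, identifies $Q(\tau)^+$ with the monomials of $\mathcal{S}_+$ divisible by the denominator $D$ and observes that $D$ is its unique source. The only cosmetic difference is that the paper cites Nakamura~\cite[Lemma~1.8]{Nakamura01} directly for $D\in\mathcal{S}_+$ and invokes Corollary~\ref{cor:Gigsaw} to match the two jigsaw decompositions, whereas you recover $D\in\mathcal{S}_+$ from closure under divisors and route the identification through the degree computation of Proposition~\ref{prop:onejigsaw}; both paths arrive at the same conclusion.
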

\begin{proof}
 Recall from Corollary~\ref{cor:Gigsaw} that the isomorphism $\mathcal{M}_\theta\cong \ghilb$ identifies the torus-invariant quiver representations $M_{\sigma_\pm}$ with the $G$-clusters $\kk[x,y,z]/I_{\pm}$ obtained as the fibres of the universal family on $\ghilb$ over the origins in the charts $U_{\sigma_\pm}$. It follows that the marking of an interior lattice point $\rho\in \Sigma(1)$ according to Recipe~\ref{rec:RR}(2) agrees with the marking from Definition~\ref{def:RRpoints}. 

To show that the recipes agree for an interior line segment $\tau\in \Sigma(2)$, consider first the classical recipe. Let $\sigma_\pm\in \Sigma(3)$ satisfy $\tau=\sigma_+\cap \sigma_-$ and let $m=(m_1, m_2, m_3)\in M$ denote the primitive vector in the normal direction to the hyperplane spanned by $\tau$ satisfying $\langle m,n\rangle\geq 0$ for all $n\in \sigma_+$. Nakamura~\cite[Lemma~1.8]{Nakamura01} implies that the denominator of the $G$-invariant Laurent monomial $x^{m_1}y^{m_2}z^{m_3}$ is one of the monomials of $\mathcal{S}_+$, say $x^ay^bz^c$, and the classical Recipe~\ref{rec:RR}(1) marks $\tau$ with the vertex $i\in Q_0\setminus \{0\}$ such that $x^ay^bz^c$ lies in the $i$-character space. Crucially,  Proposition~\ref{prop:Nakamura} shows that a monomial in $\mathcal{S}_+$ moves during the $G$-igsaw transformation across $\tau$ if and only if it is divisible by $x^ay^bz^c$, \emph{i.\,e.}\ the directed subgraph of $\mathcal{S}_+$ comprising monomials that move in the $G$-igsaw transformation has a unique source given by the monomial $x^ay^bz^c$. 

 To compare this with the recipe from Definition~\ref{def:RRlines}, identify the $G$-graphs $\mathcal{S}_\pm$ with the quivers $Q^{\sigma_{\pm}}$ as in Corollary~\ref{cor:Gigsaw} and let $x, y, z$ denote the variables in the Cox ring $\kk[t_\rho\mid \rho\in \Sigma(1)]$ indexed by the lattice points $\rho\in \Sigma(1)$ defined by corners of the junior simplex as in Remark~\ref{rem:Gigsaw}. The jigsaw transformation from $M_{\sigma_+}$ to $M_{\sigma_-}$ as in Theorem~\ref{thm:genjigsaw} moves the tile dual to a vertex $j\in Q_0$ if and only if the section $t^{\div(p)}$ labelling a path from $0$ to $j$ in $Q^{\sigma_+}$ is divisible by $x^{a}y^{b}z^{c}$. This jigsaw transformation coincides with that of Nakamura by Corollary~\ref{cor:Gigsaw}, so the subquiver $Q(\tau)^+$ of $Q^{\sigma_+}$ whose vertex set comprises vertices dual to tiles that move in the jigsaw transformation across $\tau$ has a unique source vertex $i$ corresponding to the eigencharacter of the monomial $x^ay^bz^c$. Definition~\ref{def:RRlines} marks $\tau$ with vertex $i$, so the recipes agree.
\end{proof}

\subsection{Examples}
\label{sec:examples}
We now present two examples where Combinatorial Reid's recipe from Definitions~\ref{def:RRpoints} and \ref{def:RRlines} is used to mark the interior lattice points and line segments of the toric fan $\Sigma_\theta$ with vertices of the quiver $Q$ dual to a consistent dimer model.
 
\begin{example}
\label{exa:LongHexCRR}
Consider the dimer model from Example~\ref{exa:LongHex}. The stability parameter $\theta=(-9,1,1,\ldots,1)$ is $0$-generated and the fan $\Sigma_\theta$ of $\mathcal{M}_\theta$ in Figure~\ref{fig:LHDivisorsTriangulation}(b) shows how we list the lattice points $\rho_1, \dots, \rho_{10}\in \Sigma_\theta(1)$. 

To implement Reid's recipe, first compute the quiver $Q^{\sigma}$ for each cone $\sigma\in \Sigma_\theta(3)$. One such quiver $Q^{\sigma}$ is shown in green in Figure~\ref{fig:8910}: notice that vertices 2 and 5 are both sinks of that quiver, so both vertices are candidates to mark the interior lattice point $\rho_8$ according to Definition~\ref{def:RRpoints}. In fact, both 2 and 5 are sinks in every quiver $Q^\sigma$ determined by a cone $\sigma\in \Sigma_\theta(3)$ such that $\rho_8\subset \sigma$, so both 2 and 5 mark lattice point $\rho_8$ in Reid's recipe. A similar calculation shows that vertex 2 also marks the lattice point $\rho_9$ as shown in Figure~\ref{fig:LongHexRR}. 
\begin{figure}[!ht]
\centering
\begin{tikzpicture} % Reid's Recipe for LongHex
\draw (0,0) -- (0,1) -- (-1,2) -- (-2,3) -- (-3,3) -- (-3,2) -- (-2,1) -- (-1,0) -- (0,0); 
\draw (0,0) -- node [rectangle,draw,fill=white,sloped,inner sep=1pt] {{\tiny 9}}(-1,1) -- node [rectangle,draw,fill=white,sloped,inner sep=1pt] {{\tiny 3,9}}(-2,2) -- node [rectangle,draw,fill=white,sloped,inner sep=1pt] {{\tiny 3}}(-3,3); 
\draw (0,1) -- node [rectangle,draw,fill=white,sloped,inner sep=1pt] {{\tiny 7}}(-1,1) -- node [rectangle,draw,fill=white,sloped,inner sep=1pt] {{\tiny 4}}(-2,1); 
\draw (-1,2) -- node [rectangle,draw,fill=white,sloped,inner sep=1pt] {{\tiny 6}}(-2,2) -- node [rectangle,draw,fill=white,sloped,inner sep=1pt] {{\tiny 6}}(-3,2); 
\draw (-2,3) -- node [rectangle,draw,fill=white,sloped,inner sep=1pt,rotate=90] {{\tiny 8}}(-2,2) -- node [rectangle,draw,fill=white,sloped,inner sep=1pt,rotate=90] {{\tiny 8}}(-2,1); 
\draw (-1,2) -- node [rectangle,draw,fill=white,sloped,inner sep=1pt,rotate=90] {{\tiny 4}}(-1,1) -- node [rectangle,draw,fill=white,sloped,inner sep=1pt,rotate=90] {{\tiny 1}}(-1,0);
\draw (0,0) node[circle,draw,fill=white,minimum size=5pt,inner sep=1pt] {{}};
\draw (0,1) node[circle,draw,fill=white,minimum size=5pt,inner sep=1pt] {{}};
\draw (-2,3) node[circle,draw,fill=white,minimum size=5pt,inner sep=1pt] {{}};
\draw (-3,3) node[circle,draw,fill=white,minimum size=5pt,inner sep=1pt] {{}};
\draw (-3,2) node[circle,draw,fill=white,minimum size=5pt,inner sep=1pt] {{}};
\draw (-1,0) node[circle,draw,fill=white,minimum size=5pt,inner sep=1pt] {{}};
\draw (-2,1) node[circle,draw,fill=white,minimum size=5pt,inner sep=1pt] {{}};
\draw (-2,2) node[circle,draw,fill=white,minimum size=5pt,inner sep=1pt] {{\tiny2, 5}};
\draw (-1,1) node[circle,draw,fill=white,minimum size=5pt,inner sep=1pt] {{\tiny2}};
\draw (-1,2) node[circle,draw,fill=white,minimum size=5pt,inner sep=1pt] {{}};
\end{tikzpicture}
\caption{Marking of $\Sigma_\theta$ using combinatorial Reid's recipe for the dimer model from Example~\ref{exa:LongHex}.}
\label{fig:LongHexRR}
\end{figure}
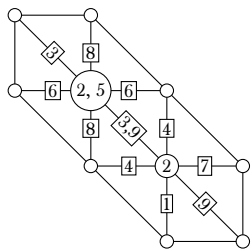
 As for the interior line segments, consider the cone $\tau$ generated by the rays $\rho_8$ and $\rho_9$. Figure~\ref{fig:jigsaw} illustrates the jigsaw pieces for $\tau$: there is precisely one jigsaw piece $\jig_1$ that lies adjacent to $\jig_0$, and the quivers $Q(\tau)^\pm$ supported in $\jig_1$ coincide. The vertices 3 and 9 are the sources of this quiver and, according to Definition~\ref{def:RRlines}, Reid's recipe marks the interior line segment $\tau$ with both 3 and 9. More generally, Combinatorial Reid's recipe marks all interior lattice points and interior line segments of $\Sigma_\theta$ with vertices as shown in Figure~\ref{fig:LongHexRR}. 
\end{example}

\begin{example}
\label{exa:heptrr}
Consider the consistent dimer model shown in black in Figure~\ref{fig:HeptDbQg}(a); the dual quiver $Q$ is illustrated in grey in the same figure. The stability condition $\theta=(-25,1,1,\ldots,1)$ is 0-generated and the labelling sections $t^{\div(a)}$ from equation \eqref{eqn:xa} that determine the tautological isomorphism $\phi\colon A\to \End(T)$ are written explicitly on (or can be deduced from) \cite[Figures~4.2(a), 4.3]{TapiaAmador15}. The toric fan $\Sigma_\theta$ is shown in Figure~\ref{fig:HeptDbQg}(b), where we have marked every interior line segment and interior lattice point of $\Sigma_\theta$ with vertices of the quiver $Q$ according to Combinatorial Reid's recipe.
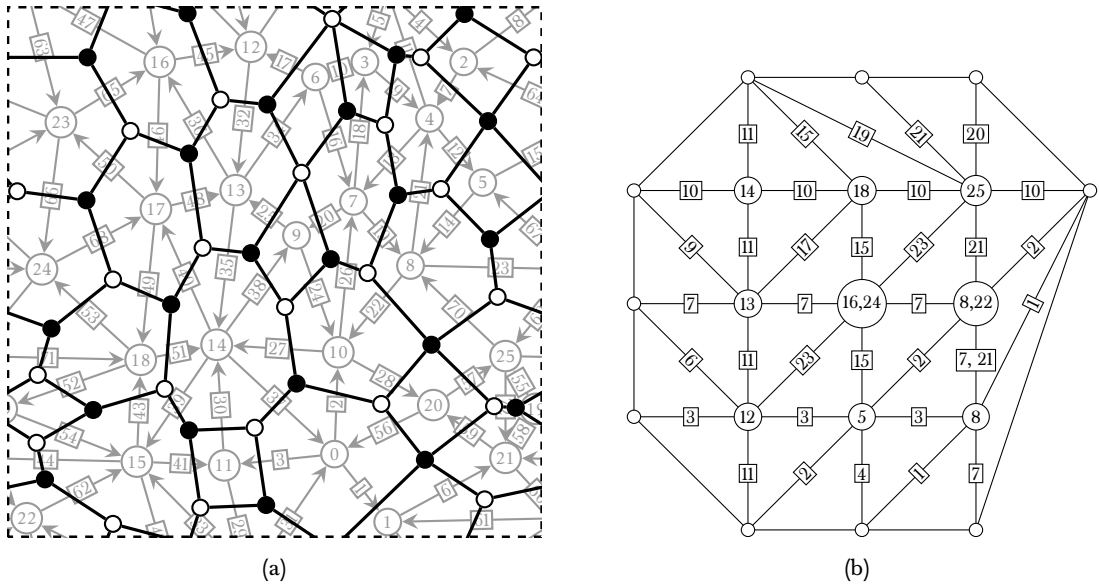
\begin{figure}[ht!]
\centering
    \subfigure[]{\begin{tikzpicture} [thick, scale=0.5, every node/.style={scale=1}] 
\begin{scope}\clip (0pt,0pt) rectangle (400pt,400pt);
\node at (8.6cm,2.2cm) [circle,ggrey,draw=ggrey,fill=white,minimum size=10pt,inner sep=1pt] (0) {\mbox{\tiny $0$}}; 
\node at (10cm,0.4cm) [circle,ggrey,draw=ggrey,fill=white,minimum size=10pt,inner sep=1pt] (1) {\mbox{\tiny $1$}}; 
\node at (12cm,12.6cm) [circle,ggrey,draw=ggrey,fill=white,minimum size=10pt,inner sep=1pt] (2) {\mbox{\tiny $2$}}; 
\node at (9.4cm,12.6cm) [circle,ggrey,draw=ggrey,fill=white,minimum size=10pt,inner sep=1pt] (3) {\mbox{\tiny $3$}}; 
\node at (11.1cm,11.1cm) [circle,ggrey,draw=ggrey,fill=white,minimum size=10pt,inner sep=1pt] (4) {\mbox{\tiny $4$}}; 
\node at (12.5cm,9.4cm) [circle,ggrey,draw=ggrey,fill=white,minimum size=10pt,inner sep=1pt] (5) {\mbox{\tiny $5$}}; 
\node at (8.1cm,12.2cm) [circle,ggrey,draw=ggrey,fill=white,minimum size=10pt,inner sep=1pt] (6) {\mbox{\tiny $6$}}; 
\node at (9.1cm,8.9cm) [circle,ggrey,draw=ggrey,fill=white,minimum size=10pt,inner sep=1pt] (7) {\mbox{\tiny $7$}}; 
\node at (10.6cm,7.2cm) [circle,ggrey,draw=ggrey,fill=white,minimum size=10pt,inner sep=1pt] (8) {\mbox{\tiny $8$}}; 
\node at (7.6cm,8cm) [circle,ggrey,draw=ggrey,fill=white,minimum size=10pt,inner sep=1pt] (9) {\mbox{\tiny $9$}}; 
\node at (8.7cm,4.9cm) [circle,ggrey,draw=ggrey,fill=white,minimum size=10pt,inner sep=1pt] (10) {\mbox{\tiny $10$}}; 
\node at (5.7cm,1.9cm) [circle,ggrey,draw=ggrey,fill=white,minimum size=10pt,inner sep=1pt] (11) {\mbox{\tiny $11$}}; 
\node at (6.4cm,13cm) [circle,ggrey,draw=ggrey,fill=white,minimum size=10pt,inner sep=1pt] (12) {\mbox{\tiny $12$}}; 
\node at (6cm,9.2cm) [circle,ggrey,draw=ggrey,fill=white,minimum size=10pt,inner sep=1pt] (13) {\mbox{\tiny $13$}}; 
\node at (5.5cm,5.1cm) [circle,ggrey,draw=ggrey,fill=white,minimum size=10pt,inner sep=1pt] (14) {\mbox{\tiny $14$}}; 
\node at (3.4cm,2cm) [circle,ggrey,draw=ggrey,fill=white,minimum size=10pt,inner sep=1pt] (15) {\mbox{\tiny $15$}}; 
\node at (4cm,12.6cm) [circle,ggrey,draw=ggrey,fill=white,minimum size=10pt,inner sep=1pt] (16) {\mbox{\tiny $16$}}; 
\node at (3.9cm,8.7cm) [circle,ggrey,draw=ggrey,fill=white,minimum size=10pt,inner sep=1pt] (17) {\mbox{\tiny $17$}}; 
\node at (3.5cm,4.7cm) [circle,ggrey,draw=ggrey,fill=white,minimum size=10pt,inner sep=1pt] (18) {\mbox{\tiny $18$}}; 
\node at (-0.16cm,3.4cm) [circle,ggrey,draw=ggrey,fill=white,minimum size=10pt,inner sep=1pt] (19a) {\mbox{\tiny $19$}}; 
\node at (13.9cm,3.4cm) [circle,ggrey,draw=ggrey,fill=white,minimum size=10pt,inner sep=1pt] (19b) {\mbox{\tiny $19$}}; 
\node at (11.2cm,3.5cm) [circle,ggrey,draw=ggrey,fill=white,minimum size=10pt,inner sep=1pt] (20) {\mbox{\tiny $20$}}; 
\node at (13.1cm,2.1cm) [circle,ggrey,draw=ggrey,fill=white,minimum size=10pt,inner sep=1pt] (21) {\mbox{\tiny $21$}}; 
\node at (0.5cm,0.5cm) [circle,ggrey,draw=ggrey,fill=white,minimum size=10pt,inner sep=1pt] (22) {\mbox{\tiny $22$}}; 
\node at (1.4cm,11cm) [circle,ggrey,draw=ggrey,fill=white,minimum size=10pt,inner sep=1pt] (23) {\mbox{\tiny $23$}}; 
\node at (0.9cm,7.1cm) [circle,ggrey,draw=ggrey,fill=white,minimum size=10pt,inner sep=1pt] (24) {\mbox{\tiny $24$}}; 
\node at (13.1cm,4.8cm) [circle,ggrey,draw=ggrey,fill=white,minimum size=10pt,inner sep=1pt] (25) {\mbox{\tiny $25$}}; 

\draw [-stealthnew,ggrey,arrowhead=6pt] (0) to node [rectangle,draw=ggrey,fill=white,sloped,inner sep=1pt] {{\tiny 1}} (1); 
\draw [-stealthnew,ggrey,arrowhead=6pt] (0) to node [rectangle,draw=ggrey,fill=white,sloped,inner sep=1pt] {{\tiny 2}} (10); 
\draw [-stealthnew,ggrey,arrowhead=6pt] (0) to node [rectangle,draw=ggrey,fill=white,sloped,inner sep=1pt] {{\tiny 3}} (11); 
\draw [-stealthnew,ggrey,arrowhead=6pt] (1) to node [rectangle,draw=ggrey,fill=white,sloped,inner sep=1pt] {{\tiny 4}} (11.5cm,-1.46cm); 
\draw [-stealthnew,ggrey,arrowhead=6pt] (10cm, 14.46cm) to node [rectangle,draw=ggrey,fill=white,sloped,inner sep=1pt] {{\tiny 4}} (2); 
\draw [-stealthnew,ggrey,arrowhead=6pt] (1) to node [rectangle,draw=ggrey,fill=white,sloped,inner sep=1pt] {{\tiny 5}} (9.6cm,-2.74cm); 
\draw [-stealthnew,ggrey,arrowhead=6pt] (10,14.46cm) to node [rectangle,draw=ggrey,fill=white,sloped,inner sep=1pt] {{\tiny 5}} (3); 
\draw [-stealthnew,ggrey,arrowhead=6pt] (1) to node [rectangle,draw=ggrey,fill=white,sloped,inner sep=1pt] {{\tiny 6}} (21); 
\draw [-stealthnew,ggrey,arrowhead=6pt] (2) to node [rectangle,draw=ggrey,fill=white,sloped,inner sep=1pt] {{\tiny 7}} (4); 
\draw [-stealthnew,ggrey,arrowhead=6pt] (2) to node [rectangle,draw=ggrey,fill=white,sloped,inner sep=1pt] {{\tiny 8}} (14.56cm,14.56cm);
\draw [-stealthnew,ggrey,arrowhead=6pt] (-2.56cm,-1.46cm) to node [rectangle,draw=ggrey,fill=white,sloped,inner sep=1pt] {{\tiny 8}} (22);  
\draw [-stealthnew,ggrey,arrowhead=6pt] (11.5cm,-1.46cm) to node [rectangle,draw=ggrey,fill=white,sloped,inner sep=1pt] {{\tiny 8}} (14.56cm,0.5cm);
\draw [-stealthnew,ggrey,arrowhead=6pt] (3) to node [rectangle,draw=ggrey,fill=white,sloped,inner sep=1pt] {{\tiny 9}} (4); 
\draw [-stealthnew,ggrey,arrowhead=6pt] (3) to node [rectangle,draw=ggrey,fill=white,sloped,inner sep=1pt] {{\tiny 10}} (6); 
\draw [-stealthnew,ggrey,arrowhead=6pt] (4) to node [rectangle,draw=ggrey,fill=white,sloped,inner sep=1pt] {{\tiny 11}} (10cm,14.46cm);
\draw [-stealthnew,ggrey,arrowhead=6pt] (11.1cm,-2.96cm) to node [rectangle,draw=ggrey,fill=white,sloped,inner sep=1pt] {{\tiny 11}} (1);  
\draw [-stealthnew,ggrey,arrowhead=6pt] (4) to node [rectangle,draw=ggrey,fill=white,sloped,inner sep=1pt] {{\tiny 12}} (5); 
\draw [-stealthnew,ggrey,arrowhead=6pt] (4) to node [rectangle,draw=ggrey,fill=white,sloped,inner sep=1pt] {{\tiny 13}} (7); 
\draw [-stealthnew,ggrey,arrowhead=6pt] (5) to node [rectangle,draw=ggrey,fill=white,sloped,inner sep=1pt] {{\tiny 14}} (8); 
\draw [-stealthnew,ggrey,arrowhead=6pt] (5) to node [rectangle,draw=ggrey,fill=white,sloped,inner sep=1pt,pos=0.4] {{\tiny 15}} (15.46cm,11cm); 
\draw [-stealthnew,ggrey,arrowhead=6pt] (-1.56cm,9.4cm) to node [rectangle,draw=ggrey,fill=white,sloped,inner sep=1pt,pos=0.4] {{\tiny 15}} (23); 
\draw [-stealthnew,ggrey,arrowhead=6pt] (6) to node [rectangle,draw=ggrey,fill=white,sloped,inner sep=1pt] {{\tiny 16}} (7);  
\draw [-stealthnew,ggrey,arrowhead=6pt] (6) to node [rectangle,draw=ggrey,fill=white,sloped,inner sep=1pt] {{\tiny 17}} (12); 
\draw [-stealthnew,ggrey,arrowhead=6pt] (7) to node [rectangle,draw=ggrey,fill=white,sloped,inner sep=1pt] {{\tiny 18}} (3); 
\draw [-stealthnew,ggrey,arrowhead=6pt] (7) to node [rectangle,draw=ggrey,fill=white,sloped,inner sep=1pt] {{\tiny 19}} (8); 
\draw [-stealthnew,ggrey,arrowhead=6pt] (7) to node [rectangle,draw=ggrey,fill=white,sloped,inner sep=1pt] {{\tiny 20}} (9); 
\draw [-stealthnew,ggrey,arrowhead=6pt] (8) to node [rectangle,draw=ggrey,fill=white,sloped,inner sep=1pt] {{\tiny 21}} (4);  
\draw [-stealthnew,ggrey,arrowhead=6pt] (8) to node [rectangle,draw=ggrey,fill=white,sloped,inner sep=1pt] {{\tiny 22}} (10); 
\draw [-stealthnew,ggrey,arrowhead=6pt] (8) to node [rectangle,draw=ggrey,fill=white,sloped,inner sep=1pt] {{\tiny 23}} (14.96cm,7.1cm);
\draw [-stealthnew,ggrey,arrowhead=6pt] (-3.46cm,7.2cm) to node [rectangle,draw=ggrey,fill=white,sloped,inner sep=1pt] {{\tiny 23}} (24); 
\draw [-stealthnew,ggrey,arrowhead=6pt] (9) to node [rectangle,draw=ggrey,fill=white,sloped,inner sep=1pt] {{\tiny 24}} (10); 
\draw [-stealthnew,ggrey,arrowhead=6pt] (9) to node [rectangle,draw=ggrey,fill=white,sloped,inner sep=1pt] {{\tiny 25}} (13); 
\draw [-stealthnew,ggrey,arrowhead=6pt] (10) to node [rectangle,draw=ggrey,fill=white,sloped,inner sep=1pt] {{\tiny 26}} (7);  
\draw [-stealthnew,ggrey,arrowhead=6pt] (10) to node [rectangle,draw=ggrey,fill=white,sloped,inner sep=1pt] {{\tiny 27}} (14); 
\draw [-stealthnew,ggrey,arrowhead=6pt] (10) to node [rectangle,draw=ggrey,fill=white,sloped,inner sep=1pt] {{\tiny 28}} (20); 
\draw [-stealthnew,ggrey,arrowhead=6pt] (11) to node [rectangle,draw=ggrey,fill=white,sloped,inner sep=1pt] {{\tiny 29}} (6.4cm,-1.06cm);
\draw [-stealthnew,ggrey,arrowhead=6pt] (5.4cm,16.46cm) to node [rectangle,draw=ggrey,fill=white,sloped,inner sep=1pt] {{\tiny 29}} (12); 
\draw [-stealthnew,ggrey,arrowhead=6pt] (11) to node [rectangle,draw=ggrey,fill=white,sloped,inner sep=1pt] {{\tiny 30}} (14); 
\draw [-stealthnew,ggrey,arrowhead=6pt] (12) to node [rectangle,draw=ggrey,fill=white,sloped,inner sep=1pt] {{\tiny 31}} (8.6cm,16.26cm);  
\draw [-stealthnew,ggrey,arrowhead=6pt] (6.4cm,-1.06cm) to node [rectangle,draw=ggrey,fill=white,sloped,inner sep=1pt] {{\tiny 31}} (0);  
\draw [-stealthnew,ggrey,arrowhead=6pt] (12) to node [rectangle,draw=ggrey,fill=white,sloped,inner sep=1pt] {{\tiny 32}} (13); 
\draw [-stealthnew,ggrey,arrowhead=6pt] (12) to node [rectangle,draw=ggrey,fill=white,sloped,inner sep=1pt] {{\tiny 33}} (3.4cm,16.06cm); 
\draw [-stealthnew,ggrey,arrowhead=6pt] (6.4cm,-1.06cm) to node [rectangle,draw=ggrey,fill=white,sloped,inner sep=1pt] {{\tiny 33}} (15);   
\draw [-stealthnew,ggrey,arrowhead=6pt] (13) to node [rectangle,draw=ggrey,fill=white,sloped,inner sep=1pt] {{\tiny 34}} (6); 
\draw [-stealthnew,ggrey,arrowhead=6pt] (13) to node [rectangle,draw=ggrey,fill=white,sloped,inner sep=1pt] {{\tiny 35}} (14); 
\draw [-stealthnew,ggrey,arrowhead=6pt] (13) to node [rectangle,draw=ggrey,fill=white,sloped,inner sep=1pt] {{\tiny 36}} (16);  
\draw [-stealthnew,ggrey,arrowhead=6pt] (14) to node [rectangle,draw=ggrey,fill=white,sloped,inner sep=1pt] {{\tiny 37}} (0); 
\draw [-stealthnew,ggrey,arrowhead=6pt] (14) to node [rectangle,draw=ggrey,fill=white,sloped,inner sep=1pt] {{\tiny 38}} (9); 
\draw [-stealthnew,ggrey,arrowhead=6pt] (14) to node [rectangle,draw=ggrey,fill=white,sloped,inner sep=1pt] {{\tiny 39}} (15); 
\draw [-stealthnew,ggrey,arrowhead=6pt] (14) to node [rectangle,draw=ggrey,fill=white,sloped,inner sep=1pt] {{\tiny 40}} (17); 
\draw [-stealthnew,ggrey,arrowhead=6pt] (15) to node [rectangle,draw=ggrey,fill=white,sloped,inner sep=1pt] {{\tiny 41}} (11);  
\draw [-stealthnew,ggrey,arrowhead=6pt] (15) to node [rectangle,draw=ggrey,fill=white,sloped,inner sep=1pt] {{\tiny 42}} (4.4cm,-1.46cm); 
\draw [-stealthnew,ggrey,arrowhead=6pt] (3.4cm,16.06cm) to node [rectangle,draw=ggrey,fill=white,sloped,inner sep=1pt] {{\tiny 42}} (16); 
\draw [-stealthnew,ggrey,arrowhead=6pt] (15) to node [rectangle,draw=ggrey,fill=white,sloped,inner sep=1pt] {{\tiny 43}} (18); 
\draw [-stealthnew,ggrey,arrowhead=6pt] (15) to node [rectangle,draw=ggrey,fill=white,sloped,inner sep=1pt] {{\tiny 44}} (-0.96cm,2.1cm); 
\draw [-stealthnew,ggrey,arrowhead=6pt] (17.46cm,2cm) to node [rectangle,draw=ggrey,fill=white,sloped,inner sep=1pt] {{\tiny 44}} (21); 
\draw [-stealthnew,ggrey,arrowhead=6pt] (16) to node [rectangle,draw=ggrey,fill=white,sloped,inner sep=1pt] {{\tiny 45}} (12); 
\draw [-stealthnew,ggrey,arrowhead=6pt] (16) to node [rectangle,draw=ggrey,fill=white,sloped,inner sep=1pt] {{\tiny 46}} (17);  
\draw [-stealthnew,ggrey,arrowhead=6pt] (16) to node [rectangle,draw=ggrey,fill=white,sloped,inner sep=1pt] {{\tiny 47}} (0.5cm,14.56cm);
\draw [-stealthnew,ggrey,arrowhead=6pt] (4.4cm,-1.46cm) to node [rectangle,draw=ggrey,fill=white,sloped,inner sep=1pt] {{\tiny 47}} (22); 
\draw [-stealthnew,ggrey,arrowhead=6pt] (17) to node [rectangle,draw=ggrey,fill=white,sloped,inner sep=1pt] {{\tiny 48}} (13); 
\draw [-stealthnew,ggrey,arrowhead=6pt] (17) to node [rectangle,draw=ggrey,fill=white,sloped,inner sep=1pt] {{\tiny 49}} (18); 
\draw [-stealthnew,ggrey,arrowhead=6pt] (17) to node [rectangle,draw=ggrey,fill=white,sloped,inner sep=1pt] {{\tiny 50}} (23); 
\draw [-stealthnew,ggrey,arrowhead=6pt] (18) to node [rectangle,draw=ggrey,fill=white,sloped,inner sep=1pt] {{\tiny 51}} (14);  
\draw [-stealthnew,ggrey,arrowhead=6pt] (18) to node [rectangle,draw=ggrey,fill=white,sloped,inner sep=1pt] {{\tiny 52}} (19a); 
\draw [-stealthnew,ggrey,arrowhead=6pt] (18) to node [rectangle,draw=ggrey,fill=white,sloped,inner sep=1pt] {{\tiny 53}} (24); 
\draw [-stealthnew,ggrey,arrowhead=6pt] (19a) to node [rectangle,draw=ggrey,fill=white,sloped,inner sep=1pt] {{\tiny 54}} (15); 
\draw [-stealthnew,ggrey,arrowhead=6pt] (19b) to node [rectangle,draw=ggrey,fill=white,sloped,inner sep=1pt] {{\tiny 55}} (25); 
\draw [-stealthnew,ggrey,arrowhead=6pt] (20) to node [rectangle,draw=ggrey,fill=white,sloped,inner sep=1pt] {{\tiny 56}} (0);  
\draw [-stealthnew,ggrey,arrowhead=6pt] (20) to node [rectangle,draw=ggrey,fill=white,sloped,inner sep=1pt] {{\tiny 57}} (25); 
\draw [-stealthnew,ggrey,arrowhead=6pt] (21) to node [rectangle,draw=ggrey,fill=white,sloped,inner sep=1pt] {{\tiny 58}} (19b); 
\draw [-stealthnew,ggrey,arrowhead=6pt] (21) to node [rectangle,draw=ggrey,fill=white,sloped,inner sep=1pt] {{\tiny 59}} (20); 
\draw [-stealthnew,ggrey,arrowhead=6pt] (21) to node [rectangle,draw=ggrey,fill=white,sloped,inner sep=1pt] {{\tiny 60}} (14.56cm,0.5cm);
\draw [-stealthnew,ggrey,arrowhead=6pt] (-0.96,2.1cm) to node [rectangle,draw=ggrey,fill=white,sloped,inner sep=1pt] {{\tiny 60}} (22);  
\draw [-stealthnew,ggrey,arrowhead=6pt] (22) to node [rectangle,draw=ggrey,fill=white,sloped,inner sep=1pt] {{\tiny 61}} (-4.06cm,0.4cm);    
\draw [-stealthnew,ggrey,arrowhead=6pt] (14.56cm,0.5cm) to node [rectangle,draw=ggrey,fill=white,sloped,inner sep=1pt] {{\tiny 61}} (1);
\draw [-stealthnew,ggrey,arrowhead=6pt] (22) to node [rectangle,draw=ggrey,fill=white,sloped,inner sep=1pt] {{\tiny 62}} (15); 
\draw [-stealthnew,ggrey,arrowhead=6pt] (22) to node [rectangle,draw=ggrey,fill=white,sloped,inner sep=1pt] {{\tiny 63}} (1.4cm,-3.06cm); 
\draw [-stealthnew,ggrey,arrowhead=6pt] (0.5cm,14.56cm) to node [rectangle,draw=ggrey,fill=white,sloped,inner sep=1pt] {{\tiny 63}} (23); 
\draw [-stealthnew,ggrey,arrowhead=6pt] (23) to node [rectangle,draw=ggrey,fill=white,sloped,inner sep=1pt] {{\tiny 64}} (-2.56cm,12.6cm);
\draw [-stealthnew,ggrey,arrowhead=6pt] (15.46cm,11cm) to node [rectangle,draw=ggrey,fill=white,sloped,inner sep=1pt] {{\tiny 64}} (2); 
\draw [-stealthnew,ggrey,arrowhead=6pt] (23) to node [rectangle,draw=ggrey,fill=white,sloped,inner sep=1pt] {{\tiny 65}} (16); 
\draw [-stealthnew,ggrey,arrowhead=6pt] (23) to node [rectangle,draw=ggrey,fill=white,sloped,inner sep=1pt] {{\tiny 66}} (24);  
\draw [-stealthnew,ggrey,arrowhead=6pt] (24) to node [rectangle,draw=ggrey,fill=white,sloped,inner sep=1pt] {{\tiny 67}} (-1.56cm,9.4cm);
\draw [-stealthnew,ggrey,arrowhead=6pt] (14.96cm,7.1cm) to node [rectangle,draw=ggrey,fill=white,sloped,inner sep=1pt] {{\tiny 67}} (5); 
\draw [-stealthnew,ggrey,arrowhead=6pt] (24) to node [rectangle,draw=ggrey,fill=white,sloped,inner sep=1pt] {{\tiny 68}} (17); 
\draw [-stealthnew,ggrey,arrowhead=6pt] (24) to node [rectangle,draw=ggrey,fill=white,sloped,inner sep=1pt,pos=0.62] {{\tiny 69}} (-0.96cm,4.8cm); 
\draw [-stealthnew,ggrey,arrowhead=6pt] (14.96cm,7.1cm) to node [rectangle,draw=ggrey,fill=white,sloped,inner sep=1pt,pos=0.62] {{\tiny 69}} (25); 
\draw [-stealthnew,ggrey,arrowhead=6pt] (25) to node [rectangle,draw=ggrey,fill=white,sloped,inner sep=1pt] {{\tiny 70}} (8); 
\draw [-stealthnew,ggrey,arrowhead=6pt] (25) to node [rectangle,draw=ggrey,fill=white,sloped,inner sep=1pt] {{\tiny 71}} (17.56cm,4.7cm);
\draw [-stealthnew,ggrey,arrowhead=6pt] (-0.96cm,4.8cm) to node [rectangle,draw=ggrey,fill=white,sloped,inner sep=1pt] {{\tiny 71}} (18);
\draw [-stealthnew,ggrey,arrowhead=6pt] (25) to node [rectangle,draw=ggrey,fill=white,sloped,inner sep=1pt] {{\tiny 72}} (21);
\node at (8.54cm,13.74cm) [circle,draw=black,fill=white,minimum size=6pt,inner sep=1pt](1d){};
\node at (6.8cm,0.85cm) [circle,draw=black,fill=black,minimum size=6pt,inner sep=1pt](2d){};
\node at (6.5cm,2.9cm) [circle,draw=black,fill=white,minimum size=6pt,inner sep=1pt](3d){};
\node at (7.6cm,4.07cm) [circle,draw=black,fill=black,minimum size=6pt,inner sep=1pt](4d){};
\node at (9.83cm,3.53cm) [circle,draw=black,fill=white,minimum size=6pt,inner sep=1pt](5d){};
\node at (10.98cm,2.05cm) [circle,draw=black,fill=black,minimum size=6pt,inner sep=1pt](6d){};
\node at (12.55cm,1cm) [circle,draw=black,fill=white,minimum size=6pt,inner sep=1pt](7d){};
\node at (12.02cm,13.87cm) [circle,draw=black,fill=black,minimum size=6pt,inner sep=1pt](8d){};
\node at (10.87cm,12.72cm) [circle,draw=black,fill=white,minimum size=6pt,inner sep=1pt](9d){};
\node at (10.23cm,12.79cm) [circle,draw=black,fill=black,minimum size=6pt,inner sep=1pt](10d){};
\node at (13.84cm,12.72cm) [circle,draw=black,fill=white,minimum size=6pt,inner sep=1pt](11d){};
\node at (12.64cm,11.03cm) [circle,draw=black,fill=black,minimum size=6pt,inner sep=1pt](12d){};
\node at (11.4cm,9.23cm) [circle,draw=black,fill=white,minimum size=6pt,inner sep=1pt](13d){};
\node at (10.27cm,9.07cm) [circle,draw=black,fill=black,minimum size=6pt,inner sep=1pt](14d){};
\node at (9.93cm,10.93cm) [circle,draw=black,fill=white,minimum size=6pt,inner sep=1pt](15d){};
\node at (8.93cm,11.3cm) [circle,draw=black,fill=black,minimum size=6pt,inner sep=1pt](16d){};
\node at (0.25cm,9.17cm) [circle,draw=black,fill=white,minimum size=6pt,inner sep=1pt](17d){};
\node at (12.69cm,7.9cm) [circle,draw=black,fill=black,minimum size=6pt,inner sep=1pt](18d){};
\node at (7.73cm,9.66cm) [circle,draw=black,fill=white,minimum size=6pt,inner sep=1pt](19d){};
\node at (6.83cm,11.47cm) [circle,draw=black,fill=black,minimum size=6pt,inner sep=1pt](20d){};
\node at (9.47cm,7cm) [circle,draw=black,fill=white,minimum size=6pt,inner sep=1pt](21d){};
\node at (8.5cm,7.37cm) [circle,draw=black,fill=black,minimum size=6pt,inner sep=1pt](22d){};
\node at (12.89cm,6.37cm) [circle,draw=black,fill=white,minimum size=6pt,inner sep=1pt](23d){};
\node at (11.15cm,5.1cm) [circle,draw=black,fill=black,minimum size=6pt,inner sep=1pt](24d){};
\node at (7.3cm,6.1cm) [circle,draw=black,fill=white,minimum size=6pt,inner sep=1pt](25d){};
\node at (6.4cm,7.53cm) [circle,draw=black,fill=black,minimum size=6pt,inner sep=1pt](26d){};
\node at (5.07cm,0.78cm) [circle,draw=black,fill=white,minimum size=6pt,inner sep=1pt](27d){};
\node at (4.77cm,2.83cm) [circle,draw=black,fill=black,minimum size=6pt,inner sep=1pt](28d){};
\node at (5.60cm,11.60cm) [circle,draw=black,fill=white,minimum size=6pt,inner sep=1pt](29d){};
\node at (4.73cm,13.89cm) [circle,draw=black,fill=black,minimum size=6pt,inner sep=1pt](30d){};
\node at (5.13cm,7.67cm) [circle,draw=black,fill=white,minimum size=6pt,inner sep=1pt](31d){};
\node at (4.77cm,10.17cm) [circle,draw=black,fill=black,minimum size=6pt,inner sep=1pt](32d){};
\node at (4.13cm,3.93cm) [circle,draw=black,fill=white,minimum size=6pt,inner sep=1pt](33d){};
\node at (4.30cm,6.17cm) [circle,draw=black,fill=black,minimum size=6pt,inner sep=1pt](34d){};
\node at (2.77cm,0.35cm) [circle,draw=black,fill=white,minimum size=6pt,inner sep=1pt](35d){};
\node at (0.98cm,1.53cm) [circle,draw=black,fill=black,minimum size=6pt,inner sep=1pt](36d){};
\node at (0.76cm,2.50cm) [circle,draw=black,fill=white,minimum size=6pt,inner sep=1pt](37d){};
\node at (2.25cm,3.37cm) [circle,draw=black,fill=black,minimum size=6pt,inner sep=1pt](38d){};
\node at (3.23cm,10.77cm) [circle,draw=black,fill=white,minimum size=6pt,inner sep=1pt](39d){};
\node at (2.10cm,12.72cm) [circle,draw=black,fill=black,minimum size=6pt,inner sep=1pt](40d){};
\node at (2.77cm,6.83cm) [circle,draw=black,fill=white,minimum size=6pt,inner sep=1pt](41d){};
\node at (2.07cm,8.93cm) [circle,draw=black,fill=black,minimum size=6pt,inner sep=1pt](42d){};
\node at (0.79cm,4.30cm) [circle,draw=black,fill=white,minimum size=6pt,inner sep=1pt](43d){};
\node at (1.15cm,5.53cm) [circle,draw=black,fill=black,minimum size=6pt,inner sep=1pt](44d){};
\node at (12.80cm,3.47cm) [circle,draw=black,fill=white,minimum size=6pt,inner sep=1pt](45d){};
\node at (13.37cm,3.43cm) [circle,draw=black,fill=black,minimum size=6pt,inner sep=1pt](46d){};

\draw [very thick] (1d) -- (10.98cm,16.11cm); %1
\draw [very thick] (8.54cm,-0.32cm) -- (6d); %1
\draw [very thick] (4d) -- (5d); %2
\draw [very thick] (2d) -- (3d); %3
\draw [very thick] (8d) -- (9d); %4
\draw [very thick] (1d) -- (10d); %5
\draw [very thick] (6d) -- (7d); %6
\draw [very thick] (9d) -- (12d); %7
\draw [very thick] (8d) -- (11d); %8
\draw [very thick] (10d) -- (15d); %9
\draw [very thick] (1d) -- (16d); %10
\draw [very thick] (9d) -- (10d); %11
\draw [very thick] (12d) -- (13d); %12
\draw [very thick] (14d) -- (15d); %13
\draw [very thick] (13d) -- (18d); %14
\draw [very thick] (12d) -- (14.31,9.17cm); %15
\draw [very thick] (-1.42cm,11.03cm) -- (17d); %15
\draw [very thick] (16d) -- (19d); %16
\draw [very thick] (1d) -- (20d); %17
\draw [very thick] (15d) -- (16d); %18
\draw [very thick] (14d) -- (21d); %19
\draw [very thick] (19d) -- (22d); %20
\draw [very thick] (13d) -- (14d); %21
\draw [very thick] (21d) -- (24d); %22
\draw [very thick] (18d) -- (23d); %23
\draw [very thick] (22d) -- (25d); %24
\draw [very thick] (19d) -- (26d); %25
\draw [very thick] (21d) -- (22d); %26
\draw [very thick] (4d) -- (25d); %27
\draw [very thick] (5d) -- (24d); %28
\draw [very thick] (2d) -- (27d); %29
\draw [very thick] (3d) -- (28d); %30
\draw [very thick] (1d) -- (6.8cm,14.91cm); %31
\draw [very thick] (8.54cm,-0.32cm) -- (2d); %31
\draw [very thick] (20d) -- (29d); %32
\draw [very thick] (27d) -- (4.73cm,-0.27cm); %33
\draw [very thick] (5.07cm,14.84cm) -- (30d); %33
\draw [very thick] (19d) -- (20d); %34
\draw [very thick] (26d) -- (31d); %35
\draw [very thick] (29d) -- (32d); %36
\draw [very thick] (3d) -- (4d); %37
\draw [very thick] (25d) -- (26d); %38
\draw [very thick] (28d) -- (33d); %39
\draw [very thick] (31d) -- (34d); %40
\draw [very thick] (27d) -- (28d); %41
\draw [very thick] (30d) -- (2.77cm,14.41cm); %42
\draw [very thick] (4.73cm,-0.17cm) -- (35d); %42
\draw [very thick] (33d) -- (38d); %43
\draw [very thick] (36d) -- (37d); %44
\draw [very thick] (29d) -- (30d); %45
\draw [very thick] (32d) -- (39d); %46
\draw [very thick] (35d) -- (2.10cm,-1.34cm); %47
\draw [very thick] (2.77cm,14.41cm) -- (40d); %47
\draw [very thick] (31d) -- (32d); %48
\draw [very thick] (34d) -- (41d); %49
\draw [very thick] (39d) -- (42d); %50
\draw [very thick] (33d) -- (34d); %51
\draw [very thick] (38d) -- (43d); %52
\draw [very thick] (41d) -- (44d); %53
\draw [very thick] (37d) -- (38d); %54
\draw [very thick] (46d) -- (14.85cm,4.30cm); %55
\draw [very thick] (-0.69cm,3.43cm) -- (43d); %55
\draw [very thick] (5d) -- (6d); %56
\draw [very thick] (24d) -- (45d); %57
\draw [very thick] (46d) -- (14.82cm,2.50cm); %58
\draw [very thick] (-0.69cm,3.43cm) -- (37d); %58
\draw [very thick] (6d) -- (45d); %59
\draw [very thick] (7d) -- (15.04cm,1.53cm); %60
\draw [very thick] (-1.51,1cm) -- (36d); %60
\draw [very thick] (7d) -- (12.02cm,-0.19cm); %61
\draw [very thick] (12.55cm,15.06cm) -- (8d); %61
\draw [very thick] (35d) -- (36d); %62
\draw [very thick] (11d) -- (16.10cm,12.72cm); %63
\draw [very thick] (-0.22cm,12.72) -- (40d); %63
\draw [very thick] (11d) -- (12d); %64
\draw [very thick] (39d) -- (40d); %65
\draw [very thick] (17d) -- (42d); %66
\draw [very thick] (18d) -- (14.31cm,9.17cm); %67
\draw [very thick] (-1.37cm,7.90cm) -- (17d); %67
\draw [very thick] (41d) -- (42d); %68
\draw [very thick] (23d) -- (15.21cm,5.53cm); %69
\draw [very thick] (-1.17cm,6.37cm) -- (44d); %69
\draw [very thick] (23d) -- (24d); %70
\draw [very thick] (43d) -- (44d); %71
\draw [very thick] (45d) -- (46d); %72

\draw[very thick, dashed] (0pt,0pt) rectangle (400pt,400pt);
\end{scope}
\end{tikzpicture}
 }
      \qquad
      \subfigure[]{
      \begin{tikzpicture}[scale=1.5, every node/.style={scale=1}] % Reid's Recipe for Hept
\draw (0,0) -- (-1,1) -- (-3,1) -- (-4,0) -- (-4,-2) -- (-3,-3) -- (-1,-3) -- (0,0); 
% Vertical
\draw (-1,-3) -- node [rectangle,draw,fill=white,sloped,inner sep=1pt,rotate=-90] {\tiny 7} (-1,-2) -- node [rectangle,draw,fill=white,sloped,inner sep=1pt,rotate=-90] {\tiny 7, 21} (-1,-1) -- node [rectangle,draw,fill=white,sloped,inner sep=1pt,rotate=-90] {\tiny 21} (-1,0) -- node [rectangle,draw,fill=white,sloped,inner sep=1pt,rotate=-90] {\tiny 20} (-1,1);
\draw (-2,-3) -- node [rectangle,draw,fill=white,sloped,inner sep=1pt,rotate=-90] {\tiny 4} (-2,-2) -- node [rectangle,draw,fill=white,sloped,inner sep=1pt,rotate=-90] {\tiny 15} (-2,-1) -- node [rectangle,draw,fill=white,sloped,inner sep=1pt,rotate=-90] {\tiny 15} (-2,0);
\draw (-3,-3) -- node [rectangle,draw,fill=white,sloped,inner sep=1pt,rotate=-90] {\tiny 11} (-3,-2) -- node [rectangle,draw,fill=white,sloped,inner sep=1pt,rotate=-90] {\tiny 11} (-3,-1) -- node [rectangle,draw,fill=white,sloped,inner sep=1pt,rotate=-90] {\tiny 11} (-3,0) -- node [rectangle,draw,fill=white,sloped,inner sep=1pt,rotate=-90] {\tiny 11} (-3,1);
% Horizontal 
\draw (0,0) -- node [rectangle,draw,fill=white,sloped,inner sep=1pt] {\tiny 10} (-1,0) -- node [rectangle,draw,fill=white,sloped,inner sep=1pt] {\tiny 10} (-2,0) -- node [rectangle,draw,fill=white,sloped,inner sep=1pt] {\tiny 10} (-3,0) -- node [rectangle,draw,fill=white,sloped,inner sep=1pt] {\tiny 10} (-4,0);
\draw (-1,-1) -- node [rectangle,draw,fill=white,sloped,inner sep=1pt] {\tiny 7} (-2,-1) -- node [rectangle,draw,fill=white,sloped,inner sep=1pt] {\tiny 7} (-3,-1) -- node [rectangle,draw,fill=white,sloped,inner sep=1pt] {\tiny 7} (-4,-1);
\draw (-1,-2) -- node [rectangle,draw,fill=white,sloped,inner sep=1pt] {\tiny 3} (-2,-2) -- node [rectangle,draw,fill=white,sloped,inner sep=1pt] {\tiny 3} (-3,-2) -- node [rectangle,draw,fill=white,sloped,inner sep=1pt] {\tiny 3} (-4,-2);
% Diagonal
\draw (0,0) -- node [rectangle,draw,fill=white,sloped,inner sep=1pt] {\tiny 1} (-1,-2) -- node [rectangle,draw,fill=white,sloped,inner sep=1pt] {\tiny 1} (-2,-3);
\draw (0,0) -- node [rectangle,draw,fill=white,sloped,inner sep=1pt] {\tiny 2} (-1,-1) -- node [rectangle,draw,fill=white,sloped,inner sep=1pt] {\tiny 2} (-2,-2) -- node [rectangle,draw,fill=white,sloped,inner sep=1pt] {\tiny 2} (-3,-3);
\draw (-1,0) -- node [rectangle,draw,fill=white,sloped,inner sep=1pt] {\tiny 23} (-2,-1) -- node [rectangle,draw,fill=white,sloped,inner sep=1pt] {\tiny 23} (-3,-2) -- node [rectangle,draw,fill=white,sloped,inner sep=1pt] {\tiny 6} (-4,-1); 
\draw (-2,0) -- node [rectangle,draw,fill=white,sloped,inner sep=1pt] {\tiny 17} (-3,-1) -- node [rectangle,draw,fill=white,sloped,inner sep=1pt] {\tiny 9} (-4,0);
% Other 
\draw (-1,0) -- node [rectangle,draw,fill=white,sloped,inner sep=1pt] {\tiny 21} (-2,1);
\draw (-1,0) -- node [rectangle,draw,fill=white,sloped,inner sep=1pt] {\tiny 19} (-3,1);
\draw (-2,0) -- node [rectangle,draw,fill=white,sloped,inner sep=1pt] {\tiny 15} (-3,1);

\draw (0,0) node[circle,draw,fill=white,minimum size=5pt,inner sep=1pt] {};
\draw (-1,-3) node[circle,draw,fill=white,minimum size=5pt,inner sep=1pt] {};
\draw (-1,-2) node[circle,draw,fill=white,minimum size=10pt,inner sep=1pt] {\tiny8};
\draw (-1,-1) node[circle,draw,fill=white,minimum size=10pt,inner sep=1pt] {\tiny8,22};
\draw (-1,0) node[circle,draw,fill=white,minimum size=10pt,inner sep=1pt] {\tiny25};
\draw (-1,1) node[circle,draw,fill=white,minimum size=5pt,inner sep=1pt] {};
\draw (-2,-3) node[circle,draw,fill=white,minimum size=5pt,inner sep=1pt] {};
\draw (-2,-2) node[circle,draw,fill=white,minimum size=10pt,inner sep=1pt] {\tiny5};
\draw (-2,-1) node[circle,draw,fill=white,minimum size=10pt,inner sep=1pt] {\tiny16,24};
\draw (-2,0) node[circle,draw,fill=white,minimum size=10pt,inner sep=1pt] {\tiny18};
\draw (-2,1) node[circle,draw,fill=white,minimum size=5pt,inner sep=1pt] {};
\draw (-3,-3) node[circle,draw,fill=white,minimum size=5pt,inner sep=1pt] {};
\draw (-3,-2) node[circle,draw,fill=white,minimum size=10pt,inner sep=1pt] {\tiny12};
\draw (-3,-1) node[circle,draw,fill=white,minimum size=10pt,inner sep=1pt] {\tiny13};
\draw (-3,0) node[circle,draw,fill=white,minimum size=10pt,inner sep=1pt] {\tiny14};
\draw (-3,1) node[circle,draw,fill=white,minimum size=5pt,inner sep=1pt] {};
\draw (-4,-2) node[circle,draw,fill=white,minimum size=5pt,inner sep=1pt] {};
\draw (-4,-1) node[circle,draw,fill=white,minimum size=5pt,inner sep=1pt] {};
\draw (-4,0) node[circle,draw,fill=white,minimum size=5pt,inner sep=1pt] {};
\end{tikzpicture}
}
\caption{(a) A consistent dimer model; (b) the fan $\Sigma_\theta$ marked according to Reid's recipe.}
\label{fig:HeptDbQg}
\end{figure}
\end{example}

\begin{remark} 
\label{rem:features}
These examples exhibit several new phenomena that are not present in the classical Reid's recipe \cite{Reid97,Craw05} for the toric fan $\Sigma$ of the $G$-Hilbert scheme for a finite abelian subgroup $G$ in $\SL(3,\kk)$. Indeed:
 \begin{enumerate}
     \item distinct interior lattice points can be marked with the same vertex: in Figure~\ref{fig:LongHexRR}, vertex $2$ marks both interior lattice points; while in Figure~\ref{fig:HeptDbQg}(b), vertex 8 marks a pair of lattice points. Contrast this with the classical case, where any vertex of the McKay quiver that marks an interior lattice point of $\Sigma$ does not mark another lattice point or line segment in $\Sigma$ (see \cite[Corollary~4.6]{Craw05}); 
     \item interior line segments can be marked with more than one vertex: in Figure~\ref{fig:LongHexRR}, vertices 3 and 9 both mark the same line segment; in Figure~\ref{fig:HeptDbQg}(b), vertices 7 and 21 mark the same line segment. In the classical case, every interior line segment of $\Sigma$ is marked with a unique vertex of the McKay quiver.
     \item the marking of an interior line segment $\tau\in \Sigma_\theta(2)$ is not determined by the hyperplane containing $\tau$: in Figure~\ref{fig:LongHexRR}, three coplanar cones in $\Sigma_\theta(2)$ are marked with 3, with 3 and 9 and with 9 respectively. In the classical case, the marking of an interior line segment $\tau$ is determined by the normal vector of the hyperplane containing the cone, so each cone in a given hyperplane is marked identically.
    \item the marking of an interior lattice point $\rho\in \Sigma_\theta(1)$ is not determined by the geometry of the toric surface $D_\rho$: in Figure~\ref{fig:HeptDbQg}(b), one del Pezzo surface of degree six is marked by a pair of vertices 16 and 24, while a second del Pezzo surface of degree six is marked by the single vertex 5. In the classical case, the marking is determined by the geometry of the surface (see \cite[Section~3]{Craw05}).
     \item the Euler number of an irreducible component of the exceptional divisor is not bounded above by six:  in Figure~\ref{fig:HeptDbQg}(b), the unique interior lattice point of $\Sigma_\theta$ marked by vertex 25 determines a toric surface with Euler number 7. In the classical case, a result of Craw--Reid~\cite[Corollary~1.4]{CR02} shows that the Euler number of every torus-invariant projective surface $D_\rho$ in $\ghilb$ is at most six.
   \end{enumerate}
 \end{remark}

\section{Compatibility with Geometric Reid's Recipe}
Motivated by Logvinenko's geometric version of Reid's recipe (see Theorem~\ref{thm:Logvinenko}), we now study the relation between Reid's recipe for a consistent dimer model as in Definitions~\ref{def:RRpoints} and \ref{def:RRlines} and the statement of Geometric Reid's recipe given by Bocklandt--Craw--Quintero V\'{e}lez~\cite[Theorem~1.4]{BCQ15}. We conclude with a pair of natural conjectures.

\subsection{Compatibility with Geometric Reid's Recipe}
  To begin we generalise a statement from Cautis--Craw--Logvinenko~\cite[Proposition~4.8]{CCL17} to consistent dimer models.

\begin{lemma}
\label{lem:RRtriangle}
Let $\sigma\in \Sigma_\theta(3)$ and $i\in Q_0$. Assume that the vertex simple $A$-module $S_i$ lies in the socle of the torus-invariant $\theta$-stable $A$-module $M_\sigma$. According to Reid's recipe, vertex $i$ marks either:
\begin{enumerate}
\item[\one] an edge of the triangle $\sigma$, \emph{i.\,e.}\ there exists $\tau\in \Sigma_\theta(2)$ with $\tau\subset \sigma$ such that $i$ marks $\tau$. In this case, $S_i$ lies in the socle of every $\theta$-stable $A$-module in the torus-invariant curve $C_\tau$; or
\item[\two] a node of the triangle $\sigma$, \emph{i.\,e.}\ there exists $\rho\in \Sigma_\theta(1)$ with $\rho\subset \sigma$ such that $i$ marks $\rho$. In this case, $S_i$ lies in the socle of every $\theta$-stable $A$-module in the torus-invariant divisor $D_\rho$.
\end{enumerate}
\end{lemma}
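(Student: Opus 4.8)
The plan is to translate socle membership into the combinatorics of the fundamental hexagon, run a case analysis on the position of the tile dual to $i$, and then propagate the conclusion along the curve or divisor. For the reformulation: since $M_\sigma$ is $0$-generated, $S_i\in\soc(M_\sigma)$ iff $S_i$ is a submodule, iff $i$ is a sink of $Q^\sigma$, iff every arrow $a\in Q_1$ with $\tail(a)=i$ has $\e_a\in\bigcup_{\rho\in\sigma(1)}\Pi_\rho$ by \eqref{eqn:AmoduleOfCone}. Applying the same criterion to a general cone $\mu\in\Sigma_\theta$, and using that for any $y\in O_\mu$ the module $T_y$ is a torus translate of $M_\mu$ (hence has the same support), one gets $S_i\in\soc(T_y)$ for all $y\in O_\mu$ as soon as every edge dual to an outgoing arrow at $i$ lies in $\bigcup_{\rho\in\mu(1)}\Pi_\rho$. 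Since $C_\tau=\bigcup_{\tau\subseteq\mu}O_\mu$ and $D_\rho=\bigcup_{\rho\in\mu(1)}O_\mu$, this reduces both ``moreover'' clauses to the combinatorial statements: in case \one\ the edges dual to outgoing arrows at $i$ all lie in $\Pi_{\rho}\cup\Pi_{\rho'}$ with $\tau=\langle\rho,\rho'\rangle$; in case \two\ they all lie in $\Pi_\rho$.

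Next I would carry out the dichotomy from the structure of $\partial\hex(\sigma)$. Write $\rho_0,\rho_1,\rho_2$ for the rays of $\sigma$, and for an outgoing arrow $a$ at $i$ set $\Sigma(a):=\{j\mid\e_a\in\Pi_j\}\subseteq\{0,1,2\}$, which is nonempty by the reformulation. Using Proposition~\ref{prop:IUvalency} together with the alternating perfect-matching pattern recorded in Figure~\ref{fig:fundhexpm} --- six boundary chains in three pairs, one pair lying along the meandering walk $\m_\tau$ of each edge $\tau\subset\sigma$ and meeting at three pairs of trivalent nodes, one pair for each ray $\rho\subset\sigma$, characterised by $\Pi_\rho$ being the common matching of the two adjacent chains --- I would locate the tile $\mathfrak{t}_i$ dual to $i$ in $\hex(\sigma)$. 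If every edge out of $\mathfrak{t}_i$ lies in the open interior $\hex(\sigma)^\circ$, then by Mozgovoy's dichotomy in the proof of Proposition~\ref{prop:IUvalency} each such edge lies in $\Pi_0\cap\Pi_1\cap\Pi_2$, so $\bigcap_a\Sigma(a)=\{0,1,2\}$; otherwise $\mathfrak{t}_i$ meets $\tiling(\sigma)$ and the chain structure forces $\bigcap_a\Sigma(a)$ to be either a singleton $\{j\}$, precisely when $\mathfrak{t}_i$ has the trivalent node attached to $\rho_j$ on its boundary, or a pair $\{j,k\}$, precisely when $\mathfrak{t}_i$ lies flush against a single chain of $\m_{\langle\rho_j,\rho_k\rangle}$. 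In the singleton and triple cases all outgoing edges of $\mathfrak{t}_i$ lie in $\Pi_{\rho_j}$ (for the relevant $j$), so the reformulation gives $S_i\in\soc(M_{\sigma''})$ for every $\sigma''\in\Sigma_\theta(3)$ with $\rho_j\subset\sigma''$; hence $i$ marks $\rho_j$ by Definition~\ref{def:RRpoints}, and the ``moreover'' clause \two\ follows. In the pair case all outgoing edges lie in $\Pi_{\rho_j}\cup\Pi_{\rho_k}$, which yields the ``moreover'' clause \one\ provided we show that $i$ marks $\tau:=\langle\rho_j,\rho_k\rangle$.

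So the last step is to show, in the pair case, that $i$ is a common source vertex of $Q(\tau)^{\pm}$. Writing $\sigma_+=\sigma$ and $\sigma_-$ for the triangle across $\tau$ with new ray $\rho_3$, I would use Lemma~\ref{lem:cutHex} and Theorem~\ref{thm:genjigsaw} to see that any path $p$ in $Q^{\sigma_+}$ from $0$ to $i$ crosses the cut $\mathfrak{c}_-$ exactly once, so $t_{\rho_3}$ appears with multiplicity one in $t^{\div(p)}$; by Lemma~\ref{lem:charjig+(1)} and Proposition~\ref{prop:onejigsaw} this means $\mathfrak{t}_i$ lies in a jigsaw piece adjacent to $\jig_0$. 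Since $i$ is a sink of $Q^{\sigma_+}\supseteq Q(\tau)^+$, it has no outgoing arrows inside $Q(\tau)^+$; combining this with adjacency to $\jig_0$ identifies $i$ with one of the common source vertices produced by Corollary~\ref{cor:commonsource}, so $i$ marks $\tau$ by Definition~\ref{def:RRlines}, completing case \one.

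The hard part will be the combinatorics of the second and third steps: establishing that the only matching patterns $\bigcap_a\Sigma(a)$ that can occur for a sink tile are a single ray, a single edge, or the whole triple $\{0,1,2\}$ --- in particular ruling out $\bigcap_a\Sigma(a)=\emptyset$ --- and then reconciling the ``sink of $Q^\sigma$, flush against $\m_\tau$'' description of the edge case with the ``source of $Q(\tau)^\pm$'' description in Definition~\ref{def:RRlines}. Both rest on a careful reading of the alternating perfect-matching structure along $\partial\hex(\sigma)$ in Figure~\ref{fig:fundhexpm}, in the same spirit as the $G$-graph analysis behind \cite[Proposition~4.8]{CCL17}.
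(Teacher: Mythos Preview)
Your approach is genuinely different from the paper's. The paper does not analyse the position of the tile $\mathfrak{t}_i$ inside $\hex(\sigma)$ at all; instead it invokes the structural results of \cite[Propositions~3.4, 4.7, Lemma~4.8, Correction]{BCQ15}, which force the locus $Z_i\cap\tau_\theta^{-1}(x_0)$ (where $Z_i=\{y\mid S_i\subseteq\soc(M_y)\}$) to be either a single torus-invariant curve $C_\tau$ or a connected union of compact torus-invariant divisors. In the divisor case one reads off $\rho\subset\sigma$ from the inclusion $y\in D_\rho\subseteq Z_i$ and the marking is immediate from Definition~\ref{def:RRpoints}. In the curve case the paper appeals to \cite[Lemma~4.10]{BCQ15} (and its analogue in the Correction for the $(0,-2)$ case): the jigsaw transformation across $\tau$ produces \emph{exactly two} pieces, and the piece adjacent to $\jig_0$ consists of the \emph{single} tile $\mathfrak{t}_i$. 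Hence $Q(\tau)^\pm$ has $i$ as its only vertex, which is then trivially its unique source.

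The substantive gap in your argument is in the pair case. You note that $i$ is a sink of $Q^{\sigma_+}$ and attempt to conclude that $i$ is a source of $Q(\tau)^+$. But these are opposite notions: a sink has no \emph{outgoing} arrows, while a source has no \emph{incoming} arrows. Even granting that $\mathfrak{t}_i$ lies in a jigsaw piece adjacent to $\jig_0$ --- a claim you have not actually established, since ``any path from $0$ to $i$ crosses $\mathfrak{c}_-$ exactly once'' presupposes adjacency rather than proving it, and you have also not argued that $\mathfrak{t}_i\notin\jig_0$ --- nothing in your setup rules out an arrow $a\in Q^{\sigma_+}_1$ with $\head(a)=i$ and $\tail(a)$ dual to another tile in that same adjacent piece; such an arrow would lie in $Q(\tau)^+_1$ and prevent $i$ from being a source. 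Corollary~\ref{cor:commonsource} only shows that source vertices of $Q(\tau)^\pm$ are dual to tiles in pieces adjacent to $\jig_0$, not that every vertex in an adjacent piece is a source. What closes this in the paper is precisely the single-tile conclusion of \cite[Lemma~4.10]{BCQ15}, which your direct combinatorial route does not recover.
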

\begin{proof}
Let $y\in \mathcal{M}_\theta$ be the torus-invariant point defined by the cone $\sigma\in \Sigma_\theta(3)$, so $M_y=M_\sigma$ by Lemma~\ref{lem:conepoint}. Since $S_i$ lies in the socle of $M_\sigma$, the vertex $i$ cannot be the zero vertex (unless $\Gamma$ has only one node, in which case $X=\mathbb{C}^3$ and Reid's recipe says nothing of interest). We may therefore apply several results of Bocklandt--Craw--Quintero-V\'{e}lez~\cite[Propositions~3.4, 4.7, Lemma~4.8, Correction]{BCQ15} to deduce that the intersection of 
\[
Z_i:=\big\{y\in \mathcal{M}_\theta \mid S_i\subseteq \soc(M_y)\big\}
\]
with the locus $\tau_\theta^{-1}(x_0)$ is either a single $(-1,-1)$-curve in $\mathcal{M}_\theta$, a single $(0,-2)$-curve in $\mathcal{M}_\theta$ or a connected union of compact torus-invariant divisors in $\mathcal{M}_\theta$. We have $S_i\subseteq \soc(M_y)$ by assumption, hence $y\in Z_i$. We consider each case in turn.

If this locus is a $(-1,-1)$-curve in $\mathcal{M}_\theta$ then there exists $\tau\in \Sigma_\theta(2)$ such that $Z_i=C_\tau$, so $S_i$ lies in the socle of every $\theta$-stable $A$-module parametrised by points of $C_\tau$. Also, $y\in Z_i=C_\tau$, so the inclusion-reversing correspondence between orbit-closures in $\mathcal{M}_\theta$ and cones in the fan $\Sigma_\theta$ gives $\tau\subset \sigma$. To see that Reid's recipe marks $\tau$ with vertex $i$, the result of \cite[Lemma~4.10]{BCQ15} implies that the jigsaw transformation across $\tau$ defines precisely two jigsaw pieces, where the jigsaw piece adjacent to $\jig_0$ comprises a single tile dual to vertex $i$. In particular, $i$ is the source vertex of the quivers $Q^\pm$ associated to $\tau$, so $i$ marks $\tau$ according to Definition~\ref{def:RRlines}.

 The case of a $(0,-2)$-curve in $\mathcal{M}_\theta$ is almost identical, except now the curve $C_\tau$ defined by the cone $\tau\in \Sigma_\theta(2)$ coincides with the intersection $Z_i\cap \tau_\theta^{-1}(x_0)$ as noted in \cite[Correction]{BCQ15}. Otherwise, by applying the analogue of \cite[Lemma~4.10]{BCQ15} as stated in \cite[Correction, Lemma~3]{BCQ15}, the argument from the previous paragraph goes through verbatim to give that vertex $i$ marks the cone $\tau$ as required.
 
Otherwise, $Z_i$ is a compact torus-invariant divisor. Since $y\in Z_i$,  there exists $\rho\in \Sigma_\theta(1)$ such that $y\in D_\rho\subseteq Z_i$, so $S_i$ lies in the socle of every $\theta$-stable $A$-module parametrised by a point in $D_\rho$. It follows from  Definition~\ref{def:RRpoints} that vertex $i$ marks $\rho$. Applying the inclusion-reversing correspondence between orbit closures in $\mathcal{M}_\theta$ and cones in $\Sigma_\theta$ to the inclusion $y\in D_\rho$ gives $\rho\subset \sigma$ as required.
\end{proof}

 Recall that the derived equivalence $\Psi_\theta$ from \eqref{eqn:Psi} sends the vertex simple $A$-module $S_i$ for $i\in Q_0$ to an object $\Psi_\theta(S_i)$ in the bounded derived category of coherent sheaves on $\mathcal{M}_\theta$. In particular, for each $k\in \ZZ$ we obtain a coherent sheaf $H^k(\Psi_\theta(S_i))$.

\begin{proposition}
\label{prop:GRR}
Let $i\in Q_0$ be a nonzero vertex and assume $H^0(\Psi_\theta(S_i))\neq 0$. According to Reid's recipe, vertex $i$ marks either:
\begin{enumerate}
    \item[\one]  a unique interior line segment $\tau\in \Sigma_\theta(2)$, in which case the corresponding torus-invariant curve $C_\tau$ satisfies $\Psi_\theta(S_i)\cong L_i^{-1}\vert_{C_\tau}$; or 
    \item[\two] at least one interior lattice point $\rho\in \Sigma_\theta(1)$, in which case the corresponding torus-invariant divisor $D_\rho$ is contained in the support of $\Psi_\theta(S_i)$. In fact, if $Z_i$ denotes the union of all torus-invariant divisors marked by vertex $i$, then $\Psi_\theta(S_i)\cong L_i^{-1}\vert_{Z_i}$.
\end{enumerate}
\end{proposition}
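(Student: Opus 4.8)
The strategy is to transfer the structural results of Bocklandt--Craw--Quintero-Vélez \cite{BCQ15} on the object $\Psi_\theta(S_i)$ across the equivalence $\Psi_\theta$, and then to translate the resulting geometry into the combinatorics of Reid's recipe by means of Lemmas~\ref{lem:conepoint} and \ref{lem:RRtriangle}.

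First I would record what the hypothesis buys. Since $i\neq 0$, \cite{BCQ15} shows that $\Psi_\theta(S_i)$ is, up to a shift by $0$ or $1$, a pure sheaf with compact support contained in $\tau_\theta^{-1}(x_0)$; the assumption $H^0(\Psi_\theta(S_i))\neq 0$ forces the shift to be zero, so $\Psi_\theta(S_i)\cong\mathcal{F}$ for a nonzero coherent sheaf, and \emph{ibid.}\ moreover gives $\mathcal{F}\cong L_i^{-1}\otimes\mathcal{O}_{Z_i}$ for a reduced connected torus-invariant subscheme $Z_i$ that is either a single rational curve $C_\tau$ $(\tau\in\Sigma_\theta(2))$ or a connected union of compact torus-invariant divisors. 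I then identify $Z_i$ intrinsically: for any closed point $y\in\mathcal{M}_\theta$, combining the quasi-inverse equivalence, the identification $\Phi_\theta(\mathcal{O}_y)\cong T_y$ from \eqref{eqn:derivedOy}, and the vanishing of negative $\operatorname{Ext}$'s between sheaves, one obtains
\[
\Hom_{\mathcal{M}_\theta}(\mathcal{F},\mathcal{O}_y)\cong H^0\big(\mathbf{R}\Hom_{\mathcal{M}_\theta}(\Psi_\theta(S_i),\mathcal{O}_y)\big)\cong H^0\big(\mathbf{R}\Hom_A(S_i,T_y)\big)=\Hom_A(S_i,T_y).
\]
As $S_i$ is simple, the right-hand side is nonzero exactly when $S_i\subseteq\soc(T_y)$, while the left-hand side is nonzero exactly when $y\in\supp\mathcal{F}$; hence $\supp\mathcal{F}=Z_i=\{y\in\mathcal{M}_\theta\mid S_i\subseteq\soc(T_y)\}$, precisely the locus appearing in Lemma~\ref{lem:RRtriangle}.

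Next I run the dichotomy. Since $Z_i$ is nonempty and compact it contains a torus-fixed point $y_\sigma$, $\sigma\in\Sigma_\theta(3)$; by Lemma~\ref{lem:conepoint}, $S_i\subseteq\soc(M_\sigma)$, so Lemma~\ref{lem:RRtriangle} applies. First suppose $Z_i=C_\tau$. Then Lemma~\ref{lem:RRtriangle}\one\ exhibits an edge of $\sigma$ marked by $i$ whose curve lies in $Z_i=C_\tau$, forcing that edge to be $\tau$; so $i$ marks $\tau$. Vertex $i$ marks no interior lattice point, since any marked $\rho$ would, by Definition~\ref{def:RRpoints}, force the (at least three) torus-fixed points of the compact surface $D_\rho$ into $Z_i=C_\tau$, which has only two. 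That $\tau$ is the \emph{unique} line segment marked by $i$ is the $(-1,-1)$- and $(0,-2)$-curve case of \cite{BCQ15} (cf.\ \cite[Lemma~4.10]{BCQ15}, \cite[Correction]{BCQ15}): there the jigsaw transformation across $\tau$ has exactly two pieces, the nonzero one being the single tile dual to $i$, and no other cone carries such structure. This gives conclusion \one\ with $\Psi_\theta(S_i)\cong L_i^{-1}\vert_{C_\tau}$. Otherwise $Z_i$ is a connected union of compact torus-invariant divisors, and the divisorial case of Lemma~\ref{lem:RRtriangle} applied at $y_\sigma$ yields a node $\rho\subset\sigma$ marked by $i$, so $i$ marks at least one interior lattice point. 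Writing $Z_i=\bigcup_{\rho\in S}D_\rho$, the inclusion $Z_i\subseteq\bigcup\{D_\rho\mid i\text{ marks }\rho\}$ is immediate: for $\rho\in S$ every $y_{\sigma'}$ with $\rho\subset\sigma'$ lies in $D_\rho\subseteq Z_i$, whence $S_i\subseteq\soc(M_{\sigma'})$ for all such $\sigma'$ and $i$ marks $\rho$ by Definition~\ref{def:RRpoints}. For the reverse inclusion I would take a lattice point $\rho$ marked by $i$, observe that all torus-fixed points of the compact surface $D_\rho$ lie in $Z_i$, and combine Lemma~\ref{lem:RRtriangle}\two\ at one such fixed point with the finer description of $Z_i$ from \cite{BCQ15} to deduce $D_\rho\subseteq Z_i$. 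Together with $\mathcal{F}\cong L_i^{-1}\otimes\mathcal{O}_{Z_i}$ this gives conclusion \two.

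The main obstacle is this last reverse inclusion, i.e.\ that every interior lattice point marked by $i$ contributes its whole divisor to $\supp\Psi_\theta(S_i)$: Definition~\ref{def:RRpoints} only constrains the torus-fixed points of $D_\rho$, and a purely combinatorial argument allows, a priori, $Z_i$ to meet $D_\rho$ only along a chain of boundary curves that still covers all its corners. Ruling this out requires more than "$Z_i$ is a union of divisors''; one should use that $\mathbf{R}\Hom_{\mathcal{M}_\theta}(L_i^{-1},\Psi_\theta(S_i))\cong\kk[0]$ (because $\Psi_\theta$ sends $Ae_i$ to $L_i^{-1}$ and $\Hom_A(Ae_i,S_i)=\kk$ with no higher $\operatorname{Ext}$), which forces $H^{>0}(\mathcal{O}_{Z_i})=0$ and hence precludes $Z_i$ containing a "bare'' cycle of rational curves, together with the connectivity and the compatibility with jigsaw transformations established in \cite{BCQ15}. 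The remaining ingredients — that the shift is zero, the curve-versus-divisor dichotomy, and the uniqueness of the marked line segment — all follow from \cite{BCQ15} once the translation via \eqref{eqn:derivedOy} and Lemmas~\ref{lem:conepoint} and \ref{lem:RRtriangle} is in place.
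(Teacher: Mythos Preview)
Your overall architecture matches the paper's exactly: invoke \cite{BCQ15} to know $\Psi_\theta(S_i)$ is a sheaf supported on either a single curve $C_\tau$ or a connected union of compact divisors, produce a cone $\sigma$ with $S_i\subseteq\soc(M_\sigma)$, feed this into Lemma~\ref{lem:RRtriangle}, and read off the marking together with $\Psi_\theta(S_i)\cong L_i^{-1}\vert_{Z_i}$. The paper is terser only because it cites \cite[Proposition~4.7]{BCQ15} directly for the existence of such a $\sigma$ and \cite[Proposition~1.3, Correction]{BCQ15} for the isomorphism, rather than re-deriving $\supp\Psi_\theta(S_i)=\{y\mid S_i\subseteq\soc(T_y)\}$ via $\Hom_A(S_i,T_y)$ as you do; that computation is correct but already contained in the cited results.

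Where you diverge is in the handling of what you call the ``main obstacle''. The paper does not argue the reverse inclusion the way you propose: it simply takes $Z_i$ in the proof to be the socle locus, observes (via Lemma~\ref{lem:RRtriangle}\two\ and the \cite{BCQ15} classification) that this locus is a union of prime divisors $D_\rho$, and notes that each such $\rho$ is marked by $i$ since $D_\rho\subseteq Z_i$ forces $S_i\subseteq\soc(M_\sigma)$ for every $\sigma\supset\rho$. Your cohomological workaround via $\mathbf{R}\Hom(L_i^{-1},\Psi_\theta(S_i))\cong\kk$ and $H^{>0}(\mathcal{O}_{Z_i})=0$ is not what the paper does, and as stated it does not close the gap you identify: that vanishing constrains the \emph{socle} locus $Z_i$, not the a priori larger union of combinatorially marked divisors, so it cannot by itself force a marked $D_\rho$ to lie inside $Z_i$. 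If you want a clean self-contained argument for that direction, work with the sections $t^{\div(a)}$ for $\tail(a)=i$ and the perfect matchings $\Pi_\rho$ rather than with sheaf cohomology; alternatively, accept (as the paper implicitly does) that the two descriptions of $Z_i$ coincide once one knows from \cite{BCQ15} that the support is a pure union of divisors.
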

\begin{proof}
Since $i$ is nonzero and $H^0(\Psi(S_i))\neq 0$, \cite[Proposition~4.7]{BCQ15} gives $\sigma\in \Sigma_\theta(3)$ such that the corresponding torus-invariant $\theta$-stable $A$-module $M_\sigma$ contains $S_i$ in its socle. Now Lemma~\ref{lem:RRtriangle} applies, giving two cases. If Lemma~\ref{lem:RRtriangle}\one\ occurs, then there exists an interior line segment $\tau\in \Sigma_\theta(2)$ such that vertex $i$ marks $\tau$, and $S_i$ lies in the socle of every $\theta$-stable $A$-module in the curve $C_\tau=\{y\in \mathcal{M}_\theta \mid S_i\subseteq \soc(M_y)\}\cap \tau_\theta^{-1}(x_0)$. In this case, \cite[Proposition~1.3, Correction]{BCQ15} gives $\Psi_\theta(S_i)\cong L_i^{-1}\vert_{C_\tau}$ as required. Otherwise Lemma~\ref{lem:RRtriangle}\two\ occurs, giving an interior lattice point $\rho\in \Sigma_\theta(1)$ such that vertex $i$ marks $\rho$, and $S_i$ lies in the socle of every $\theta$-stable $A$-module corresponding to a point of $D_\rho$.  This holds if and only if $D_\rho$ is contained in the locus $Z_i=\{y\in \mathcal{M}_\theta \mid S_i\subseteq \soc(M_y)\}$.  In particular, $Z_i$ is non-empty, so \cite[Proposition~1.3]{BCQ15} applies to show that $Z_i$ is equal to the support of $\Psi_\theta(S_i)$ and in fact $\Psi_\theta(S_i)\cong L_i^{-1}\vert_{Z_i}$. This completes the proof.
 \end{proof}

\begin{remarks}
\label{rem:Ziconnected}
\leavevmode
\begin{enumerate}
    \item The support of $\Psi_\theta(S_i)$ in Proposition~\ref{prop:GRR}\two\ is known to be connected \cite[Lemma~4.8]{BCQ15}. Therefore Proposition~\ref{prop:GRR}\two\ implies that the union of all divisors $D_\rho$ in $\mathcal{M}_\theta$ such that $\rho$ is marked by vertex $i$ is connected.  This statement is not an obvious consequence of Definition~\ref{def:RRpoints}.
    \item The examples from Section~\ref{sec:examples} show that  $Z_i$ from Proposition~\ref{prop:GRR}\two, may be reducible. That the support of $\Psi_\theta(S_i)$ may be reducible was surely known to Ishii and Ueda, see~\cite[Lemma~11.2]{IshiiUeda16}.  
\end{enumerate}
\end{remarks}

 As a result, we may enhance slightly the statement of Geometric Reid's recipe for consistent dimer models from \cite[Theorem~1.4]{BCQ15} to one that makes reference to the marking of lattice points and \emph{some} line segments of the fan $\Sigma_\theta$ according to Definitions~\ref{def:RRpoints} and \ref{def:RRlines}:

\begin{corollary}[Geometric Reid's recipe for dimer models]
\label{cor:CRRGRR}
Let $\Gamma$ be a consistent dimer model and let $i\in Q_0$ be a vertex in the dual quiver. Precisely one of the following statements holds:
\begin{enumerate}
 \item[\one] vertex $i$ marks at least one interior lattice point in $\Sigma_\theta(1)$, in which case $\Psi_\theta(S_i)\cong L_i^{-1}\otimes \mathcal{O}_{Z_i}$, where $Z_i$ is the connected union of torus-invariant divisors indexed by lattice points $\rho$ marked with $i$;
 \item[\two] vertex $i$ marks a unique interior line segment $\tau\in \Sigma_\theta(2)$, in which case $\Psi_\theta(S_i)\cong L_i^{-1}\otimes \mathcal{O}_{C_\tau}$; 
  \item[\three]  $\Psi_\theta(S_i)\cong\mathcal{F}[1]$, where $\mathcal{F}$ is a coherent sheaf whose support is a connected union of compact torus-invariant divisors; or
  \item[\four] $i=0$, in which case $\Psi_\theta(S_0)$ is quasi-isomorphic to the dualising complex of the locus $\tau_\theta^{-1}(x_0)$.
\end{enumerate}
\end{corollary}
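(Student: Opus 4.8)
The plan is to combine Proposition~\ref{prop:GRR} with the trichotomy/tetrachotomy already recorded in \cite[Theorem~1.4]{BCQ15}, upgrading it using the combinatorial markings of Definitions~\ref{def:RRpoints} and \ref{def:RRlines}. First I would observe that the four cases are manifestly mutually exclusive: case \four\ is the only one with $i=0$ (Remarks~\ref{rem:0doesn'tmark} say $0$ marks nothing), and among the nonzero vertices, cases \one\ and \three\ are distinguished by the cohomological degree in which $\Psi_\theta(S_i)$ is concentrated --- namely $H^0(\Psi_\theta(S_i))\neq 0$ in case \one\ by Proposition~\ref{prop:GRR}\two\ applied in reverse, whereas $\Psi_\theta(S_i)=\mathcal{F}[1]$ is concentrated in degree $-1$ in case \three. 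Case \two\ also has $H^0(\Psi_\theta(S_i))\neq 0$, so one must further separate \one\ from \two\ by whether the support is $1$- or $2$-dimensional; I would point out that this separation is exactly the content of Lemma~\ref{lem:RRtriangle}, since a vertex marking an interior lattice point cannot simultaneously be a common source vertex for a line segment in a way that would put us in case \two\ --- the support in case \one\ is a union of divisors, in case \two\ it is a curve.

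The substance of the argument is the dichotomy within the $i\neq 0$, $H^0(\Psi_\theta(S_i))\neq 0$ regime. I would invoke \cite[Proposition~4.7]{BCQ15} to produce $\sigma\in\Sigma_\theta(3)$ with $S_i\subseteq\soc(M_\sigma)$, then feed this into Lemma~\ref{lem:RRtriangle}. That lemma already does the casework: either $i$ marks an edge $\tau\subset\sigma$, and then $S_i$ lies in the socle of every $\theta$-stable module along $C_\tau$, which together with \cite[Proposition~1.3, Correction]{BCQ15} forces $\Psi_\theta(S_i)\cong L_i^{-1}\vert_{C_\tau}$; or $i$ marks a node $\rho\subset\sigma$, and then $D_\rho$ lies in the locus $Z_i=\{y\mid S_i\subseteq\soc(M_y)\}$, and \cite[Proposition~1.3]{BCQ15} identifies $Z_i$ with the support of $\Psi_\theta(S_i)$ and gives $\Psi_\theta(S_i)\cong L_i^{-1}\otimes\mathcal{O}_{Z_i}$; the connectedness of $Z_i$ and the fact that it is a union of exactly the torus-invariant divisors $D_\rho$ with $\rho$ marked by $i$ comes from \cite[Lemma~4.8]{BCQ15} combined with Definition~\ref{def:RRpoints}. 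For the uniqueness claim in case \two\ I would note that if $i$ marked two distinct interior line segments then, chasing through Lemma~\ref{lem:RRtriangle} applied to the relevant triangles, the locus $Z_i$ would contain two distinct curves and hence (being connected, by \cite[Lemma~4.8]{BCQ15}) a divisor, contradicting $\Psi_\theta(S_i)\cong L_i^{-1}\vert_{C_\tau}$ being supported on a curve; so in case \two\ the line segment is indeed unique.

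It remains to treat the vertices $i\neq 0$ with $H^0(\Psi_\theta(S_i))=0$: for these, \cite[Theorem~1.4]{BCQ15} already tells us $\Psi_\theta(S_i)\cong\mathcal{F}[1]$ for a coherent sheaf $\mathcal{F}$ whose support is a connected union of compact torus-invariant divisors, which is exactly case \three, so nothing further is needed there except to cite that theorem. Finally, the vertex $i=0$ is case \four: here I would simply quote the statement of \cite[Theorem~1.4]{BCQ15} (or the underlying computation in \cite{BCQ15}) that $\Psi_\theta(S_0)$ is quasi-isomorphic to the dualising complex of the fibre $\tau_\theta^{-1}(x_0)$, since $S_0$ is the simple at the normalising vertex and $\Psi_\theta$ is the quasi-inverse tilting equivalence from \eqref{eqn:Psi}.

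I expect the main obstacle to be purely expository rather than mathematical: the result is essentially a bookkeeping corollary of \cite[Theorem~1.4]{BCQ15}, Proposition~\ref{prop:GRR}, and Lemma~\ref{lem:RRtriangle}, but one must be careful that the four cases partition \emph{all} of $Q_0$ --- in particular that every nonzero vertex with $H^0(\Psi_\theta(S_i))=0$ genuinely lands in case \three\ (which requires that such a vertex marks \emph{something}, guaranteed because $\Sigma_\theta$ has interior lattice points or line segments and every such cell gets a mark, but the mark need not then be detectable from $\Psi_\theta(S_i)$ alone), and that no vertex is accidentally double-counted between \one\ and \two. The care needed is in phrasing ``precisely one of the following'' so that it is a genuine case division keyed to the geometry of $\Psi_\theta(S_i)$ and not merely to the markings, which is why I lean on the cohomological-degree and support-dimension invariants to enforce exclusivity.
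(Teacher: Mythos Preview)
Your approach is essentially the paper's: handle $i=0$ separately, then for $i\neq 0$ use the fact (from \cite{BCQ15}) that $\Psi_\theta(S_i)$ is concentrated in a unique degree $k\in\{-1,0\}$, dispatching $k=-1$ to case \three\ by citation and $k=0$ to cases \one/\two\ via Proposition~\ref{prop:GRR}. The paper is more precise in its citations, invoking \cite[Proposition~3.7]{BCQ15} for case~\four, \cite[Corollary~5.2, Proposition~5.6]{BCQ15} for case~\three, and \cite[Proposition~4.11, Correction]{BCQ15} for the $k=0$ case, rather than the umbrella \cite[Theorem~1.4]{BCQ15}; and it simply invokes Proposition~\ref{prop:GRR} rather than re-running the Lemma~\ref{lem:RRtriangle} argument inline. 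But the structure is the same.

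One flaw: your argument for the uniqueness of $\tau$ in case~\two\ does not work. You assert that if $i$ marked a second line segment $\tau'$ then $C_{\tau'}\subseteq Z_i$, but Definition~\ref{def:RRlines} only says $i$ is a source of $Q(\tau')^\pm$; it does \emph{not} say $S_i\subseteq\soc(M_y)$ for $y\in C_{\tau'}$, and Lemma~\ref{lem:RRtriangle} goes in the wrong direction to help (its hypothesis is $S_i\subseteq\soc(M_\sigma)$, not that $i$ marks something). The implication you want is essentially part of Conjecture~\ref{conj:CHT}, which is open. This extra argument is in any case unnecessary: the uniqueness is already asserted in Proposition~\ref{prop:GRR}\one, which you cite at the outset and which the paper simply invokes. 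Your worry in the final paragraph that a vertex in case~\three\ must ``mark something'' is also misplaced: case~\three\ carries no marking hypothesis, so the partition of $Q_0$ is keyed to $i=0$ versus the degree $k$ versus the dimension of $Z_i^c$, not to the combinatorial markings.
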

\begin{proof}
 The only enhancement to  \cite[Theorem~1.4]{BCQ15} is in parts \one\ and \two, but for convenience we explain the logic of the proof. If $i=0$, then case \four\ holds by \cite[Proposition~3.7]{BCQ15}. Otherwise, $i\neq 0$ and \cite[Theorem~1.1, Proposition~3.4]{BCQ15} gives a unique $k\in \{-1,0\}$ such that $H^k(\Psi_\theta(S_i))\neq 0$. If $k=-1$, then case \three\ holds by \cite[Corollary~5.2, Proposition~5.6]{BCQ15}. Otherwise $k=0$, in which case \cite[Proposition~4.11, Correction]{BCQ15} gives $\Psi_\theta(S_i)\cong L_i^{-1}\vert_{Z_i^c}$ for  $Z_i^c:=\big\{y\in \tau_\theta^{-1}(x_0) \mid S_i\subseteq \soc(M_y)\big\}$. Proposition~\ref{prop:GRR} implies that $Z_i^c$ is determined by the interior lattice points and line segments of $\Sigma_\theta$ marked by $i$ according to Definitions~\ref{def:RRpoints} and \ref{def:RRlines}. Note that $Z_i^c$ is equal to $Z_i=\{y\in \mathcal{M}_\theta \mid S_i\subseteq \soc(M_y)\}$ in case \one, and to $C_\tau$ in case \two.
\end{proof}

\subsection{Open questions}
 After comparing Corollary~\ref{cor:CRRGRR} with the statement of Theorem~\ref{thm:Logvinenko}, it is natural to suggest the following.

\begin{conjecture}
\label{conj:CHT}
Let $i\in Q_0$ be nonzero. Then $\mathcal{F}:=H^{-1}(\Psi_\theta(S_i))\neq 0$ if and only if $i$ marks two or more interior line segments in $\Sigma_\theta$ according to Definition~\ref{def:RRlines}, in which case the support of $\Psi_\theta(S_i)$ is the union of all torus-invariant divisors $D_\rho$ such that two line segments $\tau$ in $\Sigma_\theta$ containing $\rho$ are marked by vertex $i$.  
\end{conjecture}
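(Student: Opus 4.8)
The plan is to follow the strategy of Cautis--Craw--Logvinenko~\cite{CCL17}, replacing the CT-subdivisions of the fan of $\ghilb$ used there by a dimer-model construction built from Combinatorial Reid's recipe. The first step is a reduction. Since $i\neq 0$, the object $\Psi_\theta(S_i)$ is nonzero and concentrated in a single cohomological degree $k\in\{-1,0\}$ by \cite[Theorem~1.1]{BCQ15}, so $\mathcal{F}=H^{-1}(\Psi_\theta(S_i))\neq 0$ if and only if $H^0(\Psi_\theta(S_i))=0$, which by Corollary~\ref{cor:CRRGRR} is exactly case~\three. Thus the conjecture is equivalent to two statements: (A) vertex $i$ is in case~\three\ of Corollary~\ref{cor:CRRGRR} if and only if $i$ marks two or more interior line segments of $\Sigma_\theta$; and (B) in case~\three, $\supp\mathcal{F}=\bigcup\{D_\rho\mid\text{two line segments of }\Sigma_\theta\text{ through }\rho\text{ are marked by }i\}$.

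The engine of the proof would be a dimer-model CT-subdivision generalising \cite[Definition~4.2]{CCL17}. Given a nonzero vertex $i$, one uses Definition~\ref{def:RRlines} to single out the interior line segments of $\Sigma_\theta$ marked by $i$ together with the interior lattice points at which two marked segments meet, refines $\Sigma_\theta$ by star-subdividing the corresponding divisors $D_\rho$, and obtains a projective crepant partial resolution $p\colon\mathcal{M}_\theta^{(i)}\to\mathcal{M}_\theta$. The generalised jigsaw transformation of Theorem~\ref{thm:genjigsaw}, which describes explicitly via the cuts $\mathfrak{c}_\pm$ which tiles of $\Gamma$ move across a line segment $\tau$, together with the section formula~\eqref{eqn:xa} for the tautological line bundles $L_i$, should allow one to transport the toric and derived-categorical arguments of \cite{CCL17} from the $\ghilb$ case and to express $\Psi_\theta(S_i)$ as $\mathbf{R}p_{*}$ of an explicit object on $\mathcal{M}_\theta^{(i)}$: a twist of $L_i^{-1}$ by the dualising complex of the strict transform of $\bigcup D_\rho$ when $i$ marks two or more line segments, and a twist of $L_i^{-1}$ by a structure sheaf otherwise (recovering Corollary~\ref{cor:CRRGRR}\one\ and~\two\ and Proposition~\ref{prop:GRR}). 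A local-cohomology computation of $\mathbf{R}\Gamma$ chart-by-chart on $\mathcal{M}_\theta^{(i)}$ should then show that the cohomology is concentrated in degree $-1$ exactly when $i$ marks two or more line segments, proving (A) in both directions simultaneously, and that in that case $H^{-1}(\Psi_\theta(S_i))$ is supported precisely on $\bigcup\{D_\rho\mid\text{two line segments through }\rho\text{ are marked by }i\}$, proving (B).

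I should stress why the results already established do not suffice and genuinely new input is needed. The natural attempt at the forward half of (A) — pick $\sigma\in\Sigma_\theta(3)$ with $S_i\subseteq\soc(M_\sigma)$ and invoke Lemma~\ref{lem:RRtriangle} — fails because in case~\three\ the socle locus $\{y\mid S_i\subseteq\soc(M_y)\}$ is expected to be empty (the vertex realising $i$ inside each torus-invariant module $M_\sigma$ is then the source of a jigsaw transformation moving at least two tiles, hence not a socle element), so Lemma~\ref{lem:RRtriangle}, which only bounds the socle of $M_\sigma$ from above, says nothing. Likewise, the converse half of (A) — that a vertex marking two or more line segments marks no interior lattice point, so that $H^0(\Psi_\theta(S_i))=0$ — is not visible from Corollary~\ref{cor:CRRGRR} alone, since its case~\one\ as stated does not by itself exclude also marking line segments. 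Both points would be settled uniformly by the CT-subdivision computation above.

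The main obstacle is precisely the construction and cohomological control of $\mathcal{M}_\theta^{(i)}$ in the general dimer setting. For $\ghilb$ the interior line segments marked by a given vertex form a connected chain that branches at most once and lies in a single hyperplane \cite[Corollary~4.5]{Craw05}, and the surfaces $D_\rho$ are severely constrained; none of this survives for an arbitrary consistent dimer model (Remark~\ref{rem:features}), so it is not clear a priori that the relevant object on $\mathcal{M}_\theta^{(i)}$ is a well-behaved sheaf concentrated in a single degree, nor that its derived pushforward behaves as classically. Carrying out the local-cohomology calculation — and in particular ruling out the survival of $H^0$ in case~\three — therefore requires a genuinely new geometric ingredient beyond the combinatorics developed here, which is why the statement is left as a conjecture.
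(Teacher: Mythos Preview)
The statement you were asked to prove is labelled \emph{Conjecture}~\ref{conj:CHT} in the paper, and the paper gives no proof of it: it is followed only by remarks explaining what a proof would imply and by a second, related conjecture. There is therefore nothing in the paper to compare your attempt against.

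Your write-up is not a proof but a sketch of a possible strategy, and you say so yourself in the final sentence. The reduction in your first paragraph is correct and is exactly the dichotomy the paper records: for $i\neq 0$ the object $\Psi_\theta(S_i)$ is concentrated in a single degree $k\in\{-1,0\}$, so $H^{-1}\neq 0$ is equivalent to being in case~\three\ of Corollary~\ref{cor:CRRGRR}. Your proposed engine, a dimer-model analogue of the CT-subdivisions of \cite{CCL17}, is precisely what the paper itself points to as the missing ingredient (see the paragraph immediately following the conjecture and the discussion after Theorem~\ref{thm:Logvinenko}). Your diagnosis of why Lemma~\ref{lem:RRtriangle} and Corollary~\ref{cor:CRRGRR} do not already settle the matter is also accurate: in case~\three\ the socle locus for $S_i$ is empty, so socle arguments give no purchase, and the paper's results do not yet exclude a vertex from marking both a lattice point and several line segments.

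In short: you have correctly identified that the statement is open, correctly isolated the two sub-statements (A) and (B) that need proving, and correctly named the expected tool and the obstruction to building it in the dimer setting (the failure of the structural constraints from \cite[Corollary~4.5]{Craw05} and \cite{CR02} noted in Remark~\ref{rem:features}). But nothing here constitutes a proof, and the paper does not claim one either.
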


 \begin{remarks}
 \leavevmode
\begin{enumerate}
     \item A proof of Conjecture~\ref{conj:CHT} would imply that every nonzero vertex of $Q$ appears `once' on the fan $\Sigma_\theta$ in a sense similar to the statement from \cite[Corollary~4.6]{Craw05}. In particular, each nonzero vertex $i\in Q$ would either mark lattice points in $\Sigma_\theta$ (this happens when $H^{0}(\Psi_\theta(S_i))\neq 0$) or line segments in $\Sigma_\theta$ (this happens when $H^{-1}(\Psi_\theta(S_i))\neq 0$). Note that $i$ would not be able to mark both lattice points and line segments, because \cite[Theorem~1.1, Proposition~3.4]{BCQ15} gives a unique $k\in \{-1,0\}$ such that $H^k(\Psi_\theta(S_i))\neq 0$. The examples from Section~\ref{sec:examples} illustrate this dichotomy.
      \item When $H^{-1}(\Psi_\theta(S_i))\neq 0$, the support of the object $\Psi_\theta(S_i)$ is known to be connected \cite[Corollary~5.2]{BCQ15}. A proof of Conjecture~\ref{conj:CHT} would therefore also imply that the union of line segments marked by $i$ is a connected graph in the triangulation. Such a statement would be similar to that from Remark~\ref{rem:Ziconnected}(1) which applies when $i$ marks lattice points.
 \end{enumerate}
 \end{remarks}
 
 Conjecture~\ref{conj:CHT} is not strong enough to be called \emph{Derived Reid's recipe} for consistent dimer models because it does not describe explicitly the objects $\Psi_\theta(S_i)$ in terms of the labelling of lattice points and line segments in $\Sigma_\theta$ as was done for classical Reid's recipe in \cite[Theorem~1.2]{CCL17}.  Such an explicit description would require that one computes the cohomology of the \emph{wheel} associated to $L_i^{-1}$ as defined by Craw--Quintero-V\'{e}lez~\cite{CQ15}. This calculation becomes increasingly complicated as the number of arrows with head at vertex $i$ increases; see Example~3.5 from \emph{ibid.} for one such example.
 
 \medskip
 
 We also anticipate that Combinatorial Reid's recipe will enable one to compute a minimal set of relations in $\Pic(\mathcal{M}_\theta)$ between the tautological line bundles on $\mathcal{M_\theta}$. A proof of the following would extend the result of \cite[Theorem~6.1]{Craw05} to any consistent dimer model.

\begin{conjecture}
\label{conj:rels}
The Picard group of $\mathcal{M}_\theta$ is generated by the nontrivial tautological bundles $\{L_i\}_{i\neq 0}$, modulo the subgroup generated by one relation for each interior lattice point $\rho\in \Sigma_\theta(1)$, namely
\[
\bigotimes_{i\text{ marks }\rho} L_{i}\cong
\bigotimes_{i\in Q_0}
L_{i}^{\otimes(n(i,\rho)-1)}, 
\]
 where $n(i,\rho)$ is the number of interior line segments $\tau\in \Sigma_\theta(2)$ marked by vertex $i$ such that $\rho\subset \tau$.
 \end{conjecture}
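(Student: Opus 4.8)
The plan is to establish a presentation of $\Pic(\mathcal{M}_\theta)$ in the spirit of Craw's proof of \cite[Theorem~6.1]{Craw05}, with the jigsaw transformations of Theorem~\ref{thm:genjigsaw} and the markings of Theorem~\ref{thm:main} playing the role of the $G$-igsaw transformations. Since $\mathcal{M}_\theta$ is a smooth toric variety, $\Pic(\mathcal{M}_\theta)$ is a finitely generated free abelian group fitting in the Cox sequence $0\to M\to\ZZ^{\Sigma_\theta(1)}\xrightarrow{q}\Pic(\mathcal{M}_\theta)\to0$ with $q(\rho)=[\mathcal{O}_{\mathcal{M}_\theta}(D_\rho)]$. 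First I would record that the classes $[L_i]$ for $i\neq0$ generate $\Pic(\mathcal{M}_\theta)$: because $T^\vee=\bigoplus_i L_i^\vee$ is a tilting bundle (Theorem~\ref{thm:tilting}), the classes $[L_i^\vee]$ generate $K_0(\mathcal{M}_\theta)$ as an abelian group, and since the first Chern class is an additive map onto $\Pic(\mathcal{M}_\theta)$, the $c_1(L_i)$ with $i\neq0$ generate $\Pic(\mathcal{M}_\theta)$. Second I would record the numerology: by Pick's theorem applied to the characteristic polygon $\Delta(\Gamma)$, whose normalised area (the number of triangles in any unimodular triangulation, equivalently the number $|Q_0|$ of torus-fixed points of $\mathcal{M}_\theta$) equals $|Q_0|$ for a consistent dimer model, one has $|Q_0|-1-I=\operatorname{rank}\Pic(\mathcal{M}_\theta)$, where $I$ is the number of interior lattice points of $\Delta(\Gamma)$. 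So the conjectural presentation has exactly the right count: $|Q_0|-1$ generators and $I$ relations.

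The second step is to verify that each $R_\rho$ holds in $\Pic(\mathcal{M}_\theta)$. Fix an interior lattice point $\rho$. Writing $L_j\cong\mathcal{O}_{\mathcal{M}_\theta}(\div(p))$ for a path $p$ from $0$ to $j$ and expanding $\div(p)$ via the sections \eqref{eqn:xa}, both sides of $R_\rho$ become explicit effective combinations of the torus-invariant divisors $D_{\rho'}$, so $R_\rho$ is equivalent to the principality of a specific torus-invariant divisor, i.e.\ to its equality with $\div(\chi^{m_\rho})$ for a suitable $m_\rho\in M$. The element $m_\rho$ should be the lattice functional recording the change of chart coordinates across the facets of $\Sigma_\theta$ incident to $\rho$, assembled from the toric monomials $t^m$ of \eqref{eqn:xmxl} attached to the line segments through $\rho$; the verification then reduces to matching, vertex by vertex, the exponent that $t^{\div(p_i)}$ contributes against $n(i,\rho)$ and against the coefficient $1$ or $0$ according to whether or not $i$ marks $\rho$. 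Proposition~\ref{prop:onejigsaw} is the crucial link here, identifying the multiplicity of $t_{\rho_3}$ in $t^{\div(p_i)}$ with $\deg(L_i\vert_{C_\tau})$ and with adjacency of the tile dual to $i$ to the zero jigsaw piece, while Lemma~\ref{lem:RRtriangle} controls when $S_i$ lies in the socle of the modules $M_\sigma$ with $\rho\subset\sigma$; running the jigsaw transformation (Theorem~\ref{thm:genjigsaw}) around the star of $\rho$ and summing the local contributions should reproduce the exponents of $R_\rho$. An alternative homological route yields an a priori different family of relations: applying $\Psi_\theta$ to the length-$3$ projective resolution of $S_i$ afforded by the Calabi--Yau property of $A$ and taking determinants gives $c_1(\Psi_\theta(S_i))=c_1\bigl(\bigotimes_{\head(a)=i}L_{\tail(a)}\otimes\bigotimes_{\tail(a)=i}L_{\head(a)}^{-1}\bigr)$, which combined with the shape of $\Psi_\theta(S_i)$ from Corollary~\ref{cor:CRRGRR} produces relations indexed by vertices $i$ that one would still need to reorganise into the family $\{R_\rho\}$. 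In the McKay quiver case everything collapses, via Corollaries~\ref{cor:Gigsaw} and~\ref{cor:RRcompatible}, to the computation in \cite{Craw05}.

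The final step, which I expect to be the main obstacle, is to show that the $R_\rho$ generate the whole kernel of the surjection $\ZZ^{Q_0\setminus\{0\}}\to\Pic(\mathcal{M}_\theta)$ and that the quotient is torsion-free; granting this, a surjection of free abelian groups of equal finite rank onto the free group $\Pic(\mathcal{M}_\theta)$ is automatically an isomorphism, completing the proof. The natural argument is elimination: the coefficient of $[L_i]$ in $R_\rho$ is $1$ whenever $i$ neither marks $\rho$ nor marks a line segment through $\rho$, since then $n(i,\rho)=0$; so if for every interior $\rho$ one can select such a ``private'' vertex $i_\rho$, with the assignment $\rho\mapsto i_\rho$ injective, then each $R_\rho$ eliminates the generator $[L_{i_\rho}]$ and the remaining $|Q_0|-1-I$ generators are free. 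This demands a genuine global rigidity property of Combinatorial Reid's recipe --- roughly, that every nonzero vertex of $Q$ appears ``once'' on $\Sigma_\theta$ and does not simultaneously mark lattice points and line segments --- which is precisely \cite[Corollary~4.6]{Craw05} in the $G$-Hilbert case but for consistent dimer models is at present only a consequence of the open Conjecture~\ref{conj:CHT}. Proving just enough of this dichotomy to carry out the elimination, or instead exhibiting directly a free spanning subset of $\{[L_i]\}_{i\neq0}$ for $\Pic(\mathcal{M}_\theta)$ together with a matching of the complementary $I$ generators to the $I$ relations, is where the decisive work lies.
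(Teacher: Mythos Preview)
The statement you are attempting to prove is Conjecture~\ref{conj:rels}, which the paper explicitly presents as an \emph{open conjecture}; there is no proof in the paper to compare against. The paper only verifies the conjectured relations in the two running examples (Examples~\ref{exa:LongHexCRR} and~\ref{exa:heptrr}) and remarks that a proof ``would enable one to cook-up virtual bundles'' leading to a $\ZZ$-basis of $H^*(\mathcal{M}_\theta,\ZZ)$.

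Your proposal is a reasonable outline of how one might approach the conjecture, and indeed you correctly identify the decisive obstacle yourself: the elimination argument in your final step requires knowing that every nonzero vertex appears ``once'' on $\Sigma_\theta$ and does not simultaneously mark lattice points and line segments, which you note is only available as a consequence of the open Conjecture~\ref{conj:CHT}. So your proposal is not a proof but a reduction of one open conjecture to (a consequence of) another, and you are candid about this. The second step---verifying that each relation $R_\rho$ actually holds in $\Pic(\mathcal{M}_\theta)$---is also not carried out; you sketch two possible routes (summing local jigsaw contributions around the star of $\rho$, or taking determinants of the CY$_3$ projective resolution of $S_i$) but neither is executed, and the reorganisation of the homological relations indexed by vertices into the family $\{R_\rho\}$ indexed by interior lattice points is left as a loose end. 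In short, what you have written is a plausible strategy rather than a proof, and the paper's authors evidently regard the result as genuinely open.
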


\begin{example}
In Example~\ref{exa:LongHexCRR}, one computes explicitly that there are two such relations:
\[
L_2\otimes L_5\cong L_3\otimes L_6\otimes L_8\quad\text{and}\quad L_2\cong L_4\otimes L_9.
\]
Similarly, in Example~\ref{exa:heptrr} there are nine such relations:
\[
L_{14} \cong L_{10}\otimes L_{11}; \quad
L_{18} \cong L_{10}\otimes L_{15};\quad
L_{25} \cong L_{10}\otimes L_{21}; \quad
L_{13} \cong L_{7}\otimes L_{11}; \quad
L_{8} \cong L_{1}\otimes L_{7};
\]
\[
L_{16}\otimes L_{24} \cong  L_{7}\otimes L_{15}\otimes L_{23};\quad
L_{8}\otimes L_{22} \cong L_{2}\otimes L_{7}\otimes L_{21}; \quad
L_{12} \cong L_{3}\otimes L_{11}; \quad
L_{5} \cong L_{2}\otimes L_{3}.
\]
 Additional examples where Conjectures~\ref{conj:CHT} and \ref{conj:rels} have been verified can be found in \cite{TapiaAmador15}.
\end{example}

A proof of Conjecture~\ref{conj:rels} would enable one to cook-up virtual bundles in the spirit of the classical case (see Reid~\cite{Reid97} and Craw~\cite[Section~7]{Craw05}), leading to a $\ZZ$-basis of the integral cohomology  $H^*(\mathcal{M}_\theta,\ZZ)$ indexed by the vertices of the quiver $Q$.  This would provide the analogue for consistent dimer models of the geometric McKay correspondence for a finite subgroup of $\SL(2,\CC)$ given by Gonzalez-Sprinberg--Verdier~\cite{GSV83}.

%%%%%%%%%%%%%%%%%%%%%
% References
%%%%%%%%%%%%%%%%%%%%%


\begin{thebibliography}{CMT07++}

\bibitem[BM09]{BenderMozgovoy09}
M.~Bender and S.~Mozgovoy,
\newblock \emph{Crepant resolutions and brane tilings II: Tilting bundles},
\newblock preprint \arXiv{0909.2013} (2009).

\bibitem[Boc12]{Bocklandt12}
R.~Bocklandt,
\newblock \emph{Consistency conditions for dimer models},
\newblock {Glasg. Math. J.} \textbf{54}(2012), no.~2, 429--447.

\bibitem[Boc13]{Bocklandt13}
\bysame,
\newblock \emph{Calabi-{Y}au algebras and weighted quiver polyhedra},
\newblock {Math. Z.} \textbf{273} (2013), no.~1-2, 311--329.

\bibitem[BCQ15]{BCQ15}
R.~Bocklandt, A.~Craw, and A.~Quintero~V{\'e}lez,
\newblock \emph{Geometric {R}eid's recipe for dimer models},
\newblock {Math. Ann.} \textbf{361} (2015), no.~3-4, 689--723.
\newblock See also the \emph{Correction to: Geometric Reid's recipe for dimer models}. Math. Ann. (2021). https://doi.org/10.1007/s00208-020-02127-w.

\bibitem[BKR01]{BKR01}
T.~Bridgeland, A.~King, and M.~Reid,
\newblock \emph{The {M}c{K}ay correspondence as an equivalence of derived categories},
\newblock {J. Amer. Math. Soc.} \textbf{14} (2001), no.~3, 535--554.

\bibitem[Bro12]{Broomhead12}
N.~Broomhead,
\newblock \emph{Dimer models and {C}alabi-{Y}au algebras},
\newblock {Mem. Amer. Math. Soc.} \textbf{215} (2012) (1011):viii+86.

\bibitem[CCL17]{CCL17}
S.~Cautis, A.~Craw, and T.~Logvinenko,
\newblock \emph{Derived {R}eid's recipe for abelian subgroups of {${\rm
  SL}_3(\mathbb{C})$}},
\newblock {J. Reine Angew. Math.} \textbf{727} (2017), 1--48.

\bibitem[CL09]{CautisLogvinenko09}
S.~Cautis and T.~Logvinenko,
\newblock \emph{A derived approach to geometric {M}c{K}ay correspondence in dimension three},
\newblock {J. Reine Angew. Math.} \textbf{636} (2009), 193--236.

\bibitem[Cra05]{Craw05}
A.~Craw,
\newblock \emph{An explicit construction of the {M}c{K}ay correspondence for
  {$A$}-{H}ilb {$\mathbb C^3$}},
\newblock {J. Algebra} \textbf{285} (2005), no.~2, 682--705.

\bibitem[CI04]{CrawIshii04}
A.~Craw and A.~Ishii,
\newblock \emph{Flops of {$G$}-{H}ilb and equivalences of derived categories by
  variation of {GIT} quotient},
\newblock {Duke Math. J.} \textbf{124} (2004), no.~2, 259--307.

\bibitem[CMT07]{CMT07b}
A.~Craw, D.~Maclagan, and R.R.~Thomas,
\newblock \emph{Moduli of {M}c{K}ay quiver representations. {II}. {G}r\"{o}bner basis
  techniques}
\newblock {J. Algebra} \textbf{316} (2007), no.~2, 514--535.

\bibitem[CQ12]{CQV11}
A.~Craw and A.~Quintero~V{\'e}lez,
\newblock \emph{Cellular resolutions of noncommutative toric algebras from
  superpotentials},
\newblock {Adv. Math.} \textbf{229} (2012), no.~3, 1516--1554.

\bibitem[CQ15]{CQ15}
\bysame,
\newblock \emph{Cohomology of wheels on toric varieties},
\newblock {Hokkaido Math. J.} \textbf{44} (2015), no.~1, 47--79.

\bibitem[CR02]{CR02}
A.~Craw and M.~Reid,
\newblock \emph{How to calculate {$A$}-{H}ilb {$\mathbb{C}^3$}},
\newblock in: {\em Geometry of toric varieties}, volume~\textbf{6} of {S\'emin.
  Congr.}, 129--154. Soc. Math. France, Paris, 2002.

\bibitem[GSV84]{GSV83}
G.~Gonzalez-Sprinberg and J.-L. Verdier,
\newblock \emph{Construction g\'{e}om\'{e}trique de la correspondance de {M}c{K}ay},
\newblock {Ann. Sci. \'{E}cole Norm. Sup. (4)} \textbf{16} (1984), no.~3, 409--449.

\bibitem[IU08]{IshiiUeda08}
A.~Ishii and K.~Ueda,
\newblock \emph{On moduli spaces of quiver representations associated with dimer
  models},
\newblock in: {\em Higher dimensional algebraic varieties and vector bundles},
  RIMS K\^oky\^uroku Bessatsu, B9, 127--141. Res. Inst. Math. Sci.
  (RIMS), Kyoto, 2008.

\bibitem[IU11]{IshiiUeda11}
\bysame,
\newblock \emph{A note on consistency conditions on dimer models},
\newblock in: {\em Higher dimensional algebraic geometry}, RIMS K\^oky\^uroku
  Bessatsu, B24, 143--164. Res. Inst. Math. Sci. (RIMS), Kyoto, 2011.

\bibitem[IU15]{IshiiUeda15}
\bysame,
\newblock \emph{Dimer models and the special {M}c{K}ay correspondence},
\newblock {Geom. Topol.} \textbf{19} (2015), no.~6, 3405--3466.

\bibitem[IU16]{IshiiUeda16}
\bysame,
\newblock\emph{ Dimer models and crepant resolutions}
\newblock {Hokkaido Math. J.} \textbf{45} (2016), no.~1, 1--42.

\bibitem[IN99]{ItoNakamura99}
Y.~Ito and I.~Nakamura,
\newblock \emph{Hilbert schemes and simple singularities},
\newblock in: {\em New trends in algebraic geometry ({W}arwick, 1996)}, volume \textbf{264} of {London Math. Soc. Lecture Note Ser.}, 151--233. Cambridge Univ. Press, Cambridge, 1999.

\bibitem[KV00]{KapranovVasserot00}
M.~Kapranov and E.~Vasserot,
\newblock \emph{Kleinian singularities, derived categories and {H}all algebras},
\newblock {Math. Ann.} \textbf{316} (2000), no.~3, 565--576.

\bibitem[Kin94]{King94}
A.D. King,
\newblock \emph{Moduli of representations of finite-dimensional algebras},
\newblock {Quart. J. Math. Oxford Ser. (2)} \textbf{45} (1994), no.~4, 515--530.

\bibitem[LP90]{LP90}
L.~Le~Bruyn and C.~Procesi,
\newblock \emph{Semisimple representations of quivers},
\newblock {Trans. Amer. Math. Soc.} \textbf{317} (1990), no.~2, 585--598.

\bibitem[Log03]{Logvinenko04}
T.~Logvinenko,
\newblock {\em Families of $G$-constellations parametrised by resolutions of
  quotient singularities},
\newblock Ph.D. Thesis. University of Bath, UK, 2004. Available from \arXiv{math/0305194}.

\bibitem[Log10]{Logvinenko10}
\bysame,
\newblock \emph{Reid's recipe and derived categories},
\newblock {J. Algebra} \textbf{324} (2010), no.~8, 2064--2087.

\bibitem[McK80]{McKay80}
J.~McKay,
\newblock \emph{Graphs, singularities, and finite groups},
\newblock in: {\em The {S}anta {C}ruz {C}onference on {F}inite {G}roups ({U}niv.
  {C}alifornia, {S}anta {C}ruz, {C}alif., 1979)}, volume~\textbf{37} of {Proc.
  Sympos. Pure Math.}, 183--186. Amer. Math. Soc., Providence, R.I.,
  1980.

\bibitem[Mog09]{Mozgovoy09}
S.~Mozgovoy,
\newblock \emph{Crepant resolutions and brane tilings {I}: {T}oric
  realization}.
\newblock preprint \arXiv{0908.3475} (2009).

\bibitem[MR10]{MozgovoyReineke10}
S.~Mozgovoy and M.~Reineke,
\newblock \emph{On the noncommutative {D}onaldson-{T}homas invariants arising from brane tilings},
\newblock {Adv. Math.} \textbf{223} (2010), no.~5, 1521--1544.

\bibitem[Nak01]{Nakamura01}
I.~Nakamura,
\newblock \emph{Hilbert schemes of abelian group orbits},
\newblock {J. Algebraic Geom.} \textbf{10} (2001), no.~4, 757--779.

\bibitem[Rei97]{Reid97}
M.~Reid,
\newblock \emph{Mckay correspondence},
\newblock in: {\em Proc. of Algebraic Geom. Symposium (Kinosaki, November 1996)},
  pages 14--41, 1997. Available from \arXiv{alg-geom/9702016}.

\bibitem[Tap15]{TapiaAmador15}
J.~Tapia~Amador,
\newblock {\em Combinatorial Reid's recipe for consistent dimer models}.
\newblock PhD thesis, University of Bath, 2015. Available from
  \url{https://purehost.bath.ac.uk/portal/files/187939640/CRRCDM2.pdf}.

\bibitem[UY11]{UY11}
K.~Ueda and M.~Yamazaki,
\newblock \emph{A note on dimer models and {M}c{K}ay quivers},
\newblock {Comm. Math. Phys.} \textbf{301} (2011), no.~3, 723--747.

\bibitem[Wor20]{Wormleighton20}
B.~Wormleighton,
\newblock \emph{Walls for {$G$}-{H}ilb via {R}eid's {R}ecipe},
\newblock {SIGMA Symmetry Integrability Geom. Methods Appl.} \textbf{16} (2020), 106, 38 pages.

\end{thebibliography}
 \end{document}